\documentclass[11pt]{amsart}
\usepackage[margin=1in]{geometry}
\usepackage{comment}
\usepackage{graphicx}
\usepackage{hyperref}
\usepackage{amsmath}
\usepackage{amssymb}
\usepackage{amsthm}
\usepackage{mathtools}
\usepackage{mathrsfs}
\usepackage{braket}
\usepackage{float}
\usepackage{graphicx}
\usepackage{listings}
\usepackage{adjustbox}
\usepackage{framed} 
\usepackage{ltablex}
\usepackage{xcolor}
\usepackage{pgfplots}
\usepackage{tikz}
\usepackage{tikz-3dplot}
\usetikzlibrary{calc}
\usetikzlibrary{angles}
\usetikzlibrary{shapes.geometric}
\usetikzlibrary{cd}
\tdplotsetmaincoords{60}{115}
\pgfplotsset{compat=newest}
\usetikzlibrary{cd}
\usepackage[backend=biber]{biblatex}
\usepackage{bbm}
\usepackage{amsthm}
\usepackage{enumitem}

\newtheorem*{remark}{Remark}
\newtheorem*{theorem*}{Theorem}
\numberwithin{equation}{section}
\newtheorem{theorem}[equation]{Theorem}

\newtheorem{corollary}[equation]{Corollary}

\newtheorem{lemma}[equation]{Lemma}

\newtheorem{proposition}[equation]{Proposition}

\newtheorem{definition}[equation]{Definition}
\makeatletter

\renewcommand*\env@matrix[1][*\c@MaxMatrixCols c]{
  \hskip -\arraycolsep
  \let\@ifnextchar\new@ifnextchar
  \array{#1}}
\tagsleft@false\let\veqno\@@eqno
\makeatother

\renewcommand{\qed}{$\hfill\square$}

\newcommand{\clif}{\ensuremath{\mathcal{C}\ell_{0,n}}}
\newcommand{\clifn}[1]{\mathcal{C}\ell_{{#1}}}
\newcommand{\para}{\ensuremath{\$\mathbb{R}^{n+1}}}
\newcommand{\cpara}{\ensuremath{\$\widehat{\mathbb{R}}^{n+1}}}
\newcommand{\lip}{\ensuremath{\$\Gamma_{n+1}}}

\def\ol{\ensuremath\overline}
\newcommand{\defn}{\ensuremath{\scalebox{1.5}{$\lrcorner$}}}
\DeclareFontFamily{U}{mathx}{}
\DeclareFontShape{U}{mathx}{m}{n}{<-> mathx10}{}
\DeclareSymbolFont{mathx}{U}{mathx}{m}{n}
\DeclareMathAccent{\widehat}{0}{mathx}{"70}
\DeclareMathAccent{\widecheck}{0}{mathx}{"71}

\newcommand{\pU}{\mathbf{U}}
\newcommand{\pV}{\mathbf{V}}
\newcommand{\pW}{\mathbf{W}}
\newcommand{\pX}{\mathbf{X}}
\newcommand{\N}{\mathbb{N}}    % Natural numbers
\newcommand{\Z}{\mathbb{Z}}    % Integers
\newcommand{\Q}{\mathbb{Q}}    % Rational numbers
\newcommand{\R}{\mathbb{R}}    % Real numbers
\newcommand{\C}{\mathbb{C}}    % Complex numbers
\newcommand{\hyp}{\mathbb{H}}

\newcommand{\U}{\mathbb{U}}
\newcommand{\B}{\mathbb{B}}\newcommand{\pY}{\mathbf{Y}}

\begin{document}
\title{The Lipschitz Spinor-Higher Horosphere Correspondence}
\author{Orion Zymaris}

\begin{abstract}
In a paper of Mathews \cite{Mathews_Spinors_horospheres}, an isomorphism is constructed between two-component complex spinors and horospheres in $\mathbb{H}^3$ carrying `spin decorations'. A recent arXiv preprint of Mathews and Varsha \cite{mathews2024quaternions} extends this result to the case of `quaternionic spinors' and spin decorated horospheres in $\mathbb{H}^4$.

The following work generalises these results to an equivariant correspondence between two-component `Lipschitz spinors' with entries drawn from the Lipschitz group of a Clifford algebra, null multiflags in generalised Minkowski space, and higher-dimensional horospheres that carry an extension of the Mathews spin decoration. This correspondence allows spinors to be applied to horospheres in any dimension of hyperbolic space. 

A generalisation of Mathews' complex lambda lengths to arbitrary dimension is also given, along with some relations on these lambda lengths. In particular, the noncommutative hyperbolic Ptolemy relation of \cite{mathews2024quaternions} is extended to arbitrary dimension.

This paper is adapted from the author's thesis, available at \url{https://doi.org/10.26180/31361224}.
\end{abstract}

\maketitle
\tableofcontents

\setcounter{section}{-1}
\section{Introduction}
\label{sec:Introduction}
\subsection{Background}
Though a thorough treatment of the groups underlying spinors and spin structures was carried out in 1913 by \'{E}lie Cartan \cite{Cartan1913} in his work on projective groups (which would now be called projective representations of groups), the significance of the spinor was best understood by the early pioneers of quantum mechanics.\footnote{The term `spinor' first appears in a paper of the physicist van der Waerden \cite{Waerden1928}, who attributes it to Paul Ehrenfest.} Groundbreaking work of Pauli \cite{Pauli1927} and Dirac \cite{Dirac1928} in the late 1920's made use of spinors and matrix representations of certain spin groups to describe the quantum phenomenon of `spin' and introduce relativistic effects into the bourgeoning field of quantum mechanics. In the realm of pure mathematics, spinors and spinor bundles would become an important tool in geometry and topology, helping to answer questions about the existence or non-existence of certain structures on manifolds or the index of elliptic operators (\cite{LawsonMichelsohn} is a standard reference). And, across both physics and mathematics, maybe the most important variety of spinor is the two-component complex spinor $(\xi,\eta) \in \C^2$.

Since its introduction the two-component complex spinor has been thoroughly explored, its properties mapped and collated. These spinors carry representations of Spin$(3)$ and Spin$(1,3)$ as the groups $SU(2)$ and $SL(2,\C)$ respectively, two of the low-dimensional exceptional isomorphisms between spin groups and certain matrix groups. There are many other representations of the spin groups (including other representations of Spin$(3)$ and Spin$(1,3)$), but these $2 \times 2$ matrix groups and associated two-component complex spinors continue to be among the most fruitful. This raises a question; can this two-component form be extended to represent other spin groups?

It can indeed. To do this, we make a shift; by considering two-component complex spinors as projective coordinates on the Riemann sphere $\widehat{\C}$, the action of $SL(2,\C)$ becomes an action by M\"{o}bius transformations on $\widehat{\C}$.\footnote{This perspective is well-known; see, for example, \cite{SpinorsAndSpacetime}.} Results about generalised M\"obius transformations in higher dimensions are already known; the machinery for generalised M\"{o}bius transformations was introduced by Vahlen \cite{Vahlen1902}, forgotten, rediscovered by Fueter \cite{Fueter1926},\cite{Fueter1927}, forgotten again, and rediscovered by Maass \cite{Maass1949}. Here we make use of the more refined formulation found in work of Ahlfors (\cite{Ahlfors1985},\cite{Ahlfors1986}), and built upon by others such as Cao and Waterman \cite{Cao1998}, Waterman \cite{WATERMAN199387}, Lounesto and Latvamaa \cite{LounestoLatvamaaPaper}, and Wada \cite{Wada01091990}.

In this formulation, the vector space $\C$ and its compactification $\widehat{\C}$ are generalised to the paravector spaces $\para$ (with $1$ real component and $n$ complex components) and their compactifications $\cpara$. Two-component objects (the \textit{Lipschitz spinors}) that provide a form of `projective coordinates' for $\cpara$ are defined; these Lipschitz spinors draw their entries from the \textit{Lipschitz groups} $\lip$, which in turn are drawn from the Clifford algebras $\clif$ that govern the isometries of $\para$. 

Generalisations $SL(2,\lip)$ of the special linear groups are also defined, again with entries drawn from the Lipschitz groups $\lip$. These matrices act on Lipschitz spinors by matrix multiplication, or they can equivalently be considered as the components of M\"{o}bius transformations over $\cpara$. This results in generalised two-component spinors that carry representations of the higher spin groups Spin$(1,n+2)$ as matrix subgroups of $\mathcal{M}(2,\clif)$.

A key property of spinors %, and the main source of their mystery,
is the subtle nature of their geometric interpretation; they carry representations not just of the rotation groups $SO(p,q)$ but of their double covers $Spin(p,q)$. In many cases\footnote{In the case of indefinite signature the covering maps are more complicated.} this leads to an action that is locally a rotation, but is globally well defined up to `rotation' by $4\pi$. The standard geometric construction, a way to present the spinor as coordinates of some geometric object, is the null flag due to Penrose and Rindler \cite{SpinorsAndSpacetime}; in this picture, a flag structure is constructed on the light-cone in $\R^{1,3}$. They also offer an interpretation of a spinor as a tangent vector to the sphere at infinity, which is modelled as the space of rays of the light-cone.

A recent paper of Mathews \cite{Mathews_Spinors_horospheres} composes this null flag picture with some work of Penner\footnote{Penner's construction is in $\R^{1,2}$, but it generalises neatly to higher dimensions.} \cite{PennerPunctured}, \cite{PennerBook}. Penner provides a map from points of the light-cone in $\R^{1,3}$ to horospheres in the hyperboloid model of hyperbolic space that sits within the light-cone; combined with Penrose and Rindler's picture, the result is an equivariant and bijective correspondence between complex two-component spinors and \textit{spin decorated} horospheres in 3-dimensional hyperbolic space $\hyp^3$. A \textit{spin decoration} is an extra structure defined in the tangent space of a horosphere, essentially a choice of orthonormal basis for the tangent space which is then lifted to the spin double cover of the orthonormal frame bundle. 

The space of complex spinors has a natural symplectic form, and Mathews shows this form has a geometric interpretation as a generalisation of Penner's \textit{lambda length} between horocycles. A hyperbolic form of the Ptolemy relation between lambda lengths in $\hyp^3$ is also given. In a later paper \cite{mathews2024quaternions}, Mathews and Varsha extend this correspondence to one between spin decorated horospheres in $\hyp^4$ and two-component spinors with quaternionic entries.
\subsection{Key Results}
This work extends the Mathews spinor-horosphere correspondence to the case of arbitrary dimensional hyperbolic space. To be precise, an equivariant correspondence between Lipschitz spinors $(\xi,\eta) \in S\lip$ and decorated horospheres\footnote{Extension to include spin is discussed in Section \ref{sec:Fourth}.} in $\hyp^{n+2}$ is given explicitly in the upper half-space model. Generalisations to $S\lip$ of the symplectic and Hermitian forms on $\C^2$ are also defined.
\begin{theorem*}[Explicit Spinor-Horosphere Correspondence]
The spinor $\kappa=(\xi,\eta) \in S\lip$ maps under $\Phi$ to a horosphere decorated with $n$ parallel oriented line fields (labelled  $i_1,i_2,...,i_n$) and centred at $\xi\eta^{-1}$ in the upper half-space model (taken to be $\infty$ if $\eta=0$).
\begin{itemize}
 \item If $\eta \neq 0$ the horosphere has Euclidean diameter $|\eta|^{-2}$, and the north pole specification for the $i_j$ direction field is $\eta^{'}i_j\ol{\eta}$.
 \item If $\eta = 0$ then the horosphere has Euclidean height $|\xi|^2$ and the $i_j$-direction field is specified by $\xi i_j\xi^*$.
\end{itemize}
\end{theorem*}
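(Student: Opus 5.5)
We prove the statement by composing the three maps that make up $\Phi$ and computing each explicitly. These are: the map from a Lipschitz spinor to a null multiflag in generalised Minkowski space $\R^{1,n+2}$, then the Penner-type map from null vectors to horospheres in the hyperboloid model, then the isometry from the hyperboloid model to the upper half-space model. The strategy exploits equivariance, so that only a few base spinors need to be handled by hand.

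\textbf{Step 1: the base spinor.} Take $\kappa_0=(1,0)$. Here $\xi\xi^*=1$ and $\eta=0$, so the flagpole is the null vector with $T=-Z$ direction normalised to unity. Under Penner's map this gives the horosphere $\{x\cdot p=1\}$. Passing to the upper half-space model, this is the horizontal plane at height $1=|\xi|^2$, centred at $\infty$. The flag directions for $\kappa_0$ are the flag vectors associated with $i_1,\dots,i_n$. Transported through the same isometry, they become parallel line fields on the plane pointing in the directions $i_j=\xi i_j\xi^*$, as claimed.

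\textbf{Step 2: the case $\eta=0$.} A general spinor $(\xi,0)$ is obtained from $\kappa_0$ by the matrix $\begin{pmatrix}\xi&0\\0&(\xi^*)^{-1}\end{pmatrix}\in SL(2,\lip)$. We use the equivariance already established: the action of $SL(2,\lip)$ on spinors corresponds to its action on multiflags via $\kappa\kappa^\dagger$-type Hermitian matrices, and that in turn corresponds to its action by Möbius transformations $x\mapsto \xi x\xi^{*}$ on $\cpara$ and on $\hyp^{n+2}$. This Möbius map is a dilation by $|\xi|^2$ composed with a rotation fixing $\infty$. It sends the height-$1$ horosphere to height $|\xi|^2$, and it sends the direction $i_j$ to $\xi i_j\xi^*$.

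\textbf{Step 3: the case $\eta\neq 0$.} Factor $(\xi,\eta)$ as $A(\ldots)$, where $A=\begin{pmatrix}1&\xi\eta^{-1}\\0&1\end{pmatrix}\begin{pmatrix}0&-1\\1&0\end{pmatrix}$. Then $A^{-1}\kappa$ has first component $0$, or after a further step is of the form $(\eta',0)$ up to the correct involution. The inversion sends the horizontal plane at height $h$ to a horosphere centred at $0$ of diameter $1/h$, and translation then moves the centre to $\xi\eta^{-1}$. The diameter is therefore $|\eta|^{-2}$. For the decoration, we track the tangent direction at the north pole under the inversion $x\mapsto -x^{-1}$ (in Vahlen form). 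The differential of this inversion at the top acts on paravector directions by $v\mapsto \bar{x}^{-1}v\,x^{-1}$-type conjugation. Combining this with Step 2 gives $\eta' i_j\bar\eta$.

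\textbf{Main obstacle.} The hardest step is getting the precise involutions (main involution $'$, reversion $^*$, conjugation $\bar{\ }$) right in this last computation. The noncommutativity in $\clif$ makes the order and choice of involution matter, so I would verify the formula directly. This can be done by differentiating the Vahlen action $x\mapsto (ax+b)(cx+d)^{-1}$ at the horosphere's north pole, using the identity $(cx+d)^{*}$ relating $cx+d$ to $\eta$ at the relevant points. The consistency check at $n=1$ against Mathews' formula $i\eta^{-2}$-direction serves as a sanity test.
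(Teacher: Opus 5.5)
Your strategy is the same as the paper's. You start from $\Phi(1,0)$ and push height, centre and decoration through a diagonal matrix, the inversion $J=\begin{pmatrix}0&-1\\1&0\end{pmatrix}$ and a translation. For the base case, cite Lemma \ref{TransitiveExampleLemma} and Proposition \ref{BasicHorosphereLemma} rather than asserting it: the base-case directions $i_j$ need that computation. Your factorisation $\kappa=A\,\mathrm{diag}(\eta,\eta^{*-1})(1,0)^T$ even uses the same matrix as the paper's $\begin{pmatrix}1&\xi\eta^{-1}\\0&1\end{pmatrix}\mathrm{diag}(\eta^{*-1},\eta)\,J$, because $J\,\mathrm{diag}(\eta,\eta^{*-1})=\mathrm{diag}(\eta^{*-1},\eta)\,J=\begin{pmatrix}0&-\eta^{*-1}\\ \eta&0\end{pmatrix}$. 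Step 2 and the centre/diameter part of Step 3 are fine. In Step 1, $\phi_1(1,0)=(1,1;0)_{\R}$ has $T=Z$, not $T=-Z$; the latter would put the centre at $0$.

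The gap is the decoration for $\eta\neq0$, the one non-routine claim. You assert it rather than derive it, and both ingredients you state would give a wrong answer.

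First, $A^{-1}\kappa$ is exactly $(\eta,0)$: translating gives $(0,\eta)$, and $J^{-1}(0,\eta)^T=(\eta,0)^T$, with no involution. This matters, because starting from $(\eta',0)$ the correct computation ends at $\eta i_j\eta^*$. That differs from $\eta' i_j\ol{\eta}$ already for $n=1$ ($i\eta^2$ versus $i\bar\eta^2$).

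Second, the differential of $X\mapsto -X^{-1}$ is $v\mapsto X^{-1}vX^{-1}$, not $\ol{X}^{-1}vX^{-1}$. At $X=hi_{n+1}$ one has $\ol{X}=-X$, so the two differ by a sign, and that sign is exactly the orientation of the line field.

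Done correctly, Step 2 gives the field $\eta i_j\eta^*$ on the plane of height $h=|\eta|^2$. Since $i_{n+1}v=v'i_{n+1}$ and $v'=\ol{v}$ for $v\in\para$, we get $X^{-1}vX^{-1}=h^{-2}i_{n+1}vi_{n+1}=-h^{-2}\ol{v}$. So the north pole specification is $-\ol{\eta i_j\eta^*}=-\eta'\,\ol{i_j}\,\ol{\eta}=\eta' i_j\ol{\eta}$, and the translation has identity differential. This is the rule $\mathbf{D}\mapsto-\ol{\mathbf{D}}$ of Lemma \ref{DecorationsTransformLemma}, which the paper proves by tracking endpoints of tangent geodesics rather than differentials. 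The paper applies it before the diagonal matrix and obtains $\eta^{*-1}i_j\eta^{-1}=|\eta|^{-4}\eta' i_j\ol{\eta}$.
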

\noindent In Section \ref{sec:Fourth}, we lift our objects to their spin covers and recover the full correspondence. As we use the generalised Minkowski space to mediate between the space of Lipschitzz spinors and hyperbolic space, we also have a certain null flag structure $\widetilde{\mathcal{MF}}^n$ appearing in the correspondence.
\begin{theorem*}[Generalised Spinor-Horosphere Correspondence]
 There is an explicit, smooth, bijective,\\\noindent $SL(2,\lip)$ equivariant correspondence between the following:
 \begin{enumerate}[label=\roman*)]
 \item Lipschitz spinors $S\lip$,
 \item Spin multiflags $\widetilde{\mathcal{MF}}^n$,
 \item The space $S$Hor$^{n+2}$ of spin decorated horospheres in $\hyp^{n+2}$.
\end{enumerate}
\end{theorem*}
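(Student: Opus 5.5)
We build the correspondence as a composite of three maps and prove each is a smooth, equivariant bijection; bijectivity and equivariance of the whole then follow by composition. First, from a Lipschitz spinor $\kappa=(\xi,\eta)\in S\lip$ we form the Hermitian-type product $\kappa\kappa^\dagger$, using the paper's Hermitian form on $S\lip$. This gives an element of generalised Minkowski space $\R^{1,n+2}$, identified with $2\times 2$ "Hermitian" Clifford matrices. Its entries are $|\xi|^2$, $|\eta|^2$ and $\xi\ol{\eta}\in\para$, so it lies on the future light cone. We then check that for $A\in SL(2,\lip)$ we have $(A\kappa)(A\kappa)^\dagger=A(\kappa\kappa^\dagger)A^\dagger$. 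By the Ahlfors--Vahlen theory this action is the Spin$(1,n+2)\to SO^+(1,n+2)$ action on Minkowski space, so the map is equivariant. The fibre over a null vector is $\{(\xi u,\eta u): u\in\lip,\ |u|=1\}$, a copy of Spin$(n+1)$ (up to sign conventions). This is the information the flag and decoration must record.

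Second, we enhance the null vector to a multiflag. We use the derivative of $\kappa\kappa^\dagger$ in the $n$ directions $\kappa\mapsto\kappa i_j$ (that is, $\kappa i_j\kappa^\dagger$-type terms, or equivalently the symplectic-form flag directions). These give $n$ null flags $2$-planes through the null ray, and their oriented relative directions form an orthonormal $n$-frame transverse to the ray. Acting by unit $u$ rotates this frame through $u\mapsto u i_j u^{-1}$-type conjugation, realising $SO(n+1)$ acting freely and transitively on compatible frames, together with the scaling. This gives a bijection between $S\lip/\{\pm1\}$ and multiflags $\mathcal{MF}^n$. We then lift to spin multiflags $\widetilde{\mathcal{MF}}^n$ by taking the spin cover of the frame bundle, so that $\pm\kappa$ are separated; the lift is defined by path-lifting from a basepoint, using connectedness of $S\lip$. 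Equivariance is inherited because both the action and the flag construction are built from $\kappa\mapsto A\kappa$.

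Third, we apply Penner's map generalised to $\R^{1,n+2}$. A future null vector $p$ corresponds to the horosphere $\{x\in\hyp^{n+2}:\langle x,p\rangle=1\}$. The flag directions at $p$ project to parallel tangent line fields on that horosphere, and the spin-frame lift becomes the spin decoration. This map is visibly $SO^+(1,n+2)$-equivariant and bijective on decorated horospheres. It lifts to spin level because $\widetilde{\mathcal{MF}}^n$ and $S$Hor$^{n+2}$ are both principal Spin-type bundles over the same base with compatible actions. We then compare with the explicit upper half-space formulas of the previous theorem, which pin down that the composite agrees with $\Phi$ (centre $\xi\eta^{-1}$, diameter $|\eta|^{-2}$).

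The main obstacle is the injectivity and surjectivity at the level of the Spin$(n+1)$ fibre. This means showing that the decoration map from unit Lipschitz elements to orthonormal $n$-frames in the horosphere's tangent space is exactly the double cover, with kernel $\pm1$ and no further degeneracy. The kernel claim needs the fact that the unit elements of $\lip$ form Spin$(n+1)$ and act on $\R^{n+1}$ by twisted conjugation with kernel $\pm1$; the "no further degeneracy" part is the harder check. We would first verify this at the standard basepoint $\kappa=(1,0)$ or $(0,1)$, using the stabiliser computation of the explicit theorem, and then transport it everywhere by equivariance and transitivity of $SL(2,\lip)$ on $S\lip$. Smoothness follows since every map is given by polynomial Clifford formulas or by local sections of covering maps.
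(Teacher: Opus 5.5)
There is a genuine gap in your second step: the flag directions you propose are degenerate. The tangent vectors $\kappa i_j$ are precisely the directions that $\phi_1$ forgets. They are the velocities of the fibre curves $t\mapsto \kappa e^{t i_j}$, and $e^{ti_j}$ is a unit element of $\lip$, so
\[(D_\kappa\phi_1)(\kappa i_j)=\kappa\,\overline{i_j}\,\kappa^\dagger+\kappa\, i_j\,\kappa^\dagger=\kappa\,(i_j+\overline{i_j})\,\kappa^\dagger=0 .\]
The matrix $\kappa i_j\kappa^\dagger$ itself is anti-Hermitian, so it is not even a point of $\mathcal{P}^{1,n+2}$. You can see the failure directly at $\kappa=(1,0)$: the vector $(i_j,0)$ maps to $0$, whereas the paper's direction $(0,-i_j)$ maps to $2\partial_{X_j}$. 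Nor are your directions ``equivalently the symplectic-form directions''. Since $\{\kappa,\kappa a\}=0$ for every $a\in\clif$, the vector $\kappa i_j$ lies in the kernel of the bracket, rather than satisfying $\{\kappa,\nu_j\}\in\mathbb{R}^-i_j$.

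What is missing is the complement $\check\kappa=(\eta',-\xi')$, the analogue of the complex structure $J$. It gives the orthogonal splitting $T_\kappa S\lip=\kappa\mathcal{B}^n\oplus\check\kappa(\para)$, on which $D_\kappa\phi_1$ behaves as follows: it kills $\kappa(\R^{0,n}\oplus\bigwedge^2\R^{0,n})$, sends $\kappa\R$ onto $p\R$, and maps $\check\kappa(\para)$ conformally onto $T_p\mathscr{S}^+$. The flags must therefore be $[[\phi_1(\kappa),(D_\kappa\phi_1)(\check\kappa i_j)]]$. This conformality is exactly what makes them nondegenerate and mutually orthogonal, i.e.\ a genuine multiflag on which $\mathrm{Spin}(n+1)$ acts through a double cover.

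Correcting the directions also invalidates your equivariance argument. In general $A\check\kappa\neq\widecheck{A\kappa}$, so the construction is not simply ``built from $\kappa\mapsto A\kappa$''. The paper closes this with a separate lemma: the $i_j$ flag at $\kappa$ depends only on choosing some $\nu_j\in T_\kappa S\lip$ with $\{\kappa,\nu_j\}$ a negative multiple of $i_j$. Any such $\nu_j$ is $b_j\check\kappa i_j$ with $b_j>0$ plus an element of $\kappa\mathcal{B}^n$, and the latter only adds a multiple of $p$ to the flag vector. Since $SL(2,\lip)$ preserves the bracket, $A(\check\kappa i_j)$ and $\widecheck{A\kappa}\,i_j$ then give the same flag.

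With these two ingredients supplied, the rest of your outline agrees with the paper's proof. That includes the fibre $\kappa\,\mathrm{Spin}(n+1)$, the double-cover check at $(1,0)$ spread by transitivity, Penner's horosphere map, and lifting the double covers $\Phi_1$ and $\Phi=\Phi_2\circ\Phi_1$ to bijections.
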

\noindent The real and complex lambda lengths of Penner \cite{PennerPunctured} and Mathews \cite{Mathews_Spinors_horospheres} are extended to a generalised Clifford-algebra-valued lambda length. These generalised lambda lengths relate to a generalisation of the symplectic form on complex spinors (we name this generalised form \textit{the bracket}).
\begin{theorem*}[Algebra of Lambda Lengths]
 The lambda length between two spin decorated horospheres with spinor coordinates $\kappa_1 = (\xi_1,\eta_1)$ and $\kappa_2 = (\xi_2,\eta_2)$ is given by taking their bracket. Properly,
 \[\lambda_{12} = \{\kappa_1,\kappa_2\} = \xi_1^*\eta_2 - \eta_1^*\xi_2.\]
\end{theorem*}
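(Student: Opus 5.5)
The plan is to exploit $SL(2,\lip)$-equivariance of both sides, which reduces the claim to a single normalised configuration. First, the bracket $\{\kappa_1,\kappa_2\} = \xi_1^*\eta_2 - \eta_1^*\xi_2$ is invariant under $SL(2,\lip)$. For $A=\begin{pmatrix} a & b\\ c& d\end{pmatrix}$, Ahlfors' conditions for Vahlen matrices give $a^*c,\ b^*d$ paravector-valued with $a^* d - c^* b = 1$. Expanding $\{A\kappa_1,A\kappa_2\}$ then produces four groups of terms:
\begin{itemize}
\item terms with $\xi_1^*(a^*c - c^*a)\xi_2$ and $\eta_1^*(b^*d-d^*b)\eta_2$, which vanish because $a^*c$ and $b^*d$ are paravectors (hence fixed by the relevant conjugation);
\item cross terms that collapse via $a^*d-c^*b=1$ and its conjugate $d^*a - b^*c = 1$, giving back $\xi_1^*\eta_2-\eta_1^*\xi_2$.
\end{itemize}
On the geometric side, the lambda length is defined from the signed distance between horospheres together with the parallel transport of the spin decoration along the common geodesic. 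It is therefore built from isometry-invariant data. Combined with the equivariance of $\Phi$ from the Generalised Spinor-Horosphere Correspondence, this gives $\lambda(A\kappa_1,A\kappa_2)=\lambda(\kappa_1,\kappa_2)$, so both sides are invariant.

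Next comes the normalisation. The group $SL(2,\lip)$ acts transitively on pairs of distinct points of $\cpara$, so we may move $\kappa_1$ to $(\xi_1,0)$, a horosphere centred at $\infty$, and $\kappa_2$ to $(0,\eta_2)$, a horosphere centred at $0$. We then reduce further using the stabiliser of $\{0,\infty\}$, namely diagonal matrices $\mathrm{diag}(a,a'^{-1})$ (with the appropriate involution). With these we may take $\xi_1=1$, leaving a single general $\eta_2\in\lip$. Here the bracket is simply $\{\kappa_1,\kappa_2\}=\eta_2$.

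For the geometric side I would use the Explicit Spinor-Horosphere Correspondence:
\begin{itemize}
\item $\kappa_1$ gives the horizontal horosphere at height $1$, with $i_j$-fields equal to $i_j$;
\item $\kappa_2$ gives a horosphere at $0$ of diameter $|\eta_2|^{-2}$, whose north-pole decoration is $\eta_2' i_j \overline{\eta_2}$.
\end{itemize}
The common geodesic is the vertical axis, and the signed distance between the horospheres is $\log |\eta_2|^{2}$. This gives modulus $e^{d/2}=|\eta_2|$, matching $|\{\kappa_1,\kappa_2\}|$. The rotational part of the lambda length is then read off by comparing frames. Parallel transport along the vertical axis is trivial in the upper half-space model, and the convention of flipping the frame at the far horosphere introduces a fixed reflection. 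The frame $\{\eta_2' i_j\overline{\eta_2}\}$ is the image of the standard frame under the orthogonal map $x\mapsto \eta_2' x\overline{\eta_2}/|\eta_2|^2$. By definition of the spin lift, this map is represented by the unit Lipschitz element $\eta_2/|\eta_2|$. Hence the lambda length equals $|\eta_2|\cdot \eta_2/|\eta_2| = \eta_2$.

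I expect the main obstacle to be this last identification, namely matching the rotation conventions precisely. One must check that the twisted adjoint action $x\mapsto \eta' x \overline{\eta}$ used in the decoration corresponds to $\eta$ itself, and not to $\eta^*$ or $\overline{\eta}$, under the definition of the lambda length, and that the sign from the spin lift agrees with the sign of the bracket. To settle this I would first test the case $n=1$ against Mathews' complex formula $\xi_1\eta_2-\eta_1\xi_2$, where $^*$ is trivial. I would then track a single generator $i_j$ through the paravector conjugations to fix the involution, resolving the $\pm$ ambiguity using the spin cover conventions of Section~\ref{sec:Fourth}.
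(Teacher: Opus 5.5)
Your reduction is the same as the paper's. The bracket computation is Lemma~\ref{SesquimorphismLemma}, and moving to $\kappa_1=(1,0)$, $\kappa_2=(0,\eta)$ is what the paper does with $A$ and the translation $B$. Two small fixes: the stabiliser should be $\mathrm{diag}(a,a^{*-1})$, and the equal-centre case, where both sides vanish, needs a line.

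\textbf{The gap: the spin sign.} It sits in the last step, which carries all the content. Comparing the decoration $\eta' i_j\ol{\eta}$ from Theorem~\ref{HorosphereCoordinateTheorem} with the standard one only sees the images of the spin decorations in $\text{Fr}^{n+2}$, so it determines $\lambda$ at best up to sign. ``By definition of the spin lift'' does not fix that sign. The spin decoration on $\tilde{\Phi}(0,\eta)$ is determined only by continuity from an arbitrarily chosen base point. The sign of $\lambda$ also depends on the $+\pi$ versus $-\pi$ choice in Definition~\ref{SpinFrameDefinition}, and tracking a single generator $i_j$ is blind to both. You need two further ingredients.
\begin{enumerate}
\item[(a)] The base case $\lambda\bigl(\tilde{\Phi}(1,0),\tilde{\Phi}(0,1)\bigr)=+1$. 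The paper proves this in Lemma~\ref{FirstLambdaLengthLemma} using the elliptic path $M_\theta$: its rotation by $\pi$ about $\Pi_\gamma$ is cancelled by the $-\pi$ of the in/out association.
\item[(b)] A propagation argument. Pick a path $\eta_t$ in $\lip$ (connected for $n\geq 1$) from $1$ to $\eta$. Then $t\mapsto\lambda\bigl(\tilde{\Phi}(1,0),\tilde{\Phi}(0,\eta_t)\bigr)$ is continuous with values in $\{\pm\eta_t\}$, so the sign is constant.
\end{enumerate}
This is exactly what the paper's explicit paths $M_t$ in Lemmas~\ref{LambdaLengthParavectorCaseLemma} and \ref{HalfwayLambdaLengthComputationLemma} accomplish. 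A comparison with Mathews at $n=1$ can supply (a) only after you check that the conventions agree. For general $n$ you must also restrict to the totally geodesic $\hyp^3$ spanned by $1,i_j,i_{n+1}$. It does not replace (b).

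\textbf{A convention issue at the $SO(n+1)$ level.} Definition~\ref{defn:LambdaLength} uses $\sigma(\lambda)(x)=\lambda x\lambda^*$ in the $W_1^{in}$ identification. The map $x\mapsto\eta'x\ol{\eta}$ is $\sigma(\eta')$. It equals $|\eta|^2\widetilde{Ad}_\eta$, but the twisted adjoint is not the convention in which $\lambda$ is defined. If you identify the horizontal tangent space with $\para$ directly through boundary coordinates, the frame comparison gives $\sigma(\lambda)=\sigma(\eta')$, i.e.\ $\lambda=\pm\eta'$. You get $\pm\eta$ only because $1^{in}$ and $1^{out}$ point oppositely, as the paper uses in Lemma~\ref{LambdaLengthParavectorCaseLemma}. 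Hence the $W_1^{in}$ identification differs from the boundary one by $r(x)=-\ol{x}$, and $r\,\sigma(\eta')\,r=\sigma(\eta)$. So your ``fixed reflection'' is precisely what decides between $\eta$ and $\eta'$. The alternative to rule out is $\eta'$, not $\eta^*$ or $\ol{\eta}$.

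With (a), (b) and this bookkeeping your route works. Reading the rotation off Theorem~\ref{HorosphereCoordinateTheorem} is then a legitimate shortcut compared with the paper's factorisation of $\eta$ into paravectors.
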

\noindent These lambda lengths also satisfy the noncommutative Ptolemy relation of \cite{mathews2024quaternions}, alongside other interesting properties.
\begin{theorem*}[Noncommutative Ptolemy Relations]
 Given four spin decorated horospheres $(H_j,W_j)$, $j \in \{1,2,3,4\}$ in $\hyp^{n+2}$ with lambda lengths $\lambda_{jk}$ between $(H_j,W_j)$, $(H_k,W_k)$, their lambda lengths satisfy
\[\lambda_{31}^{-1}\lambda_{23}^*\lambda_{42}^{-1}\lambda_{14}^*+\lambda_{31}^{-1}\lambda_{43}^*\lambda_{24}^{-1}\lambda_{12}^*=1.\]
\end{theorem*}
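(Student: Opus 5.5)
Since the Algebra of Lambda Lengths theorem gives $\lambda_{jk}=\{\kappa_j,\kappa_k\}=\xi_j^*\eta_k-\eta_j^*\xi_k$, I will treat the Ptolemy relation as a purely algebraic identity in the Lipschitz spinors $\kappa_j=(\xi_j,\eta_j)\in S\lip$, $j=1,\dots,4$. The hyperbolic geometry then plays no further role. I will use two basic facts about the bracket. First, $*$ is an anti-involution on $\clif$, which gives $\lambda_{kj}=-\lambda_{jk}^*$. Second, the bracket is $SL(2,\lip)$-invariant: $\{A\kappa_1,A\kappa_2\}=\{\kappa_1,\kappa_2\}$. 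This invariance should follow from the defining relations of $SL(2,\lip)$ (Ahlfors' conditions $ab^*,cd^*\in\para$, $ad^*-bc^*=1$, or their transposed versions). Equivalently, it is the equivariance in the Generalised Spinor-Horosphere Correspondence combined with the fact that lambda lengths are defined geometrically.

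\textbf{Step 1: normalise by invariance.} I use $SL(2,\lip)$-invariance to put two of the spinors into standard form: $\kappa_1=(1,0)$ and $\kappa_3=(0,1)$. Transitivity on pairs with invertible bracket comes from the correspondence, since any two horospheres with distinct centres can be moved to $\infty$ and $0$. After that, a diagonal element of $SL(2,\lip)$ fixes the decorations as far as the formula requires. Any residual freedom $\kappa_1\mapsto(\alpha,0)$, $\kappa_3\mapsto(0,\alpha'^{-1})$-type scaling can be kept general, or absorbed at the end by noting that both sides transform in the same way. For a cleaner argument I will simply keep $\kappa_1=(\xi_1,0)$ and $\kappa_3=(0,\eta_3)$ general. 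Then
\[
\lambda_{31}=-\eta_3^*\xi_1,\qquad
\lambda_{12}=\xi_1^*\eta_2,\qquad
\lambda_{14}=\xi_1^*\eta_4,
\]
\[
\lambda_{23}=\xi_2^*\eta_3,\qquad
\lambda_{43}=\xi_4^*\eta_3,\qquad
\lambda_{42}=\xi_4^*\eta_2-\eta_4^*\xi_2,\qquad
\lambda_{24}=-\lambda_{42}^*.
\]

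\textbf{Step 2: substitute and simplify.} The first term of the relation becomes
\[
-\xi_1^{-1}\eta_3^{*-1}\,\eta_3^*\xi_2\,\lambda_{42}^{-1}\,\eta_4^*\xi_1
=-\xi_1^{-1}\xi_2\lambda_{42}^{-1}\eta_4^*\xi_1 .
\]
The second term becomes
\[
-\xi_1^{-1}\eta_3^{*-1}\eta_3^*\xi_4\,(-\lambda_{42}^*)^{-1}\,\eta_2^*\xi_1
=\xi_1^{-1}\xi_4(\lambda_{42}^*)^{-1}\eta_2^*\xi_1 .
\]
Conjugating the whole relation by $\xi_1$ then reduces it to
\[
\xi_4(\lambda_{42}^*)^{-1}\eta_2^*-\xi_2\lambda_{42}^{-1}\eta_4^*=1 .
\]

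\textbf{Step 3: the main obstacle.} The reduced identity is the crux, because $\lambda_{42}$ and $\lambda_{42}^*$ need not commute with the $\xi$'s and $\eta$'s. My approach is to use the Lipschitz spinor condition $\xi\eta^*\in\para$ (equivalently $\eta^{-1}\xi$ or $\xi\eta^{-1}$ is a paravector). This condition lets me write $\xi_j=z_j\eta_j$ with $z_j=\xi_j\eta_j^{-1}\in\para$, so that $z_j^*=\ol{z_j}$-type identities hold and $\eta_j^*\xi_j=\eta_j^*z_j\eta_j$. Then
\[
\lambda_{42}=\eta_4^*(\bar z_4-z_2)\eta_2
\]
holds up to the appropriate involution, since paravectors satisfy $z^*=\bar z$ or $z^*=z$ depending on the convention. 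Substituting, and using that the inverse of a paravector difference is $\ol{w}/|w|^2$, the left side becomes
\[
z_4\,w^{-1}-z_2\,w^{-1}
\]
with $w=z_4-z_2$. This equals $1$ because real scalars are central. The delicate part is getting the involutions on the paravector $w$ exactly right: whether $w$ or $\bar w$ appears, and whether the ordering leaves $\eta_2\eta_2^{-1}$ and $\eta_4^*\eta_4^{*-1}$ adjacent so they cancel. I will check this carefully against the paper's conventions for $*$, $'$ and the bar on $\para$. The same computation also needs invertibility of every $\lambda_{jk}$ that appears, which I will assume by taking the four horosphere centres to be distinct.
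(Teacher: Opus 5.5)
Your proof is correct and takes a different route from the paper. One convention needs fixing first. In this paper paravectors are fixed by reversal, $z^*=z$ (not $\ol{z}$). So with $z_j=\xi_j\eta_j^{-1}$ you get exactly $\lambda_{42}=\eta_4^*(z_4-z_2)\eta_2$ and $\lambda_{42}^*=\eta_2^*(z_4-z_2)\eta_4$.

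Writing $w=z_4-z_2$, this gives $(\lambda_{42}^*)^{-1}=\eta_4^{-1}w^{-1}\eta_2^{*-1}$ and $\lambda_{42}^{-1}=\eta_2^{-1}w^{-1}\eta_4^{*-1}$. The $\eta$'s then cancel adjacently, and your reduced identity is literally $(z_4-z_2)w^{-1}=1$; no centrality argument is needed. With $\ol{z_4}$ in place of $z_4$ the step would genuinely fail, so commit to $z^*=z$.

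Two further remarks on your setup:
\begin{itemize}
\item Your normalisation is justified by Lemma \ref{SesquimorphismLemma} together with the explicit construction in the proof of Theorem \ref{thm:LambdaLengthBracket}.
\item You need only $\lambda_{31}$ and $\lambda_{42}$ invertible, not four distinct centres. After normalising, the cases $\eta_2=0$ or $\eta_4=0$ check directly: one term vanishes and the other equals $1$.
\end{itemize}

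The paper does no direct computation. It identifies $\Delta_{k\ell}^{-1}\Delta_{kj}$ with Gelfand--Retakh left quasi-Pl\"{u}cker coordinates (Lemma \ref{lem:QuasiPluckerPdet}). It then imports the noncommutative $(2,4)$ Pl\"{u}cker relation from quasideterminant theory and converts using $\lambda_{\ell j}=-\lambda_{j\ell}^*$.

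Your route replaces that external theorem with bracket invariance plus the factorisation $\lambda_{jk}=\eta_j^*(z_j-z_k)\eta_k$, which comes from the spinor condition. What this buys:
\begin{itemize}
\item Your argument is self-contained, and it shows exactly which structure is used and which invertibility hypotheses are needed.
\item The paper's argument places the relation in a general framework, which also gives the skew-symmetry identity and further relations at no extra cost.
\end{itemize}

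You can also drop the normalisation entirely. When all $\eta_j\neq0$, apply the factorisation to every pair; the $\eta$'s telescope, and the relation becomes $(z_2-z_3)(z_4-z_2)^{-1}(z_1-z_4)+(z_4-z_3)(z_2-z_4)^{-1}(z_1-z_2)=z_3-z_1$. This holds in any ring with $z_4-z_2$ invertible, once you write $z_4-z_3=(z_4-z_2)+(z_2-z_3)$.
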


\subsection{Acknowledgements}
This paper was adapted from the author's PhD thesis, which was supported by Australian Research Council grant DP210103136. The author also wishes to thank his supervisor Daniel Mathews, and Varsha, the two of whose work forms the basis of this paper. 

\section{Clifford Algebras, Lipschitz Spinors, and Special Linear Groups}
\label{sec:First}
In \cite{Ahlfors1985}, Ahlfors reintroduces a method due to Vahlen \cite{Vahlen1902}, Maass \cite{Maass1949} and others to construct the isometry groups of $\hyp^n$ as $2\times 2$ matrix groups with entries drawn from the Lipschitz group\footnote{Also referred to as the Clifford group; we'll exclusively use the term Lipschitz group \cite{Lipschitz1886}.} of a particular Clifford algebra (introduced in \cite{CliffordAlgebras}). These matrices act on the boundary of the upper half-space model as the components of M\"{o}bius transformations which extend to isometries of the interior in the standard way, by considering their action on the endpoints of geodesics. The actions of these groups are precisely what we need to generalise the spinor-horosphere correspondence of \cite{Mathews_Spinors_horospheres} and \cite{mathews2024quaternions}.
\subsection{Clifford Algebras}
\textit{The conventions and definitions for Clifford algebras in this work are drawn from Lawson and Michelson's `Spin Geometry' \cite{LawsonMichelsohn}, and Lounesto's `Clifford Algebras and Spinors' \cite{Lounesto_2001}. Some other helpful resources are \cite{AtiyahBottShapiroCMod} and \cite{Riesz1993}}.%Note the more abstract definition of a Clifford algebra has been cut
\\
\noindent There are several ways to define a Clifford algebra, but for our purposes it is best defined explicitly in terms of a concrete basis. \label{not:Rpq}Consider the vector spaces $\R^{p,q}$ with the associated quadratic form $Q_{p,q}=x_1^2+x_2^2+...+x_p^2-x_{p+1}^2-...-x_{p+q}^2$.
\begin{definition}[Clifford Algebra]
\label{defn:CliffordAlgebra}
Take the real vector space $\R^{p,q}$ with basis $i_1,i_2,...,i_{p+q}$ and norm $N_{p,q}$ (with the standard polarisation $N_{p,q}(\cdot,\cdot)$) such that $i_1,...,i_p$ are of norm $1$ and $i_{p+1},...,i_{p+q}$ are of norm $-1$. \label{not:Clifpq}The \textit{Clifford algebra} $\clifn{p,q}$ over $\R$ is the $\R$-algebra generated by monomials in the $i_j$ subject to the relations
\begin{align*}
i_j^2 &= 1,\ 1 \leq j \leq p,\\
i_j^2 &= -1,\ p+1 \leq j \leq p+q,\\
i_ji_k &=-i_ki_j,\ k \neq j. \tag*{\defn}
\end{align*}
\end{definition}
\noindent There is a standard basis of $\clifn{p,q}$ consisting of all $2^{p+q}$ possible products $i_{j_1}i_{j_2}...i_{j_k}$ for $k \leq p+q$ and $j_1 < j_2 ... < j_k$. \label{not:multiindex}As we might have products of many $i_j$, in general we write $i_{I}$ for a multi-index $I= \{j_1,j_2,...,j_k\}$ to indicate the product $i_{j_1}i_{j_2}...i_{j_k}$. Elements $x\in \clifn{p,q}$ can then be written explicitly in terms of the standard basis as $x = \sum_{I \subseteq \{1,2,...,n\}} x_Ii_I$, where $x_I \in \R$. Elements of $\clifn{p,q}$ will generally be notated with a lowercase Latin character.

An element $x \in \clifn{p,q}$ is \textit{reduced} if it is written in terms of the standard basis; that is, if its monomials have indices in increasing order and it does not contain monomials with repeated indices, for example $i_1i_2i_1$. When we refer to monomials in $\clifn{p,q}$ from now on, we will assume they are reduced. 

The subspaces of $\clifn{p,q}$ generated by reduced monomials $i_I$ with $|I|=d$ are the subspaces of degree $d$; the elements of the subspace of degree $d$ are then said to have degree $d$. As all monomials live in exactly one such subspace, all monomials have a well-defined degree. The degree is not additive on multiplication of elements (for example, $i_j\cdot i_j$ is of degree 0, not degree 2), but it is subadditive, and additive when taken in $\Z_2$; this gives Clifford algebras a $\Z_2$ grading structure.\footnote{see \cite{LawsonMichelsohn}, Chapter 1 for a full introduction to Clifford algebras and their grading structure.}

There are several important involutions on the Clifford algebra, generalising the complex conjugate structure of $\C$.
\begin{definition}[The Involutions] We have the following involutions on the Clifford algebra $\clifn{p,q}$:
\begin{enumerate}[label=\roman*)]
\item \label{not:GradeInv}The {\normalfont grade involution} $\cdot': \clifn{p,q} \to \clifn{p,q}$ is the algebra automorphism that extends the map $V' = -V$ on $\R^{p,q}$ to the entire Clifford algebra. Given an explicit element $x = \sum_I x_Ii_I \in \clifn{p,q}$ for some multi-indexes $I$, $x'$ is obtained from $x$ by sending every $i_j$ to $-i_j$.
\item \label{not:RevInv}Given an element $x = \sum_I x_Ii_I \in \clifn{p,q}$, the {\normalfont reversal (or reverse) involution} $\cdot^*: \clifn{p,q} \to \clifn{p,q}$ is the algebra map that reverses each monomial of $x$ (say, $i_1i_2i_3 \to i_3i_2i_1$). This is an antiautomorphism; that is, $(xy)^*=y^*x^*$. 
\item \label{not:CliffConj}Doing both (in either order) gives the third morphism, the {\normalfont Clifford conjugation} $\ol{\cdot}= \cdot^{\prime*} = \cdot^{*\prime}$, also an antiautomorphism. \null\hfill\defn
\end{enumerate}
\end{definition}
\noindent The Clifford algebra $\clifn{0,1}$ is the complex numbers $\C$, and there the reversal involution becomes trivial and $\cdot'=\ol{\cdot}$ is just complex conjugation. Clifford conjugation is the natural extension of the complex conjugate, and it plays the same role in defining the norm and Hermitian product (see Definitions \ref{defn:CliffordNorm}, \ref{defn:ParavectorInnerProduct}).

In $\C$, the reals form an invariant subspace under complex conjugation. The three involutions $\cdot',\cdot^*,\ol{\cdot}$ also have invariant subspaces that help determine the structure of the algebra.
\begin{lemma}
\label{MonomialInvolutionInvarianceLemma}
 Monomials of degree $0,2 \mod 4$ are invariant under $\cdot'$, while monomials of degree $0,1 \mod 4$ are invariant under $\cdot^*$. Monomials of degree $0,3 \mod 4$ are invariant under $\ol{\cdot}$. Monomials that are not invariant under an involution are sent to their negative by the involution instead.
\end{lemma}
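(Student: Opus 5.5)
The plan is to compute the action of each involution directly on a reduced monomial $i_I = i_{j_1}i_{j_2}\cdots i_{j_k}$ of degree $k$, with $j_1<\dots<j_k$. Since each involution is linear, and a monomial can only be sent to a scalar multiple of itself, it suffices to show that each involution sends $i_I$ to $\pm i_I$. We then read off the sign as a function of $k$. This simultaneously gives the final sentence of the lemma: the non-invariant monomials are exactly those picking up the sign $-1$.

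For the grade involution the computation is immediate. Because $\cdot'$ is an algebra automorphism with $i_j' = -i_j$, we have
\[
i_I' = (-i_{j_1})\cdots(-i_{j_k}) = (-1)^k i_I .
\]
So $i_I$ is invariant exactly when $k$ is even, that is, when $k \equiv 0,2 \bmod 4$.

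For the reversal the key step is to re-reduce the reversed product. We have $i_I^* = i_{j_k}\cdots i_{j_1}$, and the indices are distinct, so only the anticommutation relation $i_ai_b=-i_bi_a$ is needed; the squaring relations never enter. Restoring increasing order is the permutation reversing $k$ letters, which has $\binom{k}{2} = k(k-1)/2$ inversions. Each transposition of adjacent distinct generators contributes a factor $-1$, giving
\[
i_I^* = (-1)^{k(k-1)/2} i_I .
\]
Checking $k=0,1,2,3 \bmod 4$ gives the signs $+,+,-,-$, since $k(k-1)/2$ modulo $2$ depends only on $k$ modulo $4$. So $i_I$ is invariant exactly when $k\equiv 0,1 \bmod 4$.

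Finally, Clifford conjugation is the composite of the two, so
\[
\overline{i_I} = (-1)^{k}(-1)^{k(k-1)/2} i_I = (-1)^{k(k+1)/2} i_I .
\]
This gives the signs $+,-,-,+$ for $k\equiv 0,1,2,3 \bmod 4$, so $i_I$ is invariant exactly when $k \equiv 0,3 \bmod 4$.

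The only step needing any care is the reversal sign, namely justifying the count $\binom{k}{2}$ of adjacent swaps. I would handle it either by the inversion count of the reversing permutation, or by induction on $k$. For the induction, write $(i_{I}i_{j_{k+1}})^* = i_{j_{k+1}} i_I^*$ and move $i_{j_{k+1}}$ past the $k$ other distinct generators. This adds $k$ sign changes, and $\binom{k}{2}+k=\binom{k+1}{2}$.
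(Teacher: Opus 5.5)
Your proposal is correct and follows the paper's route. Like the paper, you compute $(-1)^k$ for the grade involution, count $k(k-1)/2$ adjacent transpositions to get $(-1)^{k(k-1)/2}$ for reversal, and combine the two to get $(-1)^{k(k+1)/2}$ for Clifford conjugation; your inversion-count and inductive justifications of the $\binom{k}{2}$ swaps just make explicit a step the paper only asserts.
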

\noindent\textit{Proof.} The involution $\cdot'$ sends $i_j \to -i_j$, and so a monomial $m = i_{j_1}i_{j_2}...i_{j_k}$ of degree $k$ is sent to $(-1)^ki_{j_1}i_{j_2}...i_{j_k}=(-1)^km$. If $k$ is even, the monomial is invariant. 

The reversal involution $\cdot^*$ for $m$ can also be achieved through $\frac{k(k-1)}{2}$ transpositions of neighbouring $i_j$. Each transposition introduces a minus sign, so $m^*=(-1)^{\frac{k(k-1)}{2}}m$. Invariance requires $2|k(k-1)/2$, and since $k,k-1$ are coprime either $4|k$ and $k=0 \mod 4$ or $4|(k-1)$ and $k=1 \mod 4$. 

Combining the above two involutions, for a monomial $m$ of degree $k$ we find $\ol{m}=(-1)^{\frac{k(k+1)}{2}}m$, which by a similar argument implies invariance when $k=0\mod 4$ or $k=3 \mod 4$. It is clear that these involutions negate monomials that are not otherwise invariant.\qed
\\\\
\label{not:Clifpqpm}Using the grade involution, the Clifford algebra decomposes with a $\mathbb{Z}_2$ grading as $\clifn{p,q} =\clifn{p,q}^+ \oplus \clifn{p,q}^-$, where $\clifn{p,q}^{\pm} = \{s \in \clifn{p,q}\ |\ s'= \pm s\}$. The even component $\mathcal{C}\ell^+_{p,q}$ is a subalgebra.

Though Clifford algebra multiplication does not commute in general, on restriction to the \textit{real part} (elements of degree zero) the Clifford product commutes.
\begin{lemma}
\label{ABBALemma}
 Given $a,b \in \mathcal{C}\ell_{p,q}$, $\text{Re}(ab)=\text{Re}(ba)$.
\end{lemma}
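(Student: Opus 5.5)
By bilinearity of the product and linearity of $\text{Re}$, it suffices to prove the identity when $a$ and $b$ are reduced monomials. Write $a=\sum_I a_I i_I$ and $b=\sum_J b_J i_J$. Then
\[\text{Re}(ab)=\sum_{I,J}a_Ib_J\,\text{Re}(i_Ii_J)\quad\text{and}\quad \text{Re}(ba)=\sum_{I,J}a_Ib_J\,\text{Re}(i_Ji_I),\]
so the claim reduces to showing $\text{Re}(i_Ii_J)=\text{Re}(i_Ji_I)$ for all multi-indices $I,J$.

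The monomial case rests on one observation. The product $i_Ii_J$ is, up to sign, the reduced monomial $i_{I\triangle J}$, where $I\triangle J$ is the symmetric difference. This holds because each repeated generator squares to $\pm1$ and otherwise generators only anticommute. Hence $i_Ii_J$ has nonzero real part only if $I\triangle J=\emptyset$, that is, $I=J$. The same applies to $i_Ji_I$. So if $I\neq J$, both real parts vanish and the identity holds trivially.

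If $I=J$, then $i_Ii_J=i_Ii_I=i_Ji_I$ literally, so the two products coincide and in particular have equal real parts. Summing over $I,J$ gives $\text{Re}(ab)=\text{Re}(ba)$.

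The only step needing care is the claim that $i_Ii_J=\pm i_{I\triangle J}$. I would justify it by reordering the concatenated word $i_{j_1}\cdots i_{j_k}i_{l_1}\cdots i_{l_m}$ into nondecreasing index order using adjacent transpositions. Each transposition of distinct generators contributes a sign by the anticommutation relation. Adjacent equal pairs are then replaced by $i_j^2=\pm1$. What remains is exactly the generators appearing an odd number of times, namely those indexed by $I\triangle J$. Everything else in the argument is bookkeeping, so I do not expect a genuine obstacle.
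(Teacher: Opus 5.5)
Your proposal is correct and follows the paper's proof, which also expands $a$ and $b$ in the standard basis and observes that only pairs of terms with the same multi-index contribute to the real part, so both $\text{Re}(ab)$ and $\text{Re}(ba)$ equal $\sum_I a_Ib_I(i_I)^2$. Your symmetric-difference argument ($i_Ii_J=\pm i_{I\triangle J}$) justifies explicitly the one step the paper asserts without proof.
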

\noindent\textit{Proof.} We expand $a$ as $a = \sum_{I \subset \{1,...,p+q\}} a_Ii_I$, where the $a_I$ are real coefficients. Similarly, $b=\sum_Jb_Ji_J$. When we take the product $ab$ the pairs of terms that multiply to a real number must share their multi-index.
\[\text{Re}(ab) = \sum_{I\subseteq\{1,2,...,p+q\}}a_Ib_I(i_I)^2 = \sum_Ib_Ia_I(i_I)^2 = \text{Re}(ba).\tag*{\qed}\]
\subsubsection{Vectors and Paravectors}
The degree zero elements of $\clifn{p,q}$ are the \textit{scalars}; degree one elements are precisely the \textit{vectors} of $\R^{p,q}$, the underlying vector space of the Clifford algebra. \label{not:Paravectors}We can add scalars to vectors to get the subset of \textit{paravectors}\footnote{The altered position of the indices in $\$\R^{q+1,p}$ is justified by (\ref{eqn:CliffordNormIndices}) and Proposition \ref{LipschitzGroupIsomorphismLemma}. The dollar sign on any object indicates a general `paravector-ness'; For more discussion of paravectors see \cite{Porteous1969} page 254, \cite{Lounesto_2001}, and \cite{ParavectorsIn3D}.} $\$\R^{q+1,p} = \R \oplus \R^{p,q} \subseteq \clifn{p,q}$. Paravectors are generally denoted by boldface uppercase letters, for example $\pV$.

While it is more common to use a Clifford algebra to describe isometries of the underlying vector space $\R^{p,q}$, following the work of Ahlfors (see e.g. \cite{Ahlfors1985}) we will be taking the point of view where paravectors are the more important subset. There are various reasons to take this approach, which we will touch on, but a key reason is that the paravector point of view provides a natural continuation of the language of the complex spinor under the action of $SL(2,\C)$.

When $(p,q)=(0,1)$, this paravector space is just the complex numbers, and when $(p,q) = (0,2)$ we get a subspace of the quaternions which can be identified with the span of $\{1,i,j\}$. In general, $\$\R^{q+1,p}$ is a vector space with one real part and $p+q$ `imaginary parts', with unit generators of $N_{p,q}$-norm $\pm 1$ depending on their signature.

Paravectors are invariant under reversal, so given $\pV \in \$\R^{q+1,p}$ we have $\pV^*=\pV,\quad \pV'=\ol{\pV}$. For $\pV \in \$\R^{q+1,p}$, consider the product $\pV\ol{\pV} = \pV\pV'$. It is invariant under Clifford conjugation, so $\ol{\pV\pV'} = \ol{\pV'}\ol{\pV} = \pV\pV'$.

By Lemma \ref{MonomialInvolutionInvarianceLemma}, this invariance implies the product is the sum of monomials of degree $0,3 \mod 4$; but as a product of two paravectors the degree is bounded to $\leq 2$, implying it is purely of degree zero and is thus a scalar, so $\pV\ol{\pV} = \pV\pV' \in \R,\ \forall \pV \in \$\R^{q+1,p}$.
\begin{definition}
\label{defn:CliffordNorm}
The {\normalfont norm} of the paravector $\pV \in \$\R^{q+1,p}$ is the map
 \[N:\ \$\R^{q+1,p} \to \R,\ N(\pV)=\pV\ol{\pV}=\ol{\pV}\pV.\]
This norm will not in general be positive when $p > 0$. We may also refer to the norm as the {\normalfont magnitude squared} $|\pV|^2 = N(\pV)$, and when $p=0$ we define the {\normalfont magnitude} $|\pV| = \sqrt{|\pV|^2}$ of $\pV$. The terms `magnitude squared' and `norm' may be used interchangeably. \null\hfill\defn
\end{definition}
\noindent The product $\pV\ol{\pV}$ can be computed explicitly in terms of the standard basis. Given:
\[\pV = V_0+\sum_{j=1}^{p+q}V_ji_j,\ V_0,V_j \in \R\]
we find
\begin{equation}
\label{eqn:CliffordNormIndices}
N(\pV) = V_0^2 - \sum_{j=1}^{p+q}V_j^2i_j^2 = V_0^2 - \sum_{j=1}^{p}V_j^2 + \sum_{j=p+1}^{p+q}V_j^2.
\end{equation}
\begin{remark} As $\R^{p,q} \subset \$\R^{q+1,p}$, it inherits the norm $N$ of Definition \ref{defn:CliffordNorm}. But it already has a norm $N_{p,q}$ via Definition \ref{defn:CliffordAlgebra}. These are simply the negative of one another; if we take $V = \sum_{j=1}^{p+q}V_ji_j \in \R^{p,q}$, then
    \[N_{p,q}(V) = \sum_{j=1}^pV_j^2-\sum_{j=p+1}^{p+q}V_j^2,\]
    which is $-N(V)$, comparing to (\ref{eqn:CliffordNormIndices}).
\end{remark}
\noindent The norm can be polarised to give an inner product in the standard way.
\begin{definition}
\label{defn:ParavectorInnerProduct}
Given elements $\pV,\pW \in \$\R^{q+1,p}$, we define the inner product (or {\normalfont dot product})
\begin{equation}
\label{eqn:DotProduct}
\pV \cdot \pW = \frac{1}{2} \left(N(\pV+\pW)-N(\pV)-N(\pW) \right).
\end{equation}
It can equivalently be defined as
\[\pV \cdot \pW = \text{Re}(\pV\ol{\pW}) = \frac{\pV\ol{\pW}+\pW\ol{\pV}}{2} = V_0W_0 - \sum_{j=1}^pV_jW_j + \sum_{j=p+1}^{p+q}V_jW_j,\]
where $\pV = V_0 + \sum_{j=1}^{p+q}V_ji_j$ and $\pW$ is defined similarly.\null\hfill\defn
\end{definition}
\noindent Clifford multiplication of vectors in the underlying vector space $\R^{p,q}$ satisfies a \textit{polarisation identity}.\footnote{See \cite{LawsonMichelsohn} (1.4); note the sign difference, due to a different choice for the ideal that defines the Clifford algebra.} For $V,W \in \R^{p,q}$,
\begin{equation}
\label{eqn:PolarisationIdentity}
VW+WV=2N_{p,q}(V,W).
\end{equation}
\subsubsection{The Norm on $\clifn{p,q}$}
The norm on paravectors is a real-valued map, and this holds when extended to products of paravectors; given $a \in \clifn{p,q}$ satisfying 
\[a = \pV_1\pV_2...\pV_k, \text{ where } \pV_1,\pV_2,...,\pV_k \in \$\R^{q+1,p},\]
we find
\[a\ol{a} = \pV_1\pV_2...\pV_k\ol{\pV_k}...\ol{\pV_2}\ol{\pV_1} = N(\pV_1)N(\pV_2)...N(\pV_k) \in \R.\]
In fact a norm can be defined over the whole Clifford algebra by taking the real component of $a\ol{a}$, though it isn't necessarily useful for arbitrary elements.
\begin{definition}
    The norm of an element $a \in \clifn{p,q}$ is defined by 
    \[N:\ \clifn{p,q} \to \R,\ N(a)=\text{Re}(a\ol{a}).\tag*{\defn}\]
\end{definition}
\noindent This can also be polarised to give a form in the standard way.
\begin{definition}
    \label{def:CliffordDotProd}
    Given elements $a,b \in \clifn{p,q}$, we define the inner product (or dot product)
    \[a \cdot b = \frac{1}{2}(N(a+b)-N(a)-N(b)).\]
    Equivalently, 
    \[a \cdot b = \text{Re}(a\ol{b}) = \text{Re}\left(\frac{a\ol{b}+b\ol{a}}{2}\right).\tag*{\raisebox{-0.5em}{\defn}}\]
\end{definition}
\noindent Elements $a \in \clifn{p,q}$ that have a left or right inverse $a^{-1}$ are \textit{invertible} (and $a^{-1}$ is the two-sided inverse), because Clifford algebras can be represented as matrix algebras and this property holds for square matrices. \label{not:ClifInvSubgrp}Notate the subgroup of invertible elements in the Clifford algebra as $\mathcal{C}\ell^{\times}_{p,q}$. The inverse of an element $a \in \clifn{p,q}$ can always be written as $a^{-1} = \ol{a}(a\ol{a})^{-1}$, and if you restrict focus to products of paravectors then $N(a)=a\ol{a}$ is a real number and finding its multiplicative inverse is trivial.

\subsubsection{Paravectors as Exponentiated Vectors}
\label{sec:ParaExponential}
One way to think about the significance of paravectors is by exponentiating vectors; the exponential of the underlying vector space for the Clifford algebra will coordinatise the unit paravector sphere. 

\label{not:exponentialmap}The exponential map is defined\footnote{The exponential is actually convergent over the whole Clifford algebra; see \cite{LawsonMichelsohn}, Chapter 1, Section 2.} by the standard Taylor series $e^{V} = \sum_{j=0}^{\infty}\frac{1}{j!}V^j$. In the complex case covered in \cite{Mathews_Spinors_horospheres} this is also the standard Lie group exponential from $\R$ to the unit circle in $\C$ (Euler's map $\theta \to e^{i\theta}$), but the image of $\R^{p,q}$ under exponentiation doesn't admit a Lie group\footnote{The soon-to-be-defined Lipschitz and paravector Lipschitz groups in which the vectors and paravectors reside are Lie groups, however, so this can be seen as a restriction of a Lie group-Lie algebra correspondence to a subspace.} structure in general. For the remainder of Section \ref{sec:ParaExponential} we will take $(p,q) = (0,n)$ to reduce to the case $\$\R^{n+1,0} = \para$, which is positive definite.
\begin{proposition}
\label{EulerGeneralisationLemma}
Given a vector $V\in \R^{0,n}$ with $N(V)=1$ and $\theta \in \R$:
\[e^{V\theta} = \cos(\theta)+V\sin(\theta) \in \para.\]
\end{proposition}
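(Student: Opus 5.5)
The plan is to compute $V^2$ and then split the exponential series by parity, exactly as in the classical proof of Euler's formula. Since $V \in \R^{0,n}$ is a vector, it is in particular a paravector with zero real part. So $\ol{V} = V' = -V$, and the norm of Definition \ref{defn:CliffordNorm} gives $N(V) = V\ol{V} = -V^2$. The hypothesis $N(V)=1$ therefore says exactly that $V^2 = -1$. One can also see this directly from (\ref{eqn:CliffordNormIndices}): writing $V = \sum_j V_j i_j$, the polarisation identity (\ref{eqn:PolarisationIdentity}) kills the cross terms $V_jV_k(i_ji_k + i_ki_j)$ for $j\neq k$, so $V^2 = \sum_j V_j^2 i_j^2 = -\sum_j V_j^2 = -1$.

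With $V^2=-1$ in hand, an induction gives $(V\theta)^{2m} = (-1)^m\theta^{2m}$ and $(V\theta)^{2m+1} = (-1)^m\theta^{2m+1}V$. The real scalar $\theta$ commutes with everything, so these powers pose no noncommutativity issue. I then split the Taylor series $e^{V\theta} = \sum_j \frac{1}{j!}(V\theta)^j$ into even and odd terms. The even terms sum to $\sum_m \frac{(-1)^m\theta^{2m}}{(2m)!} = \cos\theta$. The odd terms sum to $V\sum_m \frac{(-1)^m\theta^{2m+1}}{(2m+1)!} = V\sin\theta$.

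The only point needing care is the justification for rearranging the series. The paper has already noted (citing \cite{LawsonMichelsohn}) that the exponential series converges on the whole finite-dimensional algebra $\clifn{0,n}$. Moreover, the even and odd partial sums each lie in the two-dimensional subspace spanned by $1$ and $V$, where everything reduces to real scalar series that converge absolutely, so the splitting is legitimate.

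Finally, $\cos\theta + V\sin\theta$ is a real scalar plus a vector, so it lies in $\para$. As a consistency check, its norm is $\cos^2\theta + \sin^2\theta\,N(V) = 1$, placing it on the unit paravector sphere as the surrounding discussion expects. I do not anticipate a genuine obstacle here; the main step is simply getting the sign $V^2=-1$ right from the paper's sign conventions.
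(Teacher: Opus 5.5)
Your proof is correct and follows essentially the same route as the paper: use $V^2=-N(V)$ to get a closed form for the powers of $V$, then split the exponential series by parity into the cosine and sine series. The only difference is that the paper first treats an arbitrary $V$, obtaining $\cos|V|+\frac{V}{|V|}\sin|V|$, and then rescales, whereas you impose $N(V)=1$ from the start. Working with $\theta$ directly also avoids the paper's step $|V\theta|=|V|\theta$, which implicitly assumes $\theta\ge 0$.
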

\noindent\textit{Proof.} For a vector $V \in \R^{0,n}$, $V'=\ol{V} = -V$. We can compute powers of V as% follows:
%\begin{align*}
%V^2 &=V(-V')=-|V|^2,\\
%V^m &= \begin{cases}
% (-1)^{\frac{m}{2}}|V|^m &\quad \textit{if m is even}, \\
% (-1)^{\frac{m-1}{2}}|V|^{m-1}V &\quad \textit{if m is odd}.
%\end{cases}
%\end{align*}
%We can put this in a more compact form:
\[V^m =(-|V|^2)^{\lfloor m/2 \rfloor}V^{m\mod 2}\]
where $\lfloor \cdot \rfloor$ is the floor function and $m\mod 2$ is the standard modulus function. Substituting this into the exponential gives us
\[e^{V} =\sum_{m=0}^{\infty}\frac{(-|V|^2)^{\lfloor m/2 \rfloor}}{m!}V^{m\mod 2} = \left(\sum_{m=0}^{\infty}\frac{(-1)^m|V|^{2m}}{(2m)!}+\sum_{m=0}^{\infty}\frac{(-1)^m|V|^{2m}}{(2m+1)!}V\right).\]
We recognise the Taylor series for $\cos|V|$ and $\sin|V|$:
\[e^{V} = \left(\cos|V|+\frac{V}{|V|}\sin|V| \right).\]
Introducing $\theta$ gives us
\[e^{V\theta} = \cos(|V\theta|)+\frac{V\theta}{|V\theta|}\sin(|V\theta|) = \cos(|V|\theta)+\frac{V}{|V|}\sin(|V|\theta).\]
%where the last equality holds if $\theta \geq 0$. If $\theta < 0$, we instead find
%\[e^{V\theta} = \cos(-|V|\theta)+\frac{V\theta}{-|V|\theta}\sin(-|V|\theta) = \cos(|V|\theta)+\frac{V}{|V|}\sin(|V|\theta).\]
By normalising so $|V|=1$, we find the desired expression. \hfill\qed
\begin{remark}
Exponentiating degree $2$ forms $B \in \lip$ of unit norm will result in the same trigonometric identity: 
\[e^{B\theta} = \cos(\theta)+B\sin(\theta),\]
as these satisfy
\[B^2=B(-\ol{B})=-|B|^2 = -1.\]
However, for forms $F \in \lip$ of degree $3$ or $4$ with unit norm we instead find $F^2=|F|^2$, giving the identity
\[e^{F\theta}=\cosh(\theta)+F\sinh(\theta).\]
Generally, given any homogeneous element $Y \in \lip$ of degree $d$ and unit norm, if $d$ is congruent to $0,3 \mod 4$ we find 
\[e^{Y\theta}=\cosh(\theta)+Y\sinh(\theta).\]
If, instead, $d \cong 1,2 \mod 4$, we find
\[e^{Y\theta} = \cos(\theta)+Y\sin(\theta).\]
\end{remark}
\noindent We can exploit this exponential map to parameterise the unit $n$-sphere, just as the unit circle can be parameterised by $e^{i\theta}$. Take the Clifford algebras $\clif$; their paravector subspaces $\para$ have a positive-definite norm via Definition \ref{defn:CliffordNorm}. Consider the unit $n$-sphere $S^n$ in $\para \subset \clif$ under this norm, and consider the subset $S^{n-1} \subset S^n$ formed by the unit $(n-1)$-sphere in $\R^{0,n} \subset \para$. Take a unit vector $V \in S^{n-1}$ and an angle $\theta \in \R$. By the proposition above, $e^{V\theta}$ has unit length, and so lies in $S^n$. 

Geometrically, to construct $e^{V\theta}$ we pick the point $V$ in $S^{n-1}$, take the plane spanned by $1$ and $V$, and rotate in this plane by angle $\theta$ from $1$ towards $V$ (see Figure \ref{fig:SphereParameterisation}).

\begin{figure}[ht!]
\centering
\begin{tikzpicture}[tdplot_main_coords, scale = 2.5]

\coordinate (P) at ({1/sqrt(3)},{1/sqrt(3)},{1/sqrt(3)});
\coordinate (V) at ({1/sqrt(2)},{0},{1/sqrt(2)});
\coordinate (O) at ({0},{0},{0});
\coordinate (U) at ({0},{1},{0});

\shade[ball color = lightgray,
	opacity = 0.5
] (0,0,0) circle (1cm);
 
\tdplotsetrotatedcoords{0}{135}{0};
\draw[dashed,
	tdplot_rotated_coords,
	gray
] (0,0,0) circle (1);

\tdplotsetrotatedcoords{90}{90}{90};
\draw[dashed,
	tdplot_rotated_coords,
	gray
] (1,0,0) arc (0:360:1);

\draw[-stealth] (0,0,0) -- (0,1.30,0)
	node[below right] {$1$};

\draw[-stealth] (0,0,0) -- (1.80,0,0) 
	node[below left] {$i_1,i_2,...$};
\draw[-stealth] (0,0,0) -- (0,0,1.30)
	node[above] {$i_j,i_{j+1}...$};

\draw[thick, -stealth] (0,0,0) -- (P) node[right] {$e^{V\theta}$};
\draw[thick, -stealth] (0,0,0) -- (V) node[right] {$V$};

\draw[fill = lightgray!50] (P) circle (0.5pt);
\draw[fill = lightgray!50] (V) circle (0.5pt);
\draw[fill = lightgray!50] (U) circle (0.5pt);

\pic [draw,->,red,"$\theta$", angle radius=0.4cm, angle eccentricity=1.5] {angle = U--O--P};
\end{tikzpicture}
\caption{Parameterisation of the sphere.}
\label{fig:SphereParameterisation}
\end{figure}
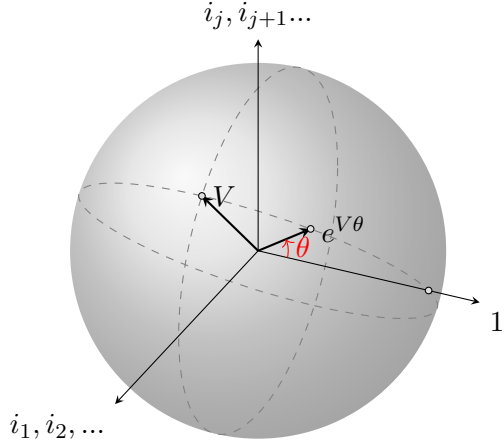
\subsection{The Lipschitz Group}
\label{sec:LipschitzGroup}
$\clifn{p,q}$ can act on itself in various ways, commonly by some variant of the adjoint action. To ensure the action is reversible (what we're really interested in down the line are isometries), we'll restrict to the action of invertible elements.
\begin{definition}[Adjoint Actions]
\label{defn:TwistedAdjointSigma}
Given an element $x \in \mathcal{C}\ell^{\times}_{p,q}$, we define an action 
\[\sigma: \mathcal{C}\ell^{\times}_{p,q} \to End(\clifn{p,q}),\ \sigma(x)(y) = xyx^*.\]
This action is multiplicative; that is,
\[\sigma(x)\sigma(y) =\sigma(xy).\]
\label{not:TwistAdj}This definition is a bit unusual; the more common\footnote{See, for example, \cite{LawsonMichelsohn}.} {\normalfont twisted adjoint action} is defined by
\[\widetilde{Ad}_xy = x'yx^{-1},\] 
which is essentially $\sigma$ up to a scaling factor. \label{not:Adj}The standard {\normalfont adjoint action} is
\[Ad_xy=xyx^{-1}.\tag*{\defn}\]
\end{definition}
\noindent The adjoint action, when restricted appropriately, gives isometries of $\R^{p,q}$, while the twisted adjoint resolves a subtlety where the standard adjoint action misses some orientation reversing isometries. However, we want to maintain the scaling factor; it appears naturally in discussion of the lambda length defined in Section \ref{LambdaLengthsSection}. So the action $\sigma(x)$ is by isometries, plus a dilation.

Clifford algebras essentially exist to describe isometries, but in general they contain many elements that do not act via an isometry. To combat this, we focus our attention on elements whose action under $\sigma$ restricts to an automorphism on some subspace of the Clifford algebra. The obvious choice is to consider the subgroup of the Clifford algebra whose action fixes the underlying vector space $\R^{p,q}$; this then defines a map $\R^{p,q} \to \R^{p,q}$. This is called a \textit{Lipschitz group}.
\begin{definition}[Lipschitz Groups]
\label{def:LipschitzGrp}
 The {\normalfont Lipschitz group} $\Gamma_{p,q}$ is the group generated by products of invertible vectors $V \in \R^{p,q}$. We can also define the even Lipschitz group:
\[\Gamma_{p,q}^+ = \Gamma_{p,q} \cap \mathcal{C}\ell^+_{p,q}.\]
For all such objects if $q=0$ we drop it from the notation, writing $\Gamma_{p,0}= \Gamma_p$ and so on. \hfill\defn
\end{definition}
\noindent There are alternative characterisations of the Lipschitz group, which can prove useful. Lounesto (\cite{Lounesto_2001}, p. 220) tells us the Lipschitz group $\Gamma_{p,q}$ has the two following alternative characterisations:
\begin{align*}
 \Gamma_{p,q} &= \{s \in \clifn{p,q}\ |\ \forall V \in \R^{p,q},\ s'Vs^{-1} \in \R^{p,q}\},\\
 \Gamma_{p,q} &= \{s \in \clifn{p,q}\ |\ \forall V \in \R^{p,q},\ sVs^* \in \R^{p,q} \text{ and } N(s) \in \R \backslash \{0\}\}.
 \end{align*}
In the second characterisation we seem to have an extra condition, that the norm of $s$ is real; but this is implied by $s'Vs^{-1} \in \R^{p,q}$ in the first characterisation. Within the Lipschitz groups, we find the spin groups.
\begin{definition}[Spin Group]
\label{def:SpinGrp}
    The spin group Spin$(p,q)$ is the `unit' subgroup of $\Gamma^+_{p,q}$. 
    \[\text{Spin}(p,q) = \{s\ |\ s \in \Gamma^+_{p,q} \text{ and } N(s)=\pm 1\}.\]
    When $q=0$, we instead define
    \[\text{Spin}(n) = \{s\ |\ s \in \Gamma^+_n \text{ and } N(s) = 1\}.\tag*{\defn}\]
\end{definition}
\noindent It is a standard result that the twisted adjoint action of the Lipschitz group is isomorphic to the action of the orthogonal group $O(p,q)$ on $\R^{p,q}$; that is, it gives the isometries\footnote{see, for example, \cite{LawsonMichelsohn} Corollary 2.6.} of $\R^{p,q}$.

However, to utilise Ahlfors' work \cite{Ahlfors1985},\cite{Ahlfors1986} (which builds its objects from paravectors rather than vectors), we consider the isometries of the paravectors $\$\R^{q+1,p}$ instead.
\begin{definition}[Paravector Lipschitz Group]
\label{def:ParaLipschitzGrp}
 The paravector Lipschitz group\footnote{A good discussion is in the book by Porteous in \cite{Porteous1969}, page 254.} $\$\Gamma_{q+1,p}$ is the subgroup of $\clifn{p,q}$ generated by products of invertible paravectors. \hfill\defn
\end{definition}
\noindent As with the more standard Lipschitz group, there are multiple ways to define the paravector Lipschitz group; these are equivalent, just different characterisations, and sometimes these alternate definitions prove useful. From Lounesto (\cite{Lounesto_2001} p. 225), we have that the paravector Lipschitz group $\$\Gamma_{q+1,p}$ is alternatively characterised by
\begin{align*}
 \$\Gamma_{q+1,p} &= \{s \in \clifn{p,q}\ |\ \forall\ \pV \in \$\R^{q+1,p},\ s'\pV s^{-1} \in \$\R^{q+1,p}\},\\
 \$\Gamma_{q+1,p} &= \{s \in \clifn{p,q}\ |\ \forall\ \pV \in \$\R^{q+1,p},\ s\pV s^* \in \$\R^{q+1,p}\text{ and } N(s) \in \R \backslash \{0\}\}.
 \end{align*}
\label{not:LipschitzMonoid}Following \cite{TheoryOfCliffordBianchiGroups}, we also define the \textit{Lipschitz monoid} $\$\Gamma_{q+1,p}^{\triangleright} = \$\Gamma_{q+1,p} \cup \{0\}$ for later convenience. 

The notation of the group $\$\Gamma_{q+1,p}$ is suggestive; although it's drawn from the Clifford algebra with signature $(p,q)$, we've moved around the indices. Indeed, this group is isomorphic to one of the more standard Lipschitz groups.
\begin{proposition}
\label{LipschitzGroupIsomorphismLemma}
With respect to their group structure, $\$\Gamma_{q+1,p} \cong \Gamma^+_{q+1,p}$.
\end{proposition}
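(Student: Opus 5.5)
The plan is to exhibit the standard algebra isomorphism $\clifn{p,q} \cong \mathcal{C}\ell^+_{q+1,p}$ and check that it carries paravectors to products of pairs of vectors. Under this isomorphism the group generated by invertible paravectors should then go to the group generated by products of pairs of invertible vectors, which is $\Gamma^+_{q+1,p}$.

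For the construction, let $\mathcal{C}\ell_{q+1,p}$ have generators $e_1,\dots,e_{p+q+1}$. Arrange them so that the extra generator $e_0 := e_{q+1}$ has $e_0^2 = 1$ (it is one of the $q+1$ generators of square $+1$). Define $\phi$ on generators by $\phi(i_j) = e_0 e_{\tau(j)}$, where $\tau$ is a bijection from $\{1,\dots,p+q\}$ onto the remaining indices. Then:
\begin{itemize}
\item The images anticommute for $j\neq k$.
\item Squares flip sign: $(e_0e_m)^2 = -e_0^2e_m^2 = -e_m^2$.
\end{itemize}
To land on the correct signature, $\tau$ must send the $p$ generators $i_j$ with $i_j^2=1$ to the $p$ generators $e_m$ with $e_m^2=-1$. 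It must likewise send the $q$ generators with $i_j^2=-1$ to the remaining $q$ generators $e_m$ with $e_m^2=+1$. The counts match, so such a $\tau$ exists.

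By the universal property implicit in Definition~\ref{defn:CliffordAlgebra}, which is a presentation by generators and relations, $\phi$ extends to an algebra homomorphism $\clifn{p,q}\to\mathcal{C}\ell^+_{q+1,p}$. Surjectivity holds because every even monomial $e_{m_1}\cdots e_{m_{2k}}$ can be rewritten using $e_0^2=1$ as a product of the $e_0e_m$. Both sides have dimension $2^{p+q}$, so $\phi$ is an isomorphism.

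Next, for a paravector $\pV = V_0 + \sum V_j i_j$ we get
\[\phi(\pV) = e_0\Big(V_0 e_0 + \textstyle\sum V_j e_{\tau(j)}\Big) = e_0 v,\]
where $v$ is a vector in $\R^{q+1,p}$. So $\phi$ sends paravectors bijectively onto $e_0\R^{q+1,p}$. Also $\pV$ is invertible iff $v$ is, since $e_0$ is invertible. Hence $\phi(\$\Gamma_{q+1,p})$ is the group generated by the elements $e_0v$ with $v$ an invertible vector.

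It remains to show this group equals $\Gamma^+_{q+1,p}$. One inclusion is immediate: each $e_0v$ is a product of two invertible vectors and is even, so it lies in $\Gamma^+_{q+1,p}$. For the reverse inclusion, I need every element of $\Gamma^+_{q+1,p}$ to be a product of an even number of invertible vectors. Then any product $v_1v_2 = (e_0v_1)^{-1}\cdots$ can be rewritten as $v_1v_2 = (v_1e_0)(e_0v_2)$. Here $e_0v_2$ is a generator, and $v_1e_0 = (e_0 v_1^{-1})^{-1}\cdot N$-scalar, in fact $v_1e_0 = (e_0v_1^{-1})^{-1}$ up to a nonzero real factor. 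That factor is itself $e_0e_0$-type, or simply absorbed because real scalars $c=e_0(ce_0)$ are generators.

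\textbf{Main obstacle.} The step I expect to be delicate is that $\Gamma^+_{q+1,p}=\Gamma_{q+1,p}\cap\mathcal{C}\ell^+$ consists exactly of products of an even number of invertible vectors. An element of $\Gamma_{q+1,p}$ is by definition a product of invertible vectors. If such a product lies in the even part, the number of factors must be even, because each vector is odd and the $\Z_2$-grading is additive; a product of an odd number of nonzero-norm vectors is a nonzero odd element. So this follows from the grading, and the remaining care is just the sign and signature bookkeeping in choosing $\tau$.
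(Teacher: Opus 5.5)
Your proof is correct. The paper gives no argument of its own: it cites Lounesto (p.~225) and Ahlfors (p.~3) and remarks that the paravector norm has signature $(q+1,p)$. Your construction is what makes that remark precise. Under $i_j\mapsto e_0e_{\tau(j)}$ you get $\phi(\pV)=e_0v$ with $v^2=V_0^2+\sum_jV_j^2e_{\tau(j)}^2=V_0^2-\sum_jV_j^2i_j^2=N(\pV)$, so $(\$\R^{q+1,p},N)$ is carried isometrically onto $\R^{q+1,p}$. This is the standard isomorphism $\clifn{p,q}\cong\mathcal{C}\ell^+_{q+1,p}$ that the cited sources rely on, so the route is the same in substance. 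What you gain is a self-contained proof, and the observation that the group isomorphism is the restriction of an algebra isomorphism.

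Your parity argument that $\Gamma^+_{q+1,p}$ is exactly the set of products of an even number of invertible vectors is sound, since inverses of invertible vectors are vectors ($v^{-1}=v/v^2$). The one thing to clean up is the final rewriting step, which is garbled as written: there is a dangling ``$(e_0v_1)^{-1}\cdots$'' and an unnecessary real factor. In fact $v_1e_0=(e_0v_1^{-1})^{-1}$ holds exactly. More simply, $v_1e_0=e_0(e_0v_1e_0)$, where $e_0v_1e_0=2N_{q+1,p}(e_0,v_1)e_0-v_1$ is an invertible vector (its square is $v_1^2\neq0$). So $v_1e_0$ is itself one of your generators.
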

\noindent \textit{Proof.} See p.225 of \cite{Lounesto_2001}, and p.3 of \cite{Ahlfors1985}; the identification follows because the norm on paravectors has signature $(q+1,p)$ (see (\ref{eqn:CliffordNormIndices})). \qed
\\\\
This also relates the paravector spin groups to the more usually defined ones.
\begin{corollary}
\label{cor:ParavSpinGrp}
The subgroup
\[\$\text{pin}(q+1,p)= \{s \in \$\Gamma_{q+1,p}\ |\ N(s) = \pm 1 \}\]
is isomorphic to Spin$(q+1,p)$. The subgroup
\[\$\text{pin}(n+1) = \{s \in \$\Gamma_{n+1}\ |\ N(s) = 1\}\] 
is isomorphic to Spin$(n+1)$.\qed
\end{corollary}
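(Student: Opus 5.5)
The plan is to make the isomorphism of Proposition \ref{LipschitzGroupIsomorphismLemma} explicit and check that it preserves the norm, so that norm-restricted subgroups match. The standard construction goes as follows. Take $\clifn{q+1,p}$ with generators $e_0,e_1,\dots,e_{p+q}$, where $e_0^2=1$ and $e_j^2=-i_j^2$ for $j\ge 1$. Since $\clifn{q+1,p}$ has $q+1$ generators of square $+1$ and $p$ of square $-1$, this is the right signature. Define
\[\phi:\clifn{p,q}\to\clifn{q+1,p}^+,\qquad \phi(i_j)=e_je_0 .\]
Then $(e_je_0)^2=-e_j^2e_0^2=i_j^2$, and for $j\neq k$ the elements $e_je_0$ and $e_ke_0$ anticommute. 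By the universal property of Clifford algebras $\phi$ is an algebra homomorphism, and comparing dimensions ($2^{p+q}$ on both sides, with the standard basis mapped to a basis) shows it is an isomorphism onto the even subalgebra.

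A paravector $\pV=V_0+\sum V_ji_j$ is sent to $(V_0e_0+\sum V_je_j)e_0$, a vector times $e_0$. Consequently a product of invertible paravectors $\pV_1\cdots\pV_k$ maps to $v_1e_0v_2e_0\cdots v_ke_0$. Here each $v_m$ is an invertible vector of $\R^{q+1,p}$, and $e_0$ is itself an invertible vector. This is an even product of invertible vectors, so it lies in $\Gamma^+_{q+1,p}$.

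Conversely, every element of $\Gamma^+_{q+1,p}$ is a product $w_1w_2\cdots w_{2m}$ of an even number of invertible vectors. Inserting $e_0e_0=1$ between them and regrouping gives
\[(w_1e_0)(e_0w_2)\cdots,\]
where each $w e_0$ and $e_0w=(we_0)'$-type factor is the $\phi$-image of a paravector or of its conjugate. Hence $\phi$ restricts to the group isomorphism of Proposition \ref{LipschitzGroupIsomorphismLemma}, which I will take as given in this concrete form.

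The key step is norm compatibility. I need to check that $\phi(\ol{s})$ equals the appropriate conjugate of $\phi(s)$ in $\clifn{q+1,p}$, so that $N(s)=s\ol{s}$ corresponds to the spinor norm $\phi(s)\phi(s)^*$ up to a fixed sign. It suffices to verify this on paravectors, since both conjugations are antiautomorphisms. For a paravector, $\phi(\pV)=ve_0$, and
\[(ve_0)(ve_0)^* = ve_0e_0v = v^2 = Q_{q+1,p}(v)=N(\pV)\]
by (\ref{eqn:CliffordNormIndices}). I also need $\phi(\ol{\pV})=e_0v=(ve_0)^*$. This holds because $\ol{\pV}=V_0-\sum V_ji_j$ maps to $V_0-\sum V_je_je_0=e_0(V_0e_0+\sum V_je_j)$. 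Thus $\phi$ intertwines Clifford conjugation on $\clifn{p,q}$ with reversal on $\clifn{q+1,p}^+$, and carries $N$ to the spinor norm. The condition $N(s)=\pm1$ therefore becomes the defining condition of Definition \ref{def:SpinGrp} for $\Gamma^+_{q+1,p}$, which gives the first claim.

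For the second claim, set $p=0$ and $q=n$. Then $N$ is positive definite on paravectors, so $N(s)>0$ for every $s\in\$\Gamma_{n+1}$. Restricting to $N(s)=1$ matches the $q=0$ convention $\text{Spin}(n+1)=\{s\in\Gamma^+_{n+1}:N(s)=1\}$.

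The main obstacle I expect is the bookkeeping of signs: confirming that the conjugation-to-reversal correspondence holds exactly, with no stray sign, so that $N=1$ and $N=-1$ are not swapped between the two settings.
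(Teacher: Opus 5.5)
Your proposal is correct and follows the paper's route. The corollary is the isomorphism of Proposition~\ref{LipschitzGroupIsomorphismLemma} restricted to norm $\pm1$ (resp.\ norm $1$) elements, and your explicit map $i_j\mapsto e_je_0$ with the check $\phi(\ol{s})=\phi(s)^*$ supplies the norm-compatibility the paper leaves implicit; an abstract group isomorphism alone would not guarantee that the norm-restricted subgroups correspond.

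Two cosmetic fixes. First, $e_0w$ equals $(we_0)^*$, which is the image of $\ol{\pW}$ when $we_0=\phi(\pW)$; it is not $(we_0)'$, which is just $we_0$. Second, Definition~\ref{def:SpinGrp} uses $N(t)=t\ol{t}$ rather than $tt^*$, but these agree on $\clifn{q+1,p}^+$, so your exact identity $(ve_0)(ve_0)^*=N(\pV)$ already rules out any swap of $N=1$ and $N=-1$.
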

\noindent Taken together, Proposition \ref{LipschitzGroupIsomorphismLemma} and Corollary \ref{cor:ParavSpinGrp} allow us to write the standard spin and Lipschitz groups in the language of paravectors. Note that this isomorphism doesn't respect the $\Z_2$ grading of the Clifford algebra.

Though Lipschitz groups are not in general commutative, the following lemma allows for some flexibility when rearranging terms.
\begin{lemma}
 \label{PartialParavectorCommutingLemma}
 For any $a \in \$\Gamma_{q+1,p}$ and paravector $\pV \in \$\R^{q+1,p}$, there exists $\pU \in \$\R^{q+1,p}$ such that $a\pV = \pU a'$, as well as $\pW \in \$\R^{q+1,p}$ such that $a\pV = \pW a$.
\end{lemma}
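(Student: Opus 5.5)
The plan is to use the alternative characterisations of $\$\Gamma_{q+1,p}$ quoted from Lounesto, together with the fact that every $a \in \$\Gamma_{q+1,p}$ is invertible.

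\emph{The factor $a'$.} We want $\pU = a\pV (a')^{-1}$. The first characterisation says that $s'\pV s^{-1} \in \$\R^{q+1,p}$ for all paravectors $\pV$ and all $s \in \$\Gamma_{q+1,p}$. Apply it with $s = a'$. This requires $a' \in \$\Gamma_{q+1,p}$, which holds because $a$ is a product of invertible paravectors $\pV_1\cdots\pV_k$. Then
\[a' = \pV_1'\cdots\pV_k' = \ol{\pV_1}\cdots\ol{\pV_k},\]
and each $\ol{\pV_j}$ is again an invertible paravector. Since the grade involution is an involution, $s' = a'' = a$. Hence
\[a\pV(a')^{-1} = s'\pV s^{-1} \in \$\R^{q+1,p},\]
and we set $\pU := a\pV(a')^{-1}$. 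Then $\pU a' = a\pV$.

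\emph{The factor $a$.} Now we want $\pW = a\pV a^{-1}$. Write
\[a^{-1} = \ol{a}\,N(a)^{-1} = a'^{*}\,N(a)^{-1},\]
where $N(a) = a\ol{a}$ is a nonzero real scalar because $a$ is a product of invertible paravectors. Then
\[a\pV a^{-1} = N(a)^{-1}\, a\pV (a')^{*}.\]
This is not directly of the form $s\pV s^*$. So instead use the second characterisation with $s = a$ applied to a modified paravector.

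The approach is to reduce to the case of a single invertible paravector factor and then induct on $k$. For one factor $\pX$:
\[\pX\pV\pX^{-1} = N(\pX)^{-1}\,\pX\pV\ol{\pX}.\]
Here $\pX\pV\ol{\pX}$ is invariant under reversal, since each of $\pX$, $\pV$, $\ol{\pX}$ is reversal-invariant and reversal reverses the order. However, it need not be a paravector: for instance, if $\pX = i_1$ and $\pV = 1$, this gives $i_1 i_1' = -i_1^2$, which is fine, but general cases produce degree-two terms.

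So the statement with the plain factor $a$ seems genuinely delicate. I would try the following. Note that $\pX\pV\ol{\pX}$ is sent to its Clifford conjugate-transpose appropriately, and check whether the degree-$\geq 2$ parts cancel. If they do not in general, I would instead look for $\pW$ of the form $a\pV^{*'}a^{-1}$ combined with the first part: writing $a\pV = \pU a'$ and then relating $a'$ to $a$ via $a' = \lambda a$ only when $a$ is homogeneous.

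The main obstacle is therefore this second claim. I expect it will require either a homogeneity assumption on $a$, or a reinterpretation of $\pW$ as lying in the image of the twisted action, in which case $\pW$ would be taken as $a'\pV a^{-1}$ read appropriately. I would test the statement on $a = 1 + i_1$ and $\pV = i_2$ before committing to it.
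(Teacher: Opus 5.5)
Your argument for the first claim is correct and is the paper's argument: $\pU = a\pV(a')^{-1} = \widetilde{Ad}_{a'}(\pV)$ lies in $\$\R^{q+1,p}$ because $a' \in \$\Gamma_{q+1,p}$. You verify that closure property explicitly, whereas the paper leaves it implicit.

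For the second claim your suspicion is justified, and no proof can be supplied, because the claim is false. The test case you propose is already a counterexample. In $\clif$ with $n \ge 2$, take $a = 1 + i_1 \in \lip$ (so $N(a) = 2$ and $a^{-1} = \tfrac12(1 - i_1)$) and $\pV = i_2$. Since $a$ is invertible, $\pW$ is forced to equal $a\pV a^{-1}$, and
\[
a\pV a^{-1} = \tfrac12\,(i_2 + i_1 i_2)(1 - i_1) = \tfrac12\,(i_2 + i_1 i_2 + i_1 i_2 - i_2) = i_1 i_2 ,
\]
which is a pure bivector. By contrast, $a\pV(a')^{-1} = \tfrac12(1+i_1)\,i_2\,(1+i_1) = i_2$, as the first claim predicts.

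The paper's proof of the second half asserts that the ordinary adjoint action $Ad_a$ preserves $\$\R^{q+1,p}$. That is true for vectors under the vector Lipschitz group $\Gamma_{p,q}$. For paravectors it is the twisted action that preserves $\$\R^{q+1,p}$, equivalently $\sigma$, since $a\pV(a')^{-1} = \sigma(a)(\pV)/N(a)$; $Ad$ does not. So the correct conclusion is that only the first half holds in general, and the second fails as soon as $n \ge 2$. Your homogeneity idea does give a valid special case: if $a' = \pm a$, then $\pW = \pm\pU$. Note also that the paper invokes the false half in the proof of Proposition~\ref{prop:AlternateSLCharacterisation}, so that step is affected too.

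One correction to your exploratory step: $\pX\pV\ol{\pX}$ is not reversal-invariant, because its reversal is $\ol{\pX}\pV\pX$. In the example above, $\pX\pV\ol{\pX} = 2i_1 i_2$, which reverses to $-2i_1 i_2$. If it were reversal-invariant, Lemma~\ref{MonomialInvolutionInvarianceLemma} together with the degree bound $\le 3$ would force it to be a paravector. That contradicts your own (correct) remark that degree-two terms appear. The reversal-invariant products are the palindromic ones, $\pX\pV\pX$ and $\ol{\pX}\pV\ol{\pX}$. This is exactly why $a\pV(a')^{-1}$ stays in $\$\R^{q+1,p}$ while $a\pV a^{-1}$ need not.
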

\noindent\textit{Proof.} We can rearrange the equation to find $\pU = a\pV a'^{-1}$, which is the twisted adjoint action $\widetilde{Ad}_{a'}\pV$. This action preserves $\$\R^{q+1,p}$ by definition, implying $\pU \in \$\R^{q+1,p}$ as claimed. Similarly, we can rearrange the second equation as $\pW = a\pV a^{-1} = Ad_a(\pV)$,
which is the ordinary adjoint action; this also preserves $\$\R^{q+1,p}$, so $\pW \in \$\R^{q+1,p}$.\qed
\subsubsection{The Action of $\lip$} \textbf{From here onwards we will consider only the Lipschitz groups $\lip = \$\Gamma_{n+1,0}$ associated with the negative-definite Clifford algebras $\clif$}. The following lemma gives a concrete geometric interpretation of the action of paravectors.
\begin{lemma}
 \label{ParavectorActionFixedSpaceLemma}
  Consider a unit paravector $\pU = e^{V\theta} \notin \R$, $V \in \R^{0,n}$, $\theta \in \R$. The set of fixed paravectors under the action $\sigma(\pU)$ on $\para$ is the subspace of $\para$ consisting of all vectors $W \in \R^{0,n}$ satisfying $\pU \cdot W=0$. The action $\sigma(\pU)$ is a rotation in the plane spanned by $\R$ and $\pU$, a rotation of $2\theta$ from $\R$ towards $\pU$.
\end{lemma}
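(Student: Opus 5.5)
We compute $\sigma(\pU)$ on $\para$ directly. Write $\pU = \cos\theta + V\sin\theta$ (Proposition \ref{EulerGeneralisationLemma}, normalising so $N(V)=1$). Since $\pU \notin \R$, we have $\sin\theta \neq 0$. Paravectors are reversal-invariant, so $\pU^* = \pU$ and $\sigma(\pU)(\pX) = \pU\pX\pU$. Since $\sigma(\pU)$ is real-linear, we decompose $\para = \R \oplus \R V \oplus V^{\perp}$, where $V^\perp$ is the set of vectors $W \in \R^{0,n}$ with $V\cdot W = 0$, and treat each summand separately.

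\emph{The summand $V^\perp$.} Take $W \in \R^{0,n}$ with $V\cdot W=0$. The polarisation identity (\ref{eqn:PolarisationIdentity}) gives $VW = -WV$, so $W\pU = (\cos\theta - V\sin\theta)W = \ol{\pU}W$. Hence
\[\pU W \pU = \pU\ol{\pU}W = N(\pU)W = W,\]
and $W$ is fixed. We also check that this condition matches the one in the statement. By Definition \ref{defn:ParavectorInnerProduct}, $\pU\cdot W = \text{Re}(\pU\ol{W}) = -\sin\theta\,\text{Re}(VW) = \sin\theta\,(V\cdot W)$. Since $\sin\theta \ne 0$, the conditions $\pU\cdot W = 0$ and $V\cdot W=0$ are equivalent for vectors $W$.

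\emph{The plane $\text{span}(1,V)$.} On this plane everything commutes and $V^2=-1$, so the plane behaves exactly like $\C$ with $V$ playing the role of $i$. Therefore $\sigma(\pU)(1) = \pU^2 = e^{2V\theta} = \cos 2\theta + V\sin 2\theta$ and $\sigma(\pU)(V) = V\pU^2 = -\sin 2\theta + V\cos2\theta$. So $\sigma(\pU)$ acts on the plane as the rotation by $2\theta$ from $1$ towards $V$. This plane is the one spanned by $\R$ and $\pU$, since $\sin\theta\neq0$.

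\emph{Determining the full fixed set.} A point of the plane is fixed only if the rotation by $2\theta$ is trivial. We must therefore assume, or handle as a degenerate case, that $2\theta \notin 2\pi\Z$. If $\theta \in \pi\Z$ then $\pU = \pm1$, which is excluded by $\pU\notin\R$. The remaining degenerate case is $\theta \equiv \pi/2 \pmod \pi$, where $\pU = \pm V$ and $\sigma(\pU)$ is $\pX \mapsto V\pX V$, a rotation by $\pi$. This fixes nothing nonzero in the plane, because it sends $1 \mapsto -1$ and $V \mapsto -V$. In every allowed case, then, the rotation on the plane has no nonzero fixed points.

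Combining the two pieces by linearity, a general paravector $a + bV + W$ with $W \in V^\perp$ is fixed exactly when $a = b = 0$. So the fixed set is precisely the set of vectors orthogonal to $\pU$, which completes the proof.

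The main obstacle is justifying the $\C$-like computation on the plane $\text{span}(1,V)$ cleanly; the key fact needed there is that $1$ and $V$ commute. The orthogonality bookkeeping for $V^\perp$ also requires care, because the dot product on vectors is negative-definite in the $N_{0,n}$ sense; this is why we phrase everything with the paravector dot product $\cdot$, which is positive on $\para$.
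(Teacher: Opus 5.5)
Your proof is correct, but it runs in the opposite direction from the paper's.

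\textbf{The paper's route.} The paper starts from the fixed-point equation $\pU\pW\pU=\pW$ and rewrites it as $\pU\pW=\pW\pU'$, using $\pU^{-1}=\pU'$. Expanding in coordinates with the polarisation identity gives $W_0U=\sum_j U_jW_j$, where $U$ is the vector part of $\pU$. The left side is a vector and the right side a scalar, so both vanish. This yields $W_0=0$ and $W\perp U$ in one step, with no decomposition and no cases. The paper then deduces the rotation claim abstractly. $\sigma(\pU)$ is an isometry whose fixed set is exactly the orthogonal complement of $\mathrm{span}\{1,\pU\}$, so it must act on that plane as an orientation-preserving isometry, that is, a rotation. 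The angle is read off from $\sigma(\pU)(1)=e^{2\theta V}$.

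\textbf{Your route.} You decompose $\para=\mathrm{span}\{1,V\}\oplus V^\perp$ first. You check directly that $V^\perp$ is fixed pointwise via $W\pU=\overline{\pU}W$, and you write down the explicit rotation matrix on the plane. Only then do you read off the fixed set from the decomposition.

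\textbf{What each buys.} Your argument is more self-contained. Invariance of the plane, orientation preservation and the angle (with its sign relative to the orientation $(1,V)$) all come out of the computation. The paper instead relies on general facts about isometries, and takes orientation preservation as known at that point. The paper's route gets the fixed set faster and avoids any case analysis.

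\textbf{A small correction.} Your \emph{remaining degenerate case} $\theta\equiv\pi/2\pmod\pi$ is not degenerate at all. There $2\theta\equiv\pi\not\equiv 0\pmod{2\pi}$, so it is already covered by the general rotation argument. A rotation by $2\theta$ has nonzero fixed vectors only when $\theta\in\pi\Z$, and that is excluded by $\pU\notin\R$. You can delete that paragraph.
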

\noindent\textit{Proof.} We want fixed directions, which is equivalent to finding fixed vectors as $|\pU|=1$ and $\sigma(\pU)$ is an isometry. Let $\pW \in \para$ be fixed by $\sigma(\pU)$.
\[\pU\pW\pU^* = \pU\pW \pU=\pW \implies \pU\pW = \pW \pU',\]
as $\pU^* = \pU$ on paravectors and $\pU^{-1} = \ol{\pU}|\pU|^{-2} = \ol{\pU} = \pU'$. We expand $\pU$ and $\pW$ in terms of coordinates, rearrange, and apply (\ref{eqn:PolarisationIdentity}) to find the following:
%\[\left(U_0+\sum_{j=1}^nU_ji_j\right)\left(W_0+\sum_{j=1}^nW_ji_j\right) = \left(W_0+\sum_{j=1}^nW_ji_j\right)\left(U_0-\sum_{j=1}^nU_ji_j\right),\]
%and expand brackets:
%\[U_0W_0+U_0\sum_{j=1}^nW_ji_j + \sum_{j=1}^nU_ji_jW_0+\sum_{j=1}^nU_ji_j\sum_{j=1}^nW_ji_j = W_0U_0 - W_0\sum_{j=1}^nU_ji_j + \sum_{j=1}^nW_ji_jU_0 -\sum_{j=1}^nW_ji_j\sum_{j=1}^nU_ji_j.\]
%We can shuffle the real coefficients $U_j$, $W_j$ about to cancel some terms:
%\begin{align*}
%\sum_{j=1}^nU_jW_0i_j+\sum_{j=1}^nU_ji_j\sum_{j=1}^nW_ji_j &= - \sum_{j=1}^nU_jW_0i_j - \sum_{j=1}^nW_ji_j\sum_{j=1}^nU_ji_j\\
%2\sum_{j=1}^nU_jW_0i_j &= - \left(\sum_{j=1}^nU_ji_j\sum_{j=1}^nW_ji_j + \sum_{j=1}^nW_ji_j\sum_{j=1}^nU_ji_j\right).
%\end{align*}
%But $U = \sum_{j=1}^nU_ji_j$ and $W = \sum_{j=1}^nW_ji_j$ are both elements of $\R^{0,n}$, so we can apply (\ref{eqn:PolarisationIdentity})
%\[UW+WU = 2N_{p,q}(U,W) = -2\sum_{j=1}^nU_jW_j\]
%to find
\[\sum_{j=1}^nU_jW_0i_j = \sum_{j=1}^nU_jW_j.\]
As the left side is strictly imaginary and the right is strictly real, they must both be zero. The condition $\sum_{j=1}^nU_jW_0i_j=0$ implies either $W_0=0$ or $\pU$ is real, but we've already assumed $\pU \notin \R$ so we conclude $W_0=0$ and $\pW = W \in \R^{0,n}$. The condition $\sum_{j=1}^nU_jW_j=0$ then implies $\pW$ is orthogonal under the dot product to the purely vector component of $\pU$, implying the first claim.

So our fixed subspace under $\sigma(\pU)$ is the orthogonal subspace to the vector component of $\pU$, implying the action (which we know is an isometry) must be an orientation-preserving linear isometry in the plane spanned by $\R$ and the vector component of $\pU$ (which is equivalent to the span of $\{\R,\pU\}$). The only orientation-preserving linear isometries of a vector space are rotations, implying the second part of the lemma.

To see precisely which rotation, let $\pU=e^{\theta V}$ for $V \in \R^{0,n}$ act on $1 \in \para$. Then $\sigma(\pU)(1) = e^{\theta V}(1)e^{\theta V} = e^{2\theta V}$, where the last equality follows from expanding as in Proposition \ref{EulerGeneralisationLemma}. The paravector $1$ is rotated towards $\pU$ by an angle of $2\theta$.\qed

\begin{corollary}
\label{IsometryGeneratedByRotationsCorollary}
All orientation-preserving linear isometries of the normed vector space $(\para,N)$ can be generated by a product of rotations in planes spanned by $1$ and a paravector $\not\in \R$. \qed
\end{corollary}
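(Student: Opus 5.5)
The statement reduces to a classical fact about $SO(n+1)$: every element is a product of rotations in planes through $1$. I would prove it in three steps.

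\emph{Step 1: rotations in planes containing $1$ generate $SO(n+1)$.} Identify $(\para,N)$ with Euclidean $\R^{n+1}$ via the positive-definite norm of Definition \ref{defn:CliffordNorm}, using the orthonormal basis $1,i_1,\dots,i_n$. By Lemma \ref{ParavectorActionFixedSpaceLemma}, for a unit paravector $\pU=e^{V\theta}\notin\R$ the map $\sigma(\pU)$ is exactly the rotation by $2\theta$ in the plane spanned by $1$ and $V$. Letting $V$ range over unit vectors of $\R^{0,n}$ and $\theta$ over $\R$ therefore gives every rotation in every $2$-plane containing $1$.

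\emph{Step 2: reduce to elementary coordinate rotations.} It is standard that $SO(n+1)$ is generated by the rotations $R_{jk}(\phi)$ in the coordinate planes $\operatorname{span}\{e_j,e_k\}$. One route is Givens decomposition: any $A\in SO(n+1)$ is a product of such rotations. Alternatively, $SO(n+1)$ is connected, and the $R_{jk}$ exponentiate a basis of $\mathfrak{so}(n+1)$, so they generate a neighbourhood of the identity. Rotations in planes $\operatorname{span}\{1,i_j\}$ are already of the required form. For a plane $\operatorname{span}\{i_j,i_k\}$, conjugate: if $\rho$ is the rotation by $\pi/2$ in $\operatorname{span}\{1,i_j\}$ sending $1\mapsto i_j$, then $\rho\,R_{1,k}(\phi)\,\rho^{-1}$ is a rotation by $\pm\phi$ in $\operatorname{span}\{\rho(1),\rho(i_k)\}=\operatorname{span}\{i_j,i_k\}$, since $\rho$ fixes $i_k$. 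Each factor here is a rotation in a plane containing $1$, so every coordinate rotation, and hence all of $SO(n+1)$, lies in the generated group.

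\emph{Step 3: check the ``$\notin\R$'' condition.} The identity is the empty product. Each nontrivial generator arises as $\sigma(e^{V\theta})$ with $\sin\theta\neq0$, so that $e^{V\theta}\notin\R$, as Lemma \ref{ParavectorActionFixedSpaceLemma} requires. When $\sin\theta=0$ the rotation is trivial (angle $2\theta\in2\pi\Z$) and can be dropped from the product. This also covers $n=1$, where $SO(2)$ consists of the rotations in the single plane $\operatorname{span}\{1,i_1\}$.

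The step that needs the most care is Step 2: verifying correctly that the conjugation moves the plane $\operatorname{span}\{1,i_k\}$ to $\operatorname{span}\{i_j,i_k\}$, and justifying generation by coordinate rotations. I would cite the Givens (QR) decomposition for the latter rather than rederive it.
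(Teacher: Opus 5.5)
Your proof is correct, but it takes a different route from the one the paper intends. The paper states the corollary with no written proof. The intended argument combines Lemma \ref{ParavectorActionFixedSpaceLemma} with two facts used elsewhere in the paper:
\begin{enumerate}[label=\roman*)]
\item $\lip$ is generated by invertible paravectors (Definition \ref{def:ParaLipschitzGrp});
\item $\sigma$ maps $\lip$ onto the orientation-preserving isometries of $\para$ up to dilation (the Lawson--Michelsohn result quoted in the proof of Theorem \ref{LipschitzLieAlgebraTheorem}).
\end{enumerate}
Write $\alpha=\pV_1\cdots\pV_k$ and use multiplicativity to get $\sigma(\alpha)=\sigma(\pV_1)\cdots\sigma(\pV_k)$. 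After normalising each factor to unit norm, the lemma identifies $\sigma(\pV_m/|\pV_m|)$ as a rotation in the plane spanned by $1$ and $\pV_m$, or the identity when $\pV_m\in\R$.

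You instead prove the generation statement entirely inside $SO(n+1)$. You use Givens rotations, plus the conjugation carrying $\operatorname{span}\{1,i_k\}$ to $\operatorname{span}\{i_j,i_k\}$. The lemma enters only to realise each generator as some $\sigma(e^{V\theta})$. This is more self-contained: together with your Step 1 it \emph{reproves} that $\sigma$, restricted to unit elements of $\lip$, surjects onto $SO(n+1)$, rather than assuming it.

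What the paper's route buys is that the rotation factors come straight from a paravector factorisation of a given $\alpha$. That is the form used later: in the proposition following Definition \ref{defn:LambdaLength}, and in Lemma \ref{HalfwayLambdaLengthComputationLemma}, where $\eta/|\eta|=e^{\theta_kV_k}\cdots e^{\theta_1V_1}$ is used to track the spin lift. Your factorisation only guarantees that the product of the unit paravectors $e^{\theta_mV_m}$ equals $\pm\alpha/|\alpha|$. This sign is irrelevant to the corollary as stated, but fixing it for the spin lift would need an extra $2\pi$ rotation.
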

\subsubsection{Lie Algebras and Tangent Spaces}
There is a standard description\footnote{\cite{Lounesto_2001} pp. 221-224.} of the Lie algebras of spin groups $Spin(p,q)$ as the space of bivectors ${\bigwedge}^{\!2}\R^{p,q}$ drawn from the exterior algebra $\bigwedge\R^{p,q}$. We can find a similar description for the Lie algebra of $\lip$. 
\begin{theorem}
\label{LipschitzLieAlgebraTheorem}
 The Lie algebra $\$\mathfrak{g}_{n+1}$ of $\lip$ consists of elements of $\clif$ whose terms have degree $\leq 2$. That is,
 \[\$\mathfrak{g}_{n+1} = \R \oplus \R^{0,n} \oplus {\bigwedge}^{\!2}\R^{0,n},\]
 where ${\bigwedge}^{\!2}\R^{0,n}$ is the space of degree two elements of the Clifford algebra. The Lie bracket operation is the commutator.
\end{theorem}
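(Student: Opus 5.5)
The plan is to identify $\$\Gamma_{n+1}$ as an open subgroup-like object built from two commuting pieces: the positive dilations $\R_{>0}$ and the paravector spin group $\$\text{pin}(n+1)\cong$ Spin$(n+1)$ from Corollary \ref{cor:ParavSpinGrp}. Every $s\in\lip$ has $N(s)=s\ol{s}>0$, since it is a product of paravectors and $\clif$ has positive-definite paravector norm. So we can write $s=|s|\cdot(s/|s|)$ with $s/|s|\in\$\text{pin}(n+1)$, and $\lip\cong\R_{>0}\times\$\text{pin}(n+1)$ as Lie groups. The central factor $\R_{>0}$ has Lie algebra $\R$. It remains to show that the Lie algebra of $\$\text{pin}(n+1)$ is $\R^{0,n}\oplus{\bigwedge}^{2}\R^{0,n}$, which has dimension $n+\binom n2=\binom{n+1}2=\dim\mathfrak{so}(n+1)$. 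The total dimension then matches $1+\binom{n+1}{2}$, as it should. The Lie bracket is the commutator because $\lip\subset\clif^{\times}$ is a matrix group, and $\clif^\times$ is open in the associative algebra $\clif$, whose Lie algebra structure is the commutator.

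For the inclusion $\supseteq$, I would exhibit curves. For a unit vector $V$, Proposition \ref{EulerGeneralisationLemma} gives $e^{V\theta}=\cos\theta+V\sin\theta\in\para\cap\lip$, so $V$ is tangent at $1$. For $j\neq k$, take the product of unit paravectors $i_j\cdot(\cos\theta\, i_j+\sin\theta\, i_k)$. It equals $-\cos\theta+\sin\theta\, i_ji_k$ and lies in $\lip$. Multiplying by $-1\in\lip$ gives $e^{\theta i_ji_k}$ (compare the Remark following Proposition \ref{EulerGeneralisationLemma}), so $i_ji_k$ is tangent at $1$. Scalars come from the curves $e^t\in\lip$. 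Since the tangent space at the identity is a vector space, these span $\R\oplus\R^{0,n}\oplus{\bigwedge}^2\R^{0,n}$.

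For the inclusion $\subseteq$, I would argue by dimension rather than characterise tangent vectors directly. The isomorphism $\$\Gamma_{n+1}\cong\Gamma^+_{n+1,0}$ of Proposition \ref{LipschitzGroupIsomorphismLemma} shows $\dim\lip=1+\dim\text{Spin}(n+1)=1+\binom{n+1}2$. This equals the dimension of the span already found, so the two spaces coincide. An alternative direct check differentiates the characterisation $s\pV s^*\in\para$ along a curve $s(t)$ with $s(0)=1$. Setting $X=\dot s(0)$, this forces $X\pV+\pV X^*\in\para$ for all $\pV$. Testing with $\pV=1$ gives $X+X^*\in\para$, which kills the degree $\geq3$ parts invariant under reversal (degrees $0,1\bmod 4$, i.e. $4,5,\dots$). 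Testing with $\pV=i_j$ then handles the remaining degrees $\geq 3$.

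The main obstacle is making the $\subseteq$ direction rigorous. The dimension argument depends on knowing that $\lip$ is a Lie group of the stated dimension, and that the isomorphism in Proposition \ref{LipschitzGroupIsomorphismLemma} is one of Lie groups and not merely abstract groups. I would therefore try the direct differentiation argument first. The awkward part is bookkeeping the degree-$3$ terms: for a monomial $X$ of degree $3$, $X+X^*=0$ gives no constraint. Here I would use $\pV=i_j$ with $j$ inside the multi-index of $X$, for which $Xi_j+i_jX^*$ yields a nonzero degree-$2$ term that is not a paravector, a contradiction.
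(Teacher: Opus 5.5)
Your primary route (explicit curves for $\supseteq$, dimension count for $\subseteq$) is a complete proof, and it divides the work differently from the paper. The paper proves $\$\mathfrak{g}_{n+1}\subseteq\R\oplus\R^{0,n}\oplus{\bigwedge}^{2}\R^{0,n}$ by linearising \emph{both} conditions of the second characterisation, $a\pV a^*\in\para$ and $N(a)\in\R\setminus\{0\}$. It then gets equality from $\dim\lip=1+\tfrac{n(n+1)}{2}$, via the surjection of $\lip$ onto $SO(n+1)$. You instead prove $\supseteq$ constructively using $e^t$, $e^{V\theta}$ and $-i_j(\cos\theta\, i_j+\sin\theta\, i_k)=\cos\theta-\sin\theta\, i_ji_k=e^{-\theta i_ji_k}$. (The sign of $\theta$ is off in your write-up, which is harmless.) You then get $\subseteq$ from the same dimension count. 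Your route avoids all degree bookkeeping. The paper's route, in exchange, shows directly why nothing of degree $\geq 3$ can be tangent. Your worry about the dimension input applies equally to the paper, and it is easily resolved. $\lip$ is a closed subgroup of $\clif^\times$. The isomorphism of Proposition~\ref{LipschitzGroupIsomorphismLemma} is the restriction of a linear algebra isomorphism $\clif\cong\mathcal{C}\ell^+_{n+1,0}$, namely $i_j\mapsto e_0e_j$ for generators $e_0,\dots,e_n$ of $\mathcal{C}\ell_{n+1,0}$. It is therefore smooth.

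The direct-differentiation fallback that you say you would try first, however, has a genuine gap. Take a degree-$3$ monomial $X$ and $j$ inside its multi-index. Then $i_j$ commutes with $X$ and $X^*=-X$, so $Xi_j+i_jX^*=0$, not a nonzero degree-$2$ term. For example, $X=i_1i_2i_3$, $j=1$ gives $-i_2i_3+i_2i_3=0$. Taking $j$ outside the multi-index instead gives $2Xi_j$ of degree $4$, but that needs such a $j$ to exist, and none exists for the top monomial. When $n\equiv 3\pmod 4$, the pseudoscalar $\omega=i_1\cdots i_n$ is central with $\omega^*=-\omega$. So $\omega\pV+\pV\omega^*=0$ for every $\pV$, and the reversal condition says nothing about $\omega$.

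This is not an artefact of linearising. For $n=3$, $s=1+\epsilon\omega$ satisfies $s\pV s^*=(1-\epsilon^2)\pV\in\para$ for all $\pV$. Yet $s\overline{s}=1+\epsilon^2+2\epsilon\omega\notin\R$, so $s\notin\lip$. The missing ingredient is the norm condition $s\overline{s}\in\R\setminus\{0\}$. Its linearisation $X+\overline{X}\in\R$ removes every monomial of degree $\equiv 0,3\pmod 4$ other than scalars, and this is exactly how the paper disposes of the degree $3+4m$ terms. Your use of $\pV=i_j$ with $j$ inside the multi-index does correctly remove degrees $2+4m$ with $m\geq 1$.
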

\noindent We write ${\bigwedge}^{\!2}\R^{0,n}$ for the degree two elements of the Clifford algebra because there is a canonical vector space isomorphism between the exterior algebra over a vector space $V$ and the Clifford algebra over $V$ for any given quadratic form $Q$ (see \cite{LawsonMichelsohn}, Prop. 1.3). Notate this Clifford algebra by $\mathcal{C}\ell(V,Q)$. By an abuse of notation, we may then speak of the embeddings ${\bigwedge}^{\!r} V \subset \mathcal{C}\ell(V,Q)$. Note that this is only an isomorphism of vector spaces, \textit{not} of algebras.
\\\\
\noindent\textit{Proof of \ref{LipschitzLieAlgebraTheorem}.} Let $a \in \lip$ and $\pV \in \para$. We can differentiate the relations defining $\lip$:
\[a\pV a^* \in \para \text{ and } N(a) \in \R \backslash \{0\},\]
and evaluate at the identity to describe the Lie algebra. 

Consider a smooth path $a_t \in \lip,\ t \in (-\varepsilon,\varepsilon)$ for some $\varepsilon>0$ satisfying $a_0=1$. Then $a_t = 1+tH+O(t^2)$ for some element $H \in \clif$. We substitute this into our first defining relation, expand, differentiate, then evaluate at t=0:
%\[(1+tH+O(t^2))\pV(1+tH+O(t^2))^* = \pW_t,\quad \pW_t \in \para,\]
%then expand and differentiate:
%\[H\pV + (H\pV)^* + O(t)... = \dot{\pW}_t,\]
%then evaluate at $t=0$.
\[H\pV+(H\pV)^* = \dot{\pW}_0 \in T_{\pW_0}\para \cong \para.\]
We expand this relation in terms of the components $\pV= V_0 + \sum V_ji_j$ to find:
\begin{align*}
%H\left(V_0+\sum_{j=1}^nV_ji_j\right) + \left(H\left(V_0+\sum_{j=1}^nV_ji_j\right)\right)^* &\in \para\\
%\implies 
V_0(H+H^*)+\sum_{j=1}^nV_j(Hi_j+(Hi_j)^*) &\in \para.
\end{align*}
By setting $V_0=1$ and $V_j=0,\ 1\leq j \leq n$ we see that $H+H^* \in \para$. Consulting Lemma \ref{MonomialInvolutionInvarianceLemma}, this implies it only has terms of degrees $0,1,2+4m,3+4m$ for $m \in \mathbb{N}$ as other terms of degree $0+4m,1+4m$ wouldn't cancel appropriately. Similarly, by setting all $V_j$ to zero except for one $j=\widehat{j}$, we see that $Hi_{\widehat{j}}+(Hi_{\widehat{j}})^* \in \para$. This implies $Hi_{\widehat{j}}$ also only has terms of degrees $0,1,2+4m,3+4m$ for all $i_j$. If $H$ contained any terms of degree $2+4m$ (say a term $i_ji_k...$) then in the product $Hi_j$ that term would become degree $1+4m$ as the $i_j$'s cancel; this conflicts with our above conclusion, so $H$ cannot contain terms of degree $2+4m$ outside of the case $m=0$, since we allow degree 1 terms.

Now bring in our second condition:
\[N(1+tH+...)= (1+tH+...)\ol{(1+tH+...)}\in \R \backslash \{0\},\]
which implies to first order $1+t(H+\ol{H}) \in \R \backslash \{0\}$, which implies $H+\ol{H} \in \R$. Lemma \ref{MonomialInvolutionInvarianceLemma} tells us terms of degree 3 or 0 mod 4 are invariant under $\ol{\cdot}$, implying $H$ only has terms of degree $0,1+4m,2+4m$. Combining all these restrictions, the Lie algebra only allows terms of degree $0,1$, and $2$. 

For the converse, we know from Corollary 2.6 (and the discussion of Theorem 2.7) of \cite{LawsonMichelsohn} that $\lip$ has a surjective homomorphism onto $SO(n+1)$ with kernel $\R^+$. This implies it is of dimension $n(n+1)/2+1$, which is therefore the dimension of its Lie algebra. By the arguments above, the Lie algebra must be contained in the vector space  $\R \oplus \R^{0,n} \oplus {\bigwedge}^{\!2}\R^{0,n}$, and because this space is also of dimension $n(n+1)/2+1$, they are isomorphic. \qed
\\\\
As is standard (see, for example, \cite{ManifoldsTu} \S 16), we can describe tangent spaces at other points of $\lip$ by left translation. First, a definition:
\begin{definition}
 \label{BiParavectorDefinition}
 The set of elements 
 \[\mathcal{B}^n = \R \oplus \R^{0,n} \oplus {\bigwedge}^{\!2}\R^{0,n}\]
 of $\clifn{0,n}$ are called the {\normalfont bi-paravectors}.\null\hfill\defn
\end{definition}
\begin{corollary}
\label{cor:TangentSpacesLip}
 The tangent space $T_s\lip$ at $s \in \lip$ is described by the subset of $\clif$ given by products $sB$ for bi-paravectors $B \in \mathcal{B}^n$. \hfill\qed
\end{corollary}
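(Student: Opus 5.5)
The plan is to reduce Corollary~\ref{cor:TangentSpacesLip} to Theorem~\ref{LipschitzLieAlgebraTheorem} using left translation. First I note that $\lip$ is a Lie group sitting inside the associative algebra $\clif$, which is a finite-dimensional real vector space. So every tangent space $T_s\lip$ can be identified with a linear subspace of $\clif$: it is the set of velocity vectors $\dot{\gamma}(0)$ of smooth paths $\gamma$ in $\lip$ with $\gamma(0)=s$. By the same identification, Theorem~\ref{LipschitzLieAlgebraTheorem} gives $T_1\lip=\$\mathfrak{g}_{n+1}=\mathcal{B}^n$ (Definition~\ref{BiParavectorDefinition}).

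Next, fix $s\in\lip$ and consider left multiplication $L_s:\lip\to\lip$, $L_s(a)=sa$. This is the restriction of the linear map $x\mapsto sx$ on $\clif$, so it is smooth. It is a diffeomorphism of $\lip$, with inverse $L_{s^{-1}}$, because $\lip$ is a group. Since $L_s$ is linear on the ambient space, its differential at the identity, viewed in $\clif$, is again $H\mapsto sH$.

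To make this concrete, take a path $a_t=1+tH+O(t^2)$ in $\lip$ with $H\in\mathcal{B}^n$. Then $sa_t$ is a path in $\lip$ through $s$, and its derivative at $t=0$ is $sH$. Hence $s\mathcal{B}^n\subseteq T_s\lip$. For the reverse inclusion, take any path $\gamma_t$ through $s$. Then $s^{-1}\gamma_t$ is a path through $1$, so its velocity $s^{-1}\dot\gamma_0$ lies in $\mathcal{B}^n$, which gives $\dot\gamma_0\in s\mathcal{B}^n$.

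Finally, as a sanity check, $s\mathcal{B}^n$ has the correct dimension $n(n+1)/2+1$, because left multiplication by the invertible element $s$ is injective on $\clif$. This matches the dimension of $\lip$ computed in the proof of Theorem~\ref{LipschitzLieAlgebraTheorem}.

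The only point needing care is identifying abstract tangent vectors with elements of $\clif$. That requires $\lip$ to be an embedded (or at least immersed) submanifold of $\clif$. This follows from its characterisation as a closed subgroup of $\mathcal{C}\ell^{\times}_{0,n}$, via the Lounesto condition $s\pV s^*\in\para$, $N(s)\neq0$, together with the closed subgroup theorem. I expect this to be the only mildly nontrivial step. Once it is in place, the rest is the standard left-translation argument, as in \cite{ManifoldsTu} \S 16.
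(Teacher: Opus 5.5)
Your proposal is correct and follows the paper's route. The paper gives no argument beyond invoking the standard left-translation description of tangent spaces of a Lie group (citing \cite{ManifoldsTu} \S 16), applied to $T_1\lip = \$\mathfrak{g}_{n+1} = \mathcal{B}^n$ from Theorem~\ref{LipschitzLieAlgebraTheorem}; your explicit path argument for both inclusions, and your closed-subgroup justification that $\lip$ is embedded in $\clif$, spell out details the paper takes for granted.
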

\noindent Applying the isomorphism of Proposition \ref{LipschitzGroupIsomorphismLemma}, ``bivectors'' of $\clifn{n+1}$ with basis elements $1,i_1,i_2,...,i_n$ are precisely the bi-paravectors.

It is a standard fact of Lie groups that inverses in the group correspond to negation in the Lie algebra:
 \[(e^G)^{-1}=e^{-G}.\]
 What about the other involutions?
\begin{lemma}
\label{lem:InvolutionsLieAlg}
 Each of the involutions $\cdot'$, $\cdot^*$, and $\ol{\cdot}$ on $\lip$ corresponds to taking that same involution in the Lie algebra, for example:
 \[\ol{(e^G)} = e^{\ol{G}}. \]
\end{lemma}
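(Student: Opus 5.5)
The plan is to work directly with the power series defining the exponential and use that each involution is a real-linear, continuous map that is either multiplicative or anti-multiplicative. For $G \in \$\mathfrak{g}_{n+1}$ we have $e^G = \sum_{m\ge 0} G^m/m!$, which converges absolutely in the finite-dimensional real vector space $\clif$ (the footnote in Section \ref{sec:ParaExponential} guarantees convergence on the whole algebra). Since $\cdot'$, $\cdot^*$ and $\ol{\cdot}$ are real-linear maps on a finite-dimensional space, they are continuous. Hence each of them commutes with the infinite sum: $(e^G)^\dagger = \sum_m (G^m)^\dagger/m!$ for $\dagger \in \{',*,\ol{\ \cdot\ }\}$.

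Next I would treat powers. For the grade involution, which is an algebra automorphism, $(G^m)' = (G')^m$ directly. For the reversal and the Clifford conjugation, which are antiautomorphisms, $(G^m)^* = (G^*)^m$ because reversing the order of a product of $m$ copies of the same element $G$ leaves the product unchanged; the same argument applies to $\ol{\cdot}$. Substituting back gives $(e^G)^\dagger = \sum_m (G^\dagger)^m/m! = e^{G^\dagger}$.

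Finally, I would check that the statement is meaningful inside the Lie algebra, namely that $G^\dagger \in \$\mathfrak{g}_{n+1}$ whenever $G \in \$\mathfrak{g}_{n+1}$. By Theorem \ref{LipschitzLieAlgebraTheorem} the Lie algebra is the bi-paravectors $\mathcal{B}^n$, consisting of terms of degree $0,1,2$. By Lemma \ref{MonomialInvolutionInvarianceLemma}, each involution acts on a homogeneous monomial by $\pm 1$, so it preserves each degree subspace and therefore $\mathcal{B}^n$. I would also remark that each involution preserves $\lip$, since it sends a product of paravectors to a product of paravectors: $\cdot'$ and $\ol{\cdot}$ send $\pV$ to $\ol{\pV}$, and $\cdot^*$ fixes $\pV$. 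This shows the correspondence is between the involution on the group and the involution on its Lie algebra.

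I expect no real obstacle. The one point to state carefully is interchanging the involution with the infinite sum, which is justified by linearity and continuity in finite dimensions.
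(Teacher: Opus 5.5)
Your proposal is correct and follows the same route as the paper: expand the exponential as a power series, apply the involution termwise, and use that the bi-paravectors $\mathcal{B}^n$ are closed under each involution, so $G'$, $G^*$ and $\ol{G}$ again lie in the Lie algebra. The paper only reads off agreement at first order in $t$, whereas you justify the termwise interchange by continuity and check $(G^m)^*=(G^*)^m$ for the antiautomorphisms, so you establish the displayed identity $\ol{(e^G)} = e^{\ol{G}}$ exactly.
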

\noindent\textit{Proof.} Consider the involution $\cdot'$. If we take the expansion of the exponential:
\[e^{tG} = 1+tG+t^2G^2/2+...\]
and we apply the involution to both sides:
\[(e^{tG})' = 1+tG'+t^2(G^2)'/2+... ,\]
we find that, on restriction to first order, $(e^G)'$ should represented in the tangent space by $G'$. As the bi-paravectors are closed under $\cdot'$, $G'$ is a well-defined element of the Lie algebra. A similar argument holds for the other involutions.
\qed

\subsection{Special Linear Groups}
\label{SectionTwoComponentSpinors}
In \cite{Ahlfors1985}, matrix groups are constructed that generalise the special and general linear groups of matrices over commutative rings. For our purposes, we're interested in the groups\footnote{In Ahlfors' definition the groups $SL(2,\cdot)$ are the subgroups of $GL(2,\cdot)$ with pseudo-determinant $\pm 1$, and $SL_+$ this is the connected component with identity. As we aren't interested in anything but definite and orientation-preserving maps, our naming is simplified.} $SL(2,\lip)$ as they generalise the role of $SL(2,\C)$ in providing a double cover of the isometry group of $\hyp^3$.
\begin{definition}[Special Linear Group]
\label{SL2LipDef}
 The matrix $\begin{pmatrix}
 a & b \\
 c & d 
 \end{pmatrix}$ is an element of $SL(2,\lip)$ if
 \begin{enumerate}[label=\roman*)]
 \item $a,b,c,d \in \lip^{\triangleright}$,
 \item $a^*d-c^*b = 1$,
 \item $ab^*,cd^*,c^*a,d^*b \in \para$.
 \end{enumerate}
 We refer to an element of $SL(2,\lip)$ as a {\normalfont Clifford matrix}. We similarly define the general linear group $GL(2,\lip)$ by relaxing condition ii) to $a^*d-c^*b\in \R\backslash \{0\}$.
 \null\hfill\defn
\end{definition}
\noindent  Condition i) is necessary to ensure the matrix is invertible, while condition ii) enforces the `special' part of `special linear group'. \label{not:pdet}The quantity $a^*d-c^*b$ is a noncommutative extension\footnote{There are multiple ways to generalise the determinant to the noncommutative setting; we use another notion, the quasideterminant (introduced in work of Gelfand and Retakh \cite{GelfandRetakh91} with a good overview given in \cite{gelfand2004quasideterminants}), in the discussion of Ptolemy relations (see Section \ref{sec:PtolemyRelns}).} of the determinant; we refer to it as the \textit{pseudo-determinant}, or pdet. The pseudo-determinant is multiplicative when restricted to $GL(2,\lip)$ (see \cite{Ahlfors1985}), which of course extends to $SL(2,\lip)$.

\label{not:1ptCompact}An action (M\"{o}bius transformation) is defined on the one-point compactification of the paravectors $\cpara = \para \cup \{\infty\}$ (also called the \textit{paravector sphere}, or \textit{generalised Riemann sphere}) according to the rule
\begin{equation}
\label{ParavectorMobiusAction}
 \begin{pmatrix}
 a & b \\
 c & d 
 \end{pmatrix}\pV = (a\pV+b)(c\pV+d)^{-1}.
\end{equation}
This is a bijective mapping $\cpara \to \cpara$ (see \cite{Ahlfors1985}); condition iii) is required to ensure this mapping is a genuine map $\cpara \to \cpara$.

\subsubsection{$SL(2,\lip)$ and the Isometries of $\hyp^n$}
Clifford algebras describe the isometries of a vector space. How does one translate this into the isometries of hyperbolic space? 

The action (\ref{ParavectorMobiusAction}) is a bijective map of the `generalised Riemann sphere' $\cpara$ onto itself; consider $\$\R^n \subset \para$ in the obvious way, by fixing the coefficient of $i_n$ at zero.\footnote{Recall that $\para$ is generated by $1$ and $i_1$ through $i_n$.} The subspace $\$\R^n$ is then acted on by $SL(2,\$\Gamma_{n})$, but $SL(2,\$\Gamma_{n}) \subset SL(2,\lip)$; we can thus consider $SL(2,\$\Gamma_{n})$ as a subgroup of $SL(2,\lip)$, one that fixes $\$\R^n \subset \para$. \label{not:UpperHalfSpace}In particular, this implies the upper half-space $\U^{n+1} \subset \para$ is preserved,\footnote{To be precise, we must also note that $SL(2,\lip)$ is orientation-preserving so the upper and lower half-spaces aren't interchanged.} so this subgroup naturally acts on a model of hyperbolic space.
\begin{lemma}
 Every $g \in SL(2,\$\Gamma_{n})$ maps the upper half-space $\U^{n+1} \subset \para$ onto itself, and
 \[\left|Dg(x)\right| = (gx)_n/x_n\]
 for all $x \in \U^{n+1}$, where the $Dg$ is the Jacobian matrix and $x_n$ is the coefficient of $i_n$ in the paravector expansion.
\end{lemma}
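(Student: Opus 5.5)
We plan to reduce to generators, following Ahlfors' treatment. By the multiplicativity of the pseudo-determinant and the chain rule, both claims are stable under composition. Preservation of $\U^{n+1}$ is clearly stable. For the Jacobian, the identity $|Dg(x)| = (gx)_n/x_n$ composes multiplicatively:
\[
|D(gh)(x)| = |Dg(hx)|\,|Dh(x)| = \frac{(ghx)_n}{(hx)_n}\cdot\frac{(hx)_n}{x_n}.
\]
So it suffices to check the statement on a generating set of $SL(2,\$\Gamma_{n})$. Following \cite{Ahlfors1985}, we will use the following generators:
\begin{itemize}
\item translations $\begin{pmatrix}1&b\\0&1\end{pmatrix}$ with $b\in\$\R^n$;
\item dilation-rotations $\begin{pmatrix}a&0\\0&a^{*-1}\end{pmatrix}$ with $a\in\$\Gamma_n$;
\item the inversion-type matrix $\begin{pmatrix}0&-1\\1&0\end{pmatrix}$.
\end{itemize}
That these generate follows by a Bruhat-style decomposition. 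If $c=0$, the matrix factors into a dilation and a translation. If $c\neq 0$, we write
\[
g=\begin{pmatrix}1&ac^{-1}\\0&1\end{pmatrix}\begin{pmatrix}0&-1\\1&0\end{pmatrix}\begin{pmatrix}c&d\\0&c^{*-1}\end{pmatrix}\cdots,
\]
using conditions ii) and iii) of Definition \ref{SL2LipDef} to check that each factor is again a Clifford matrix. We take this decomposition from Ahlfors rather than reprove it.

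\textbf{Checking each generator.} For a translation, $x\mapsto x+b$ with $b$ having zero $i_n$-component, so the $i_n$-coefficient is unchanged and the Jacobian is the identity; both sides of the identity equal $1$.

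For $x\mapsto a x a^{*\,-1}$, we rewrite it as $a x a^*/N(a)$, using $a^{*-1}=\ol{a}^{\,\prime}/N(a)$ and the relation between $a^*$ and $a'$ on paravector products. Since $a\in\$\Gamma_n$ is a product of paravectors in $\$\R^n$, the map $\sigma(a)$ is a similarity of $\para$ with factor $N(a)$ (by Lemma \ref{ParavectorActionFixedSpaceLemma} and Corollary \ref{IsometryGeneratedByRotationsCorollary}). The map fixes $i_n$: each generating paravector $\pV\in\$\R^n$ satisfies $\pV i_n \pV^* = N(\pV)\, i_n$, because $i_n$ anticommutes with the vector part of $\pV$ and is orthogonal to it. Hence the $i_n$-coefficient scales by exactly the similarity factor, which is the linear-map Jacobian $|Dg|$, and the upper half-space is preserved since the factor is positive.

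For $x\mapsto -x^{-1} = -\ol{x}/|x|^2$, the $i_n$-coefficient becomes $x_n/|x|^2$, which is positive, so $\U^{n+1}$ is preserved. The map is an inversion composed with a reflection, with conformal factor $|x|^{-2}$, matching $(gx)_n/x_n = |x|^{-2}$.

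\textbf{Interpreting $|Dg|$.} We interpret $|Dg(x)|$ as the linear conformal scaling factor, i.e.\ the operator norm of a similarity. All generators are conformal, so compositions are as well, and the multiplicativity argument above is valid.

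\textbf{Main obstacle.} The delicate step is the rotation generator. We must verify carefully, with the paper's convention $\sigma(x)(y)=xyx^*$ and the involutions, that $a x a^{*-1}$ really is $\sigma(a)$ up to the positive scalar $N(a)$, and that elements of $\$\Gamma_n$ fix the $i_n$ direction without sign flips. The key computation here is $\pV i_n\pV^* = N(\pV)\,i_n$ for $\pV\in\$\R^n$. Expanding $\pV = V_0 + V$ with $V\in\R^{0,n-1}$ gives
\[
(V_0+V)\,i_n\,(V_0+V) = i_n\,(V_0-V)(V_0+V) = i_n\,N(\pV),
\]
since $i_n$ anticommutes with $V$. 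We then extend this to products of such paravectors.
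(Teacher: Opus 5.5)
The paper gives no argument of its own here, only the citation of Ahlfors, so your generator reduction is a reasonable self-contained substitute. The strategy is sound: reduce to translations, the diagonal matrices $\mathrm{diag}(a,a^{*-1})$ and $\begin{pmatrix}0&-1\\1&0\end{pmatrix}$, and use that conformal factors multiply under composition. Your factorisation for $c\neq 0$ in fact closes with exactly the three displayed factors, since $c^*(ac^{-1}d-b)=a^*d-c^*b=1$ using $c^*a=a^*c$.

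However, the diagonal step, which you flag as the main obstacle, is wrong as written, and the verification you planned would fail.
\begin{itemize}
\item The M\"obius action of $\mathrm{diag}(a,a^{*-1})$ is $x\mapsto (ax)(a^{*-1})^{-1}=axa^*$, not $axa^{*-1}$.
\item The identity you use to convert, $a^{*-1}=\ol{a}'/N(a)$, is false. We have $\ol{a}'=a^*$, and $a^*a^*\neq N(a)$ in general; the correct formula is $a^{*-1}=a'/N(a)$.
\item Hence $axa^{*-1}=axa'/N(a)$, which is not $\sigma(a)$ up to a positive scalar. For $a=i_1$ (with $n\geq 2$) it is $x\mapsto -i_1xi_1$, which sends $i_n$ to $-i_n$ and maps $\U^{n+1}$ onto the lower half-space.
\end{itemize}

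The repair is immediate: the action is literally $\sigma(a)$, with no rescaling. Iterating your computation $\pV i_n\pV^*=N(\pV)\,i_n$ over a factorisation of $a$ into paravectors of $\$\R^n$ gives $ai_na^*=N(a)\,i_n$. Now $\sigma(a)$ is a similarity with factor $N(a)$ (indeed $N(axa^*)=N(a)^2N(x)$), and it preserves $\$\R^n$, the orthogonal complement of $i_n$. So you get $(gx)_n=N(a)\,x_n$ and $|Dg(x)|=N(a)$.

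Your text was also internally inconsistent at this point: $axa^*/N(a)$ would have factor $1$, not the $N(a)$ you assert. The identity happens to survive because it is a ratio, but the write-up must be corrected.

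Two smaller points:
\begin{itemize}
\item What makes the Jacobian identity compose is that the M\"obius action is a group action, $(gh)x=g(hx)$, together with the chain rule. Multiplicativity of the pseudo-determinant plays no role.
\item Surjectivity onto $\U^{n+1}$ follows by applying the ``into'' statement to $g^{-1}\in SL(2,\$\Gamma_n)$.
\end{itemize}
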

\noindent\textit{Proof.} See \cite{Ahlfors1986B} Section 4.\qed
\\\\
As Ahlfors demonstrates (following the ideas of Vahlen and Maass), this is essentially the statement of the invariance of the hyperbolic metric on $\U^{n+1}$, and this is enough to recover the entire (orientation-preserving) isometry group $PSL(2,\lip) = SL(2,\lip)/\{I,-I\}$.
\subsection{Two-Component Lipschitz Spinors}
\label{TwoComponentLipschitzSpinors}
We can projectivise the action of $SL(2,\lip)$ on the generalised Riemann sphere. Given $\xi,\eta \in \lip^{\triangleright}$ satisfying $\xi \eta^{-1} = \pV \in \para$, instead of working with the paravector $\pV$ we can work with these `projective coordinates' $(\xi,\eta)$. 
\begin{definition} 
\label{def:LipschitzSpinor}
A {\normalfont Lipschitz spinor} is a pair
\[\kappa = \begin{pmatrix}
    \xi \\
    \eta
\end{pmatrix},\]
with $\xi,\eta \in \lip^{\triangleright}$ (not both equally zero) which satisfy $\xi\ol{\eta} \in \para$, or equivalently $\xi\eta^{-1} \in \cpara$. \label{not:SLip}The set of Lipschitz spinors is denoted $S\lip$. More formally, we can define the `ratio map':
\[\mathscr{R}:\ S\lip \to \cpara,\ (\xi,\eta) \to \xi\eta^{-1},\]
which sends Lipschitz spinors to the generalised Riemann sphere. \label{not:ConjTrans}Lipschitz spinors also have a conjugate transpose, indicated by a superscript dagger:
\[\kappa^{\dagger} = \begin{pmatrix}
 \ol{\xi} & \ol{\eta}
\end{pmatrix}. \tag*{\defn}\]
\end{definition}
\noindent To the author's knowledge, this definition of a generalised two-component spinor has not appeared in the literature before. 
\begin{lemma}
\label{lem:xiStarEtaParavector}
    For $\kappa = (\xi,\eta) \in S\lip$,
    \[\xi^*\eta,\ \xi\eta^* \in \para.\]
\end{lemma}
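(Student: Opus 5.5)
The plan is to split off the degenerate cases and then, in the generic case, rewrite each of $\xi^*\eta$ and $\xi\eta^*$ as a real multiple of $\sigma(s)(\pW)=s\pW s^*$ with $s\in\lip$ and $\pW\in\para$. The second Lounesto characterisation of $\lip$ then puts each one in $\para$.

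\textbf{Degenerate cases.} If $\xi=0$ or $\eta=0$, both products vanish, and $0\in\para$. So I will assume $\xi,\eta\in\lip$, which makes both invertible with real nonzero norms $N(\xi)=\xi\ol{\xi}=\ol{\xi}\xi$ and $N(\eta)=\eta\ol{\eta}=\ol{\eta}\eta$. I will also use that $\lip$ is closed under $\cdot^*$, $\cdot'$ and $\ol{\cdot}$: the reverse of a product of paravectors is the product of the same paravectors in reverse order, and $\cdot'$ sends paravectors to paravectors.

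\textbf{The product $\xi^*\eta$.} Set $\pV=\xi\ol{\eta}\in\para$. Multiplying on the right by $\eta$ and using $\ol{\eta}\eta=N(\eta)$ gives $\xi=\pV\eta\,N(\eta)^{-1}$. Applying the reversal, and using $\pV^*=\pV$ for paravectors, gives $\xi^*=\eta^*\pV\,N(\eta)^{-1}$. Hence
\[\xi^*\eta=N(\eta)^{-1}\,\eta^*\pV\eta=N(\eta)^{-1}\,\sigma(\eta^*)(\pV).\]
Since $\eta^*\in\lip$ and $(\eta^*)^*=\eta$, this lies in $\para$ by the second characterisation of $\lip$.

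\textbf{The product $\xi\eta^*$.} Here the analogous substitution does not produce a sandwich directly. Applying Clifford conjugation to $\pV=\xi\ol{\eta}$ gives $\eta\ol{\xi}=\ol{\pV}$, so $\eta=\ol{\pV}\xi\,N(\xi)^{-1}$ and $\eta^*=\xi^*\ol{\pV}\,N(\xi)^{-1}$ (using $\ol{\pV}^*=\ol{\pV}$). This gives $\xi\eta^*=N(\xi)^{-1}\xi\xi^*\ol{\pV}$, which is a product of the paravector $\xi\xi^*$ with $\ol{\pV}$ and need not be a paravector on its face. The fix is Lemma \ref{PartialParavectorCommutingLemma} applied to $a=\xi^*\in\lip$: it supplies $\pW\in\para$ with $\xi^*\ol{\pV}=\pW\xi^*$. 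Then
\[\xi\eta^*=N(\xi)^{-1}\,\xi\pW\xi^*=N(\xi)^{-1}\,\sigma(\xi)(\pW)\in\para.\]

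The main obstacle is this second product, specifically recognising that the partial commuting lemma converts the one-sided product into a sandwich. The remaining checks are routine: closure of $\lip$ under the involutions, and invariance of paravectors under reversal.
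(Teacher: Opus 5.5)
Your argument for $\xi^*\eta$ is correct and is essentially the paper's: the paper substitutes $\xi=\pV\eta$ with $\pV=\xi\eta^{-1}$, while you carry a factor $N(\eta)^{-1}$ instead. The second half, however, breaks at the step $\xi^*\ol{\pV}=\pW\xi^*$ with $\pW\in\para$. That step uses the second clause of Lemma~\ref{PartialParavectorCommutingLemma}, i.e.\ that the untwisted adjoint $Ad_a$ preserves $\para$ for $a\in\lip$, and this clause is false. Only the twisted clause is valid, since $a\pV a'^{-1}=N(a)^{-1}a\pV a^*=N(a)^{-1}\sigma(a)(\pV)$. By contrast, $a=1+i_1$ gives $a\,i_2\,a^{-1}=\tfrac12(1+i_1)i_2(1-i_1)=i_1i_2\notin\para$.

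No other route will rescue this half, because the assertion $\xi\eta^*\in\para$ is itself false as soon as $n\ge 2$. Take $\eta=1+i_1$ and $\xi=i_2\eta=i_2-i_1i_2$, both in $\lip$.
\begin{itemize}
\item $\xi\ol{\eta}=i_2(1+i_1)(1-i_1)=2i_2\in\para$, so $(\xi,\eta)\in S\lip$.
\item $\xi\eta^*=i_2(1+i_1)^2=2i_2i_1\notin\para$. In quaternionic terms, $\xi=j-k$, $\eta=1+i$ and $\xi\eta^*=-2k$.
\item For this spinor your $\pW=Ad_{\xi^*}(\ol{\pV})$ comes out as $-2i_1i_2$.
\end{itemize}
The paper's own proof fails at the same point: its claim $Ad_{\eta^{-1}}(\pV)\in\para$ is the same false statement.

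What membership in $S\lip$ actually yields is the ``column'' condition $\xi^*\eta=\eta^*\xi\in\para$. It also yields the defining condition $\xi\ol{\eta}\in\para$ and its conjugates, such as $\eta\ol{\xi}$ and $\ol{\xi}\eta'=(\xi^*\eta)'$. The ``row'' condition $\xi\eta^*\in\para$ (equivalently $\eta^{-1}\xi\in\para$) is a genuinely different condition. The second identity $\xi\eta^*=\eta\xi^*$ of Corollary~\ref{cor:xistarEta} fails with it.
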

\noindent\textit{Proof.} As $\kappa$ is a Lipschitz spinor, $\xi\eta^{-1} = \pV \in \cpara$. If $\eta=0$ then $\xi^*\eta=0 \in \para$, and if $\eta \neq 0$ we compute
\[\xi =  \pV\eta \implies \xi^*\eta = \eta^*\pV\eta = \sigma(\eta^*)(\pV) \in \para,\]
as $\sigma(x)$ for $x \in \lip$ is an action on $\para$. For $\xi\eta^*$, again if $\eta=0$ then $\xi\eta^* = 0 \in \para$. If $\eta \neq 0$, then
\[\xi= \pV \eta = \eta\eta^{-1}\pV \eta = \eta Ad_{\eta^{-1}}(\pV) = \eta \pU,\]
where $\pU = Ad_{\eta^{-1}}(\pV) \in \para$. Then 
\[\xi \eta^* = \eta \pU \eta^* = \sigma(\eta)(\pU) \in \para. \tag*{\qed}\]
\begin{corollary}
\label{cor:xistarEta}
For $(\xi,\eta)\in S\lip$, 
\[\xi^*\eta = \eta^*\xi, \quad \xi\eta^* = \eta\xi^*.\]
\end{corollary}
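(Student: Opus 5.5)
The plan is to derive both identities directly from Lemma \ref{lem:xiStarEtaParavector}, using the fact that paravectors are fixed by the reversal involution. Lemma \ref{lem:xiStarEtaParavector} tells us that $\xi^*\eta \in \para$ and $\xi\eta^* \in \para$. Since every paravector $\pV$ satisfies $\pV^* = \pV$, we may apply $\cdot^*$ to each of these elements and obtain the same element back.

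For the first identity, we use that $\cdot^*$ is an antiautomorphism and an involution, which gives
\[(\xi^*\eta)^* = \eta^*(\xi^*)^* = \eta^*\xi.\]
Since $\xi^*\eta$ is a paravector, it equals its own reverse, and therefore $\xi^*\eta = \eta^*\xi$.

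For the second identity, the same reasoning gives
\[(\xi\eta^*)^* = (\eta^*)^*\xi^* = \eta\xi^*.\]
Because $\xi\eta^*$ is a paravector, it too equals its own reverse, so $\xi\eta^* = \eta\xi^*$.

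No step is a real obstacle. The only point to check is the degenerate case $\eta = 0$, and there both sides of each identity vanish, so the identities hold trivially. In any case, Lemma \ref{lem:xiStarEtaParavector} already covers $\eta = 0$, since $0$ is a paravector, and so the argument above needs no separate case split.
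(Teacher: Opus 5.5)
Your proposal is correct and is essentially the paper's proof. Both apply Lemma \ref{lem:xiStarEtaParavector} and then use that paravectors are fixed by the reversal antiautomorphism, giving $\xi^*\eta=(\xi^*\eta)^*=\eta^*\xi$ and likewise $\xi\eta^*=(\xi\eta^*)^*=\eta\xi^*$.
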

\noindent\textit{Proof.} Since $\xi^*\eta \in \para$ and paravectors are invariant under the reversal conjugation, $\xi^*\eta = (\xi^*\eta)^* = \eta^*\xi$. The same argument holds for $\xi\eta^*$.\qed
\\\\
The conditions of the previous corollary seem reminiscent of the conditions on the columns and rows of $SL(2,\lip)$, and that isn't a coincidence. The generalised special linear groups can also be characterised as $2\times 2$ matrices with Lipschitz spinors for rows and columns, and pseudo-determinant $1$.
\begin{proposition}
\label{prop:AlternateSLCharacterisation}
    $SL(2,\lip)$ can equivalently be characterised as the matrices $\begin{pmatrix}
        a & b \\
        c & d
    \end{pmatrix}$ satisfying
    \begin{enumerate}[label=\roman*)]
        \item $a,b,c,d \in \lip^{\triangleright}$,
        \item $a^*d-c^*b = 1$,
        \item $(a,b),(c,d),(a,c),(b,d) \in S\lip$.
    \end{enumerate}
\end{proposition}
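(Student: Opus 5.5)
The easier direction is the converse, so I would prove it first. Suppose the matrix satisfies i), ii) and the alternate condition iii), so that $(a,b),(c,d),(a,c),(b,d)\in S\lip$. Lemma \ref{lem:xiStarEtaParavector} applied to the row spinors $(a,b)$ and $(c,d)$ gives $ab^*,cd^*\in\para$. Applied to the column spinors $(a,c)$ and $(b,d)$ it gives $a^*c,b^*d\in\para$. Corollary \ref{cor:xistarEta} then converts these into $c^*a$ and $d^*b$, which recovers condition iii) of Definition \ref{SL2LipDef}.

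For the forward direction, take a Clifford matrix in the sense of Definition \ref{SL2LipDef}. For each of the four pairs I must check two things: that the two entries are not both zero, and that $\xi\eta^{-1}\in\cpara$. Non-vanishing is immediate from the pseudo-determinant. If $a=c=0$ or $b=d=0$ then $a^*d-c^*b=0\neq1$. If $a=b=0$ or $c=d=0$ then again $a^*d-c^*b=0$.

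For the columns I would use the M\"obius action (\ref{ParavectorMobiusAction}), which Ahlfors \cite{Ahlfors1985} shows is a bijection of $\cpara$ onto itself. Evaluating at $\infty$ and at $0$ gives $g(\infty)=ac^{-1}\in\cpara$ and $g(0)=bd^{-1}\in\cpara$. This is exactly the statement that $(a,c)$ and $(b,d)$ are Lipschitz spinors.

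For the rows I would pass to the inverse matrix. Using ii) and the multiplicativity of the pseudo-determinant from \cite{Ahlfors1985}, the inverse is
\[g^{-1}=\begin{pmatrix} d^* & -b^*\\ -c^* & a^*\end{pmatrix},\]
and it again lies in $SL(2,\lip)$. Applying the column argument to $g^{-1}$ shows that $b^*(a^*)^{-1}=(a^{-1}b)^*$ lies in $\cpara$ whenever $a\neq0$. Because paravectors are fixed by $\cdot^*$, this means $a^{-1}b=\pU\in\para$. Then
\[a\ol{b}=a\ol{\pU}\,\ol{a}=\bigl(a\,\pU'\!{}'\!\!\ldots\bigr)\]
is best handled as follows: $a\ol{b}=a\ol{\pU}\,\ol{a}=(a'\,\pU\,a^{*})'=\bigl(\sigma(a')(\pU)\bigr)'$. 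This is a paravector, since $a'\in\lip$ and both $\sigma$ and $\cdot'$ preserve $\para$. Hence $(a,b)\in S\lip$. The degenerate case $a=0$ is trivial, since then $a\ol{b}=0$. The row $(c,d)$ is handled the same way using the second column of $g^{-1}$.

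The step I expect to be the main obstacle is justifying that $g^{-1}$ is again a Clifford matrix in the sense of Definition \ref{SL2LipDef}. This amounts to checking that $d^*(-b^*)^*=-d^*b$, $(-c^*)a=-c^*a$, and the remaining reversed products are paravectors, and also that the displayed matrix really is a two-sided inverse. The product $\begin{pmatrix} d^* & -b^*\\ -c^* & a^*\end{pmatrix}g$ equals the identity directly from ii) together with $d^*b=b^*d$ and $c^*a=a^*c$, both of which follow from iii) of Definition \ref{SL2LipDef}. Two-sidedness then holds because Clifford algebras are matrix algebras. If this verification proved awkward, I would instead cite Ahlfors' theorem in \cite{Ahlfors1985} that the row and column conditions are equivalent given pdet $=1$.
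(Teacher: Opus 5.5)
The step that fails is the last one in your treatment of the rows. The identity $a\ol{b}=a\ol{\pU}\,\ol{a}=(a'\pU a^*)'$ is correct, but $a'\pU a^*$ is not $\sigma(a')(\pU)$. Since $(a')^*=\ol{a}$, one has $\sigma(a')(\pU)=a'\pU\ol{a}$. By contrast, $a'\pU a^*=x\pU\ol{x}$ with $x=a'$, and the map $\pU\mapsto x\pU\ol{x}$ does not preserve $\para$ in general. For $a=1+i_1$ and $\pU=i_2$ you get $a\ol{b}=-(1+i_1)i_2(1-i_1)=-2i_1i_2\notin\para$.

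The underlying issue is what condition iii) of Definition \ref{SL2LipDef} actually says about a row. It says exactly that $a^{-1}b\in\para$: from $ab^*=\pV$ one gets $a^{-1}b=\sigma(a^{-1})(\pV)$ directly, so the detour through $g^{-1}$ is not even needed. That means $b=a\pU$, with the paravector on the right. But $(a,b)\in S\lip$ means $b=\pW a$, with the paravector on the left. Converting one into the other would need $Ad_a$ to preserve $\para$, and it does not: $Ad_{1+i_1}(i_2)=i_1i_2$.

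This cannot be patched, because for $n\ge 2$ the forward implication is false. Consider the matrix
\[
g=\begin{pmatrix}1+i_1 & (1+i_1)i_2\\ 0 & \tfrac12(1-i_1)\end{pmatrix}
=\begin{pmatrix}1+i_1 & 0\\ 0 & \tfrac12(1-i_1)\end{pmatrix}\begin{pmatrix}1 & i_2\\ 0 & 1\end{pmatrix}.
\]
It satisfies Definition \ref{SL2LipDef}:
\begin{enumerate}[label=\roman*)]
\item its entries lie in $\lip^{\triangleright}$;
\item $a^*d-c^*b=\tfrac12(1+i_1)(1-i_1)=1$;
\item $ab^*=2i_2$, $d^*b=i_2$ and $cd^*=c^*a=0$.
\end{enumerate}
Yet $a\ol{b}=-2i_1i_2$, so its first row is not in $S\lip$. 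Both factors do satisfy the proposed characterisation, so that set is not even closed under multiplication.

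The paper's own proof breaks at the same point. It passes from ``$b$ times a paravector'' to ``a paravector times $b$'' via the $Ad$ half of Lemma \ref{PartialParavectorCommutingLemma}, which the example above refutes. What is actually true is the following: $g\in SL(2,\lip)$ if and only if i) and ii) hold and $(a,c),(b,d),(a^*,b^*),(c^*,d^*)\in S\lip$. Your column argument, together with the row information you extract from $g^{-1}$, essentially proves this, and it is the correct replacement for iii).

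Your converse also has a problem. It cites the $\xi\eta^*$ half of Lemma \ref{lem:xiStarEtaParavector}, which is false for a single spinor: $(\xi,\eta)=(i_2(1+i_1),1+i_1)$ has $\xi\ol{\eta}=2i_2$ but $\xi\eta^*=-2i_1i_2$. The converse implication does hold, but only because the pseudo-determinant and column conditions constrain the rows jointly. It therefore needs a separate argument.
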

\noindent\textit{Proof.} Obviously conditions i), ii) are identical to the definition of $SL(2,\lip)$, so we need to show that condition iii) of Definition \ref{SL2LipDef} implies condition iii) above, and vice versa. 

Note that $(a,b) \neq (0,0)$, or else the pseudo-determinant would be zero; similar holds for the other pairs in iii). If $ab^* = \pV \in \para$, then 
\[a = \pV b^{*-1} = bb^{-1}\pV b^{*-1} = b\sigma(b^{-1})(\pV).\]
As $\sigma$ is a map $\para \to \para$ this is of the form `b times a paravector'. By Lemma \ref{PartialParavectorCommutingLemma}, there exists a $\pW \in \para$ such that $a = \pW b$, which implies $a\ol{b} = |b|^2\pW$ and therefore $(a,b) \neq (0,0) \in S\lip$. The same argument holds for the other cases.

For the converse, if $(a,b) \in S\lip$ then $ab^* \in \para$ by Lemma \ref{lem:xiStarEtaParavector}, and the same argument holds for the other three spinors in iii).\qed

\subsubsection{The Action of $SL(2,\lip)$ on Lipschitz Spinors}
We want to act on these Lipschitz spinors by our special linear groups. The obvious thing to try is acting by matrix multiplication; take $(\xi,\eta)^T \in S\lip$ to be a column vector, and multiply by $A \in SL(2,\lip)$ on the left. Not only does this send Lipschitz spinors to Lipschitz spinors (ensuring it truly is an action on $S\lip$), but it is also equivalent to acting by M\"{o}bius transformations on the paravector $\xi\eta^{-1}$, so it really is the right choice.
\begin{lemma}
\label{lem:SL2DefinesSpinorAction}
Left-multiplication by $A \in SL(2,\lip)$ defines an action of $SL(2,\lip)$ on $S\lip$. The action of $A$ on $\pV \in \cpara$ corresponds equivariantly to the action by matrix multiplication on the Lipschitz spinor $(\xi,\eta)$ with $\xi\eta^{-1}=\pV$.
\end{lemma}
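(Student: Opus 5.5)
Write $A=\begin{pmatrix} a & b\\ c & d\end{pmatrix}\in SL(2,\lip)$ and $\kappa=(\xi,\eta)\in S\lip$, so that $A\kappa=(a\xi+b\eta,\ c\xi+d\eta)$. Three things have to be shown. First, $A\kappa$ lies in $S\lip$. Second, the map is an action: the identity acts trivially, and $(AB)\kappa=A(B\kappa)$ holds by associativity of matrix multiplication over $\clif$, once we know that $SL(2,\lip)$ is closed under products (Ahlfors, and multiplicativity of pdet). Third, it is compatible with the ratio map: $\mathscr{R}(A\kappa)=A\cdot\mathscr{R}(\kappa)$, with the M\"obius action (\ref{ParavectorMobiusAction}) on the right.

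The equivariance is the easy part and I would do it first. Suppose $\eta\neq0$, and put $\pV=\xi\eta^{-1}$, so $\xi=\pV\eta$. Then
\[a\xi+b\eta=(a\pV+b)\eta,\qquad c\xi+d\eta=(c\pV+d)\eta .\]
Hence $(a\xi+b\eta)(c\xi+d\eta)^{-1}=(a\pV+b)(c\pV+d)^{-1}=A\pV$ whenever $c\pV+d$ is invertible. When $c\pV+d=0$, the bottom entry of $A\kappa$ vanishes and both sides equal $\infty$. If $\eta=0$, then $\pV=\infty$, $A\kappa=(a\xi,c\xi)$, and the ratio is $ac^{-1}=A\infty$.

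Membership in $S\lip$ is the main obstacle. The entries $a\pV+b$ and $c\pV+d$ must lie in $\lip^{\triangleright}$, the two entries must not both vanish, and their ratio must be a paravector. For the entries, I would invoke Ahlfors' result (\cite{Ahlfors1985}) that for a Clifford matrix and $\pV\in\para$ the element $c\pV+d$ lies in $\lip^{\triangleright}$, and likewise $a\pV+b$. This is really the same fact that makes (\ref{ParavectorMobiusAction}) well defined. If I had to prove it directly, I would use Lemma \ref{PartialParavectorCommutingLemma} to write $c\pV=\pW c$ with $\pW\in\para$. When $c\neq0$, $d=c\,(c^{-1}d)$ and $c^{-1}d=|c|^{-2}\ol{c}d$; by condition iii) in its form $c^*d$ or $\ol{c}d$ paravector (via Corollary \ref{cor:xistarEta}), this is $c$ times a paravector. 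So $c\pV+d=c(\pX+\pY)$ is a product of Lipschitz elements and paravectors, hence in $\lip^{\triangleright}$. Since $\lip^{\triangleright}$ is closed under multiplication, the products $(a\pV+b)\eta$ and $(c\pV+d)\eta$ then lie in $\lip^{\triangleright}$ as well. Both entries cannot vanish, because $A$ is invertible (it has pseudo-determinant $1$) and $\kappa\neq0$.

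For the paravector condition, if $c\xi+d\eta\neq0$ the ratio equals $A\pV$, which lies in $\para$ because the M\"obius map sends $\cpara$ to $\cpara$. If $c\xi+d\eta=0$, the ratio is $\infty\in\cpara$. In both cases $\xi'\eta'^{-1}\in\cpara$, which is the equivalent form of the Lipschitz spinor condition in Definition \ref{def:LipschitzSpinor}. The case $\eta=0$ goes the same way: $(a\xi,c\xi)$ has entries in $\lip^{\triangleright}$, and $a\xi\,\ol{c\xi}=|\xi|^2a\ol{c}$, which is a paravector by Proposition \ref{prop:AlternateSLCharacterisation}, since $(a,c)\in S\lip$.
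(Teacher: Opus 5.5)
Your proposal is correct and follows the paper's route. The paper inserts $1=\eta^{-1}\eta$, which is your factorisation $a\xi+b\eta=(a\pV+b)\eta$, $c\xi+d\eta=(c\pV+d)\eta$; this reduces the ratio to the M\"obius map $(a\pV+b)(c\pV+d)^{-1}$, and the paper then simply appeals to that map being well defined on $\cpara$. Your checks that the entries lie in $\lip^{\triangleright}$ and do not both vanish, and your treatment of the $\eta=0$ case, are detail the paper leaves implicit. One small fix: $c^{-1}d\in\para$ follows from condition iii) via $c^{-1}d=\sigma(c^{-1})(dc^*)$ with $dc^*=cd^*\in\para$, not from Corollary~\ref{cor:xistarEta}.
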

\noindent\textit{Proof.} Take the action by matrix multiplication:
\[\begin{pmatrix}
a & b \\
c & d
\end{pmatrix}
\begin{pmatrix}
 \xi \\
 \eta
\end{pmatrix}=
\begin{pmatrix}
a\xi + b\eta \\
c\xi + d\eta \\
\end{pmatrix}.\]
Let $\pW=(a\xi + b\eta)(c\xi + d\eta)^{-1}$ be the projective image of the transformation. We can insert $1=\eta^{-1}\eta$ between the brackets:
\begin{align*}
\pW &= (a\xi + b\eta)\eta^{-1}\eta(c\xi + d\eta)^{-1} = (a\xi\eta^{-1}+b)(c\xi\eta^{-1}+d)^{-1} = (a\pV+b)(c\pV+d)^{-1}.
\end{align*}
This is the same fractional linear transformation as (\ref{ParavectorMobiusAction}), which implies left-multiplication by $SL(2,\lip)$ is a well-defined action $S\lip \to S\lip$, and implies equivariance of the $SL(2,\lip)$ action through the map $\mathscr{R}$ from Lipschitz spinors to the generalised Riemann sphere.\qed
\\\\
We may also denote this action by $A.\kappa = A.(\xi,\eta)$. We can extend this to an action by matrix multiplication of $SL(2,\lip)$ on $\clif^2$ more generally. This is more obviously a map $\clif^2 \to \clif^2$, as algebras are closed under addition and multiplication.
\begin{lemma}
\label{TransitiveLemma}
 The action of $SL(2,\lip)$ is transitive on $S\lip$.
\end{lemma}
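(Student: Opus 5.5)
The plan is to show that every Lipschitz spinor lies in the orbit of the base spinor $e=(1,0)^T$. Transitivity then follows from the group structure. If $\kappa_1=A_1.e$ and $\kappa_2=A_2.e$, then $(A_2A_1^{-1}).\kappa_1=\kappa_2$, using that $SL(2,\lip)$ is a group (pseudo-determinant multiplicative, \cite{Ahlfors1985}) and that left multiplication is an action (Lemma \ref{lem:SL2DefinesSpinorAction}).

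Since $A.e$ is the first column of $A$, it suffices, given $\kappa=(\xi,\eta)\in S\lip$, to complete $(\xi,\eta)^T$ to a Clifford matrix. I will verify the conditions of Definition \ref{SL2LipDef} directly. I split into two cases according to whether $\eta$ vanishes.

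If $\eta=0$, then $\xi\neq 0$ and I take
\[A=\begin{pmatrix}\xi & 0\\ 0 & (\xi^*)^{-1}\end{pmatrix}.\]
The pseudo-determinant is $\xi^*(\xi^*)^{-1}=1$. The products $ab^*,cd^*,c^*a,d^*b$ all vanish, and $0\in\para$.

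If $\eta\neq 0$, I take
\[A=\begin{pmatrix}\xi & -(\eta^*)^{-1}\\ \eta & 0\end{pmatrix}.\]
The pseudo-determinant is $\xi^*\cdot 0-\eta^*(-(\eta^*)^{-1})=1$. The product $cd^*$ is $0$, and $d^*b$ is $0$. Next, $ab^*=-\xi\eta^{-1}\in\para$ by the definition of a Lipschitz spinor, including the case $\xi=0$. Finally, $c^*a=\eta^*\xi\in\para$ by Lemma \ref{lem:xiStarEtaParavector} and Corollary \ref{cor:xistarEta}. In both cases the first column of $A$ is $(\xi,\eta)^T$.

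The only point needing care is condition i): the new entries $(\xi^*)^{-1}$ and $-(\eta^*)^{-1}$ must lie in $\lip^{\triangleright}$. I would handle this as follows. $\lip$ is generated by invertible paravectors, and paravectors are fixed by reversal. Reversal is an antiautomorphism, so it sends a product $\pV_1\cdots\pV_k$ to $\pV_k\cdots\pV_1$, which again lies in $\lip$. Hence $\lip$ is closed under $\cdot^*$. It is closed under inverses because it is a group, and it contains $-1$. These checks are routine, so I expect no substantial obstacle; the main work is choosing the completion with a zero entry so that the paravector conditions reduce to Lemma \ref{lem:xiStarEtaParavector}.
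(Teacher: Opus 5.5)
Your proof is correct and follows the paper's route: complete $\kappa$ to a Clifford matrix with $\kappa$ as its first column, then send $\kappa_1$ to $\kappa_2$ by $A_2A_1^{-1}$. Your case-split completions are in fact more careful than the paper's single matrix (\ref{eqn:FirstSpinorSLEmbed}), whose second column $\left(-\frac{1}{2}\eta^{*-1},\ \frac{1}{2}\xi^{*-1}\right)$ is undefined when $\xi=0$ or $\eta=0$; the matrix $A_\kappa$ of Proposition \ref{prop:KappaSLMatrix}, built from the complement $\check{\kappa}$, gives a completion that works uniformly for all $\kappa$.
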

\noindent\textit{Proof.} Any Lipschitz spinor $(\xi,\eta)$ can be found as the first column of an element of $SL(2,\lip)$:
\begin{equation}
\label{eqn:FirstSpinorSLEmbed}
A = \begin{pmatrix}
 \xi & -\frac{1}{2}\eta^{*-1}\\
 \eta & \frac{1}{2}\xi^{*-1} 
\end{pmatrix}.
\end{equation}
It is easy to check that such an $A$ lies in $SL(2,\lip)$ for all $(\xi,\eta)$. Thus, for two spinors $\kappa_1$, $\kappa_2 \in \lip$ there are matrices $A_1$, $A_2 \in SL(2,\lip)$ with first columns $\kappa_1$ and $\kappa_2$ respectively. Then $A_1.(1,0) = \kappa_1$ and $A_2.(1,0) = \kappa_2$, and $A_2A_1^{-1} \in SL(2,\lip)$ sends $\kappa_1$ to $\kappa_2$.\qed
\\\\
We will make heavy use of the transitivity of the $SL(2,\lip)$ action to extend statements from simple examples to the general case.
\subsection{Products and Forms}
\label{sec:ProductsAndForms}
There are various important binary operations on the complex spinors $\C^2$. The Lipschitz spinors carry generalisations of these.
\subsubsection{Two Inner Products}
In Definition \ref{defn:ParavectorInnerProduct} we defined an inner product (the \textit{dot product}) on paravectors in the obvious way, essentially just the Euclidean dot product. It was also extended to $\clif$ more generally, as $a \cdot b = \text{Re}(a\ol{b})$.
\begin{lemma}
 \label{CliffordAlgInnerProductLemma}
 $\clif$ equipped with $x \cdot y:\ \clif \times \clif \to \R$ is a real inner product space.
\end{lemma}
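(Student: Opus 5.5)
The plan is to verify the inner product axioms for $x\cdot y=\text{Re}(x\ol{y})$ directly in the standard monomial basis $\{i_I\}_{I\subseteq\{1,\dots,n\}}$ of $\clif$. Bilinearity is immediate, because Clifford multiplication is $\R$-bilinear, Clifford conjugation is $\R$-linear, and taking the real (degree zero) part is linear. The real content of the lemma is therefore symmetry and positive definiteness. Both should follow once we show that the standard basis is orthonormal for this pairing.

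First, symmetry. The real part of an element is fixed by Clifford conjugation, since degree-zero monomials are invariant under $\ol{\cdot}$ by Lemma \ref{MonomialInvolutionInvarianceLemma}, and conjugation maps each degree to itself. Hence $\text{Re}(x\ol{y})=\text{Re}(\ol{x\ol{y}})=\text{Re}(y\ol{x})$, using that $\ol{\cdot}$ is an involutive antiautomorphism. Alternatively, one could invoke Lemma \ref{ABBALemma} together with the second form of Definition \ref{def:CliffordDotProd}.

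Second, the basis computation. For reduced monomials $i_I$ and $i_J$, the product $i_I\ol{i_J}$ is $\pm i_{I\triangle J}$, so it has nonzero real part only when $I=J$. For $I=J$ with $|I|=k$, the argument in Lemma \ref{MonomialInvolutionInvarianceLemma} gives $\ol{i_I}=(-1)^{k(k+1)/2}i_I$. On the other hand, $\ol{i_I}=\ol{i_{j_k}}\cdots\ol{i_{j_1}}=(-1)^k i_{j_k}\cdots i_{j_1}$, so
\[i_I\ol{i_I}=(-1)^k\, i_{j_1}\cdots i_{j_k}\,i_{j_k}\cdots i_{j_1}=(-1)^k(-1)^k=1,\]
because $i_j^2=-1$ in $\clif$ and the inner letters cancel pairwise from the middle outward.

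It follows that for $x=\sum_I x_I i_I$ we have $x\cdot x=\sum_I x_I^2$. This is positive unless $x=0$, and more generally $x\cdot y=\sum_I x_Iy_I$, so the pairing is just the Euclidean inner product on $\R^{2^n}$.

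The only step needing care is the sign bookkeeping in $i_I\ol{i_I}$, that is, making sure the negative-definite signature makes every term contribute $+1$. Computing by the pairwise cancellation above, rather than by quoting the mod-$4$ invariance classes, avoids sign errors.
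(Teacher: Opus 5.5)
Your proof is correct and follows essentially the same route as the paper: symmetry comes from the invariance of the real part under Clifford conjugation, and positive definiteness comes from expanding in the monomial basis and computing $i_I\ol{i_I}=1$ by cancelling the $i_j^2=-1$ factors pairwise. Your explicit check that $\text{Re}(i_I\ol{i_J})=0$ for $I\neq J$ also justifies a step the paper leaves implicit when it drops the cross terms in $\text{Re}(x\ol{x})$.
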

\noindent\textit{Proof.} We must show i) symmetry, ii) linearity in the first argument, and iii) positive definiteness. For i) we appeal to the invariance of real numbers under $\ol{\cdot}$:
\[x \cdot y = \text{Re}(x\ol{y}) = \ol{\text{Re}(x\ol{y})} = \text{Re}(y\ol{x}) = y \cdot x.\]
For ii), take $a,b \in \R$ and $x,y,z \in \clif$:
\[(ax+by)\cdot z = \text{Re}((ax+by)\ol{z}) = a\text{Re}(x\ol{z}) + b\text{Re}(y\ol{z}) = a(x\cdot z) + b(y \cdot z).\]
So the map is real linear in the first argument, and in the second by symmetry. 

\noindent For iii), let $x = \sum_Ix_Ii_I$, $x_I \in \R$ be the expansion of $x \in \clif \backslash \{0\}$ in the standard basis.
\[x\cdot x = \text{Re}(x\ol{x}) = \sum_Ix_Ii_I\ol{x_Ii_I} = \sum_Ix_I^2i_I\ol{i_I}.\]
If $i_I = i_ji_k...i_z$, then $\ol{i_I} = (-i_z)...(-i_k)(-i_j)$. This implies
\begin{align*}
i_I\ol{i_I} &= i_ji_k...i_z(-i_z)...(-i_k)(-i_j) = 1,\\
x \cdot x &= \sum_Ix_I^2 > 0.\tag*{\qed}
\end{align*}
We can also put an inner product on $\clif^2$, of which the Lipschitz spinors are a subset.
\begin{definition}
\label{defn:CliffSquaredInnerProduct}
Given $D_m = (x_m,y_m) \in \clif^2$ for $m=1,2$, we define
\[(D_1|D_2) = \text{Re}(\ol{x}_1x_2+\ol{y}_1y_2).\]
We also define $|D|^2=(D|D)$.\null\hfill\defn
\end{definition}
\noindent We move the conjugation to the first terms to align with the generalised Hermitian form of Section \ref{sec:HermitianForm}. There, the choice matters; here, $(D_1 | D_2)$ is the same as if we chose Re$(x_1\ol{x}_2+y_1\ol{y}_2)$.
\begin{lemma}
For $x,y \in \clif$, $\text{Re}(x\ol{y}) = \text{Re}(\ol{x}y)$.
\end{lemma}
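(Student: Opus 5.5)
The plan is to combine Lemma~\ref{ABBALemma} with the fact that $\ol{\cdot}$ is an antiautomorphism fixing the real part. The identity
\[\text{Re}(x\ol{y}) = \text{Re}(\ol{y}x)\]
holds directly by Lemma~\ref{ABBALemma} (with $a = x$, $b = \ol{y}$). So it suffices to show
\[\text{Re}(\ol{y}x) = \text{Re}(\ol{x}y).\]

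For this, we use the fact that Clifford conjugation preserves real parts. Expand $z = \sum_I z_I i_I$ in the standard basis. By Lemma~\ref{MonomialInvolutionInvarianceLemma}, $\ol{\cdot}$ sends each monomial $i_I$ to $\pm i_I$, with the degree-zero monomial $1$ fixed. Since $\ol{\cdot}$ is linear, the coefficient of $1$ in $\ol{z}$ equals that in $z$, so
\[\text{Re}(\ol{z}) = \text{Re}(z) \quad \text{for all } z \in \clif.\]

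Now apply this with $z = \ol{x}y$. Because $\ol{\cdot}$ is an antiautomorphism and an involution,
\[\ol{\ol{x}y} = \ol{y}\,\ol{\ol{x}} = \ol{y}x.\]
Hence
\[\text{Re}(\ol{x}y) = \text{Re}\bigl(\ol{\ol{x}y}\bigr) = \text{Re}(\ol{y}x) = \text{Re}(x\ol{y}),\]
where the last equality is the first step. This completes the argument.

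There is no real obstacle here. The only point needing care is justifying that $\ol{\cdot}$ fixes real parts, and that follows immediately from the monomial-wise sign description in Lemma~\ref{MonomialInvolutionInvarianceLemma}.
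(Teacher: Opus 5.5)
Your proof is correct and uses the same two ingredients as the paper, namely Lemma~\ref{ABBALemma} and the invariance of real parts under Clifford conjugation, only applied in the opposite order: the paper first conjugates to get $\text{Re}(y\ol{x})$ and then applies Lemma~\ref{ABBALemma}. Your explicit justification that $\text{Re}(\ol{z}) = \text{Re}(z)$ via Lemma~\ref{MonomialInvolutionInvarianceLemma} spells out a step the paper leaves implicit.
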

\noindent\textit{Proof.} Real numbers are invariant under Clifford conjugation, implying $\text{Re}(x\ol{y}) = \ol{\text{Re}(x\ol{y})} = \text{Re}(y\ol{x})$. We then apply Lemma \ref{ABBALemma} to see $\text{Re}(y\ol{x}) = \text{Re}(\ol{x}y)$.\hfill\qed
\\\\
We define a multiplication action of $\clif$ on $\clif^2$ in the obvious way; given $z \in \clif$ and $D=(x,y) \in \clif^2$, then $zD = z(x,y) = (zx,zy)$ and $Dz = (x,y)z = (xz,yz)$.
\begin{lemma}
 \label{CliffordAlgInnerProductLemma2}
 $\clif^2$ equipped with the map $(\cdot|\cdot) :\ \clif^2 \times \clif^2 \to \R$ is a real inner product space.
\end{lemma}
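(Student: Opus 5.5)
The plan is to reduce everything to Lemma \ref{CliffordAlgInnerProductLemma}. The key observation is that the form on $\clif^2$ splits coordinatewise into two copies of the dot product on $\clif$. For $D_m=(x_m,y_m)$, linearity of $\text{Re}$ gives
\[(D_1|D_2)=\text{Re}(\ol{x}_1x_2)+\text{Re}(\ol{y}_1y_2).\]
By the lemma immediately preceding Definition \ref{defn:CliffSquaredInnerProduct}, $\text{Re}(\ol{x}_1x_2)=\text{Re}(x_1\ol{x}_2)=x_1\cdot x_2$, and similarly for the $y$'s. Hence
\[(D_1|D_2)=x_1\cdot x_2+y_1\cdot y_2,\]
which is exactly the orthogonal direct sum of two copies of the inner product space $(\clif,\cdot)$.

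With this identity in hand, the three axioms follow in order.
\begin{enumerate}[label=\roman*)]
\item Symmetry follows from the symmetry of $x\cdot y$ established in Lemma \ref{CliffordAlgInnerProductLemma}.
\item Real bilinearity follows because each summand is real linear in each argument. Alternatively, it is immediate from real linearity of multiplication, of $\ol{\cdot}$, and of $\text{Re}$.
\item For positive definiteness, note that $|D|^2=x\cdot x+y\cdot y=\sum_I x_I^2+\sum_I y_I^2$ by the computation in the proof of Lemma \ref{CliffordAlgInnerProductLemma}. This is a sum of nonnegative terms, and it vanishes only if every coefficient of both $x$ and $y$ vanishes, that is, only if $D=(0,0)$.
\end{enumerate}

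There is no real obstacle here. The only point requiring care is justifying that moving the conjugation from the second factor to the first does not change the real part, and that is exactly the preceding lemma, itself a consequence of Lemma \ref{ABBALemma}. I would also note that $|D|^2$ is simply the squared Euclidean norm of the coefficient vector of $D$ in $\R^{2\cdot 2^n}$.
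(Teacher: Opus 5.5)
Your proposal is correct and follows essentially the same route as the paper, which likewise reduces positive definiteness to $(D|D)=x\cdot x+y\cdot y>0$ via Lemma \ref{CliffordAlgInnerProductLemma}. The only cosmetic difference is that the paper verifies symmetry and linearity directly, with symmetry coming from the invariance of real numbers under $\ol{\cdot}$, rather than first rewriting $(D_1|D_2)=x_1\cdot x_2+y_1\cdot y_2$ for arbitrary arguments.
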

\noindent\textit{Proof.} 
Let $D_j$ have components $(x_m,y_m)$ for $m=1,2,3$. The map is symmetric, as
\[(D_1|D_2) = \text{Re}(\ol{x}_1x_2 + \ol{y}_1y_2) = \ol{\text{Re}(\ol{x}_1x_2 + \ol{y}_1y_2)} = \text{Re}(\ol{x}_2x_1 + \ol{y}_2y_1) = (D_2|D_1).\]
The second equality follows from the invariance of $\R$ under Clifford conjugation. It is real-linear in the first entry; for $a,b \in \R$ and $D_1,D_2,D_3 \in \clif^2$:
\[(aD_1+bD_2|D_3) = \text{Re}((a\ol{x}_1+b\ol{x}_2)x_3 + (a\ol{y}_1+b\ol{y}_2)y_3) = a(D_1|D_3) + b (D_2|D_3).\]
It is positive definite, as for $D = (x,y) \in \clif^2 \neq (0,0)$:
\[(D|D) = \text{Re}(\ol{x}x+\ol{y}y) = \text{Re}(\ol{x}x)+\text{Re}(\ol{y}y) = x\cdot x + y\cdot y > 0,\]
since the inner product on $\clif$ was shown in Lemma \ref{CliffordAlgInnerProductLemma} to be positive definite.\qed
\\\\
In practice, we're primarily interested in taking these forms on $\lip$, or on pairs $\lip^2$. The following lemma collates properties of these two inner products that will prove useful.
\begin{lemma}
 \label{InnerProductPropertiesLemma}
 Let $D_m = (x_m,y_m) \in \clif^2$ for $m = \{1,2\}$, $K = (x,y) \in \lip^2$, $\alpha\in \lip$, $c \in \R$, $b \in \clif$ satisfying $b+\ol{b} \in \R$, and $\pV,\pW \in \para$.
 \begin{enumerate}[label=\roman*)]
 \item $(D_1 \alpha| D_2 \alpha) = |\alpha|^2(D_1|D_2)$,
 \item $(K c|K b) = |K|^2 c \cdot b$,
 \item If $\text{Re}(b)=0$, then $(K|K b)= (K|b K) = 0$,
 \item $(K \pV|K \pW) = |K|^2 \pV \cdot \pW$.
 \end{enumerate}
\end{lemma}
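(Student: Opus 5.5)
The approach is to expand each pairing using Definition \ref{defn:CliffSquaredInnerProduct} and reduce everything to two facts. The first is that for any $x \in \lip^{\triangleright}$ the products $\ol{x}x$ and $x\ol{x}$ are real. This holds trivially for $x=0$. For $x=\pV_1\cdots\pV_k$ a product of paravectors it follows from the computation before the definition of the norm on $\clifn{p,q}$, namely $x\ol{x}=N(\pV_1)\cdots N(\pV_k)$; symmetrically $\ol{x}x=N(\pV_k)\cdots N(\pV_1)$. In either order the result is $|x|^2$, which is positive when $x\neq 0$, and so $|K|^2=|x|^2+|y|^2$ for $K=(x,y)\in\lip^2$. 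The second fact is the cyclic property $\text{Re}(ab)=\text{Re}(ba)$ of Lemma \ref{ABBALemma}, together with the identity $\text{Re}(x\ol{y})=\text{Re}(\ol{x}y)$ proved just before Lemma \ref{CliffordAlgInnerProductLemma2}.

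For i), we have $(D_1\alpha|D_2\alpha)=\text{Re}(\ol{\alpha}\,\ol{x}_1x_2\alpha+\ol{\alpha}\,\ol{y}_1y_2\alpha)$, since Clifford conjugation is an antiautomorphism. Applying Lemma \ref{ABBALemma} moves $\ol{\alpha}$ to the right end of each term, giving $\text{Re}(\ol{x}_1x_2\alpha\ol{\alpha}+\ol{y}_1y_2\alpha\ol{\alpha})$. Since $\alpha\ol{\alpha}=|\alpha|^2$ is a real scalar, it factors out and the result is $|\alpha|^2(D_1|D_2)$.

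For ii), conjugation fixes the real number $c$, so $(Kc|Kb)=\text{Re}(c\,\ol{x}xb+c\,\ol{y}yb)$. The factor $\ol{x}x+\ol{y}y=|K|^2$ is real, so this equals $c|K|^2\,\text{Re}(b)$. On the other side, $c\cdot b=\text{Re}(c\ol{b})=c\,\text{Re}(\ol{b})=c\,\text{Re}(b)$, because conjugation does not change the degree-zero part. The hypothesis $b+\ol{b}\in\R$ is not actually needed for this identity.

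For iii), the first equality $(K|Kb)=|K|^2\text{Re}(b)=0$ is the same computation as ii) with $c=1$. For the second, $(K|bK)=\text{Re}(\ol{x}bx+\ol{y}by)$. Lemma \ref{ABBALemma} rotates this to $\text{Re}(bx\ol{x}+by\ol{y})=|K|^2\text{Re}(b)=0$, now using that $x\ol{x}$ is real as well as $\ol{x}x$.

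For iv), we have $(K\pV|K\pW)=\text{Re}(\ol{\pV}\,\ol{x}x\pW+\ol{\pV}\,\ol{y}y\pW)=|K|^2\text{Re}(\ol{\pV}\pW)$. The identity $\text{Re}(\ol{\pV}\pW)=\text{Re}(\pV\ol{\pW})$ converts this to $|K|^2\,\pV\cdot\pW$ by Definition \ref{defn:ParavectorInnerProduct}.

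The only step needing care is the first fact, that $\ol{x}x$ and $x\ol{x}$ are both real for every element of the Lipschitz monoid. This is also the only place the hypothesis $K\in\lip^2$, rather than $K\in\clif^2$, is used, and it is why i) requires $\alpha\in\lip$. Everything else is direct manipulation.
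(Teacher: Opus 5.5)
Your proof is correct, and it takes a slightly different route from the paper's that is worth noting.

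The paper expands using the equivalent form $\text{Re}(x_1\ol{x_2}+y_1\ol{y_2})$. For i) this puts $\alpha\ol{\alpha}$ directly in the middle, so no appeal to Lemma \ref{ABBALemma} is needed. For ii) and iv) it symmetrizes, using conjugation-invariance of the real part, to sandwich $b+\ol{b}$ (respectively $\pV\ol{\pW}+\pW\ol{\pV}$) between $x$ and $\ol{x}$. The hypothesis $b+\ol{b}\in\R$, respectively the fact that $\pV\ol{\pW}$ has degree at most $2$, is exactly what makes that middle factor a real scalar.

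You instead use the defining form $\text{Re}(\ol{x_1}x_2+\ol{y_1}y_2)$, so that $\ol{x}x=|x|^2$ sits adjacent and factors out directly. The two routes trade hypotheses:
\begin{itemize}
\item Yours needs $K$ to have entries in $\lip^{\triangleright}$. In exchange it shows, as you say, that the condition on $b$ is superfluous. In fact it gives the single identity $(Ka|Kb)=|K|^2\,a\cdot b$ for all $a,b\in\clif$, which subsumes both ii) and iv).
\item The paper's symmetrization for ii) and iv) never uses that $x\ol{x}$ is real, only that $\text{Re}(x\ol{x})=\text{Re}(\ol{x}x)$. So those two items hold for arbitrary $K\in\clif^2$, provided $b+\ol{b}\in\R$ (respectively $\pV,\pW\in\para$).
\end{itemize}
For the $(K|bK)$ part of iii), the paper does rely on $x\ol{x}\in\R$, just as you do.
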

\noindent\textit{Proof.} We can show i) by a simple computation. As $\text{Re}(\ol{x}_1x_2+\ol{y}_1y_2) = \text{Re}(x_1\ol{x}_2+y_1\ol{y}_2)$, we compute
\[(D_1 \alpha|D_2 \alpha) = \text{Re}(x_1\alpha\ol{x_2 \alpha} + y_1 \alpha \ol{y_2 \alpha}) = \text{Re}\left(x_1|\alpha|^2\ol{x_2} + y_1 |\alpha|^2 \ol{y_2}\right) = |\alpha|^2(D_1|D_2).\]
For ii), we can also do a direct computation. Computing the left-hand side:
\[(K c| K b) = c\text{Re}(x\ol{x b} + y\ol{y b}) = c\text{Re}(x\ol{b}\ol{x} + y\ol{b}\ol{y}).\]
As the real component is preserved under Clifford conjugation, we rewrite this as 
\[(K c| K b) = \frac{c}{2}\text{Re}(x(b+\ol{b})\ol{x}+y(b+\ol{b})\ol{y}).\]
Because $b+\ol{b} =2\text{Re}(b) \in \R$ we can simplify this to
\[c\text{Re}(b)\text{Re}(x\ol{x}+y\ol{y}) = c\text{Re}(b)(|x|^2+|y|^2).\]
Noting that $c \cdot b = \text{Re}(c\ol{b}) = c\text{Re}(b)$, we conclude
\[(K c| K b) = (|x|^2+|y|^2)c\text{Re}(b) = |K|^2c\cdot b.\]
For iii), consider first having $b$ on the left:
\[(K|b K) = \text{Re}(x\ol{b x} + y\ol{b y}) = \text{Re}((|x|^2+|y|^2)\ol{b}) = (|x|^2+|y|^2)\text{Re}(\ol{b}) = 0.\]
If instead we have $b$ on the right we can make use of ii):
\[(K| K b) = |K|^2 (1\cdot b) = |K|^2\text{Re}(\ol{b}) = 0.\]
For iv),  we begin with a similar computation to ii).
\[(K\pV|K\pW) = \text{Re}\left(x\pV\ol{\pW}\ol{x} + y\pV\ol{\pW}\ol{y}\right) = \frac{1}{2}\text{Re}\left(x\left(\pV\ol{\pW}+\pW\ol{\pV}\right)\ol{x} +  y\left(\pV\ol{\pW}+\pW\ol{\pV}\right)\ol{y}\right).\]
$\pV\ol{\pW}$ is a product of paravectors and thus has terms of degree $\leq 2$, which implies 
\[\left(\pV\ol{\pW}+\pW\ol{\pV}\right) = \left(\pV\ol{\pW}+\ol{\pV\ol{\pW}}\right) = 2\text{Re}(\pV\ol{\pW}).\]
Taken together, we conclude $(K\pV|K\pW) = \text{Re}(x\ol{x}+y\ol{y})\text{Re}(\pV\ol{\pW}) = |K|^2\pV\cdot\pW$.\qed
\subsubsection{The Generalised Hermitian Form}
\label{sec:HermitianForm}
We can also define a `generalised Hermitian form' that extends the Hermitian inner product of a complex vector space to our Clifford algebras.
\begin{definition}
\label{defn:genHermForm}
Given $D_m = (x_m,y_m) \in \clif^2$ for $m=1,2$, we define
\[\langle \cdot, \cdot \rangle: \clif^2 \times \clif^2 \to \clif,\ \langle D_1,D_2 \rangle = \ol{x_1}x_2+\ol{y_1}y_2.\tag*{\defn}\]
\end{definition}
\noindent Note that $(D_1 | D_2) = \text{Re}(\langle D_1,D_2 \rangle)$. This form is not technically Hermitian as its range is not a subset of $\C$, but it is a straightforward extension where $\C$ is replaced with some higher Clifford algebra and complex conjugation is replaced by the Clifford conjugate $\ol{\cdot}$.
\begin{definition}
 A {\normalfont generalised Hermitian form} on a real vector space $V$ bearing a right $\clifn{p,q}$-module structure is a function $h: V \times V \to \clifn{p,q}$ such that for all $u,v,w \in V$ and all $a,b \in \clifn{p,q}$, 
 \begin{enumerate}[label=\roman*)]
 \item $h(u,va+wb) = h(u,v)a + h(u,w)b$,
 \item $h(u,v) = \ol{h(v,u)}$, where $\ol{\cdot}$ is the Clifford conjugate. \null\hfill\defn
 \end{enumerate}
\end{definition}
\begin{lemma}
 $\langle \cdot,\cdot \rangle: \clif^2 \times \clif^2 \to \clif$ is a generalised Hermitian form.
\end{lemma}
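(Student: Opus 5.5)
We verify the two defining conditions directly from Definition \ref{defn:genHermForm}, using that $\ol{\cdot}$ is a real-linear antiautomorphism of $\clif$. The module structure on $\clif^2$ is the right multiplication $Dz=(xz,yz)$ introduced before Lemma \ref{CliffordAlgInnerProductLemma2}. Vector addition is componentwise, so $\clif^2$ is a real vector space carrying a right $\clif$-module structure, as the definition requires.

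For condition i), take $u=(x_1,y_1)$, $v=(x_2,y_2)$, $w=(x_3,y_3)$ and $a,b\in\clif$. Then $va+wb=(x_2a+x_3b,\ y_2a+y_3b)$. We expand
\[\langle u,va+wb\rangle=\ol{x_1}(x_2a+x_3b)+\ol{y_1}(y_2a+y_3b)\]
using distributivity and associativity of the Clifford product. We then regroup it as $(\ol{x_1}x_2+\ol{y_1}y_2)a+(\ol{x_1}x_3+\ol{y_1}y_3)b$, which is $\langle u,v\rangle a+\langle u,w\rangle b$. The conjugations act only on the first argument and the scalars multiply on the right, so no commutation is needed.

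For condition ii), we compute $\ol{\langle v,u\rangle}=\ol{\ol{x_2}x_1+\ol{y_2}y_1}$. Clifford conjugation is additive and is an antiautomorphism, i.e.\ $\ol{pq}=\ol{q}\,\ol{p}$; this holds because both $\cdot'$ and $\cdot^*$ are (anti)automorphisms. It is also an involution, $\ol{\ol{p}}=p$. Together these give $\ol{\ol{x_2}x_1}=\ol{x_1}\,x_2$, and likewise $\ol{\ol{y_2}y_1}=\ol{y_1}y_2$, so $\ol{\langle v,u\rangle}=\langle u,v\rangle$.

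There is no real obstacle here. The only point needing care is that $\ol{\cdot}=\cdot'^{\,*}$ is indeed an involutive antiautomorphism. It is the composition of a commuting automorphism and antiautomorphism, each of order two, and on monomials both act by signs (Lemma \ref{MonomialInvolutionInvarianceLemma}), so they commute. This gives involutivity, and $\ol{pq}=(p'q')^*=q'^*p'^*=\ol{q}\,\ol{p}$ gives the antiautomorphism property.
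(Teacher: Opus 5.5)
Your proposal is correct and matches the paper's proof: right-linearity in the second argument by direct expansion, and conjugate symmetry from $\ol{\cdot}$ being an involutive antiautomorphism. Your extra check that $\cdot'$ and $\cdot^*$ commute (both act by signs on monomials) is a small addition; the paper takes this for granted when it defines Clifford conjugation as $\cdot'^{*}=\cdot^{*\prime}$.
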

\noindent\textit{Proof.} We show the form is linear in the second argument. Let $D_m = (x_m,y_m) \in \clif^2$ for $m=1,2,3$ and $a,b \in \clif$.
\[\langle D_1,D_2a+D_3b \rangle = (\ol{x_1}x_2+\ol{y_1}y_2)a + (\ol{x_1}x_3 + \ol{y_1}y_3)b = \langle D_1,D_2 \rangle a + \langle D_1,D_3 \rangle b.\]
To be generalised Hermitian, it must be equal to its conjugate.
\[\ol{\langle D_2,D_1 \rangle} = \ol{\ol{x_2}x_1+\ol{y_2}y_1} = \ol{x_1}x_2 + \ol{y_1}y_2 = \langle D_1,D_2 \rangle.\tag*{\qed}\]
This form has some nice properties of its own.
\begin{lemma}
 Given $D_m = (x_m,y_m) \in \clif^2$ for $m = 1,2$, $\alpha \in \lip$, and $b,c \in \clif$,
 \begin{enumerate}[label=\roman*)]
 \item $\langle \alpha D_1, \alpha D_2 \rangle = |\alpha|^2\langle D_1,D_2 \rangle$,
 \item $\langle D_1b, D_2 c \rangle = \ol{b}\langle D_1,D_2 \rangle c$.
 \end{enumerate}
\end{lemma}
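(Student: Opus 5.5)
Both identities follow by direct expansion from Definition \ref{defn:genHermForm}, using only that Clifford conjugation is an antiautomorphism and that $\alpha\ol{\alpha}$ is real for $\alpha \in \lip$.

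For i), the scalar $\alpha$ multiplies each component on the left, so $\alpha D_m = (\alpha x_m, \alpha y_m)$. Then
\[\langle \alpha D_1, \alpha D_2\rangle = \ol{\alpha x_1}\,\alpha x_2 + \ol{\alpha y_1}\,\alpha y_2 = \ol{x_1}\,\ol{\alpha}\alpha\, x_2 + \ol{y_1}\,\ol{\alpha}\alpha\, y_2 ,\]
where we use $\ol{\alpha x} = \ol{x}\,\ol{\alpha}$. The key input is that $\ol{\alpha}\alpha = |\alpha|^2 \in \R$. Since $\alpha$ is a product of paravectors $\pV_1\cdots\pV_k$, we get $\ol{\alpha}\alpha = \ol{\pV_k}\cdots\ol{\pV_1}\pV_1\cdots\pV_k$. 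Collapsing the middle pair $\ol{\pV_1}\pV_1 = N(\pV_1)$, which is real and hence central, and repeating inward gives $\prod_j N(\pV_j) = \alpha\ol{\alpha}$. A real scalar commutes with everything, so it can be pulled out front, giving $|\alpha|^2\langle D_1,D_2\rangle$.

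For ii), we have $D_1 b = (x_1 b, y_1 b)$ and $D_2 c = (x_2 c, y_2 c)$. Then
\[\langle D_1 b, D_2 c\rangle = \ol{x_1 b}\,x_2 c + \ol{y_1 b}\,y_2 c = \ol{b}\,\ol{x_1}x_2 c + \ol{b}\,\ol{y_1} y_2 c = \ol{b}\,\langle D_1,D_2\rangle\, c ,\]
by the antiautomorphism property and distributivity. Equivalently, this follows from linearity in the second argument together with the conjugate symmetry of the generalised Hermitian form.

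The only step needing any care is checking in i) that $\ol{\alpha}\alpha$, and not merely $\alpha\ol{\alpha}$, is real and equal to $|\alpha|^2$. This is exactly the inward-collapsing argument above.
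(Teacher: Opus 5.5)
Your proposal is correct and follows the paper's proof: both arguments expand the form directly, use that Clifford conjugation is an antiautomorphism, and pull the real scalar $\ol{\alpha}\alpha=|\alpha|^2$ out in (i). Your inward-collapsing argument, showing that $\ol{\alpha}\alpha$ (and not just $\alpha\ol{\alpha}$) is real and equals $|\alpha|^2$, supplies a step the paper uses without stating.
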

\noindent\textit{Proof.} A straightforward computation. For i),
\[\langle \alpha D_1 , \alpha D_2 \rangle = \ol{x_1}(\ol{\alpha}\alpha)x_2 + \ol{y_1}(\ol{\alpha}\alpha)y_2 = |\alpha|^2\langle D_1,D_2 \rangle,\]
where we've used the antiautomorphism of the Clifford conjugate in the first equality. If we instead try right multiplication we find ii):
\[\langle D_1 b, D_2 c \rangle = \ol{b}(\ol{x_1}x_2)c + \ol{b} (\ol{y_1}y_2)c = \ol{b} \langle D_1,D_2\rangle c.\tag*{\qed}\]
The generalised Hermitian form also nicely restricts to a map on Lipschitz spinors.
\begin{lemma}
\label{lem:HermRestrictsLip}
 $\langle \cdot, \cdot \rangle$ restricts to a map $S\lip \times S\lip \to \lip^{\triangleright}$.
\end{lemma}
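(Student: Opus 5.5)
We must show that for $\kappa_m=(\xi_m,\eta_m)\in S\lip$, $m=1,2$, the element $\langle\kappa_1,\kappa_2\rangle=\ol{\xi_1}\xi_2+\ol{\eta_1}\eta_2$ is either $0$ or lies in $\lip$. The plan is to reduce to a normal form using the $SL(2,\lip)$ action (Lemma \ref{TransitiveLemma}). The difficulty is that the Hermitian form is not $SL(2,\lip)$-invariant; it is only invariant under a unitary-type subgroup. So the plan avoids full transitivity and works directly with the two components, splitting into cases.

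\textbf{Degenerate cases.} If $\eta_1=0$ then $\xi_1\in\lip$, and either $\eta_2=0$ or not. In every such case each surviving term $\ol{\xi_1}\xi_2$ or $\ol{\eta_1}\eta_2$ is a product of elements of $\lip^{\triangleright}$. Since $\lip$ is closed under products and under $\ol{\cdot}$ (it is generated by paravectors, and $\ol{\pV}$ is a paravector), that term lies in $\lip^{\triangleright}$. The same handles $\xi_1=0$, $\xi_2=0$ or $\eta_2=0$, whenever only one term survives.

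\textbf{Generic case.} Now assume all four entries are nonzero. Write $\xi_1=\pV_1\eta_1$ and $\xi_2=\pV_2\eta_2$ with $\pV_m=\xi_m\eta_m^{-1}\in\para$. Then
\[\langle\kappa_1,\kappa_2\rangle=\ol{\eta_1}\,\ol{\pV_1}\pV_2\eta_2+\ol{\eta_1}\eta_2=\ol{\eta_1}\bigl(\ol{\pV_1}\pV_2+1\bigr)\eta_2 .\]
Since $\ol{\eta_1},\eta_2\in\lip$, it suffices to show $\ol{\pV_1}\pV_2+1\in\lip^{\triangleright}$.

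\textbf{The key step and main obstacle.} Whether $1+\ol{\pV_1}\pV_2$ lies in $\lip^{\triangleright}$ is the crux: a sum of two elements of $\lip$ need not lie in $\lip$. If $\pV_1=0$ the claim is trivial, so assume $\pV_1\neq 0$. Factor
\[1+\ol{\pV_1}\pV_2=\ol{\pV_1}\bigl(\ol{\pV_1}^{-1}+\pV_2\bigr).\]
Here $\ol{\pV_1}^{-1}=\pV_1/|\pV_1|^2$ is a paravector, so $\ol{\pV_1}^{-1}+\pV_2$ is a sum of paravectors, hence a paravector. It is therefore either $0$ or invertible, since paravector norms are positive definite. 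The product of $\ol{\pV_1}$ with it thus lies in $\lip^{\triangleright}$.

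This is the step I expect to carry the argument. The only checks remaining are routine: that products of paravectors with elements of $\lip$ stay in $\lip$, and that the zero case produces $0\in\lip^{\triangleright}$. Combining the cases gives the lemma.
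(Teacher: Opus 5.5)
Your proof is correct and follows essentially the paper's argument. Both handle the degenerate cases by noting that only one term survives. In the generic case, both factor an element of $\lip$ out on each side to reduce to a sum of two paravectors, which is either $0$ or invertible. The only difference is cosmetic: the paper pulls out $\ol{\eta_1}$ and $\xi_2$ to reach $\ol{\pV_1}+\pV_2^{-1}$ directly, while you pull out $\ol{\eta_1}$, $\eta_2$ and then $\ol{\pV_1}$ to reach $\ol{\pV_1}^{-1}+\pV_2$.
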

\noindent\textit{Proof.} Let $\kappa_1 = (\xi_1,\eta_1)$, $\kappa_2 = (\xi_2,\eta_2) \in S\lip$. Assume first that all components are non-zero. Then 
\[\langle \kappa_1,\kappa_2 \rangle = \ol{\xi_1}\xi_2 + \ol{\eta_1}\eta_2 = \ol{\eta_1}(\ol{\pV_1}\xi_2 + \eta_2) = \ol{\eta_1}(\ol{\pV_1} + \pV_2^{-1})\xi_2\]
for the paravectors $\pV_1 = \xi_1\eta_1^{-1}$, $\pV_2= \xi_2\eta_2^{-1}$. If $\ol{\pV_1}+\pV_2^{-1} \neq 0$ then this is the product of 3 elements of $\lip$, which is closed under multiplication. If $\ol{\pV_1}+\pV_2^{-1}=0$ then the form is zero. Both cases imply $\langle \kappa_1,\kappa_2 \rangle \in \lip^{\triangleright}$.

If any component $\xi_1,\xi_2,\eta_1,\eta_2$ is zero, then $\langle \kappa_1,\kappa_2 \rangle$ reduces to a single term which is a product of two elements of $\lip^{\triangleright}$. Monoids are closed under multiplication, so in this case we can also conclude $\langle \kappa_1,\kappa_2 \rangle \in \lip^{\triangleright}$. \qed
\subsubsection{The Bracket}
\label{sec:TheBracket}
The $\C^2$ space of complex spinors has a `natural' symplectic form, given by putting the spinors into a matrix and taking the determinant. We can extend this to our noncommutative Lipschitz spinors using the pseudo-determinant.
\begin{definition}[The Bracket]
 Let $D_m=(x_m,y_m)$ for $m=1,2$. {\normalfont The bracket} $\{\cdot,\cdot\}:\ \clif^2 \times \clif^2 \to \clif$ is given by
 \[\{D_1,D_2\} = \text{pdet} \begin{pmatrix}
x_1 & x_2 \\
y_1 & y_2
\end{pmatrix} = x_1^*y_2 -y_1^*x_2.\tag*{\defn}\]
\end{definition}
\noindent This form is $\clif$-sesquilinear on right-hand multiplication. For $a,b \in \clif$ and $D_1,D_2 \in \clif^2$:
\begin{align*}
\{D_1,D_2(a+b)\} &= \{D_1,D_2\}a + \{D_1,D_2\}b,\\
\{D_1(a+b),D_2\} &= a^*\{D_1,D_2\}+b^*\{D_1,D_2\}.
\end{align*}
The antisymmetry of the symplectic form on $\C^2$ is now modified, and includes the `reverse' involution.
\begin{equation}
\label{eq:BracketAntisymmetry}
 \{D_2,D_1\} = -\{D_1,D_2\}^*.
\end{equation}
Note that, because of this loosened notion of antisymmetry, $\{D,D\}$ is not necessarily zero for general elements $D \in \clif^2$. The equation $x=-x^*$ is solved by, for example, $x=i_1i_2$, so for $D = (1,i_1i_2)$,
\[\{D,D\} = \text{pdet}\begin{pmatrix}
1 & 1 \\
i_1i_2 & i_1i_2 \\
\end{pmatrix} = i_1i_2 - i_2i_1= 2i_1i_2.\]
When we restrict to Lipschitz spinors, we find things to be more nicely behaved.
\begin{lemma}
\label{SpinorFormAntisymmetryLemma}
given $a \in \clif$ and $\kappa = (\xi,\eta) \in S\lip$, $\{\kappa,\kappa a\}=0$.
\end{lemma}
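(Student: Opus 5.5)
We compute straight from the definition of the bracket and reduce everything to Corollary \ref{cor:xistarEta}. Write $\kappa a = (\xi a, \eta a)$. Then
\[\{\kappa,\kappa a\} = \xi^*(\eta a) - \eta^*(\xi a) = (\xi^*\eta - \eta^*\xi)\,a.\]
This uses only associativity and the right-sesquilinearity of the bracket noted above, so $a \in \clif$ may be arbitrary and need not lie in $\lip$. So the statement reduces to showing $\xi^*\eta = \eta^*\xi$ for any Lipschitz spinor, and that is exactly the first identity of Corollary \ref{cor:xistarEta}.

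For completeness, we also check that the corollary covers the degenerate cases. If $\eta = 0$ (or $\xi=0$), both $\xi^*\eta$ and $\eta^*\xi$ vanish trivially, and Lemma \ref{lem:xiStarEtaParavector} already handles $\eta=0$ explicitly. When $\eta\neq 0$, Lemma \ref{lem:xiStarEtaParavector} gives $\xi^*\eta = \sigma(\eta^*)(\pV)\in\para$ with $\pV=\xi\eta^{-1}$. Paravectors are fixed by the reversal $\cdot^*$, and $\cdot^*$ is an antiautomorphism, so
\[\xi^*\eta = (\xi^*\eta)^* = \eta^*\xi,\]
which is the corollary's argument.

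There is no real obstacle here; the only point needing care is the bookkeeping of the reversal involution. The bracket uses $x_1^*$ rather than $\ol{x_1}$, which is why the relevant identity is $\xi^*\eta=\eta^*\xi$ (from Corollary \ref{cor:xistarEta}) and not a statement about $\ol{\xi}\eta$. The conclusion $\{\kappa,\kappa a\}=0$ then follows for all $a\in\clif$. As a remark, setting $a=1$ recovers $\{\kappa,\kappa\}=0$ on $S\lip$, in contrast with the example $D=(1,i_1i_2)\notin S\lip$ above.
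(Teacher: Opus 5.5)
Your proposal is correct and matches the paper's proof: you factor $\{\kappa,\kappa a\}=(\xi^*\eta-\eta^*\xi)a$ and conclude using $\xi^*\eta=\eta^*\xi$ from Corollary \ref{cor:xistarEta}. Your additional checks of the degenerate cases and the remark about $(1,i_1i_2)\notin S\lip$ are accurate but not needed.
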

\noindent \textit{Proof.} $\{\kappa,\kappa a\}=0$ is equivalent to $(\xi^*\eta-\eta^*\xi)a=0$. By Corollary \ref{cor:xistarEta} we have $\xi^*\eta = \eta^*\xi$, reducing the above to $0a=0$ which is vacuously true as $a$ is finite. \qed
\\\\
If we put a Lipschitz spinor $\kappa$ in the left-hand entry of $\{\cdot,\cdot\}$, Lemma \ref{SpinorFormAntisymmetryLemma} gives the entire kernel.
\begin{lemma}
\label{KernelLemma}
 The kernel of the map
 \[\{\kappa,\cdot\}:\ \clif^2 \to \clif,\ \kappa \in S\lip\]
 is precisely $\kappa a$ for $a \in \clif$.
\end{lemma}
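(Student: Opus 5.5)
One inclusion is Lemma \ref{SpinorFormAntisymmetryLemma}: every $\kappa a$ with $a \in \clif$ lies in the kernel. The work is the reverse inclusion. Take $D = (x,y) \in \clif^2$ with $\{\kappa,D\} = \xi^*y - \eta^*x = 0$, and find $a \in \clif$ with $D = \kappa a$. I would split into cases according to which components of $\kappa$ vanish, using the fact that every nonzero element of $\lip$ is invertible (with $s^{-1} = \ol{s}/N(s)$).

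\emph{Case $\eta = 0$.} Then $\xi \neq 0$, so $\xi^*$ is invertible. The kernel condition becomes $\xi^* y = 0$, which forces $y = 0$. Put $a = \xi^{-1}x$; then $\kappa a = (x, 0) = D$. The case $\xi = 0$ is symmetric: $\eta^* x = 0$ forces $x = 0$, and $a = \eta^{-1}y$ gives $\kappa a = (0,y) = D$.

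\emph{Case $\xi,\eta$ both nonzero.} I would set $a = \eta^{-1}y$, so that the second component of $\kappa a$ is automatically $y$. It remains to show $\xi\eta^{-1}y = x$. From the kernel condition, $x = (\eta^*)^{-1}\xi^* y$, so it suffices to prove
\[(\eta^*)^{-1}\xi^* = \xi\eta^{-1}.\]
This is equivalent to $\xi^*\eta = \eta^*\xi$, obtained by multiplying on the left by $\eta^*$ and on the right by $\eta$. That identity is exactly Corollary \ref{cor:xistarEta}. Hence $x = \xi\eta^{-1}y = \xi a$ and $D = \kappa a$.

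Equivalently, the identity says $(\eta^{*})^{-1}\xi^* = (\xi\eta^{-1})^*$, which equals $\xi\eta^{-1}$ because $\xi\eta^{-1}$ is a paravector and hence reversal-invariant. That is the geometric reason the argument works.

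The only delicate point is the last one: the noncommutative rearrangement needs the specific fact that $\xi\eta^{-1}$ is fixed by $\cdot^*$. This is where the Lipschitz spinor condition enters, and it fails for general $D \in \clif^2$; the earlier example $\{D,D\} \neq 0$ shows this. Everything else is invertibility of nonzero Lipschitz elements.
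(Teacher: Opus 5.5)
Your proof is correct and follows the paper's argument: the same case split on which component of $\kappa$ vanishes, with the main case reduced to Corollary \ref{cor:xistarEta} ($\xi^*\eta=\eta^*\xi$) plus invertibility of nonzero Lipschitz elements, and the converse supplied by Lemma \ref{SpinorFormAntisymmetryLemma}. The only cosmetic difference is that the paper writes $x=\xi a$, $y=\eta b$ and cancels the invertible $\xi^*\eta$ to get $a=b$, whereas you fix $a=\eta^{-1}y$ and verify $x=\xi a$ directly.
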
 \noindent
\textit{Proof.} Fix a spinor $\kappa = (\xi,\eta) \in S\lip$, and let $D=(x,y) \in \clif^2$ be a prospective element of the kernel. If $\xi=0$, then $\eta \neq 0$ and
\[\{\kappa,D\} = -\eta^*x = 0 \implies x=0.\]
We can rewrite $y=\eta a$ for $a = \eta^{-1}y$, implying $D = (0,\eta a) = \kappa a$. A similar argument holds when $\eta=0$.

Assume, then, that $\xi,\eta \neq 0$. We want to find all $D=(x,y) \in \clif^2$ satisfying
\[\{\kappa,D\} = \xi^*y-\eta^*x = 0 \implies \xi^*y = \eta^*x.\]
Rewrite $x = \xi a$ for $a =\xi^{-1}x$, and similarly $y = \eta b$ for $b =\eta^{-1}y$. As $\xi$ and $\eta$ are invertible, we can rewrite $x$ and $y$ in this form without loss of generality, implying $\xi^*\eta b=\eta^*\xi a$. From Corollary \ref{cor:xistarEta} we have $\xi^*\eta = \eta^*\xi$, and both of these expressions are invertible. Therefore $a = b$ and $D = (\xi a,\eta a) = \kappa a$. This is enough to show both that elements of the kernel take the form $\kappa a$, and the converse. \qed
\\\\
We will mostly be applying the bracket to Lipschitz spinors, and when we do everything restricts to live in $\lip^{\triangleright}$.
\begin{lemma}
\label{lem:BracketRestrictsLip}
 The bracket restricts to a map
 \[\{\cdot,\cdot\}: S\lip \times S\lip \to \lip^{\triangleright}.\]
\end{lemma}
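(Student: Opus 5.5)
The plan mirrors the proof of Lemma \ref{lem:HermRestrictsLip}: factor the bracket as a product of Lipschitz group elements and one paravector, then use closure of $\lip$ under multiplication. Let $\kappa_1=(\xi_1,\eta_1)$ and $\kappa_2=(\xi_2,\eta_2)$ lie in $S\lip$.

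\textbf{Degenerate cases.} If some entry vanishes, $\{\kappa_1,\kappa_2\}=\xi_1^*\eta_2-\eta_1^*\xi_2$ has at most one surviving term. That term is a product of two elements of $\lip^{\triangleright}$, possibly with a sign. We need that $\lip$ is closed under $\cdot^*$ and under negation. The reverse of a product of paravectors $\pV_1\cdots\pV_k$ is $\pV_k\cdots\pV_1$, since paravectors are reversal-invariant. Also $-1$ is a real, invertible paravector. So the monoid $\lip^{\triangleright}$ is closed under both operations, and this case is done.

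\textbf{Generic case.} Assume all four entries are nonzero and set $\pV_m=\xi_m\eta_m^{-1}\in\para$, so that $\xi_m=\pV_m\eta_m$. Then
\[\{\kappa_1,\kappa_2\}=\eta_1^*\pV_1\eta_2-\eta_1^*\pV_2\eta_2=\eta_1^*(\pV_1-\pV_2)\eta_2,\]
using $\xi_1^*=\eta_1^*\pV_1^*=\eta_1^*\pV_1$. The difference $\pV_1-\pV_2$ is a paravector. If it is nonzero it is invertible, since the norm on $\para$ is positive definite. The bracket is then a product of three elements of $\lip$, hence in $\lip$. If $\pV_1=\pV_2$ the bracket is $0\in\lip^{\triangleright}$.

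\textbf{Main obstacle.} The only delicate points are the bookkeeping when some but not all entries vanish, and confirming that $\lip$ is closed under reversal. Both are handled by the paravector-product description of $\lip$ (Definition \ref{def:ParaLipschitzGrp}). As an alternative, the vanishing cases can be absorbed by transitivity (Lemma \ref{TransitiveLemma}): for $A\in SL(2,\lip)$ one has $\{A\kappa_1,A\kappa_2\}=\{\kappa_1,\kappa_2\}$. Checking that invariance, however, requires the relations of Definition \ref{SL2LipDef} and is more work than the direct case split, so I would use the direct argument.
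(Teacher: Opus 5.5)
Your proof is correct and matches the paper's: you use the same factorization $\{\kappa_1,\kappa_2\}=\eta_1^*(\pV_1-\pV_2)\eta_2$ when all entries are nonzero, and the same observation that a vanishing entry leaves a single product of two monoid elements. Your last factor $\eta_2$ is right (the paper's display writes $\eta_2^{-1}$, a slip that does not affect the conclusion), and your check that $\lip^{\triangleright}$ is closed under $\cdot^*$ and negation fills in a step the paper leaves implicit.
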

\noindent\textit{Proof.} Consider two Lipschitz spinors $\kappa_1 = (\xi_1,\eta_1)$, $\kappa_2 = (\xi_2,\eta_2) \in S\lip$. Assume first that all the components are non-zero.
\[\xi_1^*\eta_2 - \eta_1^*\xi_2 = \eta_1^*((\eta_1^*)^{-1}\xi_1^*\eta_2 - \xi_2) = \eta_1^*(\pV_1\eta_2 - \xi_2) = \eta_1^*(\pV_1 - \xi_2\eta_2^{-1})\eta_2^{-1} = \eta_1^*(\pV_1 - \pV_2)\eta_2^{-1},\]
for $\pV_1 =\xi_1\eta_1^{-1}$, $\pV_2 = \xi_2\eta_2^{-1} \in \para$, and since all non-zero paravectors are included in $\lip$, this is a product of elements of $\lip^{\triangleright}$ and is thus contained in $\lip^{\triangleright}$. If $\pV_1=\pV_2$ then the bracket maps to $0$, otherwise it maps to $\lip$. 

If any components $\xi_1,\xi_2,\eta_1,\eta_2$ are zero, then $\{\kappa_1,\kappa_2\}$ reduces to a single product of 2 elements of $\lip^{\triangleright}$. As the monoid $\lip^{\triangleright}$ is closed under multiplication, the bracket restricts as claimed. \qed
\\\\
How does our bracket interact with the action of $SL(2,\lip)$?
\begin{lemma}
 \label{SesquimorphismLemma}
Precomposition of both inputs with the action of $SL(2,\lip)$ preserves the bracket operation.
\end{lemma}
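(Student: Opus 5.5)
The plan is to show that for $A=\begin{pmatrix} a & b\\ c & d\end{pmatrix} \in SL(2,\lip)$ and $D_1,D_2 \in \clif^2$ we have $\{AD_1,AD_2\}=\{D_1,D_2\}$. It is cleanest to prove this on all of $\clif^2$, since the bracket is defined there and the action extends. The idea is to write the bracket as a matrix product. Set
\[J=\begin{pmatrix} 0 & 1\\ -1 & 0\end{pmatrix}, \qquad D^{\top*} = \begin{pmatrix} x^* & y^*\end{pmatrix} \text{ for } D=(x,y).\]
Then $\{D_1,D_2\} = D_1^{\top*} J D_2$, since $x_1^*y_2 - y_1^*x_2$ is exactly this product. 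Because $\cdot^*$ is an antiautomorphism, $(AD)^{\top*} = D^{\top*}A^{\top*}$, where $A^{\top*}=\begin{pmatrix} a^* & c^*\\ b^* & d^*\end{pmatrix}$. The claim therefore reduces to the matrix identity
\[A^{\top*}JA = J.\]

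Next I will expand this product entrywise:
\[A^{\top*}JA=\begin{pmatrix} a^*c - c^*a & a^*d - c^*b\\ b^*c - d^*a & b^*d - d^*b\end{pmatrix}.\]
The four entries are handled by the conditions on $SL(2,\lip)$.
\begin{itemize}
\item The $(1,2)$ entry is the pseudo-determinant, which equals $1$ by condition ii) of Definition~\ref{SL2LipDef}.
\item By Proposition~\ref{prop:AlternateSLCharacterisation}, the columns $(a,c)$ and $(b,d)$ are Lipschitz spinors. Corollary~\ref{cor:xistarEta} then gives $a^*c=c^*a$ and $b^*d=d^*b$, so the diagonal entries vanish. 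Equivalently, $c^*a, d^*b\in\para$ are reversal-invariant.
\item The $(2,1)$ entry is $b^*c-d^*a = -(a^*d-c^*b)^* = -1^* = -1$, since reversal is an antiautomorphism fixing $1$.
\end{itemize}
Hence $A^{\top*}JA=J$, and then $\{AD_1,AD_2\} = D_1^{\top*}A^{\top*}JAD_2 = D_1^{\top*}JD_2 = \{D_1,D_2\}$.

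The main point needing care is that $(AD)^{\top*}=D^{\top*}A^{\top*}$ holds in the noncommutative setting. Entrywise, $(ax+by)^* = x^*a^* + y^*b^*$, which is exactly the first entry of $D^{\top*}A^{\top*}$, so the order of the factors comes out correctly. Associativity of the $2\times2$ matrix products over $\clif$ is standard. No zero-divisor or invertibility issues arise, so the identity holds without any case analysis on vanishing entries.
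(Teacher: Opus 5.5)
Your proof is correct and is essentially the paper's argument. The paper expands $\{AD_1,AD_2\}$ directly and cancels the terms $x_1^*(a^*c-c^*a)x_2$ and $y_1^*(b^*d-d^*b)y_2$ using reversal-invariance of $c^*a,d^*b\in\para$. It is left with $x_1^*(a^*d-c^*b)y_2+y_1^*(b^*c-d^*a)x_2$, whose coefficients are exactly the entries of your $A^{\top*}JA$, so writing the argument as the identity $A^{\top*}JA=J$ is a tidier presentation of the same computation.
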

\noindent \textit{Proof.} Pick an element of $SL(2,\lip)$:
\[A = \begin{pmatrix}
a & b \\
c & d
\end{pmatrix}\in SL(2,\lip).\]
Act on two elements $D_m=(x_m,y_m) \in \clif^2$ for $m=1,2$, and take their bracket. After simplification, we find
%\begin{align*}
%\{AD_1,AD_2\} %&= (ax_1+by_1)^*(cx_2+dy_2)-(cx_1+dy_1)^*(ax_2+by_2)\\
%&=(x_1^*a^*cx_2-x_1^*c^*ax_2)+(y_1^*b^*dy_2-y_1^*d^*by_2)+(x_1^*a^*dy_2-x_1^*c^*by_2)+(y_1^*b^*cx_2-y_1^*d^*ax_2).
%\end{align*}
%One of the defining properties of $SL(2,\lip)$ is that $ab^*,cd^*,c^*a,d^*b \in \para$, and are therefore invariant under the $\cdot^*$ conjugation; thus the first two terms cancel and we are left with
\[\{AD_1,AD_2\} = x_1^*(a^*d-c^*b)y_2+y_1^*(b^*c-d^*a)x_2.\]
the expressions in brackets are copies (or conjugates) of the pseudo-determinant. 
%\[\{AD_1,AD_2\} = x_1^*\text{pdet}(A)y_2-y_1^*\text{pdet}(A)^*x_2.\]
By definition, pdet$(A)=1$, and this reduces to
\[\{AD_1,AD_2\} = x_1^*y_2-y_1^*x_2 = \{D_1,D_2\}.\tag*{\qed}\]

\subsection{Complementary Elements and a `K\"{a}hler-like' Structure}
For $\kappa\in S\lip$, the conjugate transpose $\kappa^{\dagger}$ satisfies $\kappa^{\dagger}\kappa = |\kappa|^2$. With the addition of the bracket operation, we define a new form of `conjugate'; the \textit{complement} $\check{\kappa}$, satisfying $\{\kappa,\check{\kappa}\} = -|\kappa|^2$.
\begin{definition}[Complementary Structure]
\label{def:Complement}
Given $D = (x,y) \in \clif^2$, we define $\check{D} = (y',-x')$.\null\hfill\defn
\end{definition}
\noindent Restricting again to $S\lip$, the map $\kappa \to \check{\kappa}$ is something like a complex structure in a K\"{a}hler manifold. The complementary element $\check{\kappa}$ is orthogonal to $\kappa$ under the inner product $(\cdot|\cdot)$ on $\clif^2$; in fact, as the following lemma shows, the entire subspace of $\clif^2$ generated by the complement is orthogonal to that generated by $\kappa$.
\begin{lemma}
 \label{InnerProductPropertiesLemma2}
 Let $\kappa = (\xi,\eta) \in S\lip$ and $x,y \in \clif$.
 \begin{enumerate}[label=\roman*)]
 \item $\check{\kappa} \in S\lip$,
 \item $( \kappa x| \check{\kappa} y) = 0$,
 \item $\langle \kappa , \check{\kappa} \rangle = 0$.
 \end{enumerate}
\end{lemma}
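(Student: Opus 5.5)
The plan is to verify the three claims directly from the definitions, treating $\check{\kappa}=(\eta',-\xi')$ componentwise and reducing each to facts already established. Throughout I use that the grade involution $\cdot'$ is an algebra automorphism which commutes with $\cdot^*$ and $\ol{\cdot}$, and that it preserves $\lip$. The latter holds because $\lip$ is generated by paravectors and $\pV'=\ol{\pV}$ is again a paravector. It also preserves $\para$ and $\lip^{\triangleright}$.

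For i), the entries $\eta',-\xi'$ lie in $\lip^{\triangleright}$ and are not both zero. It remains to check the paravector condition of Definition \ref{def:LipschitzSpinor}, namely $\eta'\,\ol{(-\xi')}\in\para$. Since $\ol{\xi'}=\xi^*$, this is $-\eta'\xi^*$. By Corollary \ref{cor:xistarEta} we have $\xi\eta^*=\eta\xi^*$, and by Lemma \ref{lem:xiStarEtaParavector} this lies in $\para$. The grade involution maps $\para$ to $\para$, so $(\eta\xi^*)'=\eta'\xi^{*\prime}$ is a paravector. The one remaining step is to relate $\eta'\xi^{*\prime}$ to $\eta'\xi^*$. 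To do this I would invoke Proposition \ref{prop:AlternateSLCharacterisation} or Lemma \ref{PartialParavectorCommutingLemma}: write $\xi=\pV\eta$ when $\eta\neq0$, and then compute $\eta'\xi^*=\eta'\eta^*\pV$. Alternatively, the condition can be phrased as "$\xi^{\prime}{}^{-1}\eta'$ is a paravector", which follows by applying $\cdot'$ to $\eta\xi^{-1}=\pV^{-1}$ (a paravector inverse is a multiple of $\ol{\pV}$). Thus $\eta'(\xi')^{-1}=(\pV^{-1})'\in\para$, and this ratio formulation, equivalent to the product condition, settles i). The degenerate cases $\xi=0$ or $\eta=0$ are trivial.

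For iii), I compute $\langle\kappa,\check{\kappa}\rangle=\ol{\xi}\eta'-\ol{\eta}\xi'$. Using $\ol{\xi}=\xi^{*\prime}$, this equals $(\xi^*\eta-\eta^*\xi)'$, and it vanishes by Corollary \ref{cor:xistarEta}.

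For ii), I expand $(\kappa x|\check{\kappa}y)=\text{Re}(\ol{x}\,\ol{\xi}\eta' y-\ol{x}\,\ol{\eta}\xi' y)=\text{Re}(\ol{x}\langle\kappa,\check{\kappa}\rangle y)$, which is $0$ by iii). Equivalently, this is property ii) of the Hermitian form lemma, $\langle \kappa x,\check\kappa y\rangle=\ol{x}\langle\kappa,\check\kappa\rangle y$, followed by taking real parts. So ii) is a corollary of iii), and I would prove iii) before ii).

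The main obstacle is i): keeping track of which involution lands where, and making sure the paravector condition survives the grade involution. Parts ii) and iii) are one-line consequences of Corollary \ref{cor:xistarEta}.
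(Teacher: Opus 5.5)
Your proof is correct. Part iii) is exactly the paper's argument, but you reach ii) by a different route.

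The paper proves ii) directly, without using iii). It writes $\xi=\pV\eta$ and rewrites $(\kappa x|\check{\kappa}y)$ as $\text{Re}(AB-BA)$ with $A=\pV$ and $B=\eta x\ol{y}\eta^*$. It then applies Lemma \ref{ABBALemma} and handles $\eta=0$ separately. You instead combine $(D_1|D_2)=\text{Re}\langle D_1,D_2\rangle$ with the rule $\langle D_1b,D_2c\rangle=\ol{b}\langle D_1,D_2\rangle c$. This gives $(\kappa x|\check{\kappa}y)=\text{Re}(\ol{x}\langle\kappa,\check{\kappa}\rangle y)$, so ii) drops out of iii) with no case split. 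It also proves more: the whole Clifford-valued form $\langle\kappa x,\check{\kappa}y\rangle$ vanishes, whereas the cyclicity argument only controls real parts.

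For i) your argument also works. The ratio version via $\eta'(\xi')^{-1}=(\pV^{-1})'$ is fine. The computation $\eta'\xi^*=\eta'\eta^*\pV$ also closes once you note $\eta'\eta^*=N(\eta')=|\eta|^2\in\R$, a step you left unstated. The detour through $(\eta\xi^*)'=\eta'\ol{\xi}$ leads nowhere and can be dropped. The paper's version is a single line with no cases: since $\ol{\eta}^{\,*}=\eta'$, one has $-\eta'\xi^*=-(\xi\ol{\eta})^*=-\xi\ol{\eta}\in\para$, because paravectors are fixed by reversal.
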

\noindent\textit{Proof.} Given $\check{\kappa}$, we write $[\check{\kappa}]_j$ for the $j$th component. For i), as $\xi\ol{\eta} \in \para$ then 
\[[\check{\kappa}]_1\ol{[\check{\kappa}]_2} = -\eta'\ol{\xi'} = -\eta'\xi^* = -(\xi\ol{\eta})^* \in \para,\]
as $\lip$ is closed under the conjugations and negation, as is $\para$. For ii), first assume $\xi\eta^{-1} = \pV \in \para$. Compute the left-hand side:
\[(\kappa x| \check{\kappa} y) = \text{Re}(\xi x \ol{\eta' y} - \eta x \ol{\xi' y}) = \text{Re}(\pV\eta x \ol{y}\eta^* - \eta x \ol{y}\eta^*\pV) = \text{Re}(AB-BA),\]
where $A=\pV=\xi\eta^{-1},\ B=\eta x \ol{y}\eta^*$. Then Lemma \ref{ABBALemma} implies $(\kappa x| \check{\kappa} y) = 0$. If $\xi\eta^{-1} = \infty$ instead, then $\eta = \eta^* = 0$ and $(\kappa x| \check{\kappa} y) = \text{Re}(\xi x \ol{y}\eta^* - \eta x \ol{y}\xi^*) = 0$. For iii), we simply compute $\langle \kappa,\check{\kappa} \rangle = \ol{\xi}\eta' -\ol{\eta}\xi' = (\xi^*\eta)' - (\eta^*\xi)' = 0$.\qed
\\\\
Part ii) of the above lemma suggests there may be a direct sum decomposition of $\clif^2$ in terms of `things generated by $\kappa$' and `things generated by $\check{\kappa}$'.
\begin{corollary}
    \label{cor:Clif2DirectSumDecomp}
    Fix a spinor $\kappa = (\xi,\eta) \in S\lip$. Then
    \[\clif^2 = \kappa \clif \oplus \check{\kappa}\clif.\]
    In particular, 
    \[(a,b) \in \clif^2 = \kappa x + \check{\kappa} y\]
    for
    \[x = \frac{\ol{\xi}a + \ol{\eta} b}{|\kappa|^2},\ y = \frac{\eta^* a - \xi^* b}{|\kappa|^2}.\]
\end{corollary}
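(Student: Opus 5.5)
The plan is to prove the two halves separately: first that every $(a,b)\in\clif^2$ is a sum $\kappa x+\check{\kappa}y$ with $x,y$ given by the displayed formulas, and then that the sum is direct. Throughout I write $|\kappa|^2=|\xi|^2+|\eta|^2$. This is a positive real number, since $\xi,\eta\in\lip^{\triangleright}$ are not both zero and $N$ is positive definite on products of paravectors in $\clif$. So dividing by $|\kappa|^2$ makes sense.

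For spanning, I will substitute the proposed $x,y$ into $\kappa x+\check{\kappa}y=(\xi x+\eta' y,\ \eta x-\xi' y)$ and check the two components directly. The first component is, up to the factor $|\kappa|^{-2}$,
\[(\xi\ol{\xi}+\eta'\eta^*)a+(\xi\ol{\eta}-\eta'\xi^*)b .\]
The coefficient of $a$ uses $\eta'\eta^*=(\eta\ol{\eta})'=|\eta|^2$, because $\eta^*=\ol{\eta}'$ and reals are fixed by $\cdot'$; hence it equals $|\kappa|^2$. For the coefficient of $b$, note $\eta'\xi^*=(\xi\ol{\eta})^*$, and $\xi\ol{\eta}\in\para$ by Definition \ref{def:LipschitzSpinor}. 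Paravectors are reversal invariant, so this coefficient vanishes. The second component is symmetric: the coefficient of $b$ is $\eta\ol{\eta}+\xi'\xi^*=|\kappa|^2$, and the coefficient of $a$ is $\eta\ol{\xi}-\xi'\eta^*=\eta\ol{\xi}-(\eta\ol{\xi})^*=0$. Here $\eta\ol{\xi}=\ol{\xi\ol{\eta}}$ is again a paravector. This yields $(a,b)=\kappa x+\check{\kappa}y$.

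For directness, suppose $\kappa x=\check{\kappa}y$ for some $x,y\in\clif$. By Lemma \ref{InnerProductPropertiesLemma2} ii),
\[(\kappa x|\kappa x)=(\kappa x|\check{\kappa}y)=0.\]
Lemma \ref{CliffordAlgInnerProductLemma2} says $(\cdot|\cdot)$ is positive definite, so $\kappa x=0$, and therefore $\kappa\clif\cap\check{\kappa}\clif=\{0\}$. Both $\kappa\clif$ and $\check{\kappa}\clif$ are real subspaces, so together with spanning this gives the direct sum decomposition.

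The only delicate point is the bookkeeping of involutions in the cross terms. One must recognise $\eta'\xi^*$ as $(\xi\ol{\eta})^*$ and $\eta'\eta^*$ as $(\eta\ol{\eta})'$. This uses that $\ol{\cdot}=\cdot^{*\prime}$, that $\cdot^*$ is an antiautomorphism, and that $\cdot'$ is an automorphism. Once those identifications are made, the computation is routine.
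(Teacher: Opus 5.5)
Your proposal is correct and follows essentially the same route as the paper. Both substitute the given $x,y$ into $\kappa x+\check{\kappa}y$, kill the cross terms using reversal-invariance of the paravector $\xi\overline{\eta}$, and get directness from the orthogonality in Lemma \ref{InnerProductPropertiesLemma2} ii); you simply spell out the involution identities and the positive-definiteness step that the paper leaves implicit when it says the orthogonal decomposition makes the expression unique.
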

\noindent\textit{Proof.} Let $\kappa = (\xi,\eta)$, and let $D = (a,b) \in \clif^2$. We wish to show $D =\kappa x + \check{\kappa}y$ for some $x,y \in \clif$. Expanding this expression in terms of components and solving the simultaneous equations, we find
\[x = \frac{\ol{\xi}a + \ol{\eta} b}{|\kappa|^2},\ y = \frac{\eta^* a - \xi^* b}{|\kappa|^2}.\]
We can check they give $D$ as claimed:
\[\xi x + \eta' y = \frac{|\xi|^2a + \xi\ol{\eta}b + |\eta|^2a - \eta'\xi^*b}{|\kappa|^2} = \frac{|\kappa|^2a + \xi\ol{\eta}b - (\xi\ol{\eta})^*b}{|\kappa|^2} = a,\]
where the last equality follows because $\xi\ol{\eta} \in \para$ is invariant under $\cdot^*$. Similarly $\eta x -\xi' y = b$, so we can write every element of $\clif^2$ in this form, and by Lemma \ref{InnerProductPropertiesLemma2} ii) these subspaces are orthogonal and this expression will be unique. \qed
\\\\
As an extension of the orthogonality of $\kappa$ and $\check{\kappa}$ we have the following technical lemma, which will be of use when constructing null flags.
\begin{lemma}
\label{TauKernelLemma}
 Let $\kappa = (\xi,\eta) \in S\lip$. Then any row vector $\tau = (\alpha,\beta) \in \lip^2\backslash \{0,0\}$ satisfying $\tau\kappa = 0$ must satisfy $\tau\check{\kappa} \neq 0$. Similarly, any row vector $\tau = (\alpha,\beta) \in \lip^2 \backslash \{0,0\}$ satisfying $\tau\check{\kappa} = 0$ must satisfy $\tau\kappa \neq 0$.
\end{lemma}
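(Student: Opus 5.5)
The plan is a direct argument by contradiction. Suppose $\tau=(\alpha,\beta)\in\lip^2\backslash\{0,0\}$ satisfies both $\tau\kappa=0$ and $\tau\check{\kappa}=0$. Written out, this is
\[\alpha\xi+\beta\eta=0,\qquad \alpha\eta'-\beta\xi'=0.\]
Throughout I will use two facts. Every nonzero element of $\lip$ is invertible, since it is a product of invertible paravectors. Hence the product of two nonzero elements of $\lip^{\triangleright}$ is nonzero. I also use that $\cdot'$ preserves $\lip$ and inverses.

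\textbf{Degenerate cases.} First suppose $\xi=0$. Then $\eta\neq0$, and the first equation reads $\beta\eta=0$, so $\beta=0$. The second equation then reads $\alpha\eta'=0$. Since $\eta'\neq0$ is invertible, $\alpha=0$, which contradicts $\tau\neq(0,0)$. The case $\eta=0$ is symmetric: the first equation gives $\alpha=0$, and the second gives $\beta\xi'=0$, so $\beta=0$.

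\textbf{Generic case.} Now assume $\xi,\eta\neq0$, and set $\pV=\xi\eta^{-1}\in\para\backslash\{0\}$. If $\beta=0$, the first equation forces $\alpha=0$, so we may take $\beta\neq0$. The first equation gives
\[\alpha=-\beta\eta\xi^{-1}=-\beta\pV^{-1}.\]
Substituting into the second equation gives $-\beta\pV^{-1}\eta'=\beta\xi'$. Cancelling the invertible $\beta$ on the left and multiplying by $\xi'^{-1}$ on the right yields
\[-\pV^{-1}\eta'\xi'^{-1}=1.\]
Next observe that $\eta'\xi'^{-1}=(\eta\xi^{-1})'=(\pV^{-1})'=\ol{\pV^{-1}}$, since $\pV^{-1}$ is a paravector. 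The identity therefore becomes
\[-\pV^{-1}\ol{\pV^{-1}}=-|\pV|^{-2}=1.\]
This is impossible, because the norm on $\para$ is positive definite. This is the main step; the only care needed is tracking the grade involution and the order of the noncommutative factors.

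\textbf{Second statement.} I will reduce it to the first by applying the first statement to $\check{\kappa}$, which lies in $S\lip$ by Lemma \ref{InnerProductPropertiesLemma2} i). Using $\cdot''=\mathrm{id}$, a direct computation gives
\[\check{\check{\kappa}}=(-\xi'',-\eta'')=-\kappa.\]
So if $\tau\check{\kappa}=0$, the first statement applied to $\check{\kappa}$ gives $\tau\check{\check{\kappa}}=-\tau\kappa\neq0$, hence $\tau\kappa\neq0$.
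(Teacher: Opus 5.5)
Your proof is correct and follows essentially the paper's route. You dispose of the cases $\xi=0$ and $\eta=0$ directly, then in the generic case solve for $\alpha$ in terms of $\beta$ and reach a paravector identity that contradicts positive-definiteness of the norm. The paper phrases this as disjointness of the solution sets $T_{\kappa}$ and $T_{\check{\kappa}}$, and derives $\pV=-(\pV^{-1})'$ for $\pV=\eta\xi^{-1}$.

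Your separate reduction of the second statement via $\check{\check{\kappa}}=-\kappa$ is valid but unnecessary. Both statements assert the same thing, namely that $\tau\kappa$ and $\tau\check{\kappa}$ cannot vanish simultaneously, and your opening contradiction argument already establishes that.
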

\noindent\textit{Proof.} Consider the sets $T_{\kappa}$, $T_{\check{\kappa}}$ of row vectors sending $\kappa$ or $\check{\kappa}$ to zero, respectively.
\[T_{\kappa} = \{(\alpha,\beta) \in \lip^2\backslash \{(0,0)\}\ |\ \alpha \xi + \beta \eta = 0\},\]
\[T_{\check{\kappa}} = \{(\alpha,\beta) \in \lip^2\backslash \{(0,0)\}\ |\ \alpha \eta' - \beta \xi' = 0\}.\]
If, say, $\xi=0$, the sets are clearly distinct, because the set $T_{(0,\eta)} = \{(\alpha,\beta) \in \lip^2\backslash \{(0,0)\}\ |\ \beta \eta = 0\}$ consists of $(\alpha,\beta) = (\alpha,0)$, while $T_{\widecheck{(0,\eta)}} = \{(\alpha,\beta) \in \lip^2\backslash \{(0,0)\}\ |\ \alpha \eta' = 0\}$ consists of $(\alpha,\beta) = (0,\beta)$. A similar fact holds if $\eta=0$ (and we cannot have $\xi=\eta=0$), so we assume going forward that all components are non-zero.

If both $\xi,\eta$ are non-zero, then $T_{\kappa}$ consists of elements $(\alpha_1,\beta_1) = \beta_1(-\eta\xi^{-1},1) \in \lip^2\backslash \{(0,0)\}$ parameterised by $\beta_1 \in \lip$ (which excludes zero), while $T_{\check{\kappa}}$ is given by elements $(\alpha_2,\beta_2) = \beta_2(\xi'\eta'^{-1},1) \in \lip^2\backslash \{(0,0)\}$, and is parameterised by $\beta_2 \in \lip$ (which, again, must be non-zero). These sets are nonempty as we can choose $\beta_1 = \beta_2 = 1$. 

Say we found a row vector $\tau$ that is within both sets. Equating the second entries implies $\beta_1=\beta_2$, and equating the first implies 
\begin{equation}
\label{eqn:TauLemmaEquality}
\eta\xi^{-1} = -\xi'\eta'^{-1}.
\end{equation}
As $(\xi,\eta) \in S\lip$ we know $\eta\xi^{-1} = (\xi\eta^{-1})^{-1} = \pV$ is a paravector, and so is $\xi'\eta'^{-1} = (\xi\eta^{-1})' = (\pV^{-1})'$. Applying this to (\ref{eqn:TauLemmaEquality}):
\[\pV = -(\pV^{-1})' = -\pV|\pV|^{-2} \implies 0 = \pV(1+|\pV|^{-2}).\]
Because $|\cdot| \geq 0$ (and therefore $(1+|\pV|^{-2})>0$), we conclude $\eta\xi^{-1}= \pV=0$. But this contradicts our assumption that $\xi,\eta \neq 0$; the sets must be disjoint. \qed
\\\\
The generalised Hermitian product, the bracket, and the map $\check{\cdot}$ together form a `K\"{a}hler-like' structure on $S\lip$; the bracket plays the role of the symplectic form, the generalised Hermitian product that of the Hermitian metric, and the map $\check{\cdot}$ that of the almost complex structure. It isn't actually a K\"{a}hler structure in general; $\langle \cdot, \cdot \rangle$ isn't actually Hermitian, and the bracket (with its added reversal conjugation) isn't quite skew-symmetric. But the analogy is strong, as the following pair of results show.
\begin{proposition}
\label{prop:HermProdIsCheckBracket}
Given elements $D_m = (x_m,y_m) \in \clif^2$ for $m=1,2$, $\langle D_1,D_2\rangle = \{\check{D}_1,D_2 \}$.
\end{proposition}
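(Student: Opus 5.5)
This is a direct computation from the definitions; no earlier lemma beyond the definition of the involutions is needed. Write $D_1 = (x_1,y_1)$, so that by Definition \ref{def:Complement} we have $\check{D}_1 = (y_1', -x_1')$. Substituting into the bracket,
\[\{\check{D}_1, D_2\} = (y_1')^* y_2 - (-x_1')^* x_2 = (y_1')^*y_2 + (x_1')^*x_2.\]

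The key fact is that the grade involution and reversal commute, and their composite is Clifford conjugation: $\ol{\cdot} = \cdot^{\prime *} = \cdot^{*\prime}$. Hence $(y_1')^* = \ol{y_1}$ and $(x_1')^* = \ol{x_1}$. We also use that $\cdot^*$ is real-linear, so $(-x_1')^* = -(x_1')^*$. The expression above then becomes
\[\ol{x_1}x_2 + \ol{y_1}y_2 = \langle D_1, D_2\rangle,\]
which is Definition \ref{defn:genHermForm}.

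The only step requiring any care is the sign bookkeeping. The minus sign built into the bracket cancels the minus sign in the second component of $\check{D}_1$, and this is what turns the difference in the bracket into the sum in the Hermitian form. No restriction to Lipschitz spinors is needed, since only the algebraic identities of the involutions are used, so the identity holds on all of $\clif^2$ as stated. I do not expect a genuine obstacle; the proof is a two-line verification.
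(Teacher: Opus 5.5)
Your proposal is correct and matches the paper's proof exactly. Both substitute $\check{D}_1 = (y_1',-x_1')$ into the bracket, use $\cdot^{\prime *} = \ol{\cdot}$, and let the two minus signs cancel to obtain $\ol{x_1}x_2 + \ol{y_1}y_2 = \langle D_1,D_2\rangle$.
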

\noindent\textit{Proof.} $\{\check{D}_1,D_2\} = (\eta_1')^*\eta_2-(-\xi_1')^*\xi_2 = \ol{\xi_1}\xi_2 + \ol{\eta_1}\eta_2 = \langle D_1,D_2 \rangle$. \qed
\\\\
Restricting to the real component, this then implies
\begin{corollary}
Given elements $D_m = (x_m,y_m) \in \clif^2$ for $m=1,2$, $(D_1| D_2) =\text{Re}\left(\{\check{D}_1,D_2\}\right)$.\qed 
\end{corollary}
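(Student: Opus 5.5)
The statement to prove is the corollary $(D_1|D_2)=\text{Re}\left(\{\check{D}_1,D_2\}\right)$. It follows directly from Proposition \ref{prop:HermProdIsCheckBracket}, together with the remark after Definition \ref{defn:genHermForm} that $(D_1|D_2) = \text{Re}(\langle D_1,D_2\rangle)$. So the plan is to chain these two facts and check nothing is lost under the real-part map.

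\textbf{Step 1.} Apply Proposition \ref{prop:HermProdIsCheckBracket} to the given $D_1, D_2 \in \clif^2$. This gives the identity $\langle D_1,D_2\rangle = \{\check{D}_1,D_2\}$ as elements of $\clif$.

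\textbf{Step 2.} Take the real part (the degree zero component) of both sides. This is a well-defined real-linear map $\clif\to\R$, so equal elements have equal real parts:
\[\text{Re}\left(\{\check{D}_1,D_2\}\right) = \text{Re}\left(\ol{x_1}x_2+\ol{y_1}y_2\right).\]

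\textbf{Step 3.} The right-hand side is by definition $(D_1|D_2)$, as given in Definition \ref{defn:CliffSquaredInnerProduct}, which gives the result.

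As a sanity check one could unfold everything directly. With $\check{D}_1=(y_1',-x_1')$, the bracket is
\[(y_1')^*y_2-(-x_1')^*x_2=\ol{y_1}y_2+\ol{x_1}x_2,\]
using $\cdot^{\prime *}=\ol{\cdot}$. I do not expect any obstacle here. The only point needing care is notational: the proof of Proposition \ref{prop:HermProdIsCheckBracket} writes the components as $\xi_m,\eta_m$ rather than $x_m,y_m$, but the computation is identical.
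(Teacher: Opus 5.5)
Your proposal is correct and matches the paper's argument: the paper obtains the corollary immediately from Proposition \ref{prop:HermProdIsCheckBracket} by taking real parts and using $(D_1|D_2)=\text{Re}(\langle D_1,D_2\rangle)$. Your direct check, $\{\check{D}_1,D_2\}=(y_1')^*y_2-(-x_1')^*x_2=\ol{y_1}y_2+\ol{x_1}x_2$, is also correct.
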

\noindent The complementary element also provides a nice way to embed $S\lip$ into $SL(2,\lip)$, by the following simple construction.
\begin{proposition}
\label{prop:KappaSLMatrix}
    For any $\kappa = (\xi,\eta) \in S\lip$, 
    \[A_{\kappa} = \begin{pmatrix}
        \kappa & -\frac{\check{\kappa}}{|\kappa|^2}
    \end{pmatrix} = 
    \begin{pmatrix}
        \xi & -\frac{\eta'}{|\xi|^2+|\eta|^2}\\
        \eta & \frac{\xi'}{|\xi|^2+|\eta|^2}
    \end{pmatrix} \in SL(2,\lip).\]
\end{proposition}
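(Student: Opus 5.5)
The plan is to verify the three conditions of Definition \ref{SL2LipDef} directly, taking the $(1,2)$ entry as $b=-\eta'/|\kappa|^2$ and the $(2,2)$ entry as $d=\xi'/|\kappa|^2$, where $|\kappa|^2=|\xi|^2+|\eta|^2>0$ because $\xi,\eta$ are not both zero. Alternatively, for condition iii) I will use the equivalent characterisation in Proposition \ref{prop:AlternateSLCharacterisation}, which reduces it to showing that rows and columns are Lipschitz spinors.

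\textbf{Condition i): entries lie in $\lip^{\triangleright}$.} The entries $\xi,\eta$ are in $\lip^{\triangleright}$ by definition. The group $\lip$ is generated by products of invertible paravectors, and the grade involution sends a paravector to its conjugate, which is again an invertible paravector. Hence $\lip$ is closed under $\cdot'$, and also under multiplication by nonzero reals and by $-1$, so $b,d\in\lip^{\triangleright}$.

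\textbf{Condition ii): the pseudo-determinant.} Here
\[
a^*d-c^*b=\frac{\xi^*\xi'+\eta^*\eta'}{|\kappa|^2}.
\]
Since $\xi^*\xi'=(\ol{\xi}\xi)'$, and $\ol{\xi}\xi=|\xi|^2$ is real for products of paravectors (the paper shows $a\ol{a}=\ol{a}a$ is the product of paravector norms), we get $\xi^*\xi'=|\xi|^2$, and likewise $\eta^*\eta'=|\eta|^2$. So the pseudo-determinant equals $1$. One could also read this off as $\{\kappa,-\check{\kappa}\}/|\kappa|^2=1$, using Proposition \ref{prop:HermProdIsCheckBracket} together with the antisymmetry (\ref{eq:BracketAntisymmetry}).

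\textbf{Condition iii): rows and columns are Lipschitz spinors.} This is where the main work lies.
\begin{itemize}
\item The first column $(\xi,\eta)$ is $\kappa$ itself.
\item The second column is $-\check{\kappa}/|\kappa|^2$. By Lemma \ref{InnerProductPropertiesLemma2} i), $\check{\kappa}\in S\lip$, and real rescaling preserves $S\lip$.
\item For the rows, I will check $ab^*=-\xi\eta'^*/|\kappa|^2=-\xi\ol{\eta}/|\kappa|^2\in\para$, which holds because $\xi\ol{\eta}\in\para$.
\item Similarly $cd^*=\eta\ol{\xi}/|\kappa|^2$, and $\eta\ol{\xi}=\ol{\xi\ol{\eta}}$ is the Clifford conjugate of a paravector, hence a paravector.
\end{itemize}
To stay closest to Definition \ref{SL2LipDef}, I will also check $c^*a=\eta^*\xi\in\para$ (Lemma \ref{lem:xiStarEtaParavector}) and $d^*b=-\xi'^*\eta'/|\kappa|^2=-(\xi^*\eta)'/|\kappa|^2\in\para$, since $\cdot'$ preserves paravectors.

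\textbf{Degenerate cases and the main obstacle.} When $\xi=0$ or $\eta=0$, one of the entries vanishes. This is allowed because entries only need to lie in the monoid $\lip^{\triangleright}$, and the products above become $0\in\para$. The main obstacle is bookkeeping with the involutions, specifically the identities $(\eta')^*=\ol{\eta}$ and $\xi^*\xi'=|\xi|^2$. Both follow from $\ol{\cdot}=\cdot'^{*}$ and from the reality of $\ol{\xi}\xi$ for $\xi\in\lip$.
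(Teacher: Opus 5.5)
Your proof is correct and follows the paper's route: check conditions i)--iii) of Definition \ref{SL2LipDef} directly, with the pseudo-determinant computed as $(|\xi|^2+|\eta|^2)/|\kappa|^2=1$. The only differences are that you get condition i) from closure of $\lip$ under $\cdot'$ rather than citing $\check{\kappa}\in S\lip$, and that the denominator in your $d^*b$ should be $|\kappa|^4$ rather than $|\kappa|^2$, which is harmless.
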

\noindent\textit{Proof.} Both $\kappa$ and $\check{\kappa}$ are spinors (see Lemma \ref{InnerProductPropertiesLemma2} i)), which implies their components are elements of $\lip^{\triangleright}$, which is condition i) for Clifford matrices (see Definition \ref{SL2LipDef}). Combined with Lemma \ref{lem:xiStarEtaParavector} (and the property $\xi\ol{\eta} \in \para$ for spinors $(\xi,\eta)$), we also get condition iii). All that's left is condition ii), which requires that the pseudo-determinant be $1$.
\[\text{pdet}(A_{\kappa}) = \xi^*\frac{\xi'}{|\kappa|^2} + \eta^*\frac{\eta'}{|\kappa|^2} = \frac{|\xi|^2}{|\kappa|^2}+\frac{|\eta|^2}{|\kappa|^2}=1.\tag*{\qed}\]
\subsection{Climbing the Clifford Algebra Ladder}
\label{sec:ClifAlgLadder}
The Cayley-Dickson construction is a process for building a sequence of algebras. It begins with the division algebras\footnote{Denoting the real numbers $\R$, the complex numbers $\C$, the quaternions $\hyp$, and the octonions $\mathbb{O}$.} $\R, \C, \hyp, \mathbb{O}$ over the reals, and is used to prove various theorems about these algebras. 

With a slight tweak, we can apply the method to instead construct the Clifford algebras. The following construction is a variation\footnote{There's more than one way to embed pairs of elements of $\clifn{0,n}$ into $\clifn{0,n+1}$; we've chosen to vary Lounesto's formulation to play nicely with right-multiplication in the other structures we define.} on that found in \cite{Lounesto_2001}, Chapter 21.
\begin{lemma}[Iterative Clifford Algebras]
\label{lem:LounestoCayleyDickson}
 Consider pairs $(u,v) \in \clif^2$. Then $\clifn{0,n+1}$ is isomorphic to the algebra of such pairs equipped with the product
 \[(u_1,v_1)\cdot(u_2,v_2) = (u_1u_2-v_1'v_2, u_1'v_2+v_1u_2).\]
\end{lemma}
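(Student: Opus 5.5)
We prove this by an explicit identification. Define $\Psi:\clif^2\to\clifn{0,n+1}$ by
\[\Psi(u,v)=u+v\,i_{n+1}.\]
First, $\Psi$ is a real vector space isomorphism. Every reduced monomial of $\clifn{0,n+1}$ either omits $i_{n+1}$ or ends with it, since indices are increasing. Hence $\clifn{0,n+1}=\clif\oplus\clif\, i_{n+1}$. Both sides have dimension $2^{n+1}$, and $\Psi$ maps the standard basis of $\clif^2$ bijectively onto the standard basis of $\clifn{0,n+1}$.

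The main step is to check that $\Psi$ intertwines the stated product with Clifford multiplication. The key commutation rule is
\[i_{n+1}\,x=x'\,i_{n+1}\quad\text{for all } x\in\clif.\]
It suffices to check this on monomials $i_I$ with $I\subseteq\{1,\dots,n\}$. Moving $i_{n+1}$ past each of the $|I|$ generators contributes a sign, giving $(-1)^{|I|}$. This agrees with the action of the grade involution on $i_I$ from Lemma \ref{MonomialInvolutionInvarianceLemma}. We also use $i_{n+1}^2=-1$ in $\clifn{0,n+1}$.

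With these two facts we expand:
\begin{align*}
\Psi(u_1,v_1)\Psi(u_2,v_2)&=(u_1+v_1i_{n+1})(u_2+v_2i_{n+1})\\
&=u_1u_2+u_1v_2i_{n+1}+v_1i_{n+1}u_2+v_1i_{n+1}v_2i_{n+1}.
\end{align*}
We rewrite each term using the commutation rule. The third term becomes $v_1u_2'\,i_{n+1}$, and the fourth becomes $v_1v_2'\,i_{n+1}^2=-v_1v_2'$. So the naive ordering gives the product $(u_1u_2-v_1v_2',\,u_1v_2+v_1u_2')$.

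The primes land on the right-hand factors here, while the lemma's formula places them on the left-hand factors: $v_1'v_2$ and $u_1'v_2$. This discrepancy is the one real obstacle. It is resolved by choosing the embedding to match the stated formula. Since the paper says it has varied Lounesto's convention, the intended embedding is most likely
\[\Psi(u,v)=u+i_{n+1}v.\]
With this choice, expanding $(u_1+i_{n+1}v_1)(u_2+i_{n+1}v_2)$ gives four terms:
\begin{itemize}
\item $u_1u_2$;
\item $u_1i_{n+1}v_2=i_{n+1}u_1'v_2$;
\item $i_{n+1}v_1u_2$;
\item $i_{n+1}v_1i_{n+1}v_2=i_{n+1}^2v_1'v_2=-v_1'v_2$.
\end{itemize}
Collecting terms yields exactly $(u_1u_2-v_1'v_2,\;u_1'v_2+v_1u_2)$, as claimed.

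Finally, $\Psi$ is a bijective linear map that respects multiplication. It also sends $(1,0)$ to $1$, so it is a unital algebra isomorphism. As a consistency check, $(0,1)$ squares to $(-1,0)$ under the stated product, matching $i_{n+1}^2=-1$.
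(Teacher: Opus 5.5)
Your proof is correct, and once you settle on $\Psi(u,v)=u+i_{n+1}v$ it is exactly the paper's argument: write each element of $\clifn{0,n+1}$ uniquely as a part without $i_{n+1}$ plus $i_{n+1}$ times a part without it, then expand the product using $i_{n+1}x=x'i_{n+1}$ and $i_{n+1}^2=-1$. The detour through $u+v\,i_{n+1}$ is unnecessary, and your bijectivity argument is phrased for that first embedding, but it carries over at once because $i_{n+1}v=v'i_{n+1}$ and $v\mapsto v'$ is a linear bijection of $\clif$.
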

\noindent\textit{Proof.} Every element of $\clifn{0,n+1}$ can be uniquely written as $u+i_{n+1}v$ for $u,v \in \clif$, where $i_{n+1}v$ groups all terms containing the generator $i_{n+1}$ and $u$ contains all other terms. We can take a product of two such elements:
\[(u_1+i_{n+1}v_1)(u_2+i_{n+1}v_2) = u_1u_2+i_{n+1}v_1u_2+u_1i_{n+1}v_2+i_{n+1}v_1i_{n+1}v_2,\]
and, because $u,\ v$ do not contain $i_{n+1}$, commuting it past them has the effect of sending $i_j \to -i_j$ for $j \in \{1,2,...,n\}$ as it passes. So we can write
\[(u_1+i_{n+1}v_1)(u_2+i_{n+1}v_2) = (u_1u_2-v_1'v_2)+i_{n+1}(v_1u_2+u_1'v_2),\]
which confirms the claim. \qed
\\\\
Given some $z=(u,v)$ as an element of $\clifn{0,n+1}$, the conjugate $\ol{z}$ has components $(\ol{u},-v^*)$; the `imaginary' coefficient has a different conjugation as the generator $i_{n+1}$ has to commute to the left-hand side after applying the anti-involution. This gives us a different way to view $\clif^2$, as an embedding in a higher Clifford algebra; and it dovetails nicely with the Lipschitz spinor structure.
\begin{lemma}
\label{lem:EmbedCliffordProduct}
 Given a pair of Lipschitz spinors $\kappa_m = (\xi_m,\eta_m) \in S\lip$ for $m=1,2$, if we write $z_m = \xi_m + i_{n+1}\eta_m \in \clifn{0,n+1}$ then $\ol{z_1}z_2 = \langle \kappa_1,\kappa_2 \rangle + i_{n+1}\{\kappa_1,\kappa_2\}$.
\end{lemma}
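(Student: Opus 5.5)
The plan is to compute $\ol{z_1}z_2$ directly, using the pair multiplication of Lemma \ref{lem:LounestoCayleyDickson}. First I will write $\ol{z_1}$ in the form $u+i_{n+1}v$ with $u,v\in\clif$. This is the observation made just after that lemma, that $\ol{(u,v)}=(\ol{u},-v^*)$; I will re-derive it to fix the signs. Clifford conjugation is an antiautomorphism and $\ol{i_{n+1}}=-i_{n+1}$, so
\[\ol{\xi_1+i_{n+1}\eta_1}=\ol{\xi_1}-\ol{\eta_1}\,i_{n+1}.\]
Moving $i_{n+1}$ past an element of $\clif$ applies the grade involution to that element. Hence $\ol{\eta_1}\,i_{n+1}=i_{n+1}(\ol{\eta_1})'=i_{n+1}\eta_1^*$, and so $\ol{z_1}=(\ol{\xi_1},-\eta_1^*)$ as a pair.

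Next I will apply the product rule $(u_1,v_1)(u_2,v_2)=(u_1u_2-v_1'v_2,\,u_1'v_2+v_1u_2)$ with $(u_1,v_1)=(\ol{\xi_1},-\eta_1^*)$ and $(u_2,v_2)=(\xi_2,\eta_2)$. The first component is $\ol{\xi_1}\xi_2+(\eta_1^*)'\eta_2=\ol{\xi_1}\xi_2+\ol{\eta_1}\eta_2$, which is $\langle\kappa_1,\kappa_2\rangle$ by Definition \ref{defn:genHermForm}. The second component is $(\ol{\xi_1})'\eta_2-\eta_1^*\xi_2=\xi_1^*\eta_2-\eta_1^*\xi_2$, which is exactly the bracket $\{\kappa_1,\kappa_2\}$. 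These use only the identities $\cdot^{*\prime}=\ol{\cdot}$ and $\ol{\cdot}\,'=\cdot^*$ from the definition of the involutions.

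The only real obstacle is sign bookkeeping. I need the correct sign of $-v^*$ in the conjugate, and I need to apply the grade involution to the right factor when commuting $i_{n+1}$ through. I will check both by testing against the case $n=0$, where $\clifn{0,1}=\C$ and $\clifn{0,2}$ is the quaternions; there the identity should reduce to the known complex formulas. The computation never uses the spinor condition $\xi_m\ol{\eta_m}\in\para$, so the identity in fact holds for all pairs in $\clif^2$. I will remark on this, noting that the restriction to $S\lip$ matters only in that both components then lie in $\lip^{\triangleright}$, by Lemmas \ref{lem:HermRestrictsLip} and \ref{lem:BracketRestrictsLip}.
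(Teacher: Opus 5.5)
Your proposal is correct and matches the paper's proof: the paper also writes $\ol{z_1}$ as the pair $(\ol{\xi_1},-\eta_1^*)$, applies the product rule of Lemma \ref{lem:LounestoCayleyDickson}, and reads off $\langle\kappa_1,\kappa_2\rangle$ and $\{\kappa_1,\kappa_2\}$ from the two components. Your sign derivation for the conjugate is right, and so is your remark that the spinor condition is never used, so the identity holds on all of $\clif^2$.
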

\noindent\textit{Proof.} We apply the Cayley-Dickson type construction of Lemma \ref{lem:LounestoCayleyDickson}.
\begin{align*}
\ol{z_1}z_2 %= (\ol{\xi_1},-\eta_1^*)\cdot(\xi_2,\eta_2) 
&= \left(\ol{\xi_1}\xi_2 - (-\eta_1^*)'\eta_2,\ol{\xi_1}'\eta_2+(-\eta_1^*)\xi_2\right)
%= (\ol{\xi_1}\xi_2+\ol{\eta_1}\eta_2,\xi_1^*\eta_2 - \eta_1^*\xi_2) 
= \langle \kappa_1,\kappa_2 \rangle + i_{n+1}\{\kappa_1,\kappa_2\}.\tag*{\qed}
\end{align*}
We see the two forms naturally arise in the embedding of Lipschitz spinors $S\lip$ into $\clifn{0,n+1}$. And, in fact, they are the components of a spinor themselves.
\begin{lemma}
\label{lem:ProductRatioSpinor}
 Let $\kappa_1 = (\xi_1,\eta_1)$, $\kappa_2 = (\xi_2,\eta_2) \in S\lip$. Then $\left(\langle \kappa_1,\kappa_2\rangle, \{\kappa_1,\kappa_2\}\right)$ is a Lipschitz spinor.
\end{lemma}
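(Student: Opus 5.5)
The plan is to check the three conditions of Definition \ref{def:LipschitzSpinor} for the pair $(h,b)$, where $h=\langle\kappa_1,\kappa_2\rangle$ and $b=\{\kappa_1,\kappa_2\}$. The conditions are: both entries lie in $\lip^{\triangleright}$, they are not both zero, and $h\ol{b}\in\para$.

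\emph{Entries in $\lip^{\triangleright}$.} Membership in $\lip^{\triangleright}$ is immediate. Lemma \ref{lem:HermRestrictsLip} gives $h\in\lip^{\triangleright}$, and Lemma \ref{lem:BracketRestrictsLip} gives $b\in\lip^{\triangleright}$.

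\emph{Not both zero.} Suppose $h=b=0$. By Proposition \ref{prop:HermProdIsCheckBracket}, $h=\{\check{\kappa}_1,\kappa_2\}$. Lemma \ref{InnerProductPropertiesLemma2} i) says $\check{\kappa}_1\in S\lip$, so Lemma \ref{KernelLemma} applies to both $\kappa_1$ and $\check{\kappa}_1$. From $b=0$ we get $\kappa_2=\kappa_1 a$ for some $a\in\clif$. From $h=0$ we get $\kappa_2=\check{\kappa}_1 c$ for some $c\in\clif$. The direct sum decomposition of Corollary \ref{cor:Clif2DirectSumDecomp} then forces $\kappa_1 a=\check{\kappa}_1 c=0$. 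Hence $\kappa_2=(0,0)$, which contradicts $\kappa_2\in S\lip$.

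\emph{The paravector condition.} This is the only real computation. Since $\ol{\cdot}$ is an antiautomorphism extending $\ol{x^*}=x'$, we have
\[\ol{b}=\ol{\eta_2}\,\xi_1'-\ol{\xi_2}\,\eta_1'.\]
Expanding $h\ol{b}=(\ol{\xi_1}\xi_2+\ol{\eta_1}\eta_2)(\ol{\eta_2}\xi_1'-\ol{\xi_2}\eta_1')$ gives four terms, which I group as follows.
\begin{enumerate}[label=\roman*)]
\item The term $\ol{\xi_1}(\xi_2\ol{\eta_2})\xi_1'$ equals $\sigma(\ol{\xi_1})(\pW)$, where $\pW=\xi_2\ol{\eta_2}\in\para$ and $(\ol{\xi_1})^*=\xi_1'$. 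It is therefore a paravector whenever $\xi_1\neq0$, and it is $0$ otherwise.
\item The term $-\ol{\eta_1}(\eta_2\ol{\xi_2})\eta_1'$ equals $-\sigma(\ol{\eta_1})(\ol{\pW})$, which is likewise a paravector.
\item The two cross terms are $-|\xi_2|^2\,\ol{\xi_1}\eta_1'$ and $|\eta_2|^2\,\ol{\eta_1}\xi_1'$. They use $\xi_2\ol{\xi_2}=|\xi_2|^2$ and $\eta_2\ol{\eta_2}=|\eta_2|^2$, which are real.
\end{enumerate}
For the cross terms, note $\ol{\xi_1}\eta_1'=(\xi_1^*\eta_1)'$ and $\ol{\eta_1}\xi_1'=(\eta_1^*\xi_1)'$. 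By Lemma \ref{lem:xiStarEtaParavector} and Corollary \ref{cor:xistarEta}, both equal $\pP'$ for the single paravector $\pP=\xi_1^*\eta_1$. So the cross terms sum to $(|\eta_2|^2-|\xi_2|^2)\pP'\in\para$.

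Since $\para$ is a real vector space, the sum of all four terms lies in $\para$, so $h\ol{b}\in\para$. I expect no real obstacle here. The only care needed is bookkeeping of the involutions (for instance $\ol{x}^*=x'$ and $\ol{x^*}=x'$) and handling the degenerate cases where some component vanishes. In those cases the corresponding terms are simply $0$, and $0$ lies in $\para$.
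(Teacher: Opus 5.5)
Your proof is correct and is essentially the paper's. You place the entries in $\lip^{\triangleright}$ by Lemmas \ref{lem:HermRestrictsLip} and \ref{lem:BracketRestrictsLip}, then use the same four-term expansion giving $\sigma(\ol{\xi_1})(\xi_2\ol{\eta_2})-\sigma(\ol{\eta_1})(\eta_2\ol{\xi_2})+(|\eta_2|^2-|\xi_2|^2)(\xi_1^*\eta_1)'$, exactly as in (\ref{eqn:KahlerRatioExpanded}). Your extra check that the two entries cannot both vanish is also valid: it uses Lemma \ref{KernelLemma} for both $\kappa_1$ and $\check{\kappa}_1$, together with the trivial intersection in Corollary \ref{cor:Clif2DirectSumDecomp}, and it covers a point the paper's proof leaves unstated.
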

\noindent\textit{Proof.} Both the forms are elements of $\lip^{\triangleright}$ (see Lemma \ref{lem:HermRestrictsLip} and Lemma \ref{lem:BracketRestrictsLip}), so all we need to check is that $\langle \kappa_1,\kappa_2 \rangle \ol{\{\kappa_1,\kappa_2\}} \in \para$.
\begin{align*}
(\ol{\xi_1}\xi_2 + \ol{\eta_1}\eta_2)\ol{(\xi_1^*\eta_2-\eta_1^*\xi_2)} &= %(\ol{\xi_1}\xi_2 + \ol{\eta_1}\eta_2)(\ol{\eta_2}\xi_1' - \ol{\xi_2}\eta_1')\\
%&= 
\ol{\xi_1}\xi_2\ol{\eta_2}\xi_1' - \ol{\xi_1}\xi_2\ol{\xi_2}\eta_1' + \ol{\eta_1}\eta_2\ol{\eta_2}\xi_1' - \ol{\eta_1}\eta_2\ol{\xi_2}\eta_1'.
\end{align*}
We substitute $\pV_j=\xi_j\ol{\eta_j}$, and $\pW_j=\xi_j^*\eta_j$.
\begin{align}
    \label{eqn:KahlerRatioExpanded}
    \langle \kappa_1,\kappa_2 \rangle \ol{\{\kappa_1,\kappa_2\}} %&= \ol{\xi_1}\pV_2\xi_1' - \ol{\eta_1}\ \ol{\pV_2}\eta_1' + |\eta_2|^2 (\eta_1^*\xi_1)'- |\xi_2|^2(\xi_1^*\eta_1)'\\
    &= \sigma(\ol{\xi_1})(\pV_2) - \sigma(\ol{\eta_1})(\ol{\pV_2}) + (|\eta_2|^2-|\xi_2|^2)\pW_1'.
\end{align}
All these components are paravectors, so $\langle \kappa_1,\kappa_2 \rangle \ol{\{\kappa_1,\kappa_2\}} \in \para$ and the claim follows. \qed
\subsection{The Tangent Bundle of $S\lip$}
As $\clif^2$ is a vector space, its tangent space at any point is naturally isomorphic to $\clif^2$. $S\lip$ is a submanifold of $\clif^2$, and thus the tangent spaces of $S\lip$ at varying points are isomorphic to varying subspaces of $\clif^2$. That is, we can write tangent vectors to $S\lip$ as pairs of elements in the Clifford algebra. 
\begin{theorem}
\label{thm:SpinorsTangentSpace}
 The tangent space $T_{\kappa}S\lip$ at $\kappa = (\xi,\eta) \in S\lip$ can be characterised as
 \begin{equation}
 \label{SpinorTangentChar1}
T_{\kappa}S\lip = \{(a,b) \in \clif^2\ |\ a \ol{\eta} + \xi \ol{b} \in \para\}.
\end{equation}
 It decomposes into real orthogonal subspaces (under $(\cdot | \cdot )$) as
\[T_{\kappa}S\lip = \kappa\R \oplus \kappa\left(\R^{0,n} \oplus {\bigwedge}^{\!2} \R^{0,n} \right) \oplus \check{\kappa}\left(\para\right).\]
 It can be explicitly constructed as a subspace of $\clif^2$; if $\eta \neq 0$, then
 \[T_{\kappa}S\lip = \{(\eta'\pV + \xi H, \eta H)\ |\ \pV \in \para,\ H \in \$\mathfrak{g}_{n+1}\},\]
 and if $\xi \neq 0$ then
 \[T_{\kappa}S\lip = \{(\xi\Xi,\xi'\pU + \eta\Xi)\ |\ \pU \in \para,\ \Xi \in \$\mathfrak{g}_{n+1}\}.\]
\end{theorem}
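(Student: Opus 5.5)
Since $SL(2,\lip)$ acts transitively on $S\lip$ (Lemma \ref{TransitiveLemma}) by linear maps of $\clif^2$ that preserve $S\lip$, each $A\in SL(2,\lip)$ gives an isomorphism $T_\kappa S\lip\to T_{A\kappa}S\lip$ by left multiplication. I will still work directly at a general point, splitting into the cases $\eta\neq 0$ and $\xi\neq 0$, since the explicit descriptions are stated that way.

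\textbf{Parametrisation and dimension.} When $\eta\neq 0$, I write $\xi=\pV_0\eta$ with $\pV_0=\xi\eta^{-1}\in\para$. A neighbourhood of $\kappa$ in $S\lip$ is then parametrised by $(\pV,\eta)\mapsto(\pV\eta,\eta)$ with $\pV\in\para$ and $\eta\in\lip$, and this map is a diffeomorphism onto its image because $\pV=\xi\eta^{-1}$ can be recovered. Its dimension is $(n+1)+\dim\lip=(n+1)+1+n(n+1)/2$, using Theorem \ref{LipschitzLieAlgebraTheorem}.

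\textbf{Explicit form of the tangent space ($\eta\neq0$).} I differentiate along $\eta_t=\eta e^{tH}$ with $H\in\$\mathfrak{g}_{n+1}$ (Corollary \ref{cor:TangentSpacesLip}) and along $\pV_t=\pV_0+t\pV_1$. This gives tangent vectors $(\pV_1\eta+\xi H,\eta H)$. By Lemma \ref{PartialParavectorCommutingLemma}, $\pV_1\eta=\eta'\pV$ for some $\pV\in\para$, and as $\pV_1$ ranges over $\para$ so does $\pV$, since $\pV=\eta'^{-1}\pV_1\eta$ is a twisted adjoint action and hence bijective. This yields the stated form. The case $\xi\neq0$ follows from the symmetric parametrisation $(\xi,\pU_0\xi)$ with $\pU_0=\eta\xi^{-1}$.

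\textbf{Characterisation (\ref{SpinorTangentChar1}).} Differentiating the defining condition $\xi\ol\eta\in\para$ shows that every tangent vector $(a,b)$ satisfies $a\ol\eta+\xi\ol b\in\para$. The right-hand side of (\ref{SpinorTangentChar1}) is a linear subspace, so the inclusion is clear. For equality I compare dimensions. Using Corollary \ref{cor:Clif2DirectSumDecomp}, I write $(a,b)=\kappa x+\check\kappa y$ and compute
\[a\ol\eta+\xi\ol b=\xi(x+\ol x)\ol\eta+\eta' y\ol\eta-\xi\ol y\xi'+\dots\]
I then simplify this with Corollary \ref{cor:xistarEta}, showing that the condition forces $x\in\$\mathfrak{g}_{n+1}$ and $y\in\para$. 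This gives the same count $\dim\$\mathfrak{g}_{n+1}+(n+1)$. I expect this step to be the main obstacle: the noncommutative bookkeeping needed to show that the condition cuts out exactly these $x$ and $y$, since for instance degree-3 parts of $x$ have to be excluded, much as in the proof of Theorem \ref{LipschitzLieAlgebraTheorem}. If it becomes unmanageable, I will instead reduce to $\kappa=(1,0)$ by transitivity. There the condition reads $\ol b\in\para$, so $b\in\para$ and $a$ is arbitrary. I would then check that $(a,b)\mapsto(Aa,Ab)$ preserves the condition, using the $SL(2,\lip)$ relations.

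\textbf{Orthogonal decomposition.} At $(1,0)$ we have $\check\kappa=(0,-1)$, and the description above becomes $\kappa\$\mathfrak{g}_{n+1}\oplus\check\kappa\para$. For a general $\kappa$, the explicit form $(\eta'\pV+\xi H,\eta H)=\kappa H+\check\kappa(-\pV')$ exhibits the same decomposition, with $-\pV'\in\para$. For orthogonality:
\begin{itemize}
\item $\kappa\clif\perp\check\kappa\clif$ by Lemma \ref{InnerProductPropertiesLemma2} ii);
\item $\kappa\R\perp\kappa(\R^{0,n}\oplus{\bigwedge}^{\!2}\R^{0,n})$ by Lemma \ref{InnerProductPropertiesLemma} iii), since those elements have zero real part (and $b+\ol b$ is real on degrees $\le2$, indeed zero).
\end{itemize}
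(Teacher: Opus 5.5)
The step that cannot be completed is the reverse inclusion in (\ref{SpinorTangentChar1}). The obstruction is not bookkeeping: for $n\ge 3$ that equality is false.

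The condition $a\ol{\eta}+\xi\ol{b}\in\para$ linearises only the constraint $\xi\ol{\eta}\in\para$. It forgets that $\xi,\eta$ must lie in $\lip^{\triangleright}$. Since $\dim\lip=1+n(n+1)/2<2^n=\dim\clif$ for $n\ge 3$, that is a genuine extra constraint.

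Your own fallback exhibits the failure. At $\kappa=(1,0)$ the condition reads $\ol{b}\in\para$ with $a$ arbitrary, a space of dimension $2^n+n+1$. But every curve in $S\lip$ through $(1,0)$ has first component in $\lip$. So by Theorem \ref{LipschitzLieAlgebraTheorem}, $T_{(1,0)}S\lip=\$\mathfrak{g}_{n+1}\times\para$, of dimension $n(n+1)/2+n+2$, which is exactly what your decomposition paragraph says. Concretely, $(i_1i_2i_3,0)$ satisfies the condition but is not tangent. The degree-$3$ (and higher) parts of $x$ therefore can never be excluded, and the dimension count cannot close.

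The equality holds only for $n\le 2$, where $\clif=\$\mathfrak{g}_{n+1}$. For larger $n$ one must add a further constraint, such as $\eta^{-1}b\in\$\mathfrak{g}_{n+1}$ when $\eta\neq 0$.

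The paper's proof will not rescue you. It too derives only the inclusion $T_\kappa S\lip\subseteq\{\cdots\}$, and then uses the unproved reverse inclusion to place $\kappa\mathcal{B}^n$ and $\check{\kappa}(\para)$ in the tangent space. The remaining assertions of the theorem are true, and you can prove them without (\ref{SpinorTangentChar1}):
\begin{itemize}
\item Your parametrisation $(\pV,\eta)\mapsto(\pV\eta,\eta)$ gives the explicit forms and the dimension, essentially as the paper's curve argument does.
\item $\kappa\mathcal{B}^n\subset T_\kappa S\lip$ follows directly from the curves $\kappa e^{tB}$.
\end{itemize}

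There is also a smaller error in your decomposition step. The identity $(\eta'\pV+\xi H,\eta H)=\kappa H+\check{\kappa}(-\pV')$ is false, because $\check{\kappa}(-\pV')=(-\eta'\pV',\xi'\pV')$. The second components then differ by $\xi'\pV'$, which is non-zero whenever $\xi\neq 0\neq\pV$.

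Instead, apply Corollary \ref{cor:Clif2DirectSumDecomp} to the remainder. This gives $(\eta'\pV,0)=\kappa x+\check{\kappa}y$ with
\begin{itemize}
\item $y=|\kappa|^{-2}\eta^*\eta'\pV=|\kappa|^{-2}|\eta|^2\pV\in\para$;
\item $x=|\kappa|^{-2}\ol{\xi}\eta'\pV=|\kappa|^{-2}(\xi^*\eta)'\pV$, which lies in $\mathcal{B}^n$ because $(\xi^*\eta)'\in\para$ by Lemma \ref{lem:xiStarEtaParavector} and a product of two paravectors has degree at most $2$.
\end{itemize}
With this correction, $T_\kappa S\lip=\kappa\mathcal{B}^n+\check{\kappa}(\para)$. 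Your orthogonality arguments via Lemma \ref{InnerProductPropertiesLemma2} ii) and Lemma \ref{InnerProductPropertiesLemma} then finish the decomposition.
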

\noindent \textit{Proof.} Points $\kappa=(\xi,\eta) \in S\lip$ satisfy $\xi\ol{\eta} \in \para$. For the first characterisation, we take a small variation and differentiate. For some $a,b \in \clif^2$ we have:
\[(\xi+ta)\ol{(\eta+tb)} \in \para \implies a\ol{\eta}+\xi\ol{b} \in T_{\xi\ol{\eta}}\para \cong \para.\]
%\implies \xi\ol{\eta}+t(a\ol{\eta}+\xi\ol{b})+t^2a\ol{b} \in &\para\\
This implies (\ref{SpinorTangentChar1}). Consider the subset $\kappa\mathcal{B}^n \in \clif^2$ (recalling $\mathcal{B}^n$ are the bi-paravectors). We check that they satisfy (\ref{SpinorTangentChar1}); for $(\xi B,\eta B) \in \kappa\mathcal{B}^n$:
\[a \ol{\eta} + \xi \ol{b} = \xi B \ol{\eta} + \xi \ol{B}\ol{\eta} = \xi(B+\ol{B})\ol{\eta}.\]
Since bi-paravectors only have terms of degree 2 or less, $B+\ol{B}=2\text{Re}(B)$.
\[a \ol{\eta} + \xi \ol{b} = 2\text{Re}(B)\xi\ol{\eta} \in \para.\]
So $\kappa\mathcal{B}^n$ is contained in $T_{\kappa}S\lip$. The space $\mathcal{B}^n$ can be decomposed into `real' and `non-real' components (`non-real' components are anything of degree 1 or higher):
\[\kappa \mathcal{B}^n = \kappa\R \oplus \kappa\left(\R^{0,n} \oplus {\bigwedge}^{\!2} \R^{0,n} \right),\]
and, because purely imaginary bi-paravectors $B$ satisfy $B+\ol{B} = 0$, we appeal to Lemma \ref{InnerProductPropertiesLemma} iii) to see that the subspaces $\kappa\R$ and $ \kappa\left(\R^{0,n} \oplus {\bigwedge}^{\!2} \R^{0,n} \right)$ are orthogonal. 

Now consider the subspace $\check{\kappa}(\para)$. We check it sits within the tangent space $T_{\kappa}S\lip$ using (\ref{SpinorTangentChar1}):
\[\eta'\pV\ol{\eta}-\xi\ol{\xi'\pV} = \sigma(\eta')(\pV)-\sigma(\xi)(\ol{\pV}) \in \para.\]
It follows from Lemma \ref{InnerProductPropertiesLemma2} ii) that this subspace is orthogonal to $\kappa\mathcal{B}^n$ under $(\cdot | \cdot)$. The total dimension of $\kappa \mathcal{B}^n \oplus \check{\kappa} (\para)$ matches the dimension of $T_{\kappa}S\lip$, so we conclude they are isomorphic and the tangent space decomposes as
 \[T_{\kappa}S\lip = \kappa\R \oplus \kappa\left(\R^{0,n} \oplus {\bigwedge}^{\!2} \R^{0,n} \right) \oplus \check{\kappa}\left(\para\right).\]
For the latter two characterisations we start instead with the pair of conditions
\begin{align}
 \label{eqn:XiProjCond}\xi &= \pV\eta,\ \pV \in \para,\\
 \label{eqn:EtaProjCond}\eta &= \pW\xi,\ \pW \in \para.
\end{align}
The first case applies when $\eta \neq 0$, and the second when $\xi \neq 0$. 

Consider a curve $\kappa_t=(\xi_t,\eta_t)$ in $S\lip$  for $t \in (-\varepsilon,\varepsilon)$ that passes through the point $(\xi_0,\eta_0) = (\xi,\eta) \in S\lip$ at $t=0$. Define $\pV_t = \xi_t\eta^{-1}_t$ (restricting $\varepsilon$ as necessary), as $\eta_t \neq 0$ over a small interval around $t=0$ by continuity. As this curve lies within the manifold of Lipschitz spinors, it satisfies the pair of conditions (\ref{eqn:XiProjCond}), (\ref{eqn:EtaProjCond}). 

Deriving (\ref{eqn:XiProjCond}) gives a condition on the derivative of $\xi$:
\[\dot{\xi_t} = \dot{\pV}_t\eta_t + \pV_t\dot{\eta}_t = \dot{\pV}_t\eta_t + \xi_t\eta^{-1}_t\dot{\eta}_t.\]
At $t=0$, $(\xi_t,\eta_t) = (\xi,\eta)$ and the tangent space will then be $(\dot{\pV}_0\eta + \xi\eta^{-1}\dot{\eta}_0,\dot{\eta}_0)$ for arbitrary choices of $(\dot{\pV}_0,\dot{\eta}_0) \in \left(\para\right) \times T_{\eta}\lip$. If we instead take $\xi \neq 0$ and apply the same arguments to (\ref{eqn:EtaProjCond}), the tangent vectors take the form $(\dot{\xi}_0,\dot{\pW}_0\xi + \eta\xi^{-1}\dot{\xi}_0)$ for arbitrary choices of $(\dot{\pW}_0,\dot{\xi}_0) \in \left(\para\right) \times T_{\xi}\lip$. Renaming the arbitrary derivatives, these arguments imply that tangent vectors can be written in the form
\begin{equation}
\label{eqn:EtaNonzeroTangentSpace}
(\pV\eta + \xi\eta^{-1} G, G) \subset \clif^2,
\end{equation}
for $\eta \neq 0,\ \pV \in \para,\ G \in T_{\eta}\lip$, and
\begin{equation}
\label{eqn:XiNonzeroTangentSpace}
(N,\pU\xi + \eta\xi^{-1}N) \subset \clif^2,
\end{equation}
for $\xi \neq 0,\ \pU \in \para,\ N \in T_{\xi}\lip$. 
 
Because $G = \eta H$ for $H \in \$\mathfrak{g}_{n+1}$ and $N = \xi \Xi$ for $\Xi \in \$\mathfrak{g}_{n+1}$, by Corollary \ref{cor:TangentSpacesLip} we find the form given in the theorem statement. The above arguments imply $T_{\kappa}S\lip$ is a subset of the vector spaces (\ref{eqn:EtaNonzeroTangentSpace}) and (\ref{eqn:XiNonzeroTangentSpace}) wherever those spaces are defined, but as the dimensions agree these spaces characterise the tangent space fully.\qed 
\\\\
The tangent vectors in the kernel of $\{\kappa,\cdot\}$ will prove useful for a later theorem, so we provide the following corollary to the larger characterisation of tangent spaces.
\begin{corollary}
 \label{TangentSpinorLemma}
 Given $\kappa \in S\lip$, if $a \in \clif$ satisfies $\kappa a \in T_{\kappa}S\lip$ then $a \in \mathcal{B}^n$ and $a+\ol{a} = 2\text{Re}(a) \in \R$.
\end{corollary}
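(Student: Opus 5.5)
We plan to use the orthogonal decomposition of Theorem \ref{thm:SpinorsTangentSpace},
\[T_{\kappa}S\lip = \kappa\mathcal{B}^n \oplus \check{\kappa}\left(\para\right),\]
together with the direct sum $\clif^2 = \kappa\clif \oplus \check{\kappa}\clif$ of Corollary \ref{cor:Clif2DirectSumDecomp}. Suppose $\kappa a \in T_{\kappa}S\lip$. By the tangent space decomposition we can write $\kappa a = \kappa B + \check{\kappa}\pV$ for some $B \in \mathcal{B}^n$ and $\pV \in \para$. Rearranging gives
\[\kappa(a-B) = \check{\kappa}\pV.\]
The left side lies in $\kappa\clif$ and the right in $\check{\kappa}\clif$. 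These summands intersect trivially: by Lemma \ref{InnerProductPropertiesLemma2} ii) they are orthogonal under the positive definite inner product $(\cdot|\cdot)$ of Lemma \ref{CliffordAlgInnerProductLemma2}, so any common element has zero norm. Hence $\kappa(a-B)=0$ and $\check{\kappa}\pV = 0$.

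Next we deduce $a=B$ from $\kappa(a-B)=0$. Since $\kappa\neq 0$, at least one of $\xi,\eta$ is a nonzero element of $\lip$ and therefore invertible. Say $\xi\neq 0$; then $\xi(a-B)=0$ gives $a=B$ after left-multiplying by $\xi^{-1}$. Alternatively, Lemma \ref{InnerProductPropertiesLemma} i) style scaling gives $|\kappa(a-B)|^2 = |\kappa|^2 (a-B)\cdot(a-B)$, since $\text{Re}(\xi c\ol{c}\ol{\xi}) = |\xi|^2\text{Re}(c\ol{c})$ by Lemma \ref{ABBALemma}; this also forces $a=B$. Either way $a\in\mathcal{B}^n$.

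Finally, for $a\in\mathcal{B}^n$ every term has degree at most $2$. By Lemma \ref{MonomialInvolutionInvarianceLemma}, Clifford conjugation negates monomials of degree $1$ and $2$ and fixes scalars. Therefore $a+\ol{a} = 2\text{Re}(a)\in\R$.

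The only delicate step is ensuring the decomposition of the tangent space is genuinely direct inside $\clif^2$, so that the $\check{\kappa}$-component of $\kappa a$ must vanish. The orthogonality in Lemma \ref{InnerProductPropertiesLemma2} ii) handles this. Note also that the dimension count in Theorem \ref{thm:SpinorsTangentSpace} is what lets us write an arbitrary tangent vector in the stated form, rather than merely knowing that $\kappa\mathcal{B}^n$ is contained in the tangent space.
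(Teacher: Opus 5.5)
Your proposal is correct and follows the same route as the paper. Both arguments play the orthogonality of $\kappa\clif$ and $\check{\kappa}\clif$ from Lemma~\ref{InnerProductPropertiesLemma2}~ii) against the decomposition $T_{\kappa}S\lip = \kappa\mathcal{B}^n \oplus \check{\kappa}(\para)$ to force $\kappa a \in \kappa\mathcal{B}^n$, and then read off $a+\ol{a}=2\text{Re}(a)$ from the degree bound. Your version is more careful than the paper's: you explicitly show that the $\check{\kappa}$-component vanishes, and that $\kappa(a-B)=0$ forces $a=B$ because some component of $\kappa$ is a nonzero, hence invertible, element of $\lip$, a cancellation step the paper leaves implicit.
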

\noindent\textit{Proof.} Lemma \ref{InnerProductPropertiesLemma2} ii) tells us the subspaces generated by multiples of $\kappa$ and $\check{\kappa}$ in $\clif^2$ are orthogonal under $(\cdot | \cdot)$; so, if $\kappa a \in T_{\kappa}S\lip$, it cannot be within the $\check{\kappa}$ summand of the orthogonal decomposition of Theorem \ref{thm:SpinorsTangentSpace}, and must lie within the $\kappa\mathcal{B}^n$ summand. This implies $a$ is a bi-paravector, which can be expressed as the sum of monomials of degree at most 2. Elements of degree $1$ and $2$ are sent to their negative by Clifford conjugation, so $a+\ol{a}$ is real. \qed
\subsubsection{Sections of the Tangent Bundle}
\label{Sec:SectionsTangentBundle}
Following \cite{Mathews_Spinors_horospheres}, we define the scaling factor of a conformal map.
\begin{definition}
    Let $V,W$ be real vector spaces equipped with nondegenerate bilinear symmetric real-valued forms  $(\cdot,\cdot)_V$ and $(\cdot,\cdot)_W$.   Let $f:V \to W$ be a conformal linear map. The real constant $K$ such that for all $V_1,V_2 \in V$,
    \[(f(V_1),f(V_2))_W = K(V_1,V_2)_V,\]
    is the {\normalfont scaling factor} of $f$. \null\hfill\defn
\end{definition}
\noindent \label{not:TSLip}We define a family of sections of $TS\lip$, parameterised by paravectors.
\begin{definition}
 For any $\pV \in \para$, the section $s_{\pV}: S\lip \to TS\lip$ of $TS\lip$ is $s_\pV(\kappa) = \check{\kappa}\pV$.
 More generally, the family of such sections forms a map
 \[s: \para \times S\lip \to TS\lip,\ s(\pV,\kappa) = s_\pV(\kappa) = \check{\kappa}\pV.\tag*{\defn}\]
\end{definition}
\noindent These sections behave nicely under the bracket, as $\{\kappa,s_\pV\kappa\} = \{\kappa,\check{\kappa}\pV\} = -|\kappa|^2\pV$. They are real linear; for $a,b \in \R$ and $\pV,\pW \in \para$,
\[s_{a\pV+b\pW}(\kappa) = \check{\kappa}(a\pV+b\pW)=a\check{\kappa}\pV+b\check{\kappa}\pW = as_\pV(\kappa)+bs_\pW(\kappa).\]
As $\check{\kappa}(\para)$ is a real-linear subspace of $\clif^2$, it has an inner product and norm induced by the inner product $(\cdot|\cdot)$ on $\clif^2$. That is, the inner product $(\cdot|\cdot)$ can be applied to this family of sections of $TS\lip$.
\begin{lemma}
 \label{KappaCheckSectionsLemma}
 For any $\kappa \in S\lip$, the map $s(\cdot,\kappa)$ is conformal with scaling factor $|\kappa|^2$. 
\end{lemma}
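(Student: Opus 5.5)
The plan is to compute the inner product of two sections directly and reduce it to the dot product on paravectors. Fix $\kappa=(\xi,\eta)\in S\lip$ and $\pV,\pW\in\para$. The goal is
\[(s_\pV(\kappa)\,|\,s_\pW(\kappa)) = (\check{\kappa}\pV\,|\,\check{\kappa}\pW) = |\kappa|^2\,\pV\cdot\pW,\]
where $|\kappa|^2=|\xi|^2+|\eta|^2$. The map $s(\cdot,\kappa)$ is already known to be real linear, so this identity is exactly conformality with scaling factor $|\kappa|^2$.

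The key step is to apply Lemma \ref{InnerProductPropertiesLemma} iv) with $K=\check{\kappa}$. That lemma requires $K\in\lip^2$. Its proof only uses that $x\ol{x}$ and $y\ol{y}$ are real, and this holds for $x,y\in\lip^{\triangleright}$ with the value $0$ allowed. By Lemma \ref{InnerProductPropertiesLemma2} i), $\check{\kappa}\in S\lip$, so its components lie in $\lip^{\triangleright}$. If needed, I would say explicitly that the computation in iv) goes through verbatim when a component is zero.

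Applying the lemma gives
\[(\check{\kappa}\pV\,|\,\check{\kappa}\pW)=|\check{\kappa}|^2\,\pV\cdot\pW.\]
It then remains to check $|\check{\kappa}|^2=|\kappa|^2$. Since $\check{\kappa}=(\eta',-\xi')$ and the grade involution preserves the norm ($\eta'\ol{\eta'}=(\eta\ol{\eta})'=|\eta|^2$, a real number), we get
\[|\check{\kappa}|^2=|\eta|^2+|\xi|^2=|\kappa|^2.\]

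The only point needing care is this extension of Lemma \ref{InnerProductPropertiesLemma} iv) to components that may be zero. I don't expect a genuine obstacle there. Alternatively, one can expand directly:
\[\text{Re}\bigl(\eta'\pV\ol{\pW}\xi^{*}+\xi'\pV\ol{\pW}\eta^{*}\bigr).\]
Symmetrising $\pV\ol{\pW}$ to $\text{Re}(\pV\ol{\pW})$ exactly as in the proof of iv) then yields $\text{Re}(\pV\ol{\pW})\,(|\eta|^2+|\xi|^2)$.
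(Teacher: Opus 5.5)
Your argument is correct and is exactly the paper's proof: apply Lemma \ref{InnerProductPropertiesLemma} iv) with $K=\check{\kappa}$ and use $|\check{\kappa}|^2=|\kappa|^2$. Your worry about zero components is harmless, since the proof of iv) works for any $K\in\clif^2$. One slip in your optional direct expansion: because $\ol{\eta'}=\eta^*$ and $\ol{\xi'}=\xi^*$, it should read $\text{Re}\bigl(\eta'\pV\ol{\pW}\eta^{*}+\xi'\pV\ol{\pW}\xi^{*}\bigr)$, not with $\xi^*$ and $\eta^*$ interchanged as you wrote.
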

\noindent\textit{Proof.} Since $s(\cdot,\kappa)$ sends $\pV \in \para$ to $\check{\kappa}\pV$, for all $\pV,\pW \in \para$, $(\check{\kappa}\pV|\check{\kappa}\pW) = |\check{\kappa}|^2\pV\cdot \pW$ by Lemma \ref{InnerProductPropertiesLemma} iv). We also have that $|\check{\kappa}| = |\kappa|$, implying the claimed result. \qed
\\\\
Later, we will consider the isomorphic copy of $\para$ within the decomposition of $T_{\kappa}S\lip$ by quotienting out the other summands to get $\para \cong T_{\kappa}S\lip/\kappa\mathcal{B}^n$. The points in this quotient space are the affine $\left(\tfrac{n^2+n}{2}+1\right)$-planes in $T_{\kappa}S\lip$ of the form $\check{\kappa}\pV+\kappa\mathcal{B}^n$ for $\pV \in \para$. The following characterisation will then prove useful, identifying the set of points forming these affine planes using the bracket.
\begin{lemma}
\label{lem:QuotientBracketChar}
 Let $\kappa \in S\lip$, $\pV \in \para$, and $w \in T_{\kappa}S\lip$. Then $w \in s_\pV(\kappa) + \kappa\mathcal{B}^n$ if and only if $\{\kappa,w\} = -\pV|\kappa|^2$.
\end{lemma}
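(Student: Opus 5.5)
The plan is to use the orthogonal decomposition of Theorem \ref{thm:SpinorsTangentSpace} together with the kernel description of Lemma \ref{KernelLemma}. The computation $\{\kappa,s_\pV(\kappa)\} = \{\kappa,\check{\kappa}\pV\} = -|\kappa|^2\pV$, recorded just after the definition of the sections $s_\pV$, does most of the work. It holds because $\{\kappa,\check{\kappa}\} = \xi^*(-\xi') - \eta^*\eta' = -(|\xi|^2+|\eta|^2)$ and the bracket is right $\clif$-linear in its second argument.

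For the forward direction, take $w = \check{\kappa}\pV + \kappa B$ with $B \in \mathcal{B}^n$. Right-linearity of the bracket gives $\{\kappa,w\} = \{\kappa,\check{\kappa}\}\pV + \{\kappa,\kappa B\}$. By Lemma \ref{SpinorFormAntisymmetryLemma} the second term vanishes, since $\{\kappa,\kappa a\}=0$ for every $a\in\clif$. Hence $\{\kappa,w\} = -|\kappa|^2\pV$.

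For the converse, let $w \in T_\kappa S\lip$ satisfy $\{\kappa,w\} = -\pV|\kappa|^2$. Since $|\kappa|^2$ is a real scalar, $-\pV|\kappa|^2 = -|\kappa|^2\pV$. Therefore $\{\kappa, w - \check{\kappa}\pV\} = 0$. By Lemma \ref{KernelLemma}, $w - \check{\kappa}\pV = \kappa a$ for some $a \in \clif$. Both $w$ and $\check{\kappa}\pV = s_\pV(\kappa)$ lie in the vector space $T_\kappa S\lip$, the latter being the $\check{\kappa}(\para)$ summand of Theorem \ref{thm:SpinorsTangentSpace}. So $\kappa a \in T_\kappa S\lip$, and Corollary \ref{TangentSpinorLemma} gives $a \in \mathcal{B}^n$. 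Thus $w \in s_\pV(\kappa) + \kappa\mathcal{B}^n$.

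The only point needing care is this last step: membership of $\kappa a$ in the tangent space must force $a\in\mathcal{B}^n$. This is handled by Corollary \ref{TangentSpinorLemma}, which rests on the orthogonality in Lemma \ref{InnerProductPropertiesLemma2} ii). One should also check that Lemma \ref{KernelLemma} applies in the degenerate cases $\xi=0$ or $\eta=0$; its proof already treats these, so no further case split is needed.
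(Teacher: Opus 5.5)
Your proof is correct and follows the paper's argument step for step. The forward direction uses right-linearity of the bracket and Lemma \ref{SpinorFormAntisymmetryLemma}, and the converse uses Lemma \ref{KernelLemma} followed by Corollary \ref{TangentSpinorLemma}. Your remark that $w - s_{\pV}(\kappa)$ lies in $T_{\kappa}S\lip$ because both terms do makes explicit a step the paper leaves implicit.
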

\noindent\textit{Proof.} We have that $\{\kappa,s_\pV(\kappa)\} = -\pV|\kappa|^2$. If $w \in s_{\pV}(\kappa)+\kappa\mathcal{B}^n$ with representative $s_{\pV}(\kappa)+\kappa B$, then applying properties of the bracket we find:
\[\{\kappa,w\} = \{\kappa, s_\pV(\kappa)\} + \{\kappa,\kappa B\} = \{\kappa,s_\pV\kappa\} = -\pV|\kappa|^2.\] 
The second equality holds for all $B \in \mathcal{B}^n$ by Lemma \ref{SpinorFormAntisymmetryLemma}. 

Conversely, if $\{\kappa,w\} = -\pV|\kappa|^2$ then $\{\kappa,w-s_\pV(\kappa)\} = 0$ by linearity and by Lemma \ref{KernelLemma} $w=s_\pV(\kappa)+\kappa x$ for some $x \in \clif$. But if $w \in TS\lip$ then $\kappa x \in \kappa\mathcal{B}^n$, by Corollary \ref{TangentSpinorLemma}. \qed
\\\\
We can introduce an inner product on the quotient as well, following Definition 3.11.4 of \cite{mathews2024quaternions}. As noted in that work, there are several ways to introduce an inner product, but as the direct sum decomposition is orthogonal and all spaces involved are positive definite they all agree.
\begin{definition}
\label{defn:InnerProductOnQuotient}
 Let $\kappa \in S\lip$. We define a positive definite inner product
 \[(\cdot|\cdot): T_{\kappa}S\lip/\kappa\mathcal{B}^n \times T_{\kappa}S\lip/\kappa\mathcal{B}^n \to \R\]
 as follows. Each element of the quotient is of the form $\check{\kappa}\pV+\kappa\mathcal{B}^n$ for a unique $\pV \in \para$; for $\pV,\pW \in \para$ we define
 \[\left(\check{\kappa}\pV+\kappa\mathcal{B}^n|\check{\kappa}\pW+\kappa\mathcal{B}^n\right) = ( \check{\kappa}\pV|\check{\kappa}\pW),\]
 where $(\cdot|\cdot)$ is the inner product\footnote{It is because of this close association to the inner product on $\clif^2$ that we use the same notation for each.} on $\clif^2$.\null\hfill\defn
\end{definition}
\noindent We see this inner product is positive definite since Lemma \ref{InnerProductPropertiesLemma} iv) tells us $(\check{\kappa}\pV|\check{\kappa}\pW) = |\check{\kappa}|^2\pV \cdot \pW$, and the dot product on $\para$ is positive-definite.

\section{Light-Cones, Multiflags, and Decorated Ideal Points}
\label{sec:Second}
Our first geometric realisation of the Lipschitz spinors $S\lip$ follows the path established by Penrose and Rindler \cite{SpinorsAndSpacetime}, relating the spinor to a null flag structure in Minkowski space. \label{not:GenMinkSpace}We'll begin with some definitions and facts about the \textit{generalised Minkowski spaces} $\R^{1,n+2}$. 
\subsection{Geometry of Generalised Minkowski Space}
We may write points in $\R^{1,n+2}$ either as a vector\footnote{Unless otherwise specified this will be intended ordering of the variables. T and Z are separated by a semicolon to remind us that the other space variables are collected in a paravector $\pX$; essentially, mimicking the matrix presentation.}
\begin{equation}
\label{OrderOfVariables}
 (T,Z;X_0,X_1,...,X_n)_{\R} = (T,Z;\pX)_{\R}
\end{equation}
\noindent with quadratic form $dT^2-dZ^2-...-dX_n^2$, or as a \textit{paravector Hermitian matrix}.
\begin{definition}[Paravector Hermitian Matrices]
\label{defn:ParavectorHermitianMatrices}
 The paravector Hermitian matrices $\mathcal{P}^{1,n+2}$ are the matrices in $\mathcal{M}(2,\lip)$ of the form\footnote{The factor of $\tfrac{1}{2}$ ensures the trace is precisely $T$.}
\[\frac{1}{2}\begin{pmatrix}
 T+Z & \pX \\
 \ol{\pX} & T-Z 
\end{pmatrix}\]
for $T,Z \in \R$ and $\pX = X_0+\sum_{j=1}^nX_ji_j \in \para$ with $X_j \in \R$ for $j \in \{0,1,...,n\}$. They provide an embedding of $\R^{1,n+2}$ into the space $\mathcal{M}(2,\para)$ of $2\times 2$ matrices with $\para$ entries, generalising the standard embedding of $\R^{1,3}$ as the Hermitian matrices. Just like standard Hermitian matrices, they satisfy the defining property
\[P=P^{\dagger},\ P \in \mathcal{P}^{1,n+2},\]
where $P^{\dagger}=(\ol{P})^T$ is the conjugate transpose.\hfill\defn
\end{definition}
\noindent $\R^{1,n+2}$ and $\mathcal{P}^{1,n+2}$ are isomorphic vector spaces; we default to writing $\R^{1,n+2}$ for both, unless we want to draw attention to the matrix form. \label{not:MinkMetric}Denote the generalised Minkowski metric $dT^2-dZ^2-...-dX_n^2$ by $( \cdot | \cdot)_{1,n+2}$, and the norm by $|p|^2=(p | p)_{1,n+2}$ for $p \in \R^{1,n+2}$. 

\label{not:LightCone}The set of points $\{p \in \R^{1,n+2}:\ (p|p)_{1,n+2} = 0\}$ forms the \textit{light-cone} $L$, while the restricted set with positive $T$ coordinate form the positive light-cone $L^+ = \{p = (T,Z;\textbf{X}) \in \R^{1,n+2}:\ |p|^2 = 0 \text{ and } T>0\}$.

\label{not:ParaMatrixPt}For a point $p=(T,Z;\pX) \in \R^{1,n+2}$ corresponding to a paravector Hermitian matrix $S_p \in \mathcal{P}^{1,n+2}$, we have the following properties:
\[\text{Tr }S_p = T,\quad 4\,\text{pdet }S_p = |p|^2.\]
\label{not:CelestialSphere}We define the \textit{celestial sphere}, or the \textit{sphere at infinity}, to be the quotient of the positive light-cone $L^+$ by the relation $p \sim k p, k \in \R^+$. \label{not:ell}That is, the celestial sphere is the space of light-like directions (or rays) $\ell$ from the origin with positive $T$ coordinate. Denote the celestial sphere by $\mathscr{S}^+$. It can be modelled concretely by taking a slice of $L^+$ at a fixed time $T$, as each point of the slice will be a representative of an equivalence class in $L^+/{\sim}$; we call such a model the \textit{celestial sphere at constant $T$}, and denote it by $\mathscr{S}^+_T$. The celestial sphere and the models at constant $T$ are naturally diffeomorphic, and both are diffeomorphic to the hypersphere $S^{n+1}$.
\subsubsection{The Tangent Spaces to Light-Cones}
Penrose and Rindler's null flag lives in the tangent bundle to the light-cone. To extend it, we need a description of the tangent space to $L^+$ in $\R^{1,n+2}$.
\begin{lemma}
\label{SphereTangentSpaceLemma}
 In a quadratic vector space $V$ with metric $( \cdot,\cdot)$, let $L_K$ be the manifold of points $x \in V$ satisfying $( x,x ) = K$ for fixed $K\in \R$. At a point $p \in L_K$ where $L_k$ is locally a smooth manifold, the tangent space is $p^{\perp}$, the space of vectors in $V$ orthogonal to $p$.
\end{lemma}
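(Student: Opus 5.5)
We regard $L_K$ as a level set of the smooth map $f: V \to \R$, $f(x) = (x,x)$, and compute the tangent space as the kernel of the differential. The argument has two halves: tangent vectors lie in $p^{\perp}$, and they fill all of $p^{\perp}$.

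\emph{Containment.} Take a smooth curve $\gamma:(-\varepsilon,\varepsilon) \to L_K$ with $\gamma(0) = p$. Since $(\gamma(t),\gamma(t)) = K$ for all $t$, differentiating with the bilinearity and symmetry of $(\cdot,\cdot)$ gives $2(\gamma(t),\dot\gamma(t)) = 0$. Evaluating at $t=0$ gives $(p,\dot\gamma(0)) = 0$, so $T_pL_K \subseteq p^{\perp}$. Equivalently, $df_p(v) = 2(p,v)$, and every tangent vector lies in $\ker df_p = p^{\perp}$.

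\emph{Equality.} I will compare dimensions. If the form is nondegenerate and $p \neq 0$, then $p^{\perp}$ has dimension $\dim V - 1$. In that case $df_p = 2(p,\cdot)$ is a nonzero linear functional, so $p$ is a regular point of $f$. The regular value (implicit function) theorem then makes $L_K$ a smooth hypersurface near $p$ with $T_pL_K = \ker df_p = p^{\perp}$. This gives equality even when $K=0$ and $p$ is null, where $p \in p^{\perp}$; that null case is exactly the one needed for $L^+$.

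\emph{Main obstacle.} The hypothesis ``where $L_K$ is locally a smooth manifold'' must be interpreted correctly. The only bad point in the nondegenerate setting is $p=0$ when $K=0$, the cone vertex, where $df_0 = 0$ and $p^{\perp} = V$. The hypothesis should be read as excluding this point; I would state explicitly that we assume $p \neq 0$ and a nondegenerate form, as holds for $(\cdot|\cdot)_{1,n+2}$. Under this reading the dimension count closes the argument. For $L^+ \subset \R^{1,n+2}$, this yields $T_pL^+ = p^{\perp}$ for every $p \in L^+$, because $p\neq 0$ there.
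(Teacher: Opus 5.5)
Your proof is correct and follows the paper's argument: differentiate $(p_t,p_t)=K$ along a curve through $p$ to get $T_pL_K\subseteq p^{\perp}$, then conclude equality by a dimension count. Your version is more careful than the paper's, which simply asserts that the dimensions agree. You justify the count via nondegeneracy, $p\neq 0$ and the implicit function theorem, and you rightly insist that the hypothesis exclude the cone vertex; for a definite form, $L_0=\{0\}$ is a smooth $0$-manifold at which the stated conclusion fails.
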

\noindent\textit{Proof.} Let $N_p \subset L_K$ be a neighbourhood of $p$. At $p$, all tangent vectors can be constructed as the derivative of some smooth curve $p_t \subset N_p \subset L_K$, $t \in (-\varepsilon,\varepsilon)$ for some $\varepsilon>0$ satisfying $p_0=p$. Because $p_t \in L_K$ it satisfies $(p_t,p_t) = K$, and we differentiate this relation to find $2( p_t,\dot{p}_t ) = 0$, which, at $t=0$, implies the tangent vector $\dot{p}_0$ is orthogonal to the position vector $p_0=p$, implying $T_pL_K \subseteq p^{\perp}$. But the dimensions agree, so $T_pL_K = p^{\perp}$. \qed
\\\\
Accordingly, we have $T_pL^+ = p^{\perp}$. Following \cite{mathews2024quaternions}, we parlay this into a description of the tangent space of the celestial sphere by taking a quotient. If $p \in L^+$ has $T$ coordinate $T_0$ and $\ell = p\R$ is the associated ray in $L^+$, \label{not:projectivisation}projectivisation (that is, taking the space of 1 dimensional vector subspaces) yields an isomorphism $T_{\ell}\mathscr{S}^+ \cong T_p\mathscr{S}_{T_0}^+$. The light-cone has product structure $L^+\cong S^{n+1} \times \R$, and accordingly we get an orthogonal decomposition of the tangent space:
\[T_pL^+ = p^{\perp} = \ell^{\perp} = p\R \oplus T_p\mathscr{S}_{T_0}^+ = \ell \oplus T_p\mathscr{S}_{T_0}^+,\]
and a corresponding description of the tangent space of the celestial sphere:
\[T_p\mathscr{S}_T^+ \cong T_{\ell}\mathscr{S}^+ \cong \frac{T_pL^+}{p\R} = \frac{p^{\perp}}{p\R} = \frac{\ell^{\perp}}{\ell}.\]
\subsubsection{Orientations in Minkowski Space}
\label{sec:OrientationConventions}
We use the following orientation conventions (which are a variation on those in \cite{mathews2024quaternions}; note that here we append vectors to the \textit{beginning} of bases instead of the end):
\begin{itemize}
 \item \textit{Orientations of codimension-1 subspaces.} Given a codimension-1 subspace $W$ of an oriented vector space $V$, a transverse vector $v$ to $W$ induces an orientation on $W$; an ordered basis $B$ for $W$ is positively oriented iff appending $v$ to the start of $B$ yields a positively oriented basis for $V$.
 \item \textit{Orientations of codimension-1 quotient spaces.} Given a 1-dimensional subspace $W=\R w$ of an oriented vector space $V$, we orient $V/W$ as follows. An ordered basis $B$ for $V/W$ is positively oriented if and only if taking $w$ followed by an ordered set of coset representatives from $B$ yields a positively oriented basis for $V$.
\end{itemize}
In $\R^{1,n+2}$ we have unit (norm $\pm 1$) vector fields $\partial T,\partial Z, \partial X_0,...,\partial X_n$ in the coordinate directions. We give $\R^{1,n+2}$ an orientation by declaring this ordered basis to be positively oriented. We also define two radial vector fields
\[\partial_r = T\partial T + Z \partial_Z + ... + X_n \partial X_n,\ \partial_{-} = Z\partial_Z + X_0\partial X_0 + ... + X_n \partial X_n.\]
The field $\partial_r$ is non-zero everywhere in $\R^{1,n+2}$ except at $p=0$, and $\partial_{-}$ is non-zero except along the $T$-axis. The field $\partial_{-}$ is also space-like, pointing along $(n+2)$-planes with constant $T$. On each space-like $(n+2)$-plane $\Pi_{T_0}$ (defined by $T=T_0$ for $T_0 \in \R$), the vector field $\partial_T$  is normal to $\Pi_{T_0}$. Following the conventions above we endow $\Pi_{T_0}$ with the induced orientation, which will be the same as coordinatising the plane as $\R^{n+2}$ with coordinates $(Z,X_0,...,X_n)$.

We define two orientations, the \textit{outward} and \textit{inward} orientations, on $\mathscr{S}_{T_0}^+$, the celestial sphere at constant $T_0$. This is a codimension 1 submanifold of $\Pi_{T_0}$ with outwards normal vector field $\partial_{-}$. The outward orientation is the one induced by $\partial_{-}$, and the inward orientation is induced by $-\partial_{-}$. Any basis of a tangent space to $\mathscr{S}^+$ is, accordingly, either oriented inward or outward. Explicitly, at $p \in \mathscr{S}_{T_0}^+$, an ordered basis $(B_1,...,B_{n+1})$ of $T_p\mathscr{S}_{T_0}^+$ has outward orientation if and only if $(\partial_{-},B_1,...,B_{n+1})$ is a positively oriented basis of $\Pi_{T_0}$, if and only if $(\partial_T,\partial_{-},B_1,...,B_{n+1})$ is a positively oriented basis of $\R^{1,n+2}$.

But there is another way to define an orientation on the tangent space. $L^+$ obtains an orientation as a codimension-1 submanifold of $\R^{1,n+2}$, with the transverse vector field $\partial_{-}$ pointing out of the solid cone of time-like vectors. At any $p \in L^+$, the $(n+2)$-plane $T_pL^+ = p^{\perp}$ also obtains an orientation. The quotient $p^{\perp}/p\R = T_\ell\mathscr{S}^+$ then obtains an orientation as a codimension-1 quotient. With this orientation, an ordered basis $(\underline{B}_1,...,\underline{B}_{n+1})$ of $p^{\perp}/p\R$ is positively oriented if and only if $(\partial_r,B_1,...,B_{n+1})$ forms a positively oriented basis of $p^{\perp}$ (where each $B_m$ is a representative of $\underline{B}_m$), if and only if $(\partial_{-},\partial_r,B_1,...,B_{n+1})$ forms a positively oriented basis of $\R^{1,n+2}$. This corresponds to the inward orientation defined above, because the swap $(\partial_T,\partial_{-}) \to (\partial_{-},\partial_r)$ is orientation reversing.

\subsection{The Basepoint Map}
In \cite{Mathews_Spinors_horospheres}, the relationship between Hermitian matrices and $\R^{1,3}$ is exploited to map the complex spinors into Minkowski space. Given a complex spinor $(\xi,\eta) \in \C^2$,
\[\begin{pmatrix}
 \xi \\
 \eta
\end{pmatrix}
\begin{pmatrix}
 \ol{\xi} & \ol{\eta}
\end{pmatrix}
= \begin{pmatrix}
 |\xi|^2 & \xi\ol{\eta}\\
 \eta\ol{\xi} & |\eta|^2
\end{pmatrix}
= \frac{1}{2}
\begin{pmatrix}
 T+Z & X+Yi \\
 X-Yi & T-Z
\end{pmatrix}.\]
The determinant of the outer product is zero, and the right-hand side has determinant proportional to the Minkowski metric, so this maps complex spinors to the light-cone $L$ in $\R^{1,3}$. Because the trace satisfies $|\xi|^2+|\eta|^2 = T>0$, it in fact maps complex spinors to $L^+$.

We have the Clifford conjugation on Lipschitz spinors, so generalisation of this map is straightforward. Given $\kappa \in S\lip$, we think of mapping it into $\R^{1,n+2}$ by thinking of $\kappa$ as a column vector and taking an outer product with its conjugate transpose to get a paravector Hermitian matrix. We recall the conjugate transpose $\ol{\kappa}^T$ of a column spinor $\kappa= (\xi,\eta)^T$ is notated $\kappa^{\dagger}$.
\begin{definition}
\label{defn:BasepointMap}
The map $\phi_1: S\lip \to \R^{1,n+2}$ is given by
\[\phi_1(\kappa) = \kappa\kappa^{\dagger} = \begin{pmatrix}
 \xi \\
 \eta
\end{pmatrix}
\begin{pmatrix}
 \ol{\xi} & \ol{\eta}
\end{pmatrix}
= \frac{1}{2}
\begin{pmatrix}
 T+Z & X_0 + \sum_{j=1}^nX_ji_j \\
 X_0 - \sum_{j=1}^nX_ji_j & T-Z
\end{pmatrix}.\tag*{\raisebox{-0.9em}{\defn}}\]
\end{definition}
\noindent It is evident that $\text{Im}(\phi_1) \subseteq L^+$ (where Im$(\cdot)$ indicates the image, not the imaginary component) as the pseudo-determinant of the outer product is zero and the trace is positive. In fact, the image is precisely the positive light-cone.
\begin{lemma}
\label{Phi1SurjectLightConeLemma}
 The image of $\phi_1$ is the positive light-cone $L^+$.
\end{lemma}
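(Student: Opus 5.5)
The inclusion $\text{Im}(\phi_1)\subseteq L^+$ was observed just before the statement: $\kappa\kappa^\dagger$ has vanishing pseudo-determinant and trace $|\xi|^2+|\eta|^2>0$. It remains to show surjectivity. My plan is to construct, for each $p=(T,Z;\pX)\in L^+$, an explicit Lipschitz spinor $\kappa$ with $\kappa\kappa^\dagger=S_p$.

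Fix $p\in L^+$. Since $|p|^2=0$, we have $T^2=Z^2+|\pX|^2$, and since $T>0$ at least one of $T+Z$ and $T-Z$ is positive, because their sum is $2T>0$.

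\textbf{Case $T+Z>0$.} Set $\xi=\sqrt{(T+Z)/2}\in\R^+\subset\lip$ and $\eta=\ol{\pX}/(2\xi)$, which is either $0$ or an invertible paravector, hence lies in $\lip^{\triangleright}$. The spinor condition holds because $\xi\ol{\eta}=\pX/2\in\para$, so $\kappa=(\xi,\eta)\in S\lip$. The entries of $\kappa\kappa^\dagger$ are then checked directly:
\begin{itemize}
\item $|\xi|^2=(T+Z)/2$;
\item $\xi\ol{\eta}=\pX/2$, and $\eta\ol{\xi}=\ol{\pX}/2$;
\item $|\eta|^2=|\pX|^2/(4\xi^2)=(T^2-Z^2)/(2(T+Z))=(T-Z)/2$, using the null condition.
\end{itemize}
These match the entries of $S_p$ in Definition \ref{defn:ParavectorHermitianMatrices}.

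\textbf{Case $T+Z=0$.} Then $T-Z=2T>0$, so take $\eta=\sqrt{(T-Z)/2}$ and $\xi=\pX/(2\eta)$. The same verification applies with the roles of $\xi$ and $\eta$ swapped.

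The only real subtlety is confirming that the constructed entries lie in $\lip^{\triangleright}$ and satisfy $\xi\ol{\eta}\in\para$. This is immediate because one component is a positive real and the other is a real multiple of a paravector. The norm identity $\pX\ol{\pX}=|\pX|^2$ used above comes from Definition \ref{defn:CliffordNorm}. Everything else is routine verification.
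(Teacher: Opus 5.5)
Your proof is correct and takes essentially the same route as the paper, which also writes down an explicit preimage; the paper's single formula $\tfrac{1}{\sqrt{2}}\bigl(\pX/\sqrt{T-Z},\sqrt{T-Z}\bigr)$ is exactly your second-case spinor. Your case split is in fact slightly more careful than the paper: that formula is undefined at the points $(T,T;0)$, where $T=Z$ and $\pX=0$ (these are the images of spinors $(\xi,0)$), and your first case $T+Z>0$ covers them.
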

\noindent\textit{Proof.} $\text{Im}(\phi_1) \subseteq L^+$ as pdet$(\phi_1(\kappa))=0$ and $|\xi|^2+|\eta|^2 = T>0$. To show surjectivity, consider an arbitrary point $p \in L^+$:
\[p = \frac{1}{2}
\begin{pmatrix}
 T+Z & \pX \\
 \ol{\pX} & T-Z
\end{pmatrix}.\]
By definition, points of $L^+$ satisfy $T^2-Z^2-|\pX|^2=0,\ T>0$. Taken together, these conditions imply $T \geq Z$; it is then clear that the spinor
\[\frac{1}{\sqrt{2}}\begin{pmatrix}
 \frac{\pX}{\sqrt{T-Z}} \\
 \sqrt{T-Z}
\end{pmatrix}\]
is defined and maps to $p$.
\qed
\\\\
It is helpful to express the metric on $\R^{1,n+2}$ using these $\mathcal{P}^{1,n+2}$ presentations of points. We may also occasionally have cause to treat the space as $\R^{n+3}$ and consider the space equipped with the Euclidean metric $dT^2+dZ^2+...+dX_n^2$.
\begin{proposition}
 \label{MatrixMetricLemma}
 Given two points $P,Q \in \mathcal{P}^{1,n+2}$,
\[P = \begin{pmatrix}
 P_{11} & P_{\pX} \\
 \ol{P_{\pX}} & P_{22}
\end{pmatrix} = \frac{1}{2}
\begin{pmatrix}
    T_1+Z_1 & \pX_1 \\
    \ol{\pX_1} & T_1-Z_1
\end{pmatrix}
,\ Q = \begin{pmatrix}
 Q_{11} & Q_{\pX} \\
 \ol{Q_{\pX}} & Q_{22}
\end{pmatrix} =
\frac{1}{2}\begin{pmatrix}
    T_2+Z_2 & \pX_2 \\
    \ol{\pX_2} & T_2-Z_2
\end{pmatrix},\]
their generalised Minkowski inner product is given by 
\begin{equation}
\label{MatrixMinkowskiMetric}
(P|Q)_{1,n+2} = 2(P_{11}Q_{22}+P_{22}Q_{11}-P_{\pX}\ol{Q_{\pX}}-Q_{\pX}\ol{P_{\pX}}).
\end{equation}
Their Euclidean inner product is
\begin{equation}
\label{EuclideanInnerProduct}
(P|Q)_{n+3} = 2(P_{11}Q_{11}+Q_{22}P_{22}+P_{\pX}\ol{Q_{\pX}}+Q_{\pX}\ol{P_{\pX}}).
\end{equation}
\end{proposition}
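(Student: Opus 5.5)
The plan is to reduce both formulas to coordinates and compare them with the quadratic forms defined on $\R^{1,n+2}$ and $\R^{n+3}$. Since both sides of (\ref{MatrixMinkowskiMetric}) and (\ref{EuclideanInnerProduct}) are real bilinear and symmetric in $(P,Q)$, it is enough to substitute the entries
\[P_{11}=\tfrac12(T_1+Z_1),\quad P_{22}=\tfrac12(T_1-Z_1),\quad P_{\pX}=\tfrac12\pX_1,\]
and similarly for $Q$, and then expand.

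For the Minkowski formula, I would first compute the diagonal terms:
\[P_{11}Q_{22}+P_{22}Q_{11}=\tfrac14\big[(T_1+Z_1)(T_2-Z_2)+(T_1-Z_1)(T_2+Z_2)\big]=\tfrac12(T_1T_2-Z_1Z_2).\]
For the off-diagonal terms, I would use Definition \ref{defn:ParavectorInnerProduct}. It gives
\[\pX_1\ol{\pX_2}+\pX_2\ol{\pX_1}=2\,\pX_1\cdot\pX_2=2\sum_{j=0}^n X_{1,j}X_{2,j}.\]
This holds because in $\clif$ (where $p=0$) the dot product on $\para$ is the Euclidean one. Hence
\[P_{\pX}\ol{Q_{\pX}}+Q_{\pX}\ol{P_{\pX}}=\tfrac12\sum_j X_{1,j}X_{2,j}.\]
Multiplying by $2$ and combining gives
\[(P|Q)_{1,n+2}=T_1T_2-Z_1Z_2-\sum_j X_{1,j}X_{2,j},\]
which is the polarisation of $dT^2-dZ^2-\dots-dX_n^2$.

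The Euclidean case follows the same steps with the signs changed:
\[P_{11}Q_{11}+P_{22}Q_{22}=\tfrac12(T_1T_2+Z_1Z_2),\]
and the off-diagonal terms now enter with a plus sign, giving the standard dot product on $\R^{n+3}$.

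The main point needing care is that the off-diagonal expression is actually real-valued and equals the Euclidean dot product of the paravector parts. That is exactly what Definition \ref{defn:ParavectorInnerProduct} provides, since the sum $\pV\ol{\pW}+\pW\ol{\pV}$ kills all degree-two terms. As a consistency check, setting $P=Q$ in (\ref{MatrixMinkowskiMetric}) recovers
\[4(P_{11}P_{22}-P_{\pX}\ol{P_{\pX}})=4\,\text{pdet}\,P=|p|^2,\]
which agrees with the relation stated earlier.
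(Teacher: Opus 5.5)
Your proposal is correct and follows essentially the same route as the paper: both rewrite $T_1T_2-Z_1Z_2$ (resp.\ $T_1T_2+Z_1Z_2$) in terms of the diagonal entries, and both identify the off-diagonal sum with the paravector dot product of Definition \ref{defn:ParavectorInnerProduct}, which is Euclidean because $p=0$. One cosmetic point: $\pV\ol{\pW}+\pW\ol{\pV}$ cancels the degree-one terms as well as the degree-two ones, and that is why it is real.
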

\noindent \textit{Proof.} The dot product (see Definition \ref{defn:ParavectorInnerProduct}) on paravectors $\pX_1$ and $\pX_2$ is given by
\[\frac{\pX_1\ol{\pX_2}+ \pX_2\ol{\pX_1}}{2}=2(P_{\pX}\ol{Q_{\pX}}+Q_{\pX}\ol{P_{\pX}}).\]
We can wrangle the $T$ and $Z$ terms from the diagonal, noting they're real and therefore commute.
\begin{align*}
(T_1T_2-Z_1Z_2) &= \frac{(T_1+Z_1)(T_2-Z_2)+(T_2+Z_2)(T_1-Z_1)}{2}\\
&= 2(P_{11}Q_{22}+P_{22}Q_{11}).
\end{align*}
In total, we find the Minkowski metric is given by 
\[( P,Q )_{1,n+2} = 2(P_{11}Q_{22}+P_{22}Q_{11}-P_{\pX}\ol{Q_{\pX}}-Q_{\pX}\ol{P_{\pX}}).\] 
To instead get the Euclidean inner product we compute
\begin{align*}
(T_1T_2+Z_1Z_2) &= \frac{(T_1+Z_1)(T_2+Z_2)+(T_1-Z_1)(T_2-Z_2)}{2}\\
&= 2(P_{11}Q_{11}+P_{22}Q_{22}),
\end{align*}
which gives us
\[(P|Q)_{n+3} = 2(P_{11}Q_{11}+P_{22}Q_{22}+P_{\pX}\ol{Q_{\pX}}+Q_{\pX}\ol{P_{\pX}}). \tag*{\qed}\]

\subsection{Tangent Data}
The kernel of $D\phi_1$ is large; in fact $\phi_1$ is a fibre bundle map over $L^+$ with fibres homeomorphic to Spin$(n+1)$. We find an indication of this when examining the preimages $\phi_1^{-1}$ of points in the light-cone, as they resemble the diagonal map inclusion of Spin$(n+1)$ into $\clif^2$; to be precise, they are an inclusion $s \to (s,\pX s)$ or $s \to (\pY s,s)$ of Spin$(n+1)$ into $S\lip$ for some elements $\pX,\pY \in \para$. 
\begin{proposition}
\label{phi1FibreLemma}
Given a point $p \in L^+$ and a single preimage point $\kappa \in \phi^{-1}(p)$,
\begin{enumerate}[label=\roman*)]
\item the entire preimage is given by $\kappa s,\ s \in \text{Spin}(n+1)$, where the spin group Spin$(n+1)$ is realised as $\{s \in \lip\ |\ N(s)=1\}$.
\item Alternatively, the preimage $\phi_1^{-1}(p)$ for a point
\[p = \begin{pmatrix}
 p_{11} & p_{\pX} \\
\ol{p_{\pX}} & p_{22}
\end{pmatrix} \in L^+\]
is given when $p_{11} > 0$ by $(s,p_{11}^{-1}\ol{p_{\pX}}s)$ for $\{s \in \lip\ |\ N(s) = p_{11}\}$.

When $p_{22} > 0$, it can instead be given by $(p_{22}^{-1}p_{\pX}s,s)$ for $\{s \in \lip\ |\ N(s) = p_{22}\}$.
\end{enumerate}
\end{proposition}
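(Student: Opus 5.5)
To prove i), I would first check that every $\kappa s$ with $N(s)=1$ lies in the fibre, and then show that any other preimage has this form. Right-multiplying by $s\in\lip$ preserves $S\lip$: for $\kappa=(\xi,\eta)$ we have $\xi s\,\ol{\eta s}=\xi s\ol{s}\,\ol{\eta}=N(s)\,\xi\ol{\eta}\in\para$, and the entries stay in $\lip^{\triangleright}$. Then
\[\phi_1(\kappa s)=\kappa s\ol{s}\kappa^{\dagger}=N(s)\,\kappa\kappa^{\dagger}=\phi_1(\kappa)\]
whenever $N(s)=1$.

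For the converse, suppose $\kappa_1=(\xi_1,\eta_1)$ and $\kappa_2=(\xi_2,\eta_2)$ both map to $p$. Comparing diagonal entries gives $|\xi_1|^2=|\xi_2|^2$ and $|\eta_1|^2=|\eta_2|^2$. Comparing off-diagonal entries gives $\xi_1\ol{\eta_1}=\xi_2\ol{\eta_2}$. Since $T>0$, at least one of $p_{11},p_{22}$ is positive; I treat the case $p_{11}>0$, and the other is symmetric. Then $\xi_1,\xi_2\neq 0$, so I set $s=\xi_1^{-1}\xi_2\in\lip$, which gives $N(s)=|\xi_2|^2/|\xi_1|^2=1$ and $\xi_2=\xi_1 s$. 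It remains to show $\eta_2=\eta_1 s$. From the off-diagonal equation, $\ol{\eta_2}=\xi_2^{-1}\xi_1\ol{\eta_1}=s^{-1}\ol{\eta_1}$. Because $s\ol{s}=1$ we have $s^{-1}=\ol{s}$, so $\ol{\eta_2}=\ol{s}\,\ol{\eta_1}=\ol{\eta_1 s}$. Applying conjugation gives $\eta_2=\eta_1 s$. The case where $\eta$ vanishes is covered, since then both sides are $0$. The symmetric case $p_{22}>0$ uses $s=\eta_1^{-1}\eta_2$ in the same way.

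For ii), I would give the explicit parametrisation directly rather than deduce it from i). Assume $p_{11}>0$. I claim the fibre consists exactly of the spinors $(s,\,p_{11}^{-1}\ol{p_{\pX}}s)$ with $N(s)=p_{11}$.

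First, such pairs lie in $S\lip$. The second entry is a paravector times $s$, so it lies in $\lip^{\triangleright}$ (it is $0$ if $p_{\pX}=0$). Also $s\,\ol{s}\,\ol{\ol{p_{\pX}}}\,p_{11}^{-1}=p_{\pX}\in\para$.

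Second, computing $\kappa\kappa^\dagger$ gives these entries:
\begin{itemize}
\item top-left: $N(s)=p_{11}$;
\item top-right: $s\ol{s}\,p_{\pX}/p_{11}=p_{\pX}$;
\item bottom-left: $\ol{p_{\pX}}$;
\item bottom-right: $p_{11}^{-2}\,\ol{p_{\pX}}\,p_{11}\,p_{\pX}=|p_{\pX}|^2/p_{11}$.
\end{itemize}
The bottom-right entry equals $p_{22}$ because the light-cone condition says $\operatorname{pdet}(p)=p_{11}p_{22}-|p_{\pX}|^2=0$.

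Third, every preimage has this form. If $(\xi,\eta)$ maps to $p$, then $N(\xi)=p_{11}$ and $\eta\ol{\xi}=\ol{p_{\pX}}$. Hence $\eta=\ol{p_{\pX}}\,\ol{\xi}^{-1}=p_{11}^{-1}\ol{p_{\pX}}\xi$, so $(\xi,\eta)$ has the stated form with $s=\xi$.

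The case $p_{22}>0$ is symmetric, with $s=\eta$ and $\xi=p_{22}^{-1}p_{\pX}\eta$.

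The main point to handle carefully is not commuting noncommuting elements. Every inversion must use $x^{-1}=\ol{x}/N(x)$, which is valid in $\lip$ because $N(x)$ is a real scalar, and every factor must be kept in its original order.
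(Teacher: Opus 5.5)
Your proof is correct and is essentially the paper's argument. Both read $N(\xi)$ (or $N(\eta)$) and the off-diagonal entry off $\kappa\kappa^{\dagger}$, solve for the other component using $x^{-1}=\ol{x}/N(x)$ in $\lip$, and identify the norm-level set with a right translate of $\text{Spin}(n+1)$. The differences are only organisational: the paper deduces i) from the parametrisation in ii), whereas you prove i) directly via $s=\xi_1^{-1}\xi_2$. You also verify the sufficiency direction of ii) explicitly (membership in $S\lip$, and the remaining diagonal entry via $\text{pdet}(p)=p_{11}p_{22}-|p_{\pX}|^2=0$), which the paper leaves implicit.
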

\noindent\textit{Proof.} Consider a point $p$ on the light-cone. The preimage will be the set of $(\xi,\eta) \in S\lip$ which satisfy
\[ 
\begin{pmatrix}
 |\xi|^2 & \xi\ol{\eta} \\
 \eta\ol{\xi} & |\eta|^2
\end{pmatrix} = 
\begin{pmatrix}
p_{11} & p_{\pX} \\
\ol{p_{\pX}} & p_{22}
\end{pmatrix}.\]
If $\eta \neq 0$, then $\xi=\pV\eta$ for some $\pV \in \para$ and we can decompose the above matrix:
\[p=|\eta|^2\begin{pmatrix}
 |\pV|^2 & \pV \\
 \ol{\pV} & 1
\end{pmatrix}.\]
Our choice of $\pV$ and $|\eta|^2$ is fixed by the equalities $|\eta|^2=p_{22}$ and $\pV=p_{\pX}/|\eta|^2$. This gives us freedom to choose any $\eta$ satisfying
\begin{equation}
\label{eqn:BasepointPreimageSpinGrp}
\{\eta \in \lip\ |\ N(\eta) = p_{22}\}. 
\end{equation}
Taking $\xi \neq 0$ instead, by the analogous argument we get a preimage with a choice of $\{\xi \in \lip\ |\ N(\xi) = p_{11}\}$, which demonstrates ii). 

Now consider i). From Corollary \ref{cor:ParavSpinGrp}, we know the group $\text{Spin}(n+1)$ can be defined as $\{s \in \$\Gamma_{n+1,0}\ |\ N(s) = 1 \}$.  Comparing this with (\ref{eqn:BasepointPreimageSpinGrp}), we notice that the preimage $\phi^{-1}(p)$ is almost exactly Spin$(n+1)$, except rescaled so $N(\eta) = p_{22}$. If we take an element satisfying (\ref{eqn:BasepointPreimageSpinGrp}) and multiply by $s \in \text{Spin}(n+1)$, then $\eta s \ol{s}\, \ol{\eta} = \eta\ol{\eta}$ and we still satisfy (\ref{eqn:BasepointPreimageSpinGrp}). Every element that satisfies (\ref{eqn:BasepointPreimageSpinGrp}) can be generated this way, for if $s \in \lip$ and $N(s) = p_{22} = N(\eta)$ then $N(s^{-1}\eta) = 1$ and $s^{-1}\eta \in \text{Spin}(n+1)$. And, of course, $ss^{-1}\eta = \eta$. The analogous argument holds if we take $\xi \neq 0$ instead, which completes the proof of i).\qed
\\\\
This shows that $\phi_1$ loses a lot of information. To counter this, we can add data via derivatives. Following Section 4.4 of \cite{mathews2024quaternions}, the derivative at a point $\kappa \in S\lip$ in the direction $v$ can be found as follows:
\[\phi_1(\kappa+tv) = (\kappa+tv)(\kappa+tv)^{\dagger} = \kappa\kappa^{\dagger}+\left(\kappa v^{\dagger}+v\kappa^{\dagger}\right)t + vv^{\dagger}t^2,\ t \in \R.\]
Taking the derivative and evaluating at $t=0$, we find
\begin{equation}
\label{DerivEquation}
(D_{\kappa}\phi_1)(v) = \kappa v^{\dagger} + v \kappa^{\dagger}. 
\end{equation}
This derivative respects the orthogonal decomposition of the spinor tangent space (see Theorem \ref{thm:SpinorsTangentSpace}); in particular, each summand has a nice geometric interpretation.
\begin{theorem}
\label{OrthogonalDecompPreservationTheorem}
 Let $\kappa = (\xi,\eta) \in S\lip$ and $\phi_1(\kappa) = p = (T,Z; \pX)_{\R}$.
 \begin{enumerate}[label=\roman*)]
 \item $D_{\kappa}\phi_1$ maps $\kappa \R$ isomorphically onto $p\R$,
 \item $D_{\kappa}\phi_1$ maps $\check{\kappa}(\para)$ isomorphically onto $T_p\mathscr{S}_T^+$,
 \item The kernel of \ $D_{\kappa}\phi_1$ is $\kappa(\R^{0,n} \oplus \bigwedge^2\R^{0,n})$.
 \end{enumerate}
\end{theorem}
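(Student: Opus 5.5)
We compute $D_\kappa\phi_1$ on each summand of the orthogonal decomposition of Theorem \ref{thm:SpinorsTangentSpace},
\[T_{\kappa}S\lip = \kappa\R \oplus \kappa\left(\R^{0,n}\oplus{\bigwedge}^{\!2}\R^{0,n}\right)\oplus\check{\kappa}(\para),\]
using formula (\ref{DerivEquation}). We then finish with a dimension count.

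\textbf{Parts i) and iii).} For $v=\kappa c$ with $c\in\R$ we get $D_\kappa\phi_1(\kappa c)=\kappa c\kappa^\dagger+\kappa c\kappa^\dagger=2c\,\kappa\kappa^\dagger=2cp$. This map is injective since $p\neq 0$, which gives i). For $v=\kappa B$ with $B$ a purely non-real bi-paravector, $v^\dagger=\ol{B}\kappa^\dagger=-B\kappa^\dagger$, because degree $1$ and $2$ terms are negated by Clifford conjugation (Lemma \ref{MonomialInvolutionInvarianceLemma}). Hence $D_\kappa\phi_1(\kappa B)=-\kappa B\kappa^\dagger+\kappa B\kappa^\dagger=0$, so $\kappa(\R^{0,n}\oplus\bigwedge^2\R^{0,n})$ lies in the kernel.

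\textbf{Part ii).} For $v=\check{\kappa}\pV$ we get
\[D_\kappa\phi_1(\check\kappa\pV)=\kappa\ol{\pV}\check\kappa^\dagger+\check\kappa\pV\kappa^\dagger.\]
First we show this lies in $p^\perp$. We use the matrix form (\ref{MatrixMinkowskiMetric}) of the Minkowski metric with $P=\kappa\kappa^\dagger$; equivalently, the inner product is $2\,\mathrm{pdet}$ of a polarised sum. Expanding in components $(\xi,\eta)$ and $\check\kappa=(\eta',-\xi')$, the pairing reduces to real parts of the form $\mathrm{Re}(AB-BA)$. These vanish by Lemma \ref{ABBALemma}, exactly as in the proof of Lemma \ref{InnerProductPropertiesLemma2} ii). More conceptually, $\phi_1$ maps $S\lip$ into $L^+$, so its derivative automatically lands in $T_pL^+=p^\perp$ (Lemma \ref{SphereTangentSpaceLemma}); this handles containment for free. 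To land in $T_p\mathscr{S}^+_T$ we must also have $T$-component zero. The trace of the image is $2\,\mathrm{Re}(\xi\ol{\pV}\,\ol{\eta'}-\eta\ol{\pV}\,\ol{\xi'})=2(\kappa\pV|\check\kappa)$ up to ordering, and this vanishes by Lemma \ref{InnerProductPropertiesLemma2} ii).

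\textbf{Injectivity on $\check\kappa(\para)$.} This is the main obstacle. The cleanest route is transitivity. The action of $A\in SL(2,\lip)$ satisfies $\phi_1(A\kappa)=A\phi_1(\kappa)A^\dagger$, which acts linearly on $\R^{1,n+2}$, so $D_{A\kappa}\phi_1\circ A=A(\cdot)A^\dagger\circ D_\kappa\phi_1$. Since $A$ respects the decomposition of tangent spaces (it sends $\kappa\mathcal B^n$ to $(A\kappa)\mathcal B^n$), the rank of $D_\kappa\phi_1$ restricted to the complementary summand is constant. Using Lemma \ref{TransitiveLemma}, it therefore suffices to check the case $\kappa=(1,0)$. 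There $\check\kappa=(0,-1)$ and $D\phi_1((0,-\pV))=-\begin{pmatrix}0&\ol{\pV}\\ \pV&0\end{pmatrix}$, which is visibly injective in $\pV$ with image the $\pX$-directions. These directions are exactly $T_p\mathscr S^+_T$ at $p=(1,1;0)$ modulo $p\R$.

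A subtlety is that $A$ need not map $\check\kappa\para$ onto $\check{(A\kappa)}\para$. We avoid this by working modulo $\kappa\mathcal B^n$, using Lemma \ref{lem:QuotientBracketChar} together with the bracket invariance of Lemma \ref{SesquimorphismLemma}. Alternatively, we argue by dimension: the kernel on the $\kappa\mathcal B^n$ summand is already identified, and the image of $D_\kappa\phi_1$ has dimension $n+2=\dim L^+$. This holds because $\phi_1$ is a surjective bundle map with fibres $\mathrm{Spin}(n+1)$ (Proposition \ref{phi1FibreLemma}), so the kernel has dimension exactly $\dim\mathrm{Spin}(n+1)=n(n+1)/2$. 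Comparing dimensions then forces iii) to be the whole kernel and ii) to be an isomorphism onto an $(n+1)$-dimensional space in $p^\perp$ complementary to $p\R$. That space is $T_p\mathscr S^+_T$.
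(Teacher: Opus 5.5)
Your parts i) and iii), and the containment of the image in $p^\perp = T_pL^+$, follow the paper. Part ii) differs in two places, and both of your versions are sound.

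\emph{Vanishing $T$-component.} You note that the trace of $\kappa v^\dagger + v\kappa^\dagger$ is $2(\kappa|v)$, so it vanishes on $\check\kappa(\para)$ by Lemma \ref{InnerProductPropertiesLemma2} ii). The pairing is really $(\kappa|\check\kappa\pV)$ rather than $(\kappa\pV|\check\kappa)$, but the lemma covers both. The paper instead cancels the diagonal terms by hand using the reversal.

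\emph{Injectivity.} The paper argues pointwise. A nonzero $\pV$ with $\kappa(\check\kappa\pV)^\dagger=-\check\kappa\pV\kappa^\dagger$ is ruled out by multiplying on the left by a row vector $\tau$ from Lemma \ref{TauKernelLemma} that kills exactly one of $\kappa,\check\kappa$. You instead reduce to $\kappa=(1,0)$ by transitivity. To make this airtight, take the invariant to be the rank of the full map $D_\kappa\phi_1$ on $T_\kappa S\lip$. It is preserved because $A\colon T_\kappa S\lip\to T_{A\kappa}S\lip$ and $X\mapsto AXA^\dagger$ are linear isomorphisms intertwining the two derivatives. Since $D_\kappa\phi_1(\kappa\mathcal{B}^n)=p\R$, and $D_\kappa\phi_1(\check\kappa(\para))$ lies in $\{T=0\}$, which meets $p\R$ only at $0$, this rank equals $1+\operatorname{rank}\bigl(D_\kappa\phi_1|_{\check\kappa(\para)}\bigr)$. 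Your computation at $(1,0)$ therefore gives rank $n+1$ on the summand everywhere. Neither the bracket nor Lemma \ref{lem:QuotientBracketChar} is needed. Quotienting only the source by $\kappa\mathcal{B}^n$ would not suffice, since $D_\kappa\phi_1$ is nonzero on $\kappa\R$; the target must also be taken modulo $p\R$.

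Your route trades the somewhat ad hoc Lemma \ref{TauKernelLemma} for homogeneity, and shows directly that $\phi_1$ is a submersion. The paper's argument is local and uses no group action.

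Your fallback dimension argument, however, does not work as stated. Knowing that the fibre $\kappa\,\mathrm{Spin}(n+1)$ has dimension $n(n+1)/2$ only yields $T_\kappa(\text{fibre})\subseteq\ker D_\kappa\phi_1$, i.e.\ $\dim\ker D_\kappa\phi_1\geq n(n+1)/2$, which you already had from iii). The reverse inequality is exactly the submersion property you are trying to prove. A surjective map with small fibres can still drop rank, e.g.\ $x\mapsto x^3$ at $0$. Moreover, Proposition \ref{phi1FibreLemma} only describes the fibres set-theoretically, not as a smooth local trivialisation. Drop the fallback and rely on the transitivity argument.
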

\noindent\textit{Proof.} We compute i) directly. For $a \in \R$:
\[(D_{\kappa}\phi_1)(a\kappa) = 2a\kappa\kappa^{\dagger} = 2a\phi_1(\kappa)=2ap.\]
For ii), Let $\pV \in \para$ and compute
\begin{equation}
\label{eqn:DerPhi1KappaCheck}
(D_{\kappa}\phi_1)(\check{\kappa}\pV) = 
\begin{pmatrix}
 \xi\ol{\pV}\eta^* + \eta'\pV\ol{\xi} & -\xi\ol{\pV}\xi^* + \eta'\pV\ol{\eta} \\
 \eta\ol{\pV}\eta^* -\xi'\pV\ol{\xi} & -\eta\ol{\pV}\xi^* - \xi'\pV\ol{\eta}
\end{pmatrix}.
\end{equation}
This derivative lives in the tangent space to $\R^{1,n+2}$ (which we've identified with $\mathcal{P}^{1,n+2}$), which is canonically isomorphic to $\R^{1,n+2}$ itself. Thus the diagonal entries are real, and invariant under conjugation. We compute the $T$ component, applying some conjugations to cancel terms:
\[(D_{\kappa}\phi_1)(\check{\kappa}\pV)_T = \left(\xi\ol{\pV}\eta^* + \eta'\pV\ol{\xi} -(\eta\ol{\pV}\xi^* + \xi'\pV\ol{\eta})^*\right) = 0.\]
As it lives in the tangent space to $L^+$ at $p$, we know $(D_{\kappa}\phi_1)(\check{\kappa}\pV) \in p^{\perp}$. Having no change in the $T$ coordinate means it stays tangent to the fixed $T$-slice $\mathscr{S}_T^+$ that models the celestial sphere. We conclude
\[(D_{\kappa}\phi_1)(\check{\kappa}\pV) \in T_p\mathscr{S}_T^+.\]
To show surjectivity, we appeal to the rank-nullity theorem and show that $D_{\kappa}\phi_1$ has trivial kernel on $\check{\kappa}(\para)$. As the components of $\check{\kappa}$ and $\pV$ are elements of $\lip^{\triangleright}$ they contain no zero divisors, and as both $\check{\kappa}$ and $\pV$ are non-zero $\check{\kappa}\pV \neq 0$. Suppose that there exists some $\pV \neq 0$ such that $D_{\kappa}\phi_1(\check{\kappa}\pV) = 0$. Then, by (\ref{DerivEquation}),
\begin{equation}
\label{eqn:TauKappaApplic}
\kappa(\check{\kappa}\pV)^{\dagger} = - \check{\kappa}\pV\kappa^{\dagger}.
\end{equation}
By Lemma \ref{TauKernelLemma}, there exists a row vector $\tau$ such that precisely one of $\tau \kappa$, $\tau \check{\kappa}$ is zero. This implies precisely one of $\tau \kappa(\check{\kappa}\pV)^{\dagger}$ and $-\tau\check{\kappa}\pV\kappa^{\dagger}$ is zero, but these are the same matrix by (\ref{eqn:TauKappaApplic}) and we have a contradiction.

Finally, we compute iii). For $B \in \mathcal{B}^n$ a bi-paravector with no real component:
\[(D_{\kappa}\phi_1)(\kappa B) = \kappa\ol{B}\kappa^{\dagger} + \kappa B\kappa^{\dagger} = \kappa( B + \ol{B})\kappa^{\dagger}=0.\]
The final equality is because $B+\ol{B}=2\text{Re}(B)$ for bi-paravectors. As the other dimensions of $T_{\kappa}S\lip$ are accounted for, this must comprise the entire kernel. \qed
\\\\
In \cite{Mathews_Spinors_horospheres}, a function $Z$ is chosen to coordinatise parts of the tangent space in a convenient way at $\kappa$; this is precisely the role of our complementary map $\check{\cdot}$, the analogue of a complex structure. In general there are $n$ tangent directions to capture, directions spanned by the basis elements $\check{\kappa}i_j,\ 1 \leq j \leq n$. This (along with $\check{\kappa}$ itself) forms an orthogonal basis for the $\check{\kappa}(\para)$ summand of the tangent space.

We can also capture $\check{\cdot}$ via a `complex structure' matrix.
\begin{definition}
 \[J = \begin{bmatrix}
 0 & 1 \\
 -1 & 0 \\
 \end{bmatrix}.\]
 It satisfies $J\kappa' = \check{\kappa}$.\null\hfill\defn
\end{definition}
\subsection{Decorated Ideal Points}
\label{sec:DecIdealPts}
In Sections \ref{sec:DecIdealPts} and \ref{sec:Multiflags} we present two ways of capturing the degrees of freedom within the kernel of $D_{\kappa}\phi_1$. The first, following \cite{mathews2024quaternions}, are \textit{decorated ideal points}; essentially, these involve identifying a basis for $\para$ in the tangent spaces to $\mathscr{S}^+$. We do this using the various derivatives $(D_{\kappa}\phi_1)(\check{\kappa}i_j)$ to make an identification of $\para$ with the tangent space. In particular, this construction clearly shows conformality of the correspondence. It is a generalisation of the `tangent vector to $\mathscr{S}^+$' construction of Penrose and Rindler \cite{SpinorsAndSpacetime}.
\begin{definition}
 Let $\ell \in \mathscr{S}^+$ be a point in the sphere at infinity (an `ideal point'). \label{not:decIdealPt}A {\normalfont decoration} on $\ell$ is a conformal orientation-preserving $\R$-linear isomorphism
 \[\psi:\ \para \to \frac{\ell^{\perp}}{\ell}\]
 with respect to the outward orientation on $\ell^{\perp}/\ell$, conformal with respect to the norm $N$ on $\para$ and the norm induced on $\ell^{\perp}/\ell$ by the generalised Minkowski metric. 
 \label{not:SpaceDecIdeal}The set of all decorated ideal points $(\ell,\psi)$ is denoted $\mathscr{S}^{+D}$.\null\hfill\defn
\end{definition}
\noindent Informally, a decoration is a choice of orthonormal basis (plus a global scaling factor) in the tangent space $T_{\ell}\mathscr{S}^+$ that identifies it with $\para$. The family of oriented orthonormal bases can be traversed by taking a starting basis and allowing all rotations, so the space of decorations at $\ell \in \mathscr{S}^+$ is really just $SO(n+1) \times \R$.

\subsubsection{The Action of $SL(2,\lip)$ on $TS\lip$}
Elements $A \in SL(2,\lip)$ act real linearly on $S\lip$, and also act real linearly on its tangent spaces, sending $T_{\kappa}S\lip \to T_{A\kappa}S\lip$ by left-multiplication. We recall from Theorem \ref{thm:SpinorsTangentSpace} that the tangent space $T_{\kappa}S\lip$ has a decomposition $\kappa\mathcal{B}^n \oplus \check{\kappa}\left(\para\right)$,
where $\mathcal{B}^n$ are the bi-paravectors. Acting by $A \in SL(2,\lip)$, the tangent space is sent to 
\[A.T_{\kappa}S\lip = A\kappa\mathcal{B}^n \oplus A\check{\kappa}\left(\para\right).\]
We can also apply this decomposition to the target tangent space:
\[T_{A\kappa}S\lip = A\kappa\mathcal{B}^n \oplus \widecheck{(A\kappa)}\left(\para\right).\]
There is an immediate problem; the first summand is preserved under the action, but the second is not. Not all is lost, however, as the second summand of both $A.T_{\kappa}S\lip$ and $T_{A\kappa}S\lip$ is isomorphic to $\para$, just identified differently.

To focus on the second summand, we can quotient out the first (as the first is preserved by the action it can safely be ignored) to get an action by $A$ as an $\R$-linear map $\tfrac{T_{\kappa}S\lip}{\kappa\mathcal{B}^n} \to \tfrac{T_{A\kappa}S\lip}{A\kappa\mathcal{B}^n}$. These quotients are isomorphic to the vector spaces $\check{\kappa}\para$ and $\widecheck{(A\kappa)}\para$ respectively. The points of $T_{\kappa}S\lip/\kappa\mathcal{B}^n$ are affine $\left(\tfrac{n^2+n}{2}+1\right)$-planes (which we shorthand as \textit{higher affine planes}) in $T_{\kappa}S\lip$ of the form $\check{\kappa}\pV+\kappa\mathcal{B}^n = s_{\pV}(\kappa)+\kappa\mathcal{B}^n$ over all $\pV \in \para$; recall that $s_{\pV}(\kappa)$ are our sections of the tangent bundle, as defined in Section \ref{Sec:SectionsTangentBundle}. We can characterise what points lie in these higher affine planes using the bracket operation, as shown in Lemma \ref{lem:QuotientBracketChar}. We can then describe the action of $SL(2,\lip)$ on the quotient.
\begin{lemma}
 \label{lem:SL2QuotientAction}
 Let $\kappa_0 \in S\lip$, $A \in SL(2,\lip)$, and $\kappa_1 = A\kappa_0$. For each $\pV \in \para$, the action of $A$ restricts to a map
 \[|\kappa_0|^{-2}s_{\pV}(\kappa_0) + \kappa_0\mathcal{B}^n \to |\kappa_1|^{-2}s_{\pV}(\kappa_1) + \kappa_1\mathcal{B}^n,\]
 which are higher affine planes in $T_{\kappa_0}S\lip$ and $T_{\kappa_1}S\lip$, respectively. 
\end{lemma}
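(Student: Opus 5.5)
The plan is to use the bracket characterisation of the higher affine planes, Lemma \ref{lem:QuotientBracketChar}, together with the $SL(2,\lip)$-invariance of the bracket, Lemma \ref{SesquimorphismLemma}.

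First, rescale. Since $s_{\pV}$ is real linear in $\pV$, we have
\[|\kappa_0|^{-2}s_{\pV}(\kappa_0) = s_{\pV|\kappa_0|^{-2}}(\kappa_0),\]
and $\pV|\kappa_0|^{-2}\in\para$. Lemma \ref{lem:QuotientBracketChar} then says that $w \in T_{\kappa_0}S\lip$ lies in $|\kappa_0|^{-2}s_{\pV}(\kappa_0)+\kappa_0\mathcal{B}^n$ if and only if
\[\{\kappa_0,w\} = -\pV|\kappa_0|^{-2}|\kappa_0|^2 = -\pV.\]
The same argument at $\kappa_1$ gives that $w' \in T_{\kappa_1}S\lip$ lies in $|\kappa_1|^{-2}s_{\pV}(\kappa_1)+\kappa_1\mathcal{B}^n$ if and only if $\{\kappa_1,w'\} = -\pV$. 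The point of the normalisation is that both planes are now cut out by the same condition, with no dependence on $|\kappa_j|$.

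Next, take $w$ in the source plane. Left multiplication by $A$ sends $T_{\kappa_0}S\lip$ to $T_{\kappa_1}S\lip$. This holds because $A$ acts on $S\lip$ by Lemma \ref{lem:SL2DefinesSpinorAction} and the action is the restriction of a real-linear map on $\clif^2$, so its derivative is again multiplication by $A$. Hence $Aw \in T_{\kappa_1}S\lip$. By Lemma \ref{SesquimorphismLemma},
\[\{\kappa_1,Aw\} = \{A\kappa_0,Aw\} = \{\kappa_0,w\} = -\pV,\]
so $Aw$ lies in the target plane.

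For a bijection onto the target plane, apply the same argument to $A^{-1}$, which also lies in $SL(2,\lip)$ because $SL(2,\lip)$ is a group. Alternatively, note that $A$ is injective and real-affine, the two planes have the same dimension, and so the image is all of the target plane.

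The main subtlety is justifying that $A$ maps tangent spaces to tangent spaces. This follows from linearity of the action on $\clif^2$, so the rest is essentially bookkeeping of the normalisation factor.
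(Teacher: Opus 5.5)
Your proposal is correct and follows essentially the same route as the paper: both use Lemma \ref{lem:QuotientBracketChar} to describe each plane as the tangent vectors $w$ with $\{\kappa_j,w\}=-\pV$, then apply the bracket invariance of Lemma \ref{SesquimorphismLemma}, with bijectivity coming from invertibility of $A$. Your justification that $A$ maps $T_{\kappa_0}S\lip$ into $T_{\kappa_1}S\lip$, via linearity of the action, is cleaner than the paper's. The paper writes tangency of $Aw$ as if it followed from the bracket computation, which does not by itself establish tangency.
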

\noindent\textit{Proof.} By Lemma \ref{lem:QuotientBracketChar}, the first set is the higher affine plane of $w \in T_{\kappa_0}S\lip$ such that $\{\kappa_0,w\} = -\pV$, while the second is the higher affine plane of $m \in T_{\kappa_1}S\lip$ such that $\{\kappa_1,m\} = -\pV$. By Lemma \ref{SesquimorphismLemma}, for any $w \in T_{\kappa_0}S\lip$ we have $\{\kappa_0,w\} = \{A\kappa_0,Aw\}$, which implies $\{\kappa_0,w\} = \{\kappa_1,Aw\} = -\pV$. So $Aw \in T_{\kappa_1}S\lip$, and as the action is invertible this map is bijective.\qed
\\\\
Because $SL(2,\lip)$ maps higher affine planes to higher affine planes, it induces a well-defined map on the quotient. This map is conformal under the inner product $(\cdot|\cdot)$ of Definition \ref{defn:InnerProductOnQuotient}.
\begin{lemma}
 Let $\kappa_0 \in S\lip$, $A \in SL(2,\lip)$, and $\kappa_1 = A\kappa_0$. The map
 \[\frac{T_{\kappa_0}S\lip}{\kappa_0\mathcal{B}^n} \to \frac{T_{\kappa_1}S\lip}{\kappa_1\mathcal{B}^n}\]
 induced by the action of $A$ is an orientation-preserving conformal $\R$-linear isomorphism under $(\cdot|\cdot)$, the metric of Definition \ref{defn:InnerProductOnQuotient}. 
\end{lemma}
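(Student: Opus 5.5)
The idea is to use the sections $s_{\pV}$ to identify each quotient with $\para$, and then show that in these coordinates the map induced by $A$ is multiplication by a positive real scalar. Conformality and orientation-preservation then follow almost immediately.

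\emph{Well-definedness and linearity.} Left-multiplication by $A$ is $\R$-linear on $\clif^2$ and sends $T_{\kappa_0}S\lip$ to $T_{\kappa_1}S\lip$. Since $A(\kappa_0 B) = (A\kappa_0)B = \kappa_1 B$ for every $B \in \mathcal{B}^n$, $A$ maps the subspace $\kappa_0\mathcal{B}^n$ onto $\kappa_1\mathcal{B}^n$. Hence $A$ descends to an $\R$-linear map between the quotients. The same argument applied to $A^{-1}$ gives its inverse, so the induced map is an isomorphism.

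\emph{Explicit form.} By Corollary \ref{cor:Clif2DirectSumDecomp} and Theorem \ref{thm:SpinorsTangentSpace}, every class in $T_{\kappa_0}S\lip/\kappa_0\mathcal{B}^n$ has a unique representative $\check{\kappa}_0\pV = s_{\pV}(\kappa_0)$ with $\pV \in \para$. Lemma \ref{lem:SL2QuotientAction} shows that $A$ maps the higher affine plane $|\kappa_0|^{-2}s_{\pV}(\kappa_0)+\kappa_0\mathcal{B}^n$ to $|\kappa_1|^{-2}s_{\pV}(\kappa_1)+\kappa_1\mathcal{B}^n$. Using real linearity of $A$ and of $\pV \mapsto s_{\pV}$, we rescale by $|\kappa_0|^2$ and obtain
\[
A\bigl(\check{\kappa}_0\pV + \kappa_0\mathcal{B}^n\bigr) = \check{\kappa}_1\Bigl(\tfrac{|\kappa_0|^2}{|\kappa_1|^2}\pV\Bigr) + \kappa_1\mathcal{B}^n .
\]
Thus, under the identifications $s(\cdot,\kappa_0)$ and $s(\cdot,\kappa_1)$ of the two quotients with $\para$, the induced map is simply $\pV \mapsto c\pV$ with $c = |\kappa_0|^2/|\kappa_1|^2 > 0$.

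\emph{Conformality.} By Definition \ref{defn:InnerProductOnQuotient} together with Lemma \ref{KappaCheckSectionsLemma}, the quotient inner product satisfies $(\check{\kappa}\pV|\check{\kappa}\pW) = |\kappa|^2\,\pV\cdot\pW$. Applying this on both sides gives
\[
(\check{\kappa}_1 c\pV|\check{\kappa}_1 c\pW) = c^2|\kappa_1|^2\,\pV\cdot\pW = \tfrac{|\kappa_0|^2}{|\kappa_1|^2}\,(\check{\kappa}_0\pV|\check{\kappa}_0\pW).
\]
So the induced map is conformal with scaling factor $|\kappa_0|^2/|\kappa_1|^2$.

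\emph{Orientation.} This is the step I expect to need the most care, because the orientation on the quotient is only implicit. The natural convention is to orient $T_{\kappa}S\lip/\kappa\mathcal{B}^n$ by transporting the orientation of $\para$ through $s(\cdot,\kappa)$. With that convention, the induced map is a positive multiple of the identity in these coordinates, so it preserves orientation.

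If the intended orientation is instead the outward orientation pulled back via $D_{\kappa}\phi_1$ (Theorem \ref{OrthogonalDecompPreservationTheorem} ii)), I would argue by continuity. The orientation sign of the induced map is locally constant in $(A,\kappa_0)$, and equals $+1$ when $A = I$. It therefore remains $+1$ provided $SL(2,\lip)$ is connected; this is the orientation-preserving component, by the footnote to Definition \ref{SL2LipDef}. Alternatively, one can check the sign directly on generators of the form $\begin{pmatrix}1&b\\0&1\end{pmatrix}$, diagonal matrices, and $J$, using transitivity (Lemma \ref{TransitiveLemma}).
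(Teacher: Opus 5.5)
Your proposal is correct and follows the paper's proof. Lemma~\ref{lem:SL2QuotientAction} gives $A(\check{\kappa}_0\pV+\kappa_0\mathcal{B}^n)=\check{\kappa}_1\tfrac{|\kappa_0|^2}{|\kappa_1|^2}\pV+\kappa_1\mathcal{B}^n$, from which conformality is computed directly; linearity comes from the matrix action and invertibility from $A^{-1}$. The paper settles orientation only by connectedness of $SL(2,\lip)$, which is your fallback; your main observation is slightly cleaner. In the $s(\cdot,\kappa)$ coordinates the induced map is a positive multiple of the identity, and these coordinates are exactly how the paper implicitly orients the quotient in Lemma~\ref{lem:QuotientMapConformal}.
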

\noindent\textit{Proof.} Let $\pV,\pW \in \para$. By Lemma \ref{lem:SL2QuotientAction}, $A$ sends the plane with representative $\check{\kappa}_0\pV = s_{\pV}(\kappa_0)$ to the plane represented by $\check{\kappa}_1\pV|\kappa_0|^2|\kappa_1|^{-2} = s_{\pV}(\kappa_1)|\kappa_0|^2|\kappa_1|^{-2}$. We thus have
\[(A\check{\kappa}_0\pV+\kappa_1\mathcal{B}^n | A\check{\kappa}_0\pW + \kappa_1\mathcal{B}^n) = \left( \check{\kappa}_1|\kappa_0|^2|\kappa_1|^{-2}\pV\ \middle|\ \check{\kappa}_1|\kappa_0|^2|\kappa_1|^{-2}\pW\right) = \frac{|\kappa_0|^4}{|\kappa_1|^4}(\check{\kappa}_1\pV|\check{\kappa}_1\pW).\]
We note that $SL(2,\lip)$ is connected and contains the identity matrix, which it is easy to show acts trivially and is thus orientation-preserving. We conclude the action of the connected component $SL(2,\lip)$ is also orientation-preserving, and $A$ is an orientation-preserving conformal map. 

Linearity then follows because the action of $SL(2,\lip)$ is by matrices, and it is an isomorphism because we can act by $A^{-1}$ (which exists as $SL(2,\lip)$ is a group) to reverse the action of $A$.\qed

\subsubsection{Conformality of $D\phi_1$}
From Theorem \ref{OrthogonalDecompPreservationTheorem} we have that, at $\kappa \in S\lip$, the derivative $D_{\kappa}\phi_1$ restricts to an isomorphism $\check{\kappa}(\para) \to T_{\phi_1(\kappa)}\mathscr{S}_T^+$. We also have a map $s(\cdot,\kappa): \para \to \check{\kappa}(\para)$ from our map of sections of $TS\lip$. We get a composition of linear isomorphisms
\[\para \overset{s(\cdot,\kappa)}{\longrightarrow} \check{\kappa}(\para) \overset{D_{\kappa}\phi_1}{\longrightarrow}T_{\phi_1(\kappa)}\mathscr{S}_T^+.\]
The first and second spaces have positive-definite inner products, and Lemma \ref{KappaCheckSectionsLemma} shows $s(\cdot,\kappa)$ is conformal. The third space $T_{\phi_1(\kappa)}\mathscr{S}_T^+$ is a space-like subspace of $\R^{1,n+2}$, and inherits a negative-definite metric by restriction of the Minkowski metric. We show that the map $D_{\kappa}\phi_1: \check{\kappa}(\para) \to T_{\phi_1(\kappa)}\mathscr{S}_T^+$ in the sequence above is conformal.
\begin{lemma}
\label{phiDerivConformalLemma}
 For $\pV \in \para$ and $\kappa \in S\lip$ we have $\textnormal{pdet}(D_{\kappa}\phi_1(\check{\kappa}\pV)) = -|\pV|^2|\kappa|^4$.
\end{lemma}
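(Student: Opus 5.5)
The plan is to compute the pseudo-determinant of the matrix (\ref{eqn:DerPhi1KappaCheck}) directly, after first reducing to a spinor whose second component is $1$. The idea is to pull the Lipschitz factor out of $\kappa$, so that only paravector algebra remains.

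\emph{Reduction.} Assume first $\eta \neq 0$, and write $\xi = \pV_0\eta$ with $\pV_0 = \xi\eta^{-1} \in \para$. Then $\kappa = \kappa_0\eta$ with $\kappa_0 = (\pV_0,1)$. Using $(\pV_0\eta)' = \pV_0'\eta'$, we get $\check{\kappa} = (\eta', -\pV_0'\eta') = \check{\kappa}_0\eta'$, and therefore $\check{\kappa}\pV = \check{\kappa}_0\pW$ with $\pW = \eta'\pV\ol{\eta}\,|\eta|^{-2}$. Since $\eta'\pV\ol{\eta} = \sigma(\eta')(\pV)$, this $\pW$ is a paravector. Using $\eta\eta^{-1}=1$, $\eta^{-1} = \ol{\eta}|\eta|^{-2}$ and (\ref{DerivEquation}), I expect
\[D_\kappa\phi_1(\check{\kappa}\pV) = |\eta|^2\left(\kappa_0\ol{\pW}\check{\kappa}_0^{\dagger} + \check{\kappa}_0\pW\kappa_0^{\dagger}\right) = |\eta|^2 D_{\kappa_0}\phi_1(\check{\kappa}_0\pW).\]
This bookkeeping of where the $\eta$'s and their conjugates sit is the step I expect to need the most care; the rest is paravector arithmetic. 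The norm of $\pW$ is $|\pW| = |\pV|$, since $|\sigma(\eta')(\pV)| = |\eta|^2|\pV|$. The pseudo-determinant scales by $|\eta|^4$, and $|\kappa|^4 = |\eta|^4(1+|\pV_0|^2)^2$. So it suffices to show
\[\textnormal{pdet}\, D_{\kappa_0}\phi_1(\check{\kappa}_0\pW) = -|\pW|^2(1+|\pV_0|^2)^2.\]

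\emph{Computation for $\kappa_0=(\pV_0,1)$.} Here $\check{\kappa}_0 = (1,-\pV_0')$ and $\check{\kappa}_0^\dagger = (1,-\pV_0)$, since $\ol{\pV_0'} = \pV_0^* = \pV_0$. Multiplying out gives
\[d_{11} = \pV_0\ol{\pW}+\pW\ol{\pV_0} = 2\,\pV_0\cdot\pW, \qquad d_{22} = -d_{11}, \qquad d_{12} = \pW - \pV_0\ol{\pW}\pV_0 .\]
The relation $d_{22} = -d_{11}$ is consistent with Theorem \ref{OrthogonalDecompPreservationTheorem}, which says the $T$-component vanishes. The matrix is paravector Hermitian with real diagonal, so its pseudo-determinant is $d_{11}d_{22} - |d_{12}|^2 = -4(\pV_0\cdot\pW)^2 - |d_{12}|^2$. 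Next, expand
\[|d_{12}|^2 = |\pW|^2 + |\pV_0|^4|\pW|^2 - 2\,\text{Re}\left((\pW\ol{\pV_0})^2\right).\]
Here the cross term is simplified using Lemma \ref{ABBALemma} together with $\ol{\pV_0\ol{\pW}\pV_0} = \ol{\pV_0}\pW\ol{\pV_0}$. For $q = \pW\ol{\pV_0}$, which has degree at most $2$, we have $q^2 = 2\text{Re}(q)\,q - q\ol{q}$. Taking real parts gives $\text{Re}(q^2) = 2(\pV_0\cdot\pW)^2 - |\pV_0|^2|\pW|^2$. Substituting, the $(\pV_0\cdot\pW)^2$ terms cancel, leaving $-|\pW|^2(1+2|\pV_0|^2+|\pV_0|^4)$, as required.

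\emph{Remaining case.} If $\eta = 0$, then $\xi \neq 0$, and the same argument runs with $\kappa = (1,\pU_0)\xi$ for the paravector $\pU_0 = \eta\xi^{-1}$. Alternatively, when $\eta=0$ one can simply compute directly: then $\check{\kappa} = (0,-\xi')$, the diagonal entries vanish, and the off-diagonal entry is $-\xi\ol{\pV}\xi^*$, which has norm $|\xi|^4|\pV|^2$.
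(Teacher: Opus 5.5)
Your proof is correct, apart from one misstatement in the reduction. The identity $\check{\kappa}\pV = \check{\kappa}_0\pW$ is false as written: $\check{\kappa}\pV = \check{\kappa}_0\eta'\pV$, and $\eta'\pV$ is not a paravector in general. The correct relation is
\[\check{\kappa}\pV = \check{\kappa}_0\,\eta'\pV\eta^{-1}\,\eta = \check{\kappa}_0\pW\eta .\]
With it, $(\check{\kappa}\pV)^{\dagger} = \ol{\eta}\,\ol{\pW}\,\check{\kappa}_0^{\dagger}$ and $\kappa^{\dagger} = \ol{\eta}\,\kappa_0^{\dagger}$. In both terms of (\ref{DerivEquation}) the factor $\eta\ol{\eta} = |\eta|^2$ then sits in the middle, which gives exactly your displayed identity $D_{\kappa}\phi_1(\check{\kappa}\pV) = |\eta|^2 D_{\kappa_0}\phi_1(\check{\kappa}_0\pW)$. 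Nothing downstream changes. The computation for $\kappa_0=(\pV_0,1)$ checks out, including $q^2 = 2\text{Re}(q)q - |q|^2$ for $q=\pW\ol{\pV_0}$; this holds because $q$ has degree at most $2$ and $q\ol{q}$ is real. The $\eta=0$ case also checks out.

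Your route differs from the paper's. The paper does not normalise. Its proof just asserts a direct expansion of the pseudo-determinant of the general matrix (\ref{eqn:DerPhi1KappaCheck}) in $\xi,\eta$. Done carefully, set $x=\xi\ol{\pV}\eta^*$, $X=\xi\ol{\pV}\xi^*$ and $Y=\eta\ol{\pV}\eta^*$, so the diagonal entries are $\pm(x+\ol{x})$. The terms $x\ol{x}$, $|X|^2$ and $|Y|^2$ assemble into $-|\pV|^2(|\xi|^2+|\eta|^2)^2$. Using $\eta^*\xi=\xi^*\eta$ (Corollary \ref{cor:xistarEta}) gives $x^2=XY$, so the leftover terms are $(YX-XY)+(\ol{X}\,\ol{Y}-\ol{Y}\,\ol{X})$. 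These cancel because Clifford conjugation negates the vector parts of paravectors.

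That route is uniform in $\kappa$, with no case split at $\eta=0$. However, it has to control products of non-commuting paravectors built from general Lipschitz entries. Your route instead uses the fact that $\kappa\mapsto\kappa\eta$ only rescales $\phi_1$, and absorbs $\eta$ into the rotated paravector $\pW=\sigma(\eta')(\pV)|\eta|^{-2}$. After that, only two paravectors remain, and the factor $|\kappa|^4=|\eta|^4(1+|\pV_0|^2)^2$ is visibly explained. The cost is the bookkeeping step and a separate $\eta=0$ case. The gain is a more transparent computation and a reusable covariance formula for $D\phi_1$ under right multiplication by elements of $\lip$.
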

\noindent\textit{Proof.} From (\ref{eqn:DerPhi1KappaCheck}) we have the matrix form of $D_{\kappa}\phi_1(\check{\kappa}\pV)$:
\[D_{\kappa}\phi_1\left(\check{\kappa}\pV\right) = \begin{pmatrix}
 \xi\ol{\pV}\eta^* + \eta'\pV\ol{\xi} & -\xi\ol{\pV}\xi^* + \eta'\pV\ol{\eta} \\
 \eta\ol{\pV}\eta^* -\xi'\pV\ol{\xi} & -\eta\ol{\pV}\xi^* - \xi'\pV\ol{\eta}
\end{pmatrix}.\]
The pseudo-determinant can then be computed, and 
$\textnormal{pdet}(D_{\kappa}\phi_1(\check{\kappa}v)) %= &- |\pV|^2\left(|\xi|^4 + |\eta|^4 + 2|\xi|^2|\eta|^2\right)\\
%& + \xi\ol{\pV}\xi^*\eta\ol{\pV}\eta^* - \xi\ol{\pV}\xi^*\eta\ol{\pV}\eta^*  + \eta'\pV\ol{\xi}\eta'\pV\ol{\xi} - \eta'\pV\ol{\xi}\eta'\pV\ol{\xi}\\
%= &-|\pV|^2\left(|\xi|^2+|\eta|^2\right)^2 
= -|\pV|^2|\kappa|^4$
as claimed. \qed
\begin{proposition}
 \label{prop:Dkphi1conformal}
 For any $\pV,\pW \in \para$ and $\kappa \in S\lip$,
 \[(D_{\kappa}\phi_1(\check{\kappa}\pV)|D_{\kappa}\phi_1(\check{\kappa}\pW))_{1,n+2} = -4|\kappa|^4\pV\cdot \pW = -4|\kappa|^2(\check{\kappa} \pV | \check{\kappa} \pW),\]
 applying the generalised Minkowski inner product, the dot product (\ref{eqn:DotProduct}) on $\clif$, and the inner product $(\cdot|\cdot)$ on $\clif^2$ (Definition \ref{defn:CliffSquaredInnerProduct}) respectively.
\end{proposition}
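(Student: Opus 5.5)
Since $D_{\kappa}\phi_1$ is $\R$-linear and $(\cdot|\cdot)_{1,n+2}$ is a symmetric bilinear form, the plan is to recover the inner product by polarisation from the norm, which Lemma \ref{phiDerivConformalLemma} already gives.

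The link between norm and pseudo-determinant is the identity $4\,\text{pdet}\,S_p = |p|^2$ for $S_p \in \mathcal{P}^{1,n+2}$. One caution: the matrix $D_{\kappa}\phi_1(\check{\kappa}\pV)$ from (\ref{eqn:DerPhi1KappaCheck}) is the full matrix of the tangent vector. It is not half of anything, whereas points are written as $\tfrac{1}{2}\begin{pmatrix} T+Z & \pX\\ \ol{\pX} & T-Z\end{pmatrix}$. So I will check that the convention matches the one used for $\phi_1$ itself, where $\phi_1(\kappa)=\kappa\kappa^\dagger$ is read directly as $S_p$. With that convention,
\[
|D_{\kappa}\phi_1(\check{\kappa}\pV)|^2_{1,n+2} = 4\,\text{pdet}\bigl(D_{\kappa}\phi_1(\check{\kappa}\pV)\bigr) = -4|\pV|^2|\kappa|^4 .
\]
The constant $-4$ in the statement confirms this bookkeeping.

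Next, set $Q(\pV) = |D_{\kappa}\phi_1(\check{\kappa}\pV)|^2_{1,n+2}$. This is a quadratic form in $\pV$, and the associated bilinear form is exactly $(D_{\kappa}\phi_1(\check{\kappa}\pV)|D_{\kappa}\phi_1(\check{\kappa}\pW))_{1,n+2}$, because $\pV \mapsto D_{\kappa}\phi_1(\check{\kappa}\pV)$ is real linear. Polarising,
\[
(D_{\kappa}\phi_1(\check{\kappa}\pV)|D_{\kappa}\phi_1(\check{\kappa}\pW))_{1,n+2} = \tfrac12\bigl(Q(\pV+\pW)-Q(\pV)-Q(\pW)\bigr) = -4|\kappa|^4\cdot\tfrac12\bigl(N(\pV+\pW)-N(\pV)-N(\pW)\bigr).
\]
By (\ref{eqn:DotProduct}) the last bracket equals $\pV\cdot\pW$, which gives the first equality.

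For the second equality, I will invoke Lemma \ref{InnerProductPropertiesLemma} iv) with $K=\check{\kappa}$. This gives $(\check{\kappa}\pV|\check{\kappa}\pW) = |\check{\kappa}|^2\,\pV\cdot\pW$. Since $|\check{\kappa}|^2 = |\eta'|^2+|\xi'|^2 = |\kappa|^2$, as noted in Lemma \ref{KappaCheckSectionsLemma}, we have $|\kappa|^4\pV\cdot\pW = |\kappa|^2(\check{\kappa}\pV|\check{\kappa}\pW)$.

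The only real obstacle is the factor-of-two bookkeeping between the matrix $D_{\kappa}\phi_1(\check{\kappa}\pV)$ and its coordinates $(T,Z;\pX)$, so that the norm is exactly four times the pseudo-determinant. If this is in doubt, I would verify it directly using (\ref{MatrixMinkowskiMetric}) with $P=Q$: it gives $|P|^2 = 4(P_{11}P_{22} - P_{\pX}\ol{P_{\pX}})$. This equals $4\,\text{pdet}P$ because $P_{11}$ and $P_{22}$ are real, so $P_{11}^*P_{22} = P_{11}P_{22}$, and $\ol{P_{\pX}}^{\,*}P_{\pX} = P_{\pX}'P_{\pX} = |P_{\pX}|^2$.
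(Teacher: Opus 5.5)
Your proposal is correct and follows essentially the same route as the paper: polarise the norm identity of Lemma \ref{phiDerivConformalLemma} using $4\,\text{pdet}\,S_p=|p|^2$ (the paper uses the form $|a+b|^2-|a-b|^2$ rather than your three-term form, which makes no difference), then get the second equality from Lemma \ref{InnerProductPropertiesLemma} iv) with $|\check{\kappa}|=|\kappa|$. Your extra check that $|P|^2=4\,\text{pdet}\,P$ follows from (\ref{MatrixMinkowskiMetric}) usefully confirms the factor-of-two bookkeeping that the paper takes for granted.
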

\noindent\textit{Proof.} Recall\footnote{See the discussion of paravector Hermitian matrices below Definition \ref{defn:ParavectorHermitianMatrices}.} that if a matrix $S_p \in \mathcal{P}^{1,n+2}$ corresponds to a point $p \in \R^{1,n+2}$ then $4\text{pdet}(S_p) = |p|^2$. If we let $S_D$ be the paravector Hermitian matrix corresponding to $D_{\kappa}\phi_1(\check{\kappa}\pV)$, Lemma \ref{phiDerivConformalLemma} then tells us
\begin{equation}
 \label{TechnicalEq1}
 |S_D|^2 = 4\text{pdet}\left(D_{\kappa}\phi_1(\check{\kappa}\pV)\right)= -4|\pV|^2|\kappa|^4.
\end{equation}
We can polarise on both norms (that is, the Minkowski norm and the norm $N$ on $\para$) to find a relationship between the inner products. The norm on the left-hand side of (\ref{TechnicalEq1}) polarises as
\[4( D_{\kappa}\phi_1(\check{\kappa}\pV)|D_{\kappa}\phi_1(\check{\kappa}\pW))_{1,n+2} = |D_{\kappa}\phi_1(\check{\kappa}(\pV+\pW))|^2-|D_{\kappa}\phi_1(\check{\kappa}(\pV-\pW))|^2,\]
while the norm on paravectors polarises as $4\pV \cdot \pW = |\pV + \pW|^2 - |\pV - \pW|^2$. Apply these to (\ref{TechnicalEq1}) for the first equality. The second follows from Lemma \ref{InnerProductPropertiesLemma} iv).\qed
\\\\
With tools in hand, we now show the derivative map on the quotient is conformal. 
\begin{lemma}
\label{lem:QuotientMapConformal}
 The quotient map 
 \[D_{\kappa}\phi_1: \frac{T_{\kappa}S\lip}{\kappa \mathcal{B}^n} \to \frac{\ell^{\perp}}{\ell}\] 
 from sections $s(\cdot,\kappa)$ to $T_p\mathscr{S}_T^+$ is conformal with scaling factor $-4|\kappa|^2$. It is orientation-preserving with respect to the outward orientation on $\mathscr{S}^+$.
\end{lemma}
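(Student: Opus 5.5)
The plan is to treat conformality and orientation separately. For conformality, I will work with coset representatives. By Definition \ref{defn:InnerProductOnQuotient} and Theorem \ref{thm:SpinorsTangentSpace}, each class in $T_{\kappa}S\lip/\kappa\mathcal{B}^n$ has a unique representative $\check{\kappa}\pV$ with $\pV\in\para$.

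First I check that the quotient map is well defined. By Theorem \ref{OrthogonalDecompPreservationTheorem}, $D_\kappa\phi_1$ sends $\kappa\R$ into $p\R=\ell$ and kills $\kappa(\R^{0,n}\oplus\bigwedge^2\R^{0,n})$. So $D_\kappa\phi_1(\kappa\mathcal{B}^n)\subseteq\ell$, and the map descends to $T_\kappa S\lip/\kappa\mathcal{B}^n\to \ell^\perp/\ell$.

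Next, since $\ell$ is null and orthogonal to all of $\ell^\perp$, the Minkowski form descends to $\ell^\perp/\ell$. It can therefore be computed on any representatives, in particular on $D_\kappa\phi_1(\check{\kappa}\pV)$. Proposition \ref{prop:Dkphi1conformal} then gives
\[(D_\kappa\phi_1(\check{\kappa}\pV)\,|\,D_\kappa\phi_1(\check{\kappa}\pW))_{1,n+2}=-4|\kappa|^2(\check{\kappa}\pV|\check{\kappa}\pW),\]
and the right-hand side is $-4|\kappa|^2$ times the quotient inner product of Definition \ref{defn:InnerProductOnQuotient}. This gives the scaling factor $-4|\kappa|^2$. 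Bijectivity follows from part ii) of Theorem \ref{OrthogonalDecompPreservationTheorem} together with $T_p\mathscr{S}_T^+\cong\ell^\perp/\ell$.

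The main obstacle is orientation. I would reduce it to a single base point using continuity and connectedness. The map $\kappa\mapsto D_\kappa\phi_1\circ s(\cdot,\kappa)$ is a continuous family of linear isomorphisms $\para\to T_{\phi_1(\kappa)}\mathscr{S}^+_T$. Comparing the image of the standard basis $(1,i_1,\dots,i_n)$ with the outward orientation gives a sign that is continuous in $\kappa$, and hence locally constant. So it suffices to show that $S\lip$ is connected and to check the sign at one point.

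For connectedness I would use Lemma \ref{TransitiveLemma}: $S\lip$ is the orbit of $(1,0)$ under $SL(2,\lip)$, and the paper has already used that $SL(2,\lip)$ is connected. If that feels shaky, a direct argument also works: $\lip$ deformation retracts to $\text{Spin}(n+1)\times\R^+$, which is connected for $n\ge1$, and the parametrisation $(\pV\eta,\eta)$ together with the chart at $\eta=0$ covers $S\lip$ by connected overlapping pieces.

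The base-point check is at $\kappa=(1,0)$, so $p=\tfrac12(1,1;0)$ in matrix form, i.e.\ $T=Z=1$, $\pX=0$, and $\check{\kappa}=(0,-1)$. Equation (\ref{eqn:DerPhi1KappaCheck}) gives
\[D_\kappa\phi_1(\check{\kappa}\pV)=\begin{pmatrix}0&-\ol{\pV}\\-\pV&0\end{pmatrix},\]
so the basis $1,i_1,\dots,i_n$ is sent to $-\partial X_0,\partial X_1,\dots,\partial X_n$ up to a positive factor, since $\ol{i_j}=-i_j$. At this point $\partial_-=\partial_Z$ (times a positive constant). The outward orientation is therefore decided by the sign of
\[\det(\partial T,\partial Z,-\partial X_0,\partial X_1,\dots,\partial X_n)=-1.\]
This would suggest the map is orientation-reversing with respect to the outward orientation. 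I expect the resolution is either a convention check against the inward/outward discussion in Section \ref{sec:OrientationConventions} (the quotient orientation corresponds to the inward one, and composing with the scaling factor's sign flip may matter), or a sign error in my matrix-to-coordinate identification. The most delicate step is reconciling this sign carefully with how the paper identifies $\mathcal{P}^{1,n+2}$ entries with $(T,Z;\pX)$.
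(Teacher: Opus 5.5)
Your conformality argument is the paper's: unique representatives $\check\kappa\pV$, Definition \ref{defn:InnerProductOnQuotient}, and Proposition \ref{prop:Dkphi1conformal}. Your check that $D_\kappa\phi_1(\kappa\mathcal{B}^n)\subseteq\ell$, which is needed for the map to descend to the quotient, fills in a step the paper leaves implicit. Your orientation strategy is also the paper's: connectedness of $S\lip$ plus a check at $\kappa_0=(1,0)$.

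The gap is that you end at a contradiction rather than a conclusion, and you suspect the wrong step. Your base-point computation is correct. At $\xi=1$, $\eta=0$, (\ref{eqn:DerPhi1KappaCheck}) has off-diagonal entries $-\ol{\pV}$ and $-\pV$. Hence $s_1(\kappa_0)\mapsto-2\partial X_0$ and $s_{i_j}(\kappa_0)\mapsto 2\partial X_j$; the latter agrees with Section \ref{ExampleComputation} at $\theta=0$.

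The paper's proof instead asserts $D_{\kappa_0}\phi_1(s_1(\kappa_0))=2\partial_{X_0}$ in (\ref{eqn:Derivs}). That is a sign slip, and the claimed outwardness of the image basis rests on it. Since $\partial_-=\partial_Z$ at $p_0$, your determinant $-1$ stands. With $\para$ oriented by $(1,i_1,\dots,i_n)$, the map is orientation-reversing for the outward orientation. It is orientation-preserving for the inward orientation, which is the quotient orientation on $p^\perp/p\R$ from Section \ref{sec:OrientationConventions}; indeed $(\partial_Z,\partial_T+\partial_Z,-\partial X_0,\partial X_1,\dots,\partial X_n)$ is positively oriented.

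So the orientation clause cannot be proved as stated under the paper's conventions. There are two honest ways to finish. One is to prove the corrected version; in that case the definition of decorated ideal points, which also uses the outward orientation, should switch to the inward one if $D_\kappa\phi_1\circ s(\cdot,\kappa)$ is meant to be a decoration. The other is to change a convention, e.g.\ orient $\para$ by $(-1,i_1,\dots,i_n)$. Looking for an error in your own matrix-to-coordinate identification is not one of them.

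Your other proposed escape is a red herring. The negative factor $-4|\kappa|^2$ only records that the target form is negative definite, and it carries no orientation information.
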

\noindent\textit{Proof.} In Section \ref{Sec:SectionsTangentBundle},
every element of the quotient $T_{\kappa}S\lip/\kappa\mathcal{B}^n$ is seen to have a unique representative that is a section $s_{\pV}(\kappa)$ for $\pV \in \para$. For $\pV,\pW \in \para$, 
\[(s_{\pV}(\kappa) + \kappa\mathcal{B}^n | s_{\pW}(\kappa) + \kappa\mathcal{B}^n) = (s_{\pV}(\kappa) | s_{\pW}(\kappa)) = |\kappa|^2\pV \cdot \pW,\]
where the left inner product is on $\tfrac{T_{\kappa}S\lip}{\kappa \mathcal{B}^n}$, the central is the inner product on $\clif^2$, and the rightmost is the dot product on $\clif$. Conformality now follows from Proposition \ref{prop:Dkphi1conformal}:
\[(D_{\kappa}\phi_1(s_{\pV}(\kappa)) | D_{\kappa}\phi_1(s_{\pW}(\kappa)))_{1,n+2} = -4|\kappa|^2(s_{\pV}(\kappa) | s_{\pW}(\kappa)) = -4|\kappa|^2(s_{\pV}(\kappa) + \kappa\mathcal{B}^n | s_{\pW}(\kappa) + \kappa\mathcal{B}^n).\]
To show the map is orientation-preserving, note that at each $\kappa \in S\lip$ the map $D_{\kappa}\phi_1$ sends a standard oriented basis $s_1(\kappa),s_{i_1}(\kappa),...,s_{i_n}(\kappa)$ to a basis of $T_p\mathscr{S}_T^+$. The space $S\lip$ is connected (except for the case of $n=0$, but there the decoration becomes trivial), so we can check the orientation for a single $\kappa$ and continuously vary to argue the map is orientation-preserving (or reversing) equally over all of $S\lip$. 

The spinor $\kappa_0 = (1,0)$ with tangent space $T_{\kappa_0}S\lip = \kappa_0\mathcal{B}^n\oplus \check{\kappa}_0\para$ is sent to $p_0 = (1,1;0,...,0)_{\R}$, with tangent space $T_{p_0}L^+ = p_0^{\perp}$. The tangent space $p_0^{\perp}$ is spanned by $p_0$ and the basis $\partial X_0,...,\partial X_n$ of $T_{p_0/{\sim}}\mathscr{S}^+$. We find the derivative maps the tangent vectors as follows:
\begin{equation}
\label{eqn:Derivs}
D_{\kappa_0}(s_1(\kappa_0)) = 2\partial_{X_0},\ D_{\kappa_0}(s_{i_j}(\kappa_0)) = 2\partial_{X_j},
\end{equation}
and so the oriented basis $s_1(\kappa_0),s_{i_1}(\kappa_1),...,s_{i_n}(\kappa_n)$ is mapped to the basis $2\partial X_0,...,2\partial X_n$ of $T_{p_0}\mathscr{S}^+$, which is outward oriented. We conclude the map is orientation-preserving.\qed

\subsection{Multiflags}
\label{sec:Multiflags}
The missing degrees of freedom lost by $\phi_1$ can also be recovered via a flag structure, generalising the `null flag' picture of Penrose and Rindler \cite{SpinorsAndSpacetime} in the complex spinor case. 
\begin{definition}[Flag]
    Given a vector space $V^n$, a flag of $V^n$ is a strictly ascending chain of vector subspaces $V_0 \subset V_1 \subset ... \subset V_r$. Because the inclusions are strict, 
    \[0 \leq \text{dim}(V_0) < \text{dim}(V_1) < ... \text{dim}(V_r) \leq n,\]
    where dim$(V_j)$ is the dimension of $V_j$. A flag's {\normalfont signature} is the sequence $(\text{dim}(V_0),...,\text{dim}(V_r))$.
    
    An {\normalfont oriented flag} is a flag with an orientation on each quotient space $V_{j+1}/V_j$, where $V_j \subset V_{j+1}$ are neighbouring subspaces in the flag.\null\hfill\defn
\end{definition}
\noindent The \textit{null flag} is a particular type of flag structure living on the light-cone in Minkowski space, so named because the light-cone is also the space of `null directions' in Minkowski, the vectors of length zero. 
\begin{definition}[Null Flag]
    Consider the light-cone $L^+ \subset \R^{1,n+2}$. \label{not:FlagSingle}Given a point $p \in L^+$ and a vector $v \in T_pL^+$ linearly independent of $p$, let $F_{p,v}$ be the span of $\R p$ and $v$ and denote the flag $0 \subset \R p \subset F_{p,v}$ by $[[p,v]]$. We call $p$ the {\normalfont base-point} and $F_{p,v}$ the {\normalfont flag plane}, and say the flag is {\normalfont based} at $p$. The {\normalfont flagpole} $p\R$ is chosen to be future-oriented, and $F_{p,v}/p\R$ is oriented by the equivalence class of the vector $v$. Such a flag is called a {\normalfont null flag}. \hfill\defn
\end{definition}
\noindent Following \cite{Mathews_Spinors_horospheres}, all flags are taken to be oriented and of signature (1,2). That is, all flags are of the form $\{0\} =V_0 \subset V_1 \subset V_2$, where dim $V_1=1$, dim $V_2=2$, and $V_1/V_0 = V_1$ and $V_2/V_1$ are endowed with orientations.

The fibres of $\phi_1$ are (up to scaling) the spin group $Spin(n+1)$, and it is this group we seek to reconstruct with flags. Given a particular Lipschitz spinor $\kappa$, it should map to a point of the light-cone and some collection of flags with an orthogonality condition upon them, to recover something like a spin group. 

But orthogonal in what sense? Where should our flags live? Our copy of $Spin(n+1)$ in the fibres should act on an $(n+1)$-dimensional vector space, so we should look for a `natural' choice of such a space within $\R^{1,n+2}$. Similar to the decorated ideal points, we choose to act on $T_{\ell}\mathscr{S}^+$, where $\ell = \phi_1(\kappa)\R$. Our flags will all share a flagpole (given by the span of $\phi_1(\kappa)$), and the flag planes will be determined by various derivatives in the tangent space to $L^+$.

The sphere at infinity $\mathscr{S}^+$ can be thought of as the set of directions available along the light-cone; each ray of the light-cone becomes a point in the sphere. Thus, a flag $[[p,v]]$ would have its flagpole mapped to the equivalence class of $p$ in $\mathscr{S}^+$. We can also imagine mapping the tangent space of $L^+$ at $p$ to the tangent space of $\mathscr{S}^+$ by quotienting out $\R p$, the ray pointing along $p$. This would send the flag $[[p,v]]$ to the equivalence class of $p \in \mathscr{S}^+$ with tangent vector $v + \R p$. 

However, $p$ is orthogonal to itself under the Minkowski metric. By the bilinearity of the metric, then, orthogonality of these quotients in the tangent space $T_p\mathscr{S}^+$ is equivalent to orthogonality of the flag plane vectors $v_j$. That is, if we take two vectors $v_1,v_2 \in T_pL^+$ and two arbitrary real numbers $r_1,r_2 \in \R$, to have orthogonality we require
\[(v_1+r_1p|v_2+r_2p)_{1,n+2} = (v_1|v_2)_{1,n+2}+r_2(p|v_2)_{1,n+2} + r_1(v_1|p)_{1,n+2}+r_1r_2(p|p)_{1,n+2} = 0.\]
Then, because $v_1,v_2,p \in p^{\perp}$, this implies
\[(v_1+r_1p|v_2+r_2p)_{1,n+2} = (v_1|v_2)_{1,n+2} = 0.\]
It follows that, rather than dealing with quotients, we can consider the orthogonality of flags simply by comparing the tangent vectors defining the flag plane.
\begin{definition}[Orthogonality of Flags]
\label{defn:OrthoOfFlags}
 Consider two flags $F_1 = [[p,v_1]]$ and $F_2 = [[p,v_2]]$ that share the same flagpole vector. These flags are orthogonal if $(v_1|v_2)_{1,n+2} = 0$. \null\hfill\defn
\end{definition}
\noindent All of this suggests the correct `space of flags' should be those with orthogonal flag planes.
\begin{definition}
\label{def:Multiflag}
The {\normalfont null multiflag} (or simply {\normalfont multiflag}) $[[p;v_1,v_2,...,v_n]]$ is the ordered set (or tuple) of null flags $([[p,v_1]],\ [[p,v_2]],...,[[p,v_n]])$, each based at $p$ and bearing a `flag plane vector' $v_j \in T_pL^+$. The flag plane vectors $v_j$ must be mutually orthogonal, and the collection $\{v_1,v_2,...,v_n,p\}$ must form a linearly independent set. The individual constituent flags $[[p,v_j]]$ are called {\normalfont subflags}. \null\hfill\defn 
\end{definition}
\begin{lemma}
 The subflags of $[[\phi_1(\kappa);(D_{\kappa}\phi_1)(\check{\kappa}i_1),(D_{\kappa}\phi_1)(\check{\kappa}i_2),...,(D_{\kappa}\phi_1)(\check{\kappa}i_n)]]$ are orthogonal under the Minkowski metric; that is, they do indeed collect to form a proper multiflag.
\end{lemma}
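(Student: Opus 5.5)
The plan is to reduce everything to the conformality statement of Proposition \ref{prop:Dkphi1conformal}, together with the structural facts about $D_{\kappa}\phi_1$ from Theorem \ref{OrthogonalDecompPreservationTheorem}. Write $p = \phi_1(\kappa)$ and $v_j = (D_{\kappa}\phi_1)(\check{\kappa}i_j)$ for $1 \leq j \leq n$. Definition \ref{def:Multiflag} imposes three requirements, and I would check them in turn.

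First, each $v_j$ must lie in $T_pL^+$. By Theorem \ref{OrthogonalDecompPreservationTheorem} ii), $D_{\kappa}\phi_1$ sends $\check{\kappa}(\para)$ into $T_p\mathscr{S}_T^+$. This space is contained in $p^{\perp} = T_pL^+$ by Lemma \ref{SphereTangentSpaceLemma}, so each $[[p,v_j]]$ is at least a candidate null flag.

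Second, the $v_j$ must be mutually orthogonal. Apply Proposition \ref{prop:Dkphi1conformal} with $\pV = i_j$ and $\pW = i_k$:
\[(v_j|v_k)_{1,n+2} = -4|\kappa|^4\, i_j\cdot i_k.\]
By Definition \ref{defn:ParavectorInnerProduct}, $i_j\cdot i_k = \text{Re}(i_j\ol{i_k}) = -\text{Re}(i_ji_k)$. For $j\neq k$ the product $i_ji_k$ is a degree-two monomial, so its real part vanishes. Hence $(v_j|v_k)_{1,n+2}=0$, which is exactly the orthogonality of Definition \ref{defn:OrthoOfFlags}. For $j=k$ the same formula gives $(v_j|v_j)_{1,n+2} = -4|\kappa|^4 \neq 0$, which we use below.

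Third, the set $\{p,v_1,\dots,v_n\}$ must be linearly independent, and I expect this to be the only real step. The elements $\check{\kappa}i_1,\dots,\check{\kappa}i_n$ are linearly independent in $\check{\kappa}(\para)$, since $s(\cdot,\kappa)$ is conformal with nonzero scaling factor by Lemma \ref{KappaCheckSectionsLemma}. Theorem \ref{OrthogonalDecompPreservationTheorem} ii) says $D_{\kappa}\phi_1$ is injective on $\check{\kappa}(\para)$, so the $v_j$ are linearly independent and lie in $T_p\mathscr{S}_T^+$.

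It remains to exclude $p$ from their span. Note that $p$ has $T$-coordinate $|\kappa|^2>0$. Every vector of $T_p\mathscr{S}_T^+$ has zero $T$-component; this was computed explicitly in the proof of Theorem \ref{OrthogonalDecompPreservationTheorem}. Therefore $p$ cannot be a combination of the $v_j$.

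An alternative argument avoids coordinates. Suppose $p = \sum c_jv_j$. Since $p$ is null and lies in $p^{\perp}$, pairing with $v_k$ gives $0 = (p|v_k)_{1,n+2} = c_k(v_k|v_k)_{1,n+2} = -4c_k|\kappa|^4$. So every $c_k=0$ and $p=0$, contradicting $p\in L^+$. Either route completes the verification that $[[\phi_1(\kappa);v_1,\dots,v_n]]$ is a multiflag.
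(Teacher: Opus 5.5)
Your proposal is correct and follows the same route as the paper. The paper's proof just observes that the $i_j$ are orthogonal in $\para$ and that $D_{\kappa}\phi_1$ is conformal on $\check{\kappa}(\para)$, citing Lemma \ref{lem:QuotientMapConformal}; that lemma rests on Proposition \ref{prop:Dkphi1conformal}, which you apply directly. You also check that each $v_j$ lies in $T_pL^+$ and that $\{p,v_1,\dots,v_n\}$ is linearly independent, two conditions of Definition \ref{def:Multiflag} the paper leaves implicit, and both of your arguments for keeping $p$ out of the span of the $v_j$ are sound.
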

\noindent \textit{Proof.} The vectors $i_j$ are orthogonal, and remain orthogonal under the conformal map $D_{\kappa}\phi_1$ (see Lemma \ref{lem:QuotientMapConformal}). This implies the flags themselves are orthogonal, by Definition \ref{defn:OrthoOfFlags}.\qed
\\\\
As an appropriate target space for our map from $S\lip$, we define a bundle of multiflags over the light-cone.
\begin{definition}
\label{def:MultiflagSpace}
The space of null multiflags $\mathcal{MF}^n_p$ at a point $p \in L^+$ consists of all $[[p;v_1,v_2,...,v_n]]$ based at $p$ with flag plane vectors $v_j$ that are mutually orthogonal under $(\cdot|\cdot)_{1,n+2}$, and form a positively oriented frame. The bundle of null multiflags $\mathcal{MF}^n$ is then the fibre bundle with fibre $\mathcal{MF}^n_p$ at each $p \in L^+$. \hfill\defn
\end{definition}
\noindent The space of multiflags $\mathcal{MF}^n_p$ based at $p$ is diffeomorphic to the space of positively oriented orthonormal frames at $p$, which is diffeomorphic to $SO(n+1)$. $\mathcal{MF}^n$ is then a frame bundle over $L^+ \cong S^{n+1} \times \R$.
\begin{definition}
\label{defn:Phi1}
 The map $\Phi_1$ from non-zero spinors to multiflags is given by
 \[\Phi_1: S\lip \to \mathcal{MF}^n,\ \Phi_1(\kappa) = [[\phi_1(\kappa);(D_{\kappa}\phi_1)(\check{\kappa}i_1),(D_{\kappa}\phi_1)(\check{\kappa}i_2),...,(D_{\kappa}\phi_1)(\check{\kappa}i_n)]].\]
 We refer to the individual flag $[[\phi_1(\kappa),(D_{\kappa}\phi_1)(\check{\kappa}i_j)]]$ as the $i_j$ flag, or $i_j$ subflag. \null\hfill\defn
\end{definition}
\noindent Multiflags and decorated ideal points carry the same information, as the following proposition (an extension of Proposition 4.8.2 of \cite{mathews2024quaternions}) shows. Recall that $\mathscr{S}^{+D}$ is the space of decorated ideal points, pairs $(\ell,\psi)$ where $\ell \in \mathscr{S}^+$ is a point in the celestial sphere and the decoration $\psi$ is a conformal orientation-preserving $\R$-linear isomorphism from $\para \to \ell^{\perp}/\ell$ (informally, an orthonormal basis up to a global scale factor for the tangent space $T_{\ell}\mathscr{S}^+$).
\begin{proposition}
\label{prop:MFlagsAreDecIdealPoints}
 There is a smooth bijective correspondence $\Psi:\ \mathcal{MF}^n \to \mathscr{S}^{+D}$, as follows.
 \begin{enumerate}[label=\roman*),leftmargin=0.9cm]
 \item Given a multiflag $[[p;v_1,...,v_n]] \in \mathcal{MF}^n$, let $p$ have $T$ coordinate $T_0$, and let $a_1,a_2,...,a_n > 0$ be chosen such that $a_1v_1,a_2v_2,...,a_nv_n$ have generalised Minkowski norm $-4T_0^2$. Then
 \[\Psi([[p;v_1,v_2,...,v_n]]) = (p\R,\psi),\]
 where $\psi$ is uniquely defined by $\psi(i_j) = a_jv_j + p\R$.
 \item Given $(\ell,\psi) \in \mathscr{S}^{+D}$, let $K < 0$ be the conformal scale factor of $\psi$. Then
 \[(\ell,\psi) \to [[p;\Tilde{\psi}(i_1),...,\Tilde{\psi}(i_n)]],\]
 where $p$ is the point on $\ell$ whose $T$-coordinate $T_0$ satisfies $K=-4T_0^2$, and $\Tilde{\psi}(i_j) \in \ell^{\perp}$ for $j \in \{1,...,n\}$ are arbitrary lifts of $\psi(i_j) \in \ell^{\perp}/\ell$ to $\ell^{\perp} = T_pL^+$.
 \end{enumerate}
\end{proposition}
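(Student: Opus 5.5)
We will check that the two assignments in i) and ii) are well-defined maps between $\mathcal{MF}^n$ and $\mathscr{S}^{+D}$, that they are mutually inverse, and that both are smooth.

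\emph{Well-definedness of i).} Each $v_j$ lies in $p^\perp=T_pL^+$ and is linearly independent of $p$. Because $p$ is null and $p^\perp/p\R\cong T_p\mathscr{S}^+_{T_0}$ is negative definite, $v_j$ has strictly negative Minkowski norm. Hence the positive scalars $a_j$ with $|a_jv_j|^2=-4T_0^2$ exist and are unique. The $n$ classes $a_jv_j+p\R$ are mutually orthogonal, since the discussion before Definition \ref{defn:OrthoOfFlags} shows that orthogonality passes to the quotient. They all have norm $-4T_0^2$.

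There is a dimension problem: $\ell^\perp/\ell$ has dimension $n+1$, but specifying $\psi(i_j)$ for $j=1,\dots,n$ only fixes $\psi$ on the vector part $\R^{0,n}$. The value $\psi(1)$ is forced as follows. It must be the unique vector in the one-dimensional orthogonal complement of $\mathrm{span}\{a_jv_j+p\R\}$ that has norm $-4T_0^2$ and makes $(\psi(1),\psi(i_1),\dots,\psi(i_n))$ outward oriented. This is exactly where the positive-orientation condition in Definition \ref{def:MultiflagSpace} is used, and it is what "uniquely defined" in the statement means. The resulting $\psi$ sends an orthonormal basis of $\para$ (for $N$) to an orthogonal basis with common norm $-4T_0^2$. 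By bilinearity $\psi$ is therefore conformal with scale factor $K=-4T_0^2$, and it is orientation preserving by construction.

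\emph{Well-definedness of ii).} Since $\psi$ is conformal into a negative-definite space, its scale factor satisfies $K<0$. Along the ray $\ell$ the $T$-coordinate runs over all of $(0,\infty)$, so there is a unique $p\in\ell$ with $-4T_0^2=K$. Different lifts $\tilde\psi(i_j)$ differ by multiples of $p$, and by definition a flag depends only on the plane $\R p+\R\tilde\psi(i_j)$ together with the orientation of $v$ modulo $p$. The multiflag is therefore independent of the choice of lift. Orthogonality of the lifts holds by the same computation as before Definition \ref{defn:OrthoOfFlags}. Linear independence of $\{p,\tilde\psi(i_j)\}$ follows because the $\psi(i_j)$ are independent in the quotient. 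Positive orientation is inherited from the orientation of $\psi$.

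\emph{Inverse and smoothness.} Starting from $(\ell,\psi)$, the lifts $\tilde\psi(i_j)$ already have norm $K=-4T_0^2$, so every $a_j=1$ and i) returns $\psi$ on each $i_j$. The value on $1$ is also recovered, because $\psi(1)$ satisfies the same orthogonality, norm and orientation conditions that determine it uniquely. In the other direction, starting from a multiflag, rescaling each $v_j$ by $a_j>0$ and adding multiples of $p$ leaves each oriented flag unchanged, and the $T$-coordinate recovered from $K=-4T_0^2$ is the original $T_0$. So the two maps are mutually inverse.

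Smoothness of both maps follows because $a_j=2T_0/\sqrt{-|v_j|^2}$, the orthogonal-complement vector $\psi(1)$, and $T_0=\sqrt{-K}/2$ are all smooth functions of the data on the relevant open sets.

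The main obstacle is the $(n+1)$-versus-$n$ dimension mismatch. The key step is to justify that orientation together with conformality pins down $\psi(1)$ uniquely. I would do this by noting that the orthogonal complement is a line, that the norm condition leaves two choices, and that the two choices give opposite orientations.
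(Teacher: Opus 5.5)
Your proposal is correct and follows the paper's proof essentially step for step. The $a_j$ are fixed by the norm condition, and $\psi(1)$ is taken to be the unique norm-$(-4T_0^2)$ vector completing $(a_jv_j+\ell)$ to an outward-oriented orthogonal basis of $\ell^\perp/\ell$. The two constructions are then checked to be mutually inverse: the flag does not depend on the choice of lift, every $a_j=1$ on the way back, and $T_0$ is recovered from $K=-4T_0^2$.

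One small correction: it is the requirement that $\psi$ be orientation-preserving, not the positive-orientation clause of Definition~\ref{def:MultiflagSpace}, that selects $\psi(1)$. Any orthogonal $n$-frame in the $(n+1)$-dimensional quotient admits exactly one such completion.
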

\noindent\textit{Proof.} Consider the multiflag $[[p;v_1,...,v_n]]$, and let $\ell = p\R$. Flags at $p$ are subsets  of $T_pL^+ = \ell^{\perp}$, and correspond to oriented lines in $\ell^{\perp}/\ell$. The equivalence classes of the $v_j$ in $\ell^{\perp}/\ell$ orient the lines corresponding to the $i_j$ flags. The $v_j$ are space-like, as are their equivalence classes in $\ell^{\perp}/\ell$. 

As $T_0$ is strictly positive and the $v_j$ are space-like (and thus have negative Minkowski norm), by linearity there are a unique set of $a_j > 0$ such that $a_jv_j$ have generalised Minkowski norm $-4T_0^2$, and therefore a unique set of $a_j > 0$ such that $a_jv_j+\ell \in \ell^{\perp}/\ell$ also have norm $-4T_0^2$. Note that $(\cdot |\cdot)_{1,n+2}$ descends to the quotient $\ell^{\perp}/\ell$ because it is zero on $\ell$.

As $\ell^{\perp}/\ell$ is oriented, $n+1$-dimensional, and negative definite under the norm descending from $(\cdot |\cdot)_{1,n+2}$, there exists a unique $v_0$ of norm $-4T_0^2$ such that $(v_0+\ell,a_1v_1+\ell,...,a_nv_n+\ell)$ forms an oriented orthogonal basis. There is a unique orientation-preserving conformal $\psi$ such that $\psi(i_j) = a_jv_j+\ell$, and because we chose $v_0$ with norm $-4T_0^2$, it also satisfies $\psi(1)=v_0$. Since $\psi$ sends the generators $1,i_1,...,i_n$ (all elements of unit $N$-norm) to elements of norm $-4T_0^2$, $\psi$ has scale factor $K=-4T_0^2$.

Conversely, from $(\ell,\psi)$ we can set $p$ as the point on $\ell$ with $T$-coordinate $T_0 > 0$ such that the scale factor $K$ of $\psi$ is $K=-4T_0^2$. Then $\psi(i_j) \in \ell^{\perp}/\ell = T_pL^+/p\R$ gives us the $i_j$ flag plane vector. Taking arbitrary lifts $\Tilde{\psi}(i_j) \in \ell^{\perp}$ we find the $i_j$-flag is $[[p;\Tilde{\psi}(i_j)]]$, and as $\psi$ is conformal and the $i_j \in \para$ are orthogonal the $\psi(i_j)$ are also orthogonal, which implies the flags they define are orthogonal, and we have a multiflag as claimed.

To see that these correspondences are inverses of one another, suppose $[[p;v_1,...,v_n]]$ maps to $(\ell,\psi)$, so $\ell = p\R$, $\psi$ has scaling factor $K=-4T_0^2$, and $\psi(i_j) = a_jv_j+p\R$ as above. Let $(\ell,\psi)$ map to $[[q;\Tilde{\psi}(i_1),...,\Tilde{\psi}(i_n)]]$. Since $q$ lies on $p\R$ and has $T$-coordinate $T'$ such that $K=-4T_0^2=-4T'^2$ and $T_0,T'>0$, we conclude $p=q$. Since $\psi(i_j)=a_jv_j+p\R$ and $a_j>0$, any lift $\Tilde{\psi}(i_j) \in \ell^{\perp}$ yields the same flag as $v_j$. Thus $[[p;v_1,...,v_n]] = [[q;\Tilde{\psi}(i_1),...,\Tilde{\psi}(i_n)]]$.

Similarly, suppose $(\ell,\psi)$ maps to $[[p;v_1,...,v_n]]$ where $\psi$ has scaling factor $K$, the $T$-coordinate $T_0$ of p satisfies $K=-4T_0^2$, and the $v_j \in \ell^{\perp}$ generate the same flags as $\psi(i_j) \in \ell^{\perp}/\ell$. Let $[[p;v_1,...,v_n]]$ map to $(\ell',\psi')$. Then $\ell' = p\R$, so $\ell' = \ell$. $\psi'$ has scaling factor $-4T_0^2$, so sends each $i_j$ to an element of $\ell^{\perp}/\ell$ of norm $-4T_0^2$, generating the flags $[[p,v_1]]$,...,$[[p,v_n]]$, just like $\psi$. Thus $\psi=\psi'$ and so $(\ell,\psi) = (\ell',\psi')$. The correspondences are inverses of one another, and clearly smooth.\qed

\subsection{The Actions of $SL(2,\lip)$}
\label{sec:ActionsofSL2OnMinkowski}
To relate spinors and geometric objects in a useful way, our correspondences should be equivariant under the action of $SL(2,\lip)$. To have equivariance, though, we need an $SL(2,\lip)$ action on each of our spaces. We have an action by M\"{o}bius transformations on the paravectors $\para$ (\ref{ParavectorMobiusAction}), and on $S\lip$ we have an action by matrix multiplication that is equivalent to M\"{o}bius transformations under projectivisation (see Lemma \ref{lem:SL2DefinesSpinorAction}). 
\begin{definition}
\label{defn:PHermAction}
For a paravector Hermitian matrix $S_p \in \mathcal{P}^{1,n+2}$ representing $p \in \R^{1,n+2}$, the action of $A \in SL(2,\lip)$ is defined to be $A.S_p = AS_pA^{\dagger}$. \null\hfill\defn
\end{definition}
\noindent This is generalising the standard conjugation action of $SL(2,\C)$ on $\R^{1,3}$.
\begin{lemma}
    Let $A \in SL(2,\lip)$, and $S_p \in \mathcal{P}^{1,n+2}$. The action $A.S_p = AS_pA^{\dagger}$ of $A$ on $S_p$ is a map $\R^{1,n+2} \to \R^{1,n+2}$.
\end{lemma}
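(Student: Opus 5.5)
The plan is to avoid computing $AS_pA^{\dagger}$ entry by entry and instead reduce to the basepoint map $\phi_1$, whose image we already know lies in $\mathcal{P}^{1,n+2}$. The key observation is that the action $S_p \mapsto AS_pA^{\dagger}$ is $\R$-linear in $S_p$, because real scalars are central in $\clif$ and matrix multiplication distributes over addition. It therefore suffices to check the claim on a spanning set of $\mathcal{P}^{1,n+2}\cong\R^{1,n+2}$. For this spanning set we take matrices of the form $\kappa\kappa^{\dagger}=\phi_1(\kappa)$ with $\kappa\in S\lip$.

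The first step is to show that $L^+$ spans $\R^{1,n+2}$. Explicitly, the null vectors $(1,\pm1;0,\dots,0)_{\R}$ and $(1,0;\pm e_j)_{\R}$ lie in $L^+$, where $e_j$ runs over the unit paravectors $1,i_1,\dots,i_n$. Their sums and differences recover $\partial T$, $\partial Z$ and every $\partial X_j$. By Lemma \ref{Phi1SurjectLightConeLemma}, each of these null vectors equals $\phi_1(\kappa)=\kappa\kappa^{\dagger}$ for some $\kappa\in S\lip$. Hence every $S_p\in\mathcal{P}^{1,n+2}$ can be written as a real linear combination $\sum_m c_m\kappa_m\kappa_m^{\dagger}$, where the coefficients $c_m$ may be negative.

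The second step handles a single rank-one term. Since Clifford conjugation is an antiautomorphism, the conjugate transpose satisfies $(XY)^{\dagger}=Y^{\dagger}X^{\dagger}$ for $2\times 2$ matrices over $\clif$; this is the same reversal as for ordinary matrices. Consequently
\[A\kappa\kappa^{\dagger}A^{\dagger}=(A\kappa)(A\kappa)^{\dagger}=\phi_1(A\kappa).\]
By Lemma \ref{lem:SL2DefinesSpinorAction}, $A\kappa\in S\lip$, so $\phi_1(A\kappa)\in L^+\subset\mathcal{P}^{1,n+2}$. More precisely, $\phi_1(A\kappa)$ is a paravector Hermitian matrix: its diagonal entries $|\xi|^2,|\eta|^2$ are real, and its off-diagonal entries $\xi\ol{\eta}$ and $\eta\ol{\xi}=\ol{\xi\ol{\eta}}$ are a paravector and its conjugate. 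Applying linearity, $AS_pA^{\dagger}=\sum_m c_m\phi_1(A\kappa_m)$ is a real linear combination of elements of the vector space $\mathcal{P}^{1,n+2}$, so it lies in $\mathcal{P}^{1,n+2}$ as well. This gives the required map $\R^{1,n+2}\to\R^{1,n+2}$.

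The only step needing care is the identity $(A\kappa)^{\dagger}=\kappa^{\dagger}A^{\dagger}$ in the noncommutative setting. Its $j$-th entry is $\ol{\sum_k A_{jk}\kappa_k}=\sum_k\ol{\kappa_k}\,\ol{A_{jk}}$, which is exactly the $j$-th entry of $\kappa^{\dagger}A^{\dagger}$, so the identity holds. The spanning argument itself is routine, and I do not expect any further obstacle.
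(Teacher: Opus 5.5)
Your proposal is correct and follows essentially the same route as the paper, which also uses $A\phi_1(\kappa)A^{\dagger}=(A\kappa)(A\kappa)^{\dagger}=\phi_1(A\kappa)$ to show $L^+$ is preserved, takes a basis of $\R^{1,n+2}$ from $L^+$ (there $\phi_1(1,0)$, $\phi_1(0,1)$, $\phi_1(1,1)$, $\phi_1(i_j,1)$), and concludes by real-linearity of $S_p\mapsto AS_pA^{\dagger}$. Your explicit checks that $(A\kappa)^{\dagger}=\kappa^{\dagger}A^{\dagger}$ in the noncommutative setting and that $\phi_1(A\kappa)$ is genuinely a paravector Hermitian matrix spell out details the paper leaves implicit.
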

\noindent\textit{Proof.} It is easy to show this action preserves the light-cone; by Lemma \ref{Phi1SurjectLightConeLemma} every point in $L^+$ is the image $\phi_1(\kappa)$ of some $\kappa \in S\lip$, and 
\[A.\phi_1(\kappa) = A\phi_1(\kappa)A^{\dagger} = (A\kappa)(\kappa^{\dagger}A^{\dagger}) = \phi_1(A\kappa).\]
We have already seen that $SL(2,\lip)$ has a well-defined action on $S\lip$, so $A\kappa \in S\lip$ and $\phi_1(A\kappa) \in L^+$. 

We can then choose a basis $\textbf{B}$ for $\R^{1,n+2}$ from $L^+$, because the span of $L^+$ is $\R^{1,n+2}$. For example,
\begin{align*}
B_{+} &= \phi_1(1,0) = \begin{pmatrix}
    1 & 0 \\
    0 & 0
\end{pmatrix},\ 
B_{-} = \phi_1(0,1) = \begin{pmatrix}
    0 & 0 \\
    0 & 1
\end{pmatrix},\\
B_0 &= \phi_1(1,1) = \begin{pmatrix}
    1 & 1 \\
    1 & 1
\end{pmatrix},\ 
B_j = \phi_1(i_j,1) = \begin{pmatrix}
    1 & i_j \\
    -i_j & 1
\end{pmatrix},
\end{align*}
for $1 \leq j \leq n$. We can recover the standard basis for $\R^{1,n+2}$ as follows:
\begin{align*}
(1,0;0)_{\R} &= \tfrac{1}{2}(B_+ + B_-),\ (0,1;0)_{\R} = \tfrac{1}{2}(B_+ - B_-),\\ (0,0;1)_{\R} &= \tfrac{1}{2}(B_0 - B_+ - B_-),\ (0,0;i_j)_{\R} = \tfrac{1}{2}(B_j - B_+ - B_-).
\end{align*}
As the action is a combination of left and right matrix multiplication, the action of $A$ is a real-linear map $L^+ \to L^+$. A point of $\R^{1,n+2}$ written as a linear combination of elements of $\textbf{B}$ in $L^+$ will be sent to another linear combination of points in $L^+$, and as the the span of $L^+$ is $\R^{1,n+2}$ we conclude the image is also in $\R^{1,n+2}$.
\qed
\\\\
This action is defined on both $\phi_1(\kappa)$ and its derivatives (both are in generalised Minkowski space and thus $\mathcal{P}^{1,n+2}$), so it also acts on multiflags. We denote the action on a multiflag $[[p;v_1,...,v_n]]$ by 
\[A.[[p;v_1,...,v_n]]=[[A.p;A.v_1,...,A.v_n]].\]
\vspace{-1em}
\begin{lemma}
\label{ActionIsometryMink}
The action of $SL(2,\lip)$ on $\R^{1,n+2}$ is by orientation-preserving isometries.
\end{lemma}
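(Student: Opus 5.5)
The argument has three parts: show that the norm is preserved, polarise to get the full inner product, and then settle orientation by a connectedness argument.

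\emph{Norm preservation.} For $S_p \in \mathcal{P}^{1,n+2}$ we have $|p|^2 = 4\,\text{pdet}(S_p)$, so it is enough to show
\[
\text{pdet}(AS_pA^{\dagger}) = \text{pdet}(S_p) \quad \text{for all } A \in SL(2,\lip).
\]
The pseudo-determinant is multiplicative on $GL(2,\lip)$, but $S_p$ need not lie there (it may be singular, or have non-Lipschitz entries in general). I would therefore not rely on multiplicativity directly. Instead I would use the basis $\textbf{B} = \{B_+, B_-, B_0, B_j\}$ of images $\phi_1(\kappa)$ from the previous lemma, together with the identity $A.\phi_1(\kappa) = \phi_1(A\kappa)$.

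Points of $L^+$ are sent to $L^+$, so null vectors map to null vectors. The action is real-linear. A real-linear map of a Lorentzian space that preserves the null cone is conformal, but this alone only gives preservation of the metric up to a scale.

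\emph{Fixing the scale via the bracket.} To pin down the scale, I would compute $(\phi_1(\kappa_1)|\phi_1(\kappa_2))_{1,n+2}$ in terms of spinors. Expanding via (\ref{MatrixMinkowskiMetric}), I expect
\[
(\phi_1(\kappa_1)|\phi_1(\kappa_2))_{1,n+2} = 2\,N(\{\kappa_1,\kappa_2\}),
\]
up to a normalising constant. This is the analogue of $|\xi_1\eta_2 - \eta_1\xi_2|^2$ in the complex case. Checking the identity should be a short manipulation using $\xi_k\ol{\eta_k} \in \para$ and Corollary \ref{cor:xistarEta}.

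By Lemma \ref{SesquimorphismLemma}, $\{A\kappa_1, A\kappa_2\} = \{\kappa_1,\kappa_2\}$. Hence the inner product of any two points of $L^+$ is preserved. Since $L^+$ spans $\R^{1,n+2}$, bilinearity extends this to all pairs of vectors, and the action is an isometry. Polarisation is not even needed once inner products of spanning vectors are known.

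\emph{Orientation.} As in the earlier argument for the quotient map, $SL(2,\lip)$ is connected and contains $I$, which acts trivially. The determinant of the induced linear map is a continuous function into $\{\pm1\}$, so it is constantly $1$ and the action is orientation-preserving. Time-orientation is also preserved, since $L^+$ maps to $L^+$.

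\emph{Main obstacle.} The hardest step is verifying the identity between the Minkowski pairing of $\phi_1(\kappa_1),\phi_1(\kappa_2)$ and $N(\{\kappa_1,\kappa_2\})$. Noncommutativity makes the expansion messy. I would first handle $\eta_1,\eta_2 \neq 0$ by writing $\xi_k = \pV_k\eta_k$. Then $\{\kappa_1,\kappa_2\} = \eta_1^*(\pV_1-\pV_2)\eta_2$, following the proof of Lemma \ref{lem:BracketRestrictsLip} (and correcting the $\eta_2^{-1}$ appearing there to $\eta_2$), which gives norm $|\eta_1|^2|\eta_2|^2|\pV_1-\pV_2|^2$. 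The matrix formula gives the same quantity directly, since $\phi_1(\kappa_k) = |\eta_k|^2\begin{pmatrix}|\pV_k|^2 & \pV_k\\ \ol{\pV_k} & 1\end{pmatrix}$. The cases where some $\eta_k = 0$ follow by continuity.
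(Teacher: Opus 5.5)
Your proof is correct, and for the isometry part it takes a genuinely different route from the paper.

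The paper argues in one line. It places $S_p$ in $GL(2,\lip)^{\triangleright}$ and uses multiplicativity of the pseudo-determinant to get $\text{pdet}(AS_pA^{\dagger})=\text{pdet}(S_p)$. Your reason for avoiding this is only half right:
\begin{itemize}
\item The entries of $S_p$ are reals and paravectors, hence lie in $\lip^{\triangleright}$, so non-Lipschitz entries are not an obstruction.
\item Singularity is a fair concern. Ahlfors' multiplicativity is for invertible Clifford matrices, so null $p$ needs an easy density-and-continuity extension.
\item The paper also uses $\text{pdet}(A^{\dagger})=1$ without comment. This follows from Lemma \ref{lem:VariantSLConds} by applying $\cdot'$ to $ad^*-bc^*=1$.
\end{itemize}

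You instead establish $(\phi_1(\kappa_1)|\phi_1(\kappa_2))_{1,n+2}=2N(\{\kappa_1,\kappa_2\})$, which holds exactly. You combine it with Lemma \ref{SesquimorphismLemma}, the equivariance $A.\phi_1(\kappa)=\phi_1(A\kappa)$, surjectivity of $\phi_1$ onto $L^+$, and the fact that $L^+$ spans $\R^{1,n+2}$. This costs a short computation but uses only results the paper has already proved. The identity is also valuable in its own right: together with Theorem \ref{thm:LambdaLengthBracket} it is the higher-dimensional Penner formula $|\lambda_{12}|^2=\tfrac12(\phi_1(\kappa_1)|\phi_1(\kappa_2))_{1,n+2}$. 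You are also right that $\eta_2^{-1}$ in the proof of Lemma \ref{lem:BracketRestrictsLip} should be $\eta_2$.

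You can drop the case split and the continuity step. Write $\lambda=\xi_1^*\eta_2-\eta_1^*\xi_2$. In $N(\lambda)$ the cross terms give $-2\text{Re}(\xi_1^*\eta_2\ol{\xi_2}\eta_1')$. Lemma \ref{ABBALemma}, together with $\eta_1'\xi_1^*=(\xi_1\ol{\eta_1})^*=\xi_1\ol{\eta_1}$, turns this into $-2\text{Re}(\xi_1\ol{\eta_1}\,\eta_2\ol{\xi_2})$. That matches (\ref{MatrixMinkowskiMetric}) for all spinors at once.

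Your orientation argument is the paper's connectedness argument, made slightly more explicit through continuity of the determinant.
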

\noindent\textit{Proof.} Represent a point $p \in \R^{1,n+2}$ as a matrix $S_p \in \mathcal{P}^{1,n+2}$: 
\[S_p = \frac{1}{2}\begin{pmatrix}
 T+Z & \pX \\
 \ol{\pX} & T-Z
\end{pmatrix}.\]
The norm on $S_p \in \mathcal{P}^{1,n+2}$ is given by the pseudo-determinant $\text{pdet}(S_p)$ (up to a constant factor). \label{not:GL2LipMonoid}Define $GL(2,\lip)^{\triangleright}$ by extending the pseudo-determinant condition of $GL(2,\lip)$ to allow any value in $\R$; this will allow us to consider $L^+ \subset GL(2,\lip)$. Action by $A \in SL(2,\lip)$ gives $AS_pA^{\dagger}$, but the pseudo-determinant is multiplicative on elements of $GL(2,\lip)^{\triangleright}$, and it is easy to check that $S_p \in GL(2,\lip)^{\triangleright}$ also. This implies
\[\text{pdet}(AS_pA^{\dagger})=\text{pdet}(A)\text{pdet}(S_p)\text{pdet}(A^{\dagger}) = \text{pdet}(S_p).\]
$SL(2,\lip)$ then acts via isometries as it preserves the norm.

Theorem B of \cite{Ahlfors1985} tells us the group $SL(2,\lip)$ is the double cover of the connected orientation-preserving group of M\"{o}bius transformations $M_+(\widehat{\R}^n)$, which implies $SL(2,\lip)$ is also connected; so, if one element of $SL(2,\lip)$ has an orientation-preserving action, they all do. But the identity matrix is an element of $SL(2,\lip)$, and its action is orientation-preserving.\qed
\subsection{$SL(2,\lip)$ Equivariance of $\Phi_1$}
\label{sec:EquivarianceOfPhi1}
The map $\phi_1$ is equivariant with respect to the $SL(2,\lip)$-action. For $A \in SL(2,\lip)$:
\begin{equation}
\label{eqn:EquivarOfphi1}
\phi_1(A\kappa) = (A\kappa)(\kappa^{\dagger}A^{\dagger})=A\phi_1(\kappa)A^{\dagger}=A.\phi_1(\kappa).
\end{equation}
Equivariance also holds for the derivatives:
\[A.(D_{\kappa}\phi_1)(v)=A.(\kappa v^{\dagger}+v\kappa^{\dagger})=A\kappa v^{\dagger}A^{\dagger}+Av\kappa^{\dagger}A^{\dagger} =(A\kappa)(Av)^{\dagger}+(Av)(A\kappa)^{\dagger} = (D_{A\kappa}\phi_1)(Av).\]
This equivariance also extends to $\Phi_1$. We first generalise Lemma 2.8 of \cite{Mathews_Spinors_horospheres} (and Lemma 4.6.5 of \cite{mathews2024quaternions}) to the Lipschitz spinor case.
\begin{lemma}
\label{NuZjLemma}
 Given a spinor $\kappa$ and a set of tangent vectors $\nu_j,\ 1\leq j \leq n$ at $\kappa \in S\lip$ (that is, $\nu_j \in T_{\kappa}S\lip$), the following are equivalent:
 \begin{enumerate}[label=\roman*)]
 \item $\{\kappa,\nu_j\}$ is a (non-zero) negative multiple of $i_j$ for all $j$,
 \item $\nu_j = \kappa a_j + b_j\check{\kappa}i_j$ for $a_j \in \mathcal{B}^n$, $b_j\in \R^+$ for all $j$,
 \item $[[\phi_1(\kappa);(D_{\kappa}\phi_1)(\nu_1),...,(D_{\kappa}\phi_1)(\nu_n)]]=\Phi_1(\kappa)$.
 \end{enumerate}
\end{lemma}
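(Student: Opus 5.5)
I will prove the cycle i) $\Leftrightarrow$ ii) and ii) $\Leftrightarrow$ iii), working one index $j$ at a time, since all three conditions are conjunctions over $j$ of conditions involving only $\nu_j$.

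\emph{i) $\Rightarrow$ ii).} Suppose $\{\kappa,\nu_j\} = -c_j i_j$ with $c_j>0$. Set $b_j = c_j|\kappa|^{-2}>0$. We have $\{\kappa,\check{\kappa}\} = -|\kappa|^2$ and the bracket is right-linear, so $\{\kappa, b_j\check{\kappa}i_j\} = -c_j i_j$. Hence $\{\kappa,\nu_j - b_j\check{\kappa}i_j\}=0$. Lemma \ref{KernelLemma} then gives $\nu_j = b_j\check{\kappa}i_j + \kappa a_j$ for some $a_j\in\clif$. The vector $\check{\kappa}i_j = s_{i_j}(\kappa)$ lies in $T_\kappa S\lip$ by Theorem \ref{thm:SpinorsTangentSpace}, and $\nu_j$ does by hypothesis, so $\kappa a_j\in T_\kappa S\lip$. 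Corollary \ref{TangentSpinorLemma} then gives $a_j\in\mathcal{B}^n$. Equivalently, this direction is Lemma \ref{lem:QuotientBracketChar} applied with $\pV = b_j i_j$.

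\emph{ii) $\Rightarrow$ i).} Compute directly: $\{\kappa,\kappa a_j\}=0$ by Lemma \ref{SpinorFormAntisymmetryLemma}, and $\{\kappa,b_j\check{\kappa}i_j\} = -b_j|\kappa|^2 i_j$. This is a nonzero negative multiple of $i_j$.

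\emph{ii) $\Rightarrow$ iii).} By Theorem \ref{OrthogonalDecompPreservationTheorem}, $D_\kappa\phi_1$ sends $\kappa a_j$ to $2\text{Re}(a_j)\,p$, where $p=\phi_1(\kappa)$: the real part goes to a multiple of $p$ and the rest lies in the kernel. Therefore
\[(D_\kappa\phi_1)(\nu_j) = 2\text{Re}(a_j)\,p + b_j (D_\kappa\phi_1)(\check{\kappa}i_j).\]
The span of $p$ and this vector equals the span of $p$ and $(D_\kappa\phi_1)(\check\kappa i_j)$. Modulo $p\R$ the orientation class is the same, because $b_j>0$. So the $j$th flag agrees with the $i_j$ subflag of $\Phi_1(\kappa)$, and this holds for all $j$.

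\emph{iii) $\Rightarrow$ ii).} This is the main obstacle, because it requires inverting $D_\kappa\phi_1$ modulo flags. Equality of the $j$th subflags means that $(D_\kappa\phi_1)(\nu_j) = r_j p + b_j (D_\kappa\phi_1)(\check{\kappa}i_j)$ for some $r_j\in\R$ and $b_j>0$. The positivity of $b_j$ comes from the orientation of $F/p\R$, and nondegeneracy holds since $(D_\kappa\phi_1)(\check\kappa i_j)\notin p\R$.

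Next, decompose $\nu_j$ using Theorem \ref{thm:SpinorsTangentSpace}: write $\nu_j = \kappa r' + \kappa B + \check{\kappa}\pV$, with $r'\in\R$, $B$ purely imaginary in $\mathcal{B}^n$, and $\pV\in\para$. Applying $D_\kappa\phi_1$ and using Theorem \ref{OrthogonalDecompPreservationTheorem} gives $2r'p + (D_\kappa\phi_1)(\check\kappa\pV)$. Since $p\R$ and $T_p\mathscr{S}^+_T$ are complementary (the latter has zero $T$-component), we can compare components. This gives $r'=r_j/2$ and $(D_\kappa\phi_1)(\check{\kappa}(\pV - b_j i_j)) = 0$. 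Injectivity of $D_\kappa\phi_1$ on $\check\kappa(\para)$ then gives $\pV = b_j i_j$.

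Hence $\nu_j = \kappa a_j + b_j\check\kappa i_j$ with $a_j = r' + B\in\mathcal{B}^n$ and $b_j>0$, which is ii).
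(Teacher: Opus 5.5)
Your proposal is correct and follows the paper's proof essentially step for step. Like the paper, you get i)$\Leftrightarrow$ii) from Lemma \ref{KernelLemma} and Corollary \ref{TangentSpinorLemma}, and ii)$\Leftrightarrow$iii) from Theorem \ref{OrthogonalDecompPreservationTheorem}. The only difference is cosmetic and occurs in iii)$\Rightarrow$ii): the paper subtracts $\tfrac{r_j}{2}\kappa + b_j\check{\kappa}i_j$ from $\nu_j$ and reads off membership in $\ker D_\kappa\phi_1 = \kappa(\R^{0,n}\oplus\bigwedge^2\R^{0,n})$ directly, whereas you decompose $\nu_j$ first and use injectivity on $\check{\kappa}(\para)$ together with the complementarity of $p\R$ and $T_p\mathscr{S}^+_T$, which amounts to the same thing.
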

\noindent
\textit{Proof}.\\
\noindent
$i) \to ii)$: $\check{\kappa}$ is defined so $\{\kappa,\check{\kappa}i_j\}$ is a negative multiple of $i_j$. We can construct any other negative multiple of $i_j$ scaling by some $b_j \in \R^+$. Thus, if $\{\kappa,\nu_j\}$ is a negative multiple of $i_j$, there is some choice of $b_j \in \R^+$ so that $\{\kappa,\nu_j\} = b_j\{\kappa,\check{\kappa}i_j\} = \{\kappa,b_j\check{\kappa}i_j\}$. By linearity $\{\kappa,\nu_j-b_j\check{\kappa}i_j\}=0$, so $\nu_j = b_j\check{\kappa}i_j+D$ for some $D$ in the kernel of the map $\{\kappa,\cdot\}$. From Lemma \ref{KernelLemma}, we know the kernel is all elements of the form $\kappa a_j$, $a_j \in \clif$, and therefore $\nu_j = \kappa a_j + b_j\check{\kappa}i_j$. But because $\nu_j-b_j\check{\kappa}i_j=\kappa a_j$ is a tangent vector, Corollary \ref{TangentSpinorLemma} implies $a_j \in \mathcal{B}^n$.
\\\\
$ii \to i)$: We compute $\{\kappa,\nu_j\}$ using properties of the bracket from Section \ref{sec:TheBracket}:
\[\{\kappa,\nu_j\} = \{\kappa,\kappa a_j\} + b_j\{\kappa,\check{\kappa}\}i_j = -b_j|\kappa|^2i_j.\]
As $b_j>0$ this is a non-zero negative multiple of $i_j$.
\\\\
\noindent $ii) \to iii)$: Consider (\ref{DerivEquation}). If we apply this to $\nu_j = \kappa a_j + b_j\check{\kappa}i_j$ and expand we find:
\begin{align*}
%(D_{\kappa}\phi_1)\left(\kappa a_j + b_j\check{\kappa}i_j\right) &= \kappa \left(\kappa a_j+ b_j\check{\kappa}i_j\right)^{\dagger} + \left(\kappa a_j + b_j\check{\kappa}i_j\right)\kappa^{\dagger}\\
%&= \kappa \ol{a}_j\kappa^{\dagger} + b_j\kappa (\check{\kappa}i_j)^{\dagger} + \kappa a_j \kappa^{\dagger}+ b_j(\check{\kappa}i_j)\kappa^{\dagger}\\
%&=\kappa (a_j+\ol{a}_j) \kappa^{\dagger} + b_j (\kappa (\check{\kappa}i_j)^{\dagger}+(\check{\kappa}i_j)\kappa^{\dagger})\\
&\text{Re}(a_j)(D_{\kappa}\phi_1)(\kappa) + b_j (D_{\kappa}\phi_1)(\check{\kappa}i_j).
\end{align*}
The derivative $\text{Re}(a_j)(D_{\kappa}\phi_1)(\kappa) = 2\text{Re}(a_j)\kappa\kappa^{\dagger}$ is proportional to $\phi_1(\kappa)$, so we can rewrite this as: 
\[(D_{\kappa}\phi_1)\left(\kappa a_j + b_j\check{\kappa}i_j\right)= 2\text{Re}(a_j)\phi_1(\kappa) + b_j (D_{\kappa}\phi_1)(\check{\kappa}i_j).\]
That is, the derivative $(D_{\kappa}\phi_1)\left(\kappa a_j + b_j\check{\kappa}i_j\right)$ is a linear combination of $\phi_1(\kappa)$ and $(D_{\kappa}\phi_1)(\check{\kappa}i_j)$, for some $b_j>0$. This implies the subflags $[[\phi_1(\kappa),(D_{\kappa}\phi_1)(\nu_j)]]$ and $[[\phi_1(\kappa),(D_{\kappa}\phi_1)(\check{\kappa}i_j)]]$ are the same for all $j$, and therefore the multiflags are equal. 
\[[[\phi_1(\kappa);(D_{\kappa}\phi_1)(\nu_1),...,(D_{\kappa}\phi_1)(\nu_n)]]=\Phi_1(\kappa).\]
$iii) \to ii)$: The equality of multiflags is equivalent to the equality of their subflags for all $j$. If the $i_j$ subflags are equal, then the flag plane vectors $(D_{\kappa}\phi_1)(\nu_j)$ must be a linear combination of the vectors spanning the $i_j$ subflag in $\Phi_1(\kappa)$:
\[(D_{\kappa}\phi_1)(\nu_j) = a_j\phi_1(\kappa) + b_j(D_{\kappa}\phi_1)(\check{\kappa}i_j),\]
where $a_j \in \R$ and $b_j \in \R^+$. Substituting $D_{\kappa}\phi_1(\kappa) = 2\phi_1(\kappa)$, we find
\[(D_{\kappa}\phi_1)(\nu_j) = \frac{a_j}{2}D_{\kappa}\phi_1(\kappa) + b_j(D_{\kappa}\phi_1)(\check{\kappa}i_j),\]
which by the linearity of the derivative implies
\[\nu_j - \frac{a_j}{2}\kappa - b_j\check{\kappa}i_j \in \text{Ker}(D_{\kappa}\phi_1) = \kappa\left(\R^{0,n} \oplus \bigwedge^{2} \R^{0,n}\right),\]
with the last equality following from Theorem \ref{OrthogonalDecompPreservationTheorem} iii). This implies $\nu_j - \frac{a_j}{2}\kappa - b_j\check{\kappa}i_j = \kappa c_j$ for some $c_j \in \mathcal{B}^n$, and so $\nu_j = \kappa\left(c_j+\frac{a_j}{2}\right) + b_j\check{\kappa}$, satisfying ii).
\qed
\\\\
With this in hand, we can prove the equivariance of $\Phi_1$.
\begin{theorem} 
\label{Phi1Equivariance}
 The map $\Phi_1$ is equivariant with respect to the $SL(2,\lip)$ actions on $S\lip$ and $\mathcal{MF}^n$.
\end{theorem}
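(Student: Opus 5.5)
We must show $\Phi_1(A\kappa) = A.\Phi_1(\kappa)$ for all $A \in SL(2,\lip)$ and $\kappa \in S\lip$. The base points already agree by (\ref{eqn:EquivarOfphi1}), $\phi_1(A\kappa) = A.\phi_1(\kappa)$. For the flag plane vectors, the equivariance of derivatives computed above gives
\[A.(D_{\kappa}\phi_1)(\check{\kappa}i_j) = (D_{A\kappa}\phi_1)(A\check{\kappa}i_j),\]
so that
\[A.\Phi_1(\kappa) = [[\phi_1(A\kappa);(D_{A\kappa}\phi_1)(\nu_1),\dots,(D_{A\kappa}\phi_1)(\nu_n)]], \qquad \nu_j = A\check{\kappa}i_j.\]
The $\nu_j$ are tangent vectors at $A\kappa$, since left multiplication by $A$ maps $T_{\kappa}S\lip$ to $T_{A\kappa}S\lip$. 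The plan is therefore to apply Lemma \ref{NuZjLemma} at the spinor $A\kappa$ with these $\nu_j$. Condition iii) of that lemma is exactly the desired equality, so it suffices to verify condition i).

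Condition i) is a bracket computation. By Lemma \ref{SesquimorphismLemma} and right-linearity of the bracket,
\[\{A\kappa, A\check{\kappa}i_j\} = \{\kappa,\check{\kappa}i_j\} = \{\kappa,\check{\kappa}\}i_j.\]
We then need $\{\kappa,\check{\kappa}\} = -|\kappa|^2$. Writing $\check{\kappa} = (\eta',-\xi')$, we get
\[\{\kappa,\check{\kappa}\} = \xi^*(-\xi') - \eta^*\eta' = -\ol{\xi}\xi' \cdot{} ,\]
more precisely $-\xi^*\xi' - \eta^*\eta' = -(|\xi|^2+|\eta|^2)$, using $\xi^*\xi' = \ol{\xi'}\,\xi' = |\xi|^2$. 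This is consistent with the remark $\{\kappa,s_\pV\kappa\} = -|\kappa|^2\pV$. Hence $\{A\kappa,\nu_j\} = -|\kappa|^2 i_j$, a non-zero negative multiple of $i_j$ for every $j$, so Lemma \ref{NuZjLemma} yields $[[\phi_1(A\kappa);(D_{A\kappa}\phi_1)(\nu_j)]_j] = \Phi_1(A\kappa)$, as required.

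The main point of care is that the action on multiflags must be well defined and land in $\mathcal{MF}^n$. The image vectors must remain mutually orthogonal and positively oriented, and $A$ must preserve $T_pL^+$. This follows from Lemma \ref{ActionIsometryMink}: the action is by orientation-preserving linear isometries, so it carries $p^\perp$ to $(A.p)^\perp$ and preserves orthogonality. A further point is that the flag determined by $(D_{A\kappa}\phi_1)(\nu_j)$ depends only on that vector together with the flagpole. The argument of Lemma \ref{NuZjLemma} ii)$\to$iii) handles this, since the vectors differ from the standard ones by positive multiples plus multiples of $\phi_1(A\kappa)$; here the scalar is $b_j = |\kappa|^2/|A\kappa|^2 > 0$. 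Once these points are checked, equivariance of $\Phi_1$ follows.
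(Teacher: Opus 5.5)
Your proof is correct and follows the paper's argument. You push the flag vectors through the equivariance of $D\phi_1$, use Lemma \ref{SesquimorphismLemma} to get $\{A\kappa, A\check{\kappa}i_j\} = -|\kappa|^2 i_j$, and then apply Lemma \ref{NuZjLemma} (i)$\Rightarrow$(iii) at the spinor $A\kappa$. Two details you supply are left implicit in the paper: the check that $\{\kappa,\check{\kappa}\} = -|\kappa|^2$, and that $A\check{\kappa}i_j \in T_{A\kappa}S\lip$. In that check, the stray expression $-\ol{\xi}\xi'$ should simply read $-\xi^*\xi' - \eta^*\eta'$.
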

\noindent \textit{Proof.}
Given $\Phi_1(\kappa)=[[\phi_1(\kappa);(D_{\kappa}\phi_1)(\check{\kappa}i_1),(D_{\kappa}\phi_1)(\check{\kappa}i_2),...,(D_{\kappa}\phi_1)(\check{\kappa}i_n)]]$, act on it with an element $A \in SL(2,\lip)$:
\begin{align*}
A.\Phi_1(\kappa) &= [[A.\phi_1(\kappa);A.(D_{\kappa}\phi_1)(\check{\kappa}i_1),A.(D_{\kappa}\phi_1)(\check{\kappa}i_2),...,A.(D_{\kappa}\phi_1)(\check{\kappa}i_n)]]\\
&=[[\phi_1(A\kappa);(D_{A\kappa}\phi_1)(A(\check{\kappa}i_1)),(D_{A\kappa}\phi_1)(A(\check{\kappa}i_2)),...,(D_{A\kappa}\phi_1)(A(\check{\kappa}i_n))]].
\end{align*}
As $A$ preserves the bracket, 
\begin{equation}
\label{eqn:AKappaBracket}
\{A\kappa,A(\check{\kappa}i_j)\}=\{\kappa,\check{\kappa}i_j\}=-|\kappa|^2i_j.
\end{equation}
Now we apply Lemma \ref{NuZjLemma}; taken together with (\ref{eqn:AKappaBracket}), it implies the $A\check{\kappa}$ will generate the same multiflag as the $\widecheck{A \kappa}$.
\begin{align*}
[[\phi_1(A\kappa);(D_{A\kappa}\phi_1)(A(\check{\kappa}i_1)),...,(D_{A\kappa}\phi_1)(A(\check{\kappa}i_n))]] &= [[\phi_1(A\kappa);(D_{A\kappa}\phi_1)(\widecheck{A\kappa}i_1),...,(D_{A\kappa}\phi_1)(\widecheck{A\kappa}i_n)]]\\
&=\Phi_1(A\kappa).\tag*{\qed}
\end{align*}
With the domain and codomain of $\Phi_1$ properly understood, we can consider injectivity and surjectivity. 
\begin{theorem}
\label{thm:Phi1SmoothDoubleCover}
 The map $\Phi_1: S\lip \to \mathcal{MF}^n$ is a smooth double cover.
\end{theorem}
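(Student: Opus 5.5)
The plan is to combine equivariance (Theorem \ref{Phi1Equivariance}) with transitivity of $SL(2,\lip)$ on $S\lip$ (Lemma \ref{TransitiveLemma}), and so reduce everything to a single basepoint, $\kappa_0=(1,0)$.

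\textbf{Smoothness.} This is immediate. $\phi_1(\kappa)=\kappa\kappa^\dagger$ is polynomial in the components of $\kappa$. The map $\kappa\mapsto\check\kappa i_j$ is linear, and (\ref{DerivEquation}) is polynomial. Hence each flag plane vector $(D_\kappa\phi_1)(\check\kappa i_j)$ depends smoothly on $\kappa$, and $\Phi_1$ is smooth into the frame bundle $\mathcal{MF}^n$.

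\textbf{Surjectivity.} I will show that $SL(2,\lip)$ acts transitively on $\mathcal{MF}^n$, which together with equivariance gives surjectivity. First, the action on $L^+$ is transitive, since $\phi_1$ is surjective (Lemma \ref{Phi1SurjectLightConeLemma}) and equivariant. So it suffices to show that the stabiliser of $p_0=\phi_1(\kappa_0)$ acts transitively on $\mathcal{MF}^n_{p_0}\cong SO(n+1)$. For this I will use the matrices $\mathrm{diag}(s, s^{*-1})$ with $s\in\text{Spin}(n+1)$, i.e.\ $N(s)=1$. These lie in $SL(2,\lip)$ and fix $p_0$, since $s\bar s=1$. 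By equivariance, the multiflag at $p_0$ they produce is $\Phi_1(\kappa_0 s)$. Using (\ref{eqn:Derivs}) and $\check{(\kappa_0 s)}=\check\kappa_0 s'$, the $i_j$ flag plane vector becomes $D\phi_1(\check\kappa_0 s' i_j)$, which corresponds to the paravector $s' i_j s^{-1}\cdot$ (up to the conformal identification of Lemma \ref{lem:QuotientMapConformal}). This is the twisted adjoint action of $s$ on $\para$. By Corollary \ref{cor:ParavSpinGrp} and the standard covering $\text{Spin}(n+1)\to SO(n+1)$, this realises every element of $SO(n+1)$ on the frame $(1,i_1,\dots,i_n)$. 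The frame is determined by $i_1,\dots,i_n$ together with orientation, so every multiflag at $p_0$ is attained.

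\textbf{Fibres have exactly two points.} Suppose $\Phi_1(\kappa)=\Phi_1(\kappa')$. Then $\phi_1(\kappa)=\phi_1(\kappa')$, so by Proposition \ref{phi1FibreLemma} we have $\kappa'=\kappa s$ with $s\in\text{Spin}(n+1)$. Applying the equivariance reduction, it is enough to take $\kappa=\kappa_0$. The multiflag condition forces the induced rotation $V\mapsto s'Vs^{-1}$ of $\para$ to fix each $i_j$ direction positively, hence to be the identity on the oriented frame. The kernel of $\text{Spin}(n+1)\to SO(n+1)$ is $\{\pm1\}$, so $\kappa'=\pm\kappa$. Conversely, $-\kappa$ clearly gives the same multiflag, because $\phi_1$ and $D\phi_1(\check\kappa i_j)$ are quadratic in $\kappa$. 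A smooth surjection with two-point fibres that is equivariant for a transitive Lie group action is a homogeneous fibration $G/H\to G/H'$ with $[H':H]=2$, hence a double cover. Alternatively, one can check local diffeomorphism by a dimension count: $\dim S\lip=\dim\mathcal{MF}^n$, and the kernel of the derivative is trivial modulo the discrete fibre.

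\textbf{Main obstacle.} The hardest step is the fibre computation: verifying precisely that right multiplication by $s$ rotates the flag plane vectors by the (twisted) adjoint action, with the correct orientation. I expect this to reduce to a direct computation with (\ref{eqn:DerPhi1KappaCheck}) at $\kappa_0=(1,0)$, where $D\phi_1(\check\kappa_0 \pV)$ has off-diagonal entry $\pV$ and $\check{(\kappa_0s)}=(0,-s')$, giving off-diagonal entry $s\bar{\pV}s^{*}$-type terms.
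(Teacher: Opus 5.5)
Your proposal is correct and follows essentially the paper's route: transitivity on $L^+$ plus equivariance reduces everything to $\kappa_0=(1,0)$, the stabilising matrices $\mathrm{diag}(s,s^{*-1})=\mathrm{diag}(s,s')$ with $N(s)=1$ rotate the flags at $p_0$, and surjectivity and the $\pm\kappa$ fibres both come from $\text{Spin}(n+1)\to SO(n+1)$ having kernel $\{\pm 1\}$. Your citation of Proposition \ref{phi1FibreLemma} to reduce an arbitrary fibre to $\kappa_0$, and your homogeneous-space remark that makes the $2$-to-$1$ map a genuine covering, are slightly more careful than the paper, which leaves both implicit. One small correction: the direct computation at $\kappa_0 s=(s,0)$ gives off-diagonal entry $s i_j s^*=\sigma(s)(i_j)$, as in the paper, rather than the twisted adjoint $s' i_j s^{-1}$. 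This is harmless, since $s\mapsto\sigma(s)$ is likewise a surjection of $\text{Spin}(n+1)$ onto $SO(n+1)$ with kernel $\{\pm1\}$.
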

\noindent\textit{Proof.} Given a point $F \in \mathcal{MF}^n$, $F = [[p;v_1,v_2,...,v_n]]$, we have by definition of $\mathcal{MF}^n$ that $p \in L^+$, but $L^+ = \text{Im}(\phi_1)$ by Lemma \ref{Phi1SurjectLightConeLemma}, so there is a non-empty set of spinors $\kappa_p$ which map to $\Phi_1(\kappa_p) = [[p;...]]$. Consider $\Phi_1(\kappa)$ for $\kappa = (\xi,0)$, $|\xi|=1$. All such $\kappa$ have base-point $\phi_1(\kappa)=(1,1;0)_{\R} \in L^+$, and they can all be found by acting on a base spinor $\kappa_0 = (1,0)$.
\[\kappa = A\kappa_0 = \begin{pmatrix}
    \xi & 0 \\
    0 & \xi'
\end{pmatrix}
\begin{pmatrix}
    1 \\
    0
\end{pmatrix},\ A \in SL(2,\lip).\]
$\kappa_0$ is sent to the multiflag $\Phi_1(\kappa_0) = [[(1,1;0)_{\R};\partial_{X_1},\partial_{X_2},...,\partial_{X_n}]]$ (the computation follows from (\ref{eqn:Derivs})), and by the equivariance of $\Phi_1$ shown in Theorem \ref{Phi1Equivariance} we have 
\[\Phi_1(\kappa) = A.\Phi_1(\kappa_0) = [[(1,1;0)_{\R};A.\partial_{X_1},A.\partial_{X_2},...,A.\partial_{X_n}]].\] 
If we check how $A$ acts on these vector fields, it is by the $\sigma(\cdot)$ action, which is just a rotation:
\[A.\partial_{X_j} = A\partial_jA^{\dagger} 
= \begin{pmatrix}
    0 & \xi i_j\xi^* \\
    -\xi'i_j\ol{\xi} & 0
\end{pmatrix} = 
\begin{pmatrix}
    0 & \sigma(\xi)(i_j) \\
    \ol{\sigma(\xi)(i_j)} & 0
\end{pmatrix}.\]
$\sigma(\cdot)$ is a homomorphism of $\lip$ onto the group of rotations of $\para$, with a fibre $\R^+$ that can be partially recovered via the norm $N(\cdot)$. As $|\xi|=1$, the only two preimages of a particular rotation will be $\xi,-\xi$. 

The set of $\xi \in \lip$ satisfying $|\xi|=1$ is just the spin group $Spin(n+1)$; all rotations $SO(n+1)$ can be given as $\sigma(x)$ for $x \in Spin(n+1)$, so all positively oriented orthonormal frames can be reached from $\kappa_0$, and thus all multiflags based at $(1,1;0)_{\R}$ are mapped to by some spinor $\kappa$ by rotating $\kappa_0$ appropriately. 

As $SL(2,\lip)$ acts transitively on $L^+$ (and, as we've just seen, on the space of multiflags based at $(1,1;0)_{\R}$), it acts transitively on $\mathcal{MF}^n$, we can send any base-point to any other, and any flag based at one point to any other, implying $\Phi_1$ is surjective. $\Phi_1(\kappa_1) = \Phi_1(\kappa_2)$ if and only if $\kappa_2 = \pm\kappa_1$, implying the map is a double cover. Smoothness follows because the various maps are either defined in terms of polynomial functions, or derivatives of smooth functions.\qed
\\\\
Topologically, the map $\Phi_1:\ S\lip \to \mathcal{MF}^n$ is a double covering map from a fibre bundle locally modelled on $Spin(n+1)\times S^{n+1}\times \R \to SO(n+1) \times S^{n+1} \times \R$.

\subsection{Example Computation of a Multiflag}
\label{ExampleComputation}
The exponential map sends $\R^{0,n}$ to the unit sphere in $\para$ (see Proposition \ref{EulerGeneralisationLemma}), which sheds some light on why paravectors are a natural object to consider; just as the vectors $\R^{0,n}$ are distinguished in the Lie algebra $\clif$, their exponentials (the unit paravectors) are distinguished in the Lie group $\clif^{\times}$. 

Take $\kappa = (e^{V \theta},0)$ for $V \in \R^{0,n}$ of unit $N$-norm, and $\theta \in \R$. 
%\[\phi_1(\kappa) = \begin{pmatrix}
%1 & 0 \\
%0 & 0
%\end{pmatrix} = \frac{1}{2}
%\begin{pmatrix}
% T+Z & \pX \\
% \ol{\pX} & T-Z 
%\end{pmatrix},\]
%or equivalently $\phi_1(\kappa)$ is the point\footnote{Note that not all flags based at $(1,1;0)_{\R}$ are given by a $\kappa$ of this form.} $(1,1;0)_{\R} \in \R^{1,n+2}$, with $T=Z=1$ and the other components all zero. 
The various derivatives that define our flag planes can be computed to have the following matrix form: %are
\[(D_{\kappa}\phi_1)(\check{\kappa}i_j) = \begin{pmatrix}
 0 & i_j-2V_j\sin \theta e^{V \theta}\\
 -i_j-2V_j\sin \theta e^{-V \theta} & 0
\end{pmatrix}.\]
We can easily read off that $T=Z=0$, but to compute the remaining terms in the standard coordinates $(T,Z;X_0,X_1,...,X_n)_{\R}$ we expand the $(1,2)$ entry out further.
\[i_j-2V_j\sin \theta e^{V \theta}= -2V_j\sin \theta \cos \theta - 2V_j\sin^2\theta V + i_j.\]
Expand $V $ as $\sum_{k=1}^nV_ki_k$ and apply a few double angle formulae.
\[i_j-2V_j\sin \theta e^{V \theta} = -V_j\sin(2 \theta) + V_j(\cos(2\theta)-1) \left(\sum_{k=1}^nV_ki_k\right) + i_j.\]
Clearly the term $X_0 = -V_j\sin(2\theta)$, while for all other $k \in \{1,...,n\},\ k \neq j$ we have $X_k = V_jV_k(\cos(2\theta)-1)$. The remaining term is $X_j = V_j^2(\cos(2\theta)-1)+1$, and so our flag plane vectors $(D_{\kappa}\phi_1)(\check{\kappa}i_j)$ can be explicitly written as
\[(0,0;-V_j\sin(2\theta),V_1V_j(\cos(2\theta)-1),V_2V_j(\cos(2\theta)-1),...,V_j^2(\cos(2\theta)-1)+1,...,V_nV_j(\cos(2\theta)-1))_{\R}.\]
As with Penrose and Rindler's null flags \cite{SpinorsAndSpacetime}, rotating the spinor by $\theta$ rotates the subflags by $2\theta$. 

\section{From Multiflags to Horospheres}
\label{sec:Third}
The multiflag (or, equivalently, decorated ideal point) structure is one way to represent a Lipschitz spinor as a geometric object; it is the natural extension of the null flag of Penrose and Rindler (respectively, their picture of a spin-vector as a tangent vector to $\mathscr{S}^+$). But we can go further, and following \cite{Mathews_Spinors_horospheres}, \cite{mathews2024quaternions} extend the map into hyperbolic space. 

The obvious model to employ is the hyperboloid model of hyperbolic space living `inside' the light-cone. Following \cite{Mathews_Spinors_horospheres} and \cite{PennerPunctured} we map flagpole vectors to horospheres, which in the hyperboloid model are given by the intersection of particular hyperplanes in $\R^{1,n+2}$ with the hyperboloid. We can then intersect the multiflag with the horospheres (or more precisely their tangent spaces); these intersections generate oriented line fields in the Euclidean geometry of the horosphere. These line fields have some nice properties, namely being parallel and orthogonal. 

These constructions exploit the close relationship between the hyperboloid model and the light-cone, but our final destination is the upper half-space model; it is there that the nicest explicit forms for the correspondence are found.
\begin{definition}[Coordinate conventions]
\label{defn:HypCoords}
The hyperboloid model $\hyp^{n+2}$ lives in the generalised Minkowski space $\R^{1,n+2}$. Similar to the coordinate notation introduced at the top of the previous section, we can write coordinates
\[(T,Z;X_0,X_1,...,X_n)_{\hyp} \text{ or } (T,Z;\pX)_{\hyp}, \text{ with } T^2-Z^2-|\pX|^2 = 1,\] 
for the hyperboloid. As this is a submanifold of $\R^{1,n+2}$ we could simply use the subscript $\R$ as in the prior section, but the subscript $\hyp$ helps recall our focus on hyperbolic space, and connects more naturally to the variant subscripts $\partial \hyp$ and $T\hyp$ defined below.

\label{not:BallModel}The ball model $\B^{n+2}$ can be identified (as a topological space) with the unit\footnote{under the metric $N(\pV) = \pV\ol{\pV}$.} ball of $\$\R^{n+2}$, just as the disc model can be identified with the unit disc in $\C$ by applying a new metric. For later convenience we also identify the natural copy of $\para \subset \$\R^{n+2}$ found by setting the $i_{n+1}$ coefficient to zero, and coordinatise $\$\R^{n+2}$ as $y_0+\sum_{j=1}^ny_ji_j + wi_{n+1}$; \label{not:BallCoords}we then use the coordinates
\[(w;y_0,y_1,...,y_n)_{\mathbb{B}} \text{ or } (w;\textbf{y})_{\mathbb{B}},\ \textbf{y}\in\para,\ w^2+|\textbf{y}|^2 < 1 \text{ for the ball model.}\]
\label{not:UpperHalf2}Finally, the upper half-space model is identified as $\U^{n+2} \subset \$\R^{n+2}$, where the boundary plane is identified with $\para$. \label{not:UpperHalfCoords}We coordinatise $\$\R^{n+2}$ as $x_0 + \sum_{j=1}^nx_ji_j + zi_{n+1}$, and we write
\[(z;x_0,x_1,...,x_n)_{\U} \text{ or } (z;\textbf{x})_{\U},\ z>0, \text{ for the upper half-space model.}\]
\label{not:BdryCoords}We use the same coordinate layout with subscript $\partial\hyp$, $\partial\mathbb{B}$, or $\partial\U$ for the boundary of the hyperboloid, ball, and upper half-space models respectively; the boundary of the hyperboloid is given by $\mathscr{S}^+$ (the space of rays on the light-cone), the boundary of $\B^{n+2}$ is given by the unit length paravectors under the positive-definite metric $N(\pV) = \pV\ol{\pV}$ on $\$\R^{n+2}$, and the boundary in $\U^{n+2}$ is given by the points $(0;\textbf{x})_{\U} \cup \{\infty\}$. As the boundary of $\U$ can be identified with $\para \cup \{\infty\}$, we often simply write a paravector $x_0 + \sum_{j=1}^n x_ji_j$ for a coordinate in $\partial \U$.

\label{not:TanCoords}We also define coordinates with subscripts $T\hyp$, $T\mathbb{B}$, or $T\U$ for the tangent spaces of each model. The arrangement of coordinates is identical to their manifold counterparts.\null\hfill\defn
\end{definition}
\subsection{Mapping Spinors to the Hyperboloid Model}
Define $\hyp^{n}$ to be the positive sheet of the hyperboloid given by points $x \in \R^{1,n}$ satisfying $(x|x)_{1,n} = 1$, and endow it with a metric by restricting the Minkowski metric. This gives a model of hyperbolic space called the \textit{hyperboloid model}.
\begin{definition}
    A {\normalfont horosphere} in hyperbolic $n$-space is the limit as $r \to \infty$ of a series of $(n-1)$-spheres of radius $r$ which are tangent at a fixed point, and on a fixed side, of a given hyperplane. When $n=2$, we may also call this a {\normalfont horocycle}. \hfill\defn
\end{definition}
\noindent In the hyperboloid model, horospheres are given by the intersection of the hyperboloid with a hyperplane that has null normal vector;\footnote{A normal vector of norm zero under $(\cdot|\cdot)_{1,n}$.} points of the light-cone parameterise this space, as the light-cone is the space of null vectors (see \cite{HorospheresInHypGeo} for more discussion on this). The first stage of our map will send flagpoles to a hyperplane they define as a normal vector, and then to horospheres defined by the intersection of this hyperplane with the hyperboloid model.
\begin{definition}
 \label{not:HorN}Denote the space of horospheres in $n$-dimensional hyperbolic space as Hor$^n$, regardless of model. \label{not:phi2}The map $\phi_2:\ L^+ \to$ Hor$^n$ sends $p \in L^+$ to the horosphere defined by the intersection of the hyperplane $\Pi_p = \{x \in \R^{1,n+2}\ |\ (x|p)_{1,n} = 1\}$, and the hyperboloid $\hyp^{n+2} = \{x \in \R^{1,n+2}\ |\ (x|x)_{1,n} = 1\}$. 
 \label{not:phi}We also define $\phi = \phi_2 \circ \phi_1$.
 
 \label{not:phi2partial}The map $\phi_2^{\partial}:\ L^+ \to \partial \hyp^n$ instead sends $p$ to the point at infinity of $\phi_2(p)$ (also called the {\normalfont centre} of the horosphere). \label{phipartial}We also define $\phi^{\partial} = \phi_2^{\partial} \circ \phi_1$. \hfill\defn
\end{definition}
\noindent Visualising the boundary at infinity is tricky in the hyperboloid model. However, we already have a good representation; the boundary can be defined as the space of rays on the light-cone, which is our celestial sphere $\mathscr{S}^+$. The centre of the horosphere $\phi_2(p)$ is then represented by the ray $\R^+ p$ through the point $p$ (see \cite{PennerPunctured} for details). 

To consider questions of equivariance, we need to define an action of $SL(2,\lip)$ on the hyperboloid. The obvious choice is to inherit the action on generalised Minkowski space (see Section \ref{sec:ActionsofSL2OnMinkowski}).
\begin{definition}
\label{def:ActionOnHyperbolic}
The action of $SL(2,\lip)$ on $\hyp^{n+2}$ is the restriction of the conjugation action defined on $\mathcal{P}^{1,n+2}$. When we introduce other models of hyperbolic space (say a model $M^{n+2}$), the action on $M^{n+2}$ is then inherited through an isometry $\pi: \hyp^{n+2} \to M^{n+2}$. That is, the action of $A \in SL(2,\lip)$ on some $x \in M^{n+2}$ is defined as $\pi\left(A.\pi^{-1}(x)\right)$. It is equivariant by definition. As it acts on $\R^{1,n+2}$ by isometries (and, as we shall see, maps $\hyp^{n+2} \to \hyp^{n+2}$), this action also maps null planes to null planes and thus horospheres to horospheres, and so defines an action on Hor$^{n+2}$. \null\hfill\defn
\end{definition}
\noindent We must check that this action does restrict to an action on the hyperboloid model. We know from Lemma \ref{ActionIsometryMink} that it preserves the metric, so it does map the hyperboloid to itself; but it must also preserve the positive sheet.
\begin{lemma}
 The action of $SL(2,\lip)$ on $\R^{1,n+2}$ restricts to an action by orientation-preserving isometries on the hyperboloid model $\hyp^{n+2}$.
\end{lemma}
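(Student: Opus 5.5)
The approach is to combine Lemma \ref{ActionIsometryMink} with a connectedness argument. By that lemma, each $A \in SL(2,\lip)$ acts on $\R^{1,n+2}$ by an orientation-preserving linear isometry of $(\cdot|\cdot)_{1,n+2}$. It therefore maps the two-sheeted hyperboloid $\{x : (x|x)_{1,n+2} = 1\}$ to itself. What remains is to show that the positive sheet $\hyp^{n+2}$ (the one with $T>0$) is sent to itself rather than to the negative sheet.

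\textbf{Step 1: $L^+$ is preserved.} The computation $A.\phi_1(\kappa) = \phi_1(A\kappa)$ from (\ref{eqn:EquivarOfphi1}), together with Lemma \ref{Phi1SurjectLightConeLemma}, shows that $A$ maps $L^+$ into $L^+$. This is because every point of $L^+$ is some $\phi_1(\kappa)$, and $\phi_1(A\kappa)$ has trace $|A\kappa|^2>0$.

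\textbf{Step 2: the positive sheet is preserved.} Take $x \in \hyp^{n+2}$. Note that $x$ is future time-like exactly when $(x|p)_{1,n+2} > 0$ for every $p \in L^+$. This is the standard fact that a time-like vector with $T>0$ pairs positively with every future null vector, by the reverse Cauchy--Schwarz inequality: $T_xT_p > |(Z,\pX)_x||(Z,\pX)_p|$. Since $A$ is an isometry and preserves $L^+$, we get $(A.x|A.p)_{1,n+2} = (x|p)_{1,n+2} > 0$ for all $p\in L^+$. As $A$ is bijective on $L^+$ (its inverse $A^{-1}\in SL(2,\lip)$ also preserves $L^+$), the point $A.x$ pairs positively with all of $L^+$. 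Also $(A.x|A.x)_{1,n+2}=1$, so $A.x$ has $T>0$ and lies in $\hyp^{n+2}$.

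Alternatively, one could argue by connectedness. $\hyp^{n+2}$ is connected and the action is continuous, so the image lies in a single sheet. Evaluating at $(1,0;0)_{\hyp}$, which is $\tfrac12 I$, gives $\tfrac12 AA^{\dagger}$, whose trace $\tfrac12(|a|^2+|b|^2+|c|^2+|d|^2)$ is positive. This version is probably the cleaner one to write down.

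\textbf{Step 3: the restricted maps are orientation-preserving isometries.} The restriction is an isometry of $\hyp^{n+2}$ because the metric on $\hyp^{n+2}$ is the restriction of the Minkowski metric. It is an action because the action on $\mathcal{P}^{1,n+2}$ is one: $(AB)S(AB)^{\dagger} = A(BSB^{\dagger})A^{\dagger}$. For orientation, I would use that $SL(2,\lip)$ is connected (Theorem B of \cite{Ahlfors1985}, as in Lemma \ref{ActionIsometryMink}) and contains the identity, which acts trivially. The orientation character is continuous and $\{\pm1\}$-valued, so it is identically $+1$. Equivalently, $A$ preserves the orientation of $\R^{1,n+2}$ and also preserves the outward normal direction, the position vector, of the positive sheet; hence the induced orientation on the sheet is preserved as well.

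The main subtlety is Step 2, the time-orientation: an orientation-preserving Lorentz isometry could a priori swap sheets. Both arguments above settle this, the trace computation most directly.
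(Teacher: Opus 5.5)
Your proof is correct, but for the key step, showing that the positive sheet is not swapped with the negative one, you argue differently from the paper. The paper's argument is a one-liner from the connectedness of $SL(2,\lip)$ (Ahlfors' Theorem B, already used in Lemma \ref{ActionIsometryMink}): the identity preserves the positive sheet, so every element does. It leaves implicit why ``preserves the positive sheet'' is an open and closed condition on $A$. That needs exactly your observation that each $A$ carries the connected sheet into a single sheet, together with continuity in $A$.

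Your two arguments work element by element and use no topology of the group. The trace computation $T(A.\tfrac12 I)=\tfrac12(|a|^2+|b|^2+|c|^2+|d|^2)>0$ is fully explicit. The light-cone duality argument is more conceptual: any linear isometry preserving $L^+$ (already known from the equivariance of $\phi_1$) lies in the orthochronous Lorentz group.

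For orientation you still rely on the same connectedness the paper uses, or on the ambient orientation from Lemma \ref{ActionIsometryMink}, which itself rests on it. So the dependence on Ahlfors' result is not removed, but it is confined to that one point.
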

\noindent\textit{Proof.} All we need is to show the action restricts to a map $\hyp^{n+2} \to \hyp^{n+2}$. To show this, we use the connectedness of $SL(2,\lip)$; because the identity matrix maps the positive sheet to itself, all elements of the connected component containing the identity map also preserve the positive sheet. But the connected component containing the identity is the whole of $SL(2,\lip)$.\qed
\\\\
With an action defined, we can check the equivariance of $\phi_2$. 
\begin{lemma}
\label{phi2equivariant}
 The maps $\phi_2$ and $\phi_2^{\partial}$ are equivariant under the action of $SL(2,\lip)$.
\end{lemma}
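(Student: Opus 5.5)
The plan is to use the fact, already established, that $SL(2,\lip)$ acts on $\R^{1,n+2}$ by linear isometries (Lemma \ref{ActionIsometryMink}) and preserves the positive sheet $\hyp^{n+2}$. Both $\phi_2$ and $\phi_2^{\partial}$ are defined purely in terms of the Minkowski inner product, the hyperboloid, and rays on $L^+$. Equivariance should therefore reduce to checking that a linear isometry carries each of these defining objects to the corresponding object for the image point.

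For $\phi_2$, fix $A \in SL(2,\lip)$ and $p \in L^+$. Write $A.x = AxA^{\dagger}$, which is a real-linear map of $\R^{1,n+2}$. First, $A.p \in L^+$, since $A.\phi_1(\kappa)=\phi_1(A\kappa)$ by (\ref{eqn:EquivarOfphi1}) and $\phi_1$ surjects onto $L^+$ (Lemma \ref{Phi1SurjectLightConeLemma}). Next, compute the image of the hyperplane: if $x \in \Pi_p$, then
\[(A.x|A.p)_{1,n+2}=(x|p)_{1,n+2}=1,\]
so $A.x \in \Pi_{A.p}$. Because $A$ is invertible with inverse again in $SL(2,\lip)$, the same argument applied to $A^{-1}$ gives $A.\Pi_p = \Pi_{A.p}$. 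We already know $A.\hyp^{n+2}=\hyp^{n+2}$. Since $A.$ is a bijection, it commutes with intersections, and so
\[A.\phi_2(p) = A.(\Pi_p\cap\hyp^{n+2}) = \Pi_{A.p}\cap\hyp^{n+2} = \phi_2(A.p).\]
This is exactly equivariance, with the action on Hor$^{n+2}$ taken as in Definition \ref{def:ActionOnHyperbolic}.

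For $\phi_2^{\partial}$, the centre of $\phi_2(p)$ is represented in $\mathscr{S}^+$ by the ray $\R^+p$. The action on $\partial\hyp^{n+2}=\mathscr{S}^+$ is induced by linearity on rays, and linearity gives $A.(\R^+p)=\R^+(A.p)$. So it remains only to check that the induced boundary action agrees with taking centres, i.e.\ that an isometry of $\hyp^{n+2}$ sends the centre of a horosphere to the centre of its image. Since the action is by linear maps of $\R^{1,n+2}$, we can argue directly. Geodesic rays in $\hyp^{n+2}$ that converge to the ideal point $\R^+p$ are asymptotic in the projective sense to the ray $\R^+p$, and $A.$ maps them to rays asymptotic to $\R^+A.p$. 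Hence $\phi_2^{\partial}(A.p)=A.\phi_2^{\partial}(p)$.

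The only delicate step is this identification of the boundary action with the linear action on rays. I would handle it by noting that a point $x_t\in\phi_2(p)$ moving out to the centre satisfies $x_t/T(x_t)\to p/T(p)$ in the projectivisation. Applying the continuous linear map $A.$ and renormalising the time coordinate then gives the claim. Everything else is a formal consequence of isometry and linearity.
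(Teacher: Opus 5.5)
Your proposal is correct and follows the same route as the paper. The paper likewise uses that $SL(2,\lip)$ acts by linear isometries of $\R^{1,n+2}$ to show the defining hyperplane $\Pi_p$ is carried to $\Pi_{A.p}$, hence $A.\phi_2(p)=\phi_2(A.p)$, and it disposes of $\phi_2^{\partial}$ in one line by passing to centres; your extra checks simply fill in details the paper leaves implicit. These are that $A.p\in L^+$, the reverse inclusion via $A^{-1}$, and that the linear action on rays of $L^+$ agrees with the boundary extension of the isometry.
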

\noindent\textit{Proof.} $A \in SL(2,\lip)$ acts as an isometry on the hyperboloid, as implied by Lemma \ref{ActionIsometryMink}. Explicitly, given points $x,y \in \hyp^{n+2}$, $(Ax|Ay)_{1,n+2} = (x|y)_{1,n+2}$. A horosphere $\phi_2(p)$ is defined by the hyperplane $\{x \in \R^{1,n+2}\ |\ (x|p)_{1,n+2} = 1\}$ where it intersects the hyperboloid. The action of $A$ then gives us the horosphere $A.\phi_2(p)$ whose points $x \in \hyp^{n+2}$ satisfy $(A^{-1}x|p)_{1,n+2}=1$. Because $A$ is an isometry, $A.\phi_2(p)$ is defined equivalently by the hyperplane with defining condition
\[1 = (A^{-1} x| p)_{1,n+2} = (AA^{-1} x| Ap)_{1,n+2} = ( x|Ap)_{1,n+2},\]
which is the defining hyperplane for $\phi_2(Ap)$. 

Similarly, $\phi_2^{\partial}$ is equivariant as we can just drop all information about the horospheres except their centres. \qed

\subsubsection{Flags to Line Fields}
Consider a multiflag as extending through Minkowski space and intersecting with our newly-found horosphere. These intersections will be straight lines in the Euclidean geometry of the horosphere. Foliating Minkowski space by parallel copies of the flag-planes, this will generate an oriented line field on the horosphere. 

Such a line field, strictly speaking, lives in the tangent bundle to the horosphere, so we need a description of the tangent spaces of horospheres. The tangent space to a horosphere is a subspace of both the tangent space of the hyperboloid model, and the tangent space of the defining hyperplane for that horosphere, so it lives in the intersection of those two tangent spaces.

We know from Lemma \ref{SphereTangentSpaceLemma} that the tangent space at a point $q$ on the hyperboloid is $q^{\perp}$, so we look for the tangent space to the hyperplane.
\begin{lemma}
\label{lem:HyperplaneTangentSpace}
 In a quadratic vector space $V$ with metric $( \cdot,\cdot)$, the tangent space to the hyperplane $\{x \in V\ |\ (x,p) = k\}$ where $p \in V$, $k \in \R$ is given by $p^{\perp}$.
\end{lemma}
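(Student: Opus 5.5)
The plan is to argue exactly as in Lemma \ref{SphereTangentSpaceLemma}, using the fact that the hyperplane $H_{p,k} = \{x \in V\ |\ (x,p) = k\}$ is an affine subspace. First, observe that if $p = 0$ the set is either empty (when $k \neq 0$) or all of $V$ (when $k = 0$). That case is degenerate, so I will assume $p \neq 0$. Since $( \cdot,\cdot)$ is nondegenerate, the map $x \mapsto (x,p)$ is then a nonzero linear functional. Hence $H_{p,k}$ is a genuine affine hyperplane of codimension one, and in particular a smooth submanifold at every point.

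Next, take any point $q \in H_{p,k}$ and any smooth curve $x_t \subset H_{p,k}$ with $x_0 = q$, for $t \in (-\varepsilon,\varepsilon)$. Differentiating the defining relation $(x_t,p) = k$, and using bilinearity together with the fact that $p$ is constant, gives $(\dot{x}_t,p) = 0$. So every tangent vector at $q$ lies in $p^{\perp}$, that is, $T_qH_{p,k} \subseteq p^{\perp}$.

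For the reverse inclusion, I will follow the same dimension-count used in Lemma \ref{SphereTangentSpaceLemma}. The tangent space has dimension $\dim V - 1$, since $H_{p,k}$ has codimension one. The space $p^{\perp}$ is the kernel of a nonzero linear functional, so it also has dimension $\dim V - 1$. Alternatively, I can show the inclusion directly: for $v \in p^{\perp}$, the straight curve $q + tv$ stays inside $H_{p,k}$, since $(q + tv,p) = k + t(v,p) = k$, and its velocity at $t = 0$ is $v$. Either route gives equality, and the tangent space is independent of the base point $q$.

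The only point needing care is nondegeneracy. In $\R^{1,n+2}$ the vector $p$ may be null, as it is for $p \in L^+$, and then $p \in p^{\perp}$. This does not affect the argument, because it only needs $(\cdot,p) \not\equiv 0$, which holds for any $p \neq 0$ in a nondegenerate space. The main obstacle is therefore just to state that hypothesis explicitly.
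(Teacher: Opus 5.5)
Your proposal is correct and follows the paper's proof: you differentiate $(x_t,p)=k$ along a curve in the hyperplane to get $T_q \subseteq p^{\perp}$, then conclude equality by a dimension count. Your extra remarks are refinements the paper leaves implicit: the explicit nondegeneracy hypothesis, the degenerate case $p=0$, and the straight-line curve $q+tv$, which gives the reverse inclusion directly.
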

\noindent\textit{Proof.} Similar to Lemma \ref{SphereTangentSpaceLemma}, we construct a tangent vector as a derivative of some curve $q_t$ in the hyperplane. $(q_t,p) = K \implies (\dot{q}_t,p) = 0$, so $\dot{q}_t \in p^{\perp}$, and as the dimensions of the tangent space and the orthogonal complement agree they are in fact equal. \qed
\\\\
The tangent space at the point $q$ of the horosphere $H_p$ defined by the hyperplane 
\[\Pi_p = \{x \in \R^{1,n+2}\ |\ (x|p)_{1,n+2}=1\}\] 
will be an $n+1$-dimensional subspace of the intersection $q^{\perp} \cap p^{\perp}$, which implies dim$(q^{\perp} \cap p^{\perp}) \geq n+1$. Both $q^{\perp}$ and $p^{\perp}$ are $n+2$-dimensional, but $q^{\perp}$ is spacelike, while $T_q\Pi_p$ contains the light-like direction $\R p$ by Lemma \ref{lem:HyperplaneTangentSpace}. So the intersection must be precisely $n+1$-dimensional, and the tangent space of the horosphere at $q$ is exactly $q^{\perp} \cap p^{\perp}$. 

We can now define line fields by intersecting the flag planes of $\Phi_1(\kappa)$ with the tangent spaces of $\phi(\kappa)$. The intersection of the $i_j$ subflag will define the $i_j$ \textit{line field}.
\begin{lemma}
\label{lem:FlagsToLineFields}
The $n$ subflags of the multiflag $\Phi_1(\kappa)$ intersect the horosphere $\phi(\kappa)$ to form $n$ linearly independent oriented lines. By taking the intersection with the family of parallel translates of the multiflag across $\R^{1,n+2}$, we instead get $n$ linearly independent oriented line fields.
\end{lemma}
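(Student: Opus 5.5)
Write $p=\phi_1(\kappa)$ and $v_j=(D_\kappa\phi_1)(\check{\kappa}i_j)$. Fix a point $q$ of the horosphere $\phi(\kappa)=\phi_2(p)$. As shown just before the statement, the tangent space there is $T_q\phi(\kappa)=q^{\perp}\cap p^{\perp}$, which is $(n+1)$-dimensional and spacelike.

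The plan is to intersect this tangent space with each flag plane $F_j=\text{span}(p,v_j)$, translated to $q$. Since $v_j\in T_pL^+=p^{\perp}$ and $p\in p^{\perp}$, we have $F_j\subset p^{\perp}$. Hence $F_j\cap T_q\phi(\kappa)=F_j\cap q^{\perp}$. This is the kernel of the linear functional $w\mapsto(w|q)_{1,n+2}$ restricted to the $2$-plane $F_j$.

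This functional is nonzero on $F_j$, because $(p|q)_{1,n+2}=1$ by the definition of $\Pi_p$. So the kernel is exactly one-dimensional, spanned by
\[u_j=v_j-(v_j|q)_{1,n+2}\,p.\]
The vector $u_j$ is nonzero, since $v_j$ and $p$ are independent by Theorem \ref{OrthogonalDecompPreservationTheorem}.

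For orientation, $u_j$ projects onto $v_j$ in $F_j/p\R$. The orientation of the flag, given by the class of $v_j$, therefore orients the line $\R u_j$ by $u_j$ itself. Equivalently, this picks out the half of the line lying on the positive side of $p\R$ within $F_j$.

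Next I will show linear independence. Suppose $\sum c_ju_j=0$. Then $\sum c_jv_j\in p\R$, so $\sum c_j(v_j+p\R)=0$ in $\ell^{\perp}/\ell$. The classes $v_j+p\R$ are images of the orthogonal, nonzero vectors $\check{\kappa}i_j$ under the conformal isomorphism of Lemma \ref{lem:QuotientMapConformal}. They are therefore mutually orthogonal and nonzero in the negative-definite space $\ell^{\perp}/\ell$, hence independent, so every $c_j=0$. A cleaner version of the same argument: the $u_j$ are themselves mutually orthogonal, because $(u_j|u_k)_{1,n+2}=(v_j|v_k)_{1,n+2}$ using $(p|p)=(p|v_j)=0$. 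They are also spacelike, and that already gives independence.

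Finally, for the line fields, I will apply the same construction at every $q\in\phi(\kappa)$, using the translate of the multiflag through $q$. The formula for $u_j$ depends smoothly on $q$ and is never zero. This gives $n$ smooth oriented line fields which are independent at each point.

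The main obstacle I expect is that the orientation must be well defined and consistent across the whole horosphere. The explicit formula for $u_j$ handles this, since it has no sign ambiguity, so I expect the difficulty to be mild. I will also remark that $(v_j|q)$ is an affine function of $q$ along the horosphere, which makes the fields parallel in the Euclidean geometry of the horosphere; that parallelism is presumably established later in the paper.
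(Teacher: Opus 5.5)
Your proof is correct and follows essentially the paper's route. Both arguments reduce to $F_j\cap q^{\perp}$ using $F_j\subset p^{\perp}$, then use $p\notin q^{\perp}$ to cut each flag plane down to a line: the paper does this by a dimension count on the full span $F_n$ of the multiflag, you do it flag by flag via the functional $(\cdot|q)_{1,n+2}$ with $(p|q)_{1,n+2}=1$, and your vector $u_j=v_j-(v_j|q)_{1,n+2}\,p$ with the identity $(u_j|u_k)_{1,n+2}=(v_j|v_k)_{1,n+2}$ supplies the linear independence the paper states without argument and already gives the pointwise orthogonality the paper defers to Corollary \ref{OrthogonalLineFields}.
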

\noindent\textit{Proof.} As we know the subflags only intersect one another along the flagpole $p\R$, the multiflag spans a subspace of dimension $n+1$ (including the light-like flagpole); call this subspace $F_n$. The tangent space at a point $q$ on the horosphere $\phi(\kappa)$ is given by $T_q(\phi(\kappa))=q^{\perp} \cap \phi_1(\kappa)^{\perp}$. 

$F_n$ lies within the tangent space to the light-cone, so $F_n \subset \phi_1(\kappa)^{\perp}$. The intersection of the horosphere with the subspace $F_n$ is thus
\[T_q(\phi_2(p)) \cap F_n = q^{\perp}\cap \phi_1(\kappa)^{\perp} \cap F_n = q^{\perp} \cap F_n.\]
This intersection of vector spaces is at most $n+1$ dimensional, the dimension of $F_n$; but at minimum it is $n$ dimensional, because $q^{\perp}$ is $n+2$ dimensional and $F_n$ is $n+1$ dimensional, while they are within an $n+3$ dimensional vector space $\R^{1,n+2}$. There is at least one light-like vector $\phi_1(\kappa)$ contained in $F_n$ that does not sit within the $q^{\perp}$, as $q^{\perp}$ is necessarily spacelike (since the metric restricts to be positive definite on the hyperboloid). This implies the intersection is precisely $n$ dimensional. All the subflags must therefore intersect the horosphere in at least a 1 dimensional subspace; but as each subflag has one light-like dimension, the intersection is precisely 1 dimensional. 

Say a subflag is parameterised as $f_j = p\R + v_j\R$, where $p\R$ is the flagpole. Via Section \ref{sec:OrientationConventions}, given one of these flag-planes an orientation is defined on $f_j/p\R$, which then defines an orientation on the corresponding line. Across the $n$ subflags, we then have $n$ linearly independent oriented lines. By taking parallel copies of the multiflag we foliate $\R^{1,n+2}$, and so we define a line field at every point of the horosphere.\qed

\subsubsection{The Map $\Phi_2$}
The full map from $\mathcal{MF}^n$ to hyperbolic space will send a multiflag $[[p;v_1,v_2,...,v_n]]$ to the horosphere $\phi_2(p)$ with line fields in its Euclidean geometry defined by taking all parallel translates of the subflags $[[p,v_j]]$, and intersecting them with the horosphere. Note that, when describing a line field by a vector pointing along the lines, scalar multiples of the same vector $v_j$ denote the same field; we can always normalise the vectors describing these line fields where necessary. We say such a vector \textit{directs} or \textit{specifies} the line field.
\begin{definition}
\label{def:LHor}
The space $L$Hor$^{n+2}$ is the space of horospheres in $\hyp^{n+2}$ carrying an ordered set of $n$ linearly independent oriented line fields $(L_1,...,L_n)$. The individual line field $L_j$ is referred to as the {\normalfont $i_j$ line field}. 

\label{Phi2}The map $\Phi_2: \mathcal{MF}^n \to L\text{Hor}^{n+2}$ sends a multiflag  $[[p;v_1,v_2,...,v_n]]$ to the horosphere $\phi_2(p)$ with the line field defined at each point $q$ by $T_q\phi_2(p) \cap [[p,v_j]]$ and the induced orientations. \label{not:Phi}We also define the map $\Phi = \Phi_2 \circ \Phi_1$. \null\hfill\defn 
\end{definition}
\noindent We will later find that the image of $\Phi_2$ only includes certain rather nice line fields, so the codomain will be restricted; but, for now, all we know is that it maps multiflags to horospheres with $n$ linearly independent oriented line fields defined upon them.

We define an action of $SL(2,\lip)$ on $L$Hor$^{n+2}$ by restricting its action on $\hyp^{n+2}$ (which is equivalent to an action by $SO(1,n+2)^+$). The action on hyperbolic space is by orientation-preserving isometries and sends horospheres to horospheres, and its derivative sends line fields on horospheres to line fields on horospheres.

\subsubsection{Parallel Line Fields}
The line fields defined by these intersections have another property, namely being \textit{parallel} (as demonstrated later, in Corollary \ref{cor:LineFieldsParallel}). This means nothing other than being parallel in the Euclidean sense, but it is worth making a precise definition for this context.

Isometries that generate translations along a horosphere form a subgroup of the parabolic transformations in $\hyp^n$. However, as noted in Section 3.9 of \cite{mathews2024quaternions}, in dimension 4 and above there are parabolic transformations that act on a horosphere, but do not act by translations. Combining and extending Definition 3.9.3 and Lemma 3.9.1 of that paper, we can define the translations appropriately.
\begin{definition}
\label{def:ParabolicTranslation}
 We call a matrix $A \in SL(2,\lip)$ that satisfies any of the following conditions:
 \begin{enumerate}[label=\roman*)]
 \item $A \neq 1$ and $(A-1)^2=0$,
 \item $A$ is conjugate to $P_0 = \begin{pmatrix}
 1 & 1 \\
 0 & 1 
 \\
 \end{pmatrix}$,
 \item $A = \begin{pmatrix}
 1-\xi \eta^* & \xi\xi^* \\
 -\eta \eta^* & 1+\eta\xi^*
 \end{pmatrix}$ for some $(\xi,\eta) \in S\lip$,
 \end{enumerate}
 a {\normalfont parabolic translation}. The set of parabolic translation matrices is denoted $P$. 
 
 A line field on the horosphere $H \in \text{Hor}^{n+2}$ is {\normalfont parallel} if it is invariant under the action of all parabolic translations in $SL(2,\lip)$ fixing $H$ (also referred to as {\normalfont translations on the horosphere}). \null\hfill\defn
\end{definition}
\noindent We demonstrate the 3 conditions of Definition \ref{def:ParabolicTranslation} are, in fact, equivalent. First, though, a technical lemma.
\begin{lemma}
\label{lem:VariantSLConds}
    For the Clifford matrix
    \[A = \begin{pmatrix}
        a & b \\
        c & d
    \end{pmatrix} \in SL(2,\lip),\]
    the following variations on the pseudo-determinant condition hold:
    \begin{equation}
    \label{eqn:AlternatePseudoDets}
    ad^*-bc^* = da^*-cb^* = d^*a-b^*c = 1.
    \end{equation}
\end{lemma}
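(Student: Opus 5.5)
The plan is to prove the three identities by writing down an explicit inverse of $A$ and using that a one-sided inverse of a Clifford matrix is two-sided. Let $\tilde{A} = \begin{pmatrix} d^* & -b^* \\ -c^* & a^* \end{pmatrix}$, the Clifford-matrix analogue of the adjugate. First I will compute $\tilde{A}A$ entry by entry, using only the defining conditions of Definition \ref{SL2LipDef} together with Proposition \ref{prop:AlternateSLCharacterisation}.

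The $(1,1)$ entry is $d^*a - b^*c$. Since $c^*a, d^*b \in \para$ are invariant under $\cdot^*$, the condition $a^*d - c^*b = 1$ reverses to $d^*a - b^*c = 1^* = 1$. This is already the third identity in (\ref{eqn:AlternatePseudoDets}).

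The $(2,2)$ entry is $-c^*b + a^*d = 1$ directly. The off-diagonal entries are $d^*b - b^*d$ and $-c^*a + a^*c$. Both vanish by Corollary \ref{cor:xistarEta}, because the columns $(a,c)$ and $(b,d)$ are Lipschitz spinors. Hence $\tilde{A}A = I$.

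Next I need $A\tilde{A} = I$. This is the only step requiring care, since in a noncommutative setting a left inverse need not automatically be a right inverse. Two routes are available.

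\emph{First route.} View $A$ and $\tilde{A}$ as elements of $\mathcal{M}(2,\clif)$. This is a finite-dimensional real matrix algebra, because $\clif$ embeds in a real matrix algebra. In a finite-dimensional associative algebra, a left inverse is a two-sided inverse, by the same reasoning the paper uses for invertibility in $\clif$. So $A\tilde{A} = I$.

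\emph{Second route (fallback).} Avoid this abstraction and argue by cases. If $a \neq 0$, write $c = \pW a$ and $b = a\pU$ for paravectors, using Lemma \ref{PartialParavectorCommutingLemma} and the spinor property of the rows and columns. Then verify the entries directly, handling $a = 0$ separately, where $b$ and $c$ are invertible. I would attempt the first route and regard it as sufficient.

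Expanding $A\tilde{A}$:
\[A\tilde{A} = \begin{pmatrix} ad^* - bc^* & -ab^* + ba^* \\ cd^* - dc^* & -cb^* + da^* \end{pmatrix} = I.\]
The diagonal entries give $ad^* - bc^* = 1$ and $da^* - cb^* = 1$, which are the remaining two identities in (\ref{eqn:AlternatePseudoDets}).

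As a consistency check, the off-diagonal entries vanish independently by Corollary \ref{cor:xistarEta} applied to the rows $(a,b)$ and $(c,d)$. Also, $ad^* - bc^* = 1$ reverses to $da^* - cb^* = 1$, because $ab^*$ and $cd^*$ are paravectors. So only one of these two diagonal identities genuinely needs the inverse argument.
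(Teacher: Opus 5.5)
Your proof is correct, but it takes a different route from the paper's.

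\textbf{The paper's route.} The paper argues elementwise. It sandwiches the pseudo-determinant identity as $d(a^*d-c^*b)\ol{d}$ and as $c'(a^*d-c^*b)c^*$. It then uses $dc^*=cd^*$, $b\ol{d}=d'b^*$ and $a^*c=c^*a$ to extract the real scalars $|d|^2$ and $|c|^2$. This gives $|d|^2(da^*-cb^*-1)=0$ and $|c|^2(ad^*-bc^*-1)=0$. Since $c$ and $d$ cannot both vanish, one identity holds, and reversal yields the other.

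\textbf{Your route.} You show that the adjugate $\tilde{A}$ is a left inverse, using only the pseudo-determinant and the column conditions. You then upgrade it to a two-sided inverse in the finite-dimensional unital algebra $\mathcal{M}(2,\clif)$. That step is sound: $\tilde{A}A=I$ makes $z\mapsto Az$ injective, hence surjective. So $AY=I$ for some $Y$, and $Y=\tilde{A}AY=\tilde{A}$.

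\textbf{What each buys.} Your argument avoids the case split on $c,d$. It also produces $A^{-1}=\tilde{A}$ as a by-product, a formula the paper relies on implicitly when it computes $BP_0B^{-1}$ in the next lemma. The paper's argument stays entirely within elementwise Clifford manipulations and needs no appeal to finite-dimensionality or a matrix representation.

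\textbf{Two small remarks.} First, $d^*a-b^*c=1$ follows from applying $\cdot^*$ to $a^*d-c^*b=1$ simply because $\cdot^*$ is an antiautomorphism. Citing $c^*a,d^*b\in\para$ there is unnecessary. Second, since your first route is already rigorous, you can drop the fallback case analysis.
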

\noindent\textit{Proof.} Taking the original condition on the pseudo-determinant that holds for $SL(2,\lip)$, we manipulate it to get the new forms. 
\[|d|^2\cdot 1 = |d|^2(a^*d-c^*b) = d(a^*d-c^*b)\ol{d} = da^*|d|^2-dc^*b\ol{d}.\]
By Corollary \ref{cor:xistarEta} we have $dc^*=cd^*$, and as $b\ol{d} \in \para$ we have $b\ol{d} = (b\ol{d})^* = d'b^*$.
\[|d|^2 = da^*|d|^2-cd^*d'b^* = |d|^2(da^*-cb^*).\]
This either tells us $da^*-cb^*=1$, or $|d|^2=0$. Varying the argument a little, we find
\[|c|^2 \cdot 1 = |c|^2 (a^*d-c^*b) = c'(a^*d-c^*b)c^* = c'a^*dc^*-|c|^2bc^* = |c|^2(ad^*-bc^*),\]
implying either $|c|^2 = 0$ or $ad^*-bc^*=1$. As the pseudo-determinant $a^*d-c^*b \neq 0$ we must have at least one of $c,d$ non-zero, so at least one of $da^*-cb^* = 1, \text{ or } ad^*-bc^*=1$ must hold. But if $ad^*-bc^*=1$ then $(ad^*-bc^*)^* = da^*-cb^* =1$,
and similarly if $da^*-cb^*=1$ then $1 = (da^*-cb^*)^* = ad^*-bc^*$. We conclude both `variant pseudo-determinants' are equal to $1$. The final equality of (\ref{eqn:AlternatePseudoDets}) then holds because the pseudo-determinant $a^*d-c^*b=1$ is invariant under $\cdot^*$. \qed
%\\\\
%Now to prove the conditions of Definition \ref{def:ParabolicTranslation} are equivalent.
\begin{lemma}
    The following conditions on a matrix $A \in SL(2,\lip)$ are equivalent.
    \begin{enumerate}[label=\roman*)]
 \item $A \neq 1$ and $(A-1)^2=0$,
 \item $A$ is conjugate to $P_0 = \begin{pmatrix}
 1 & 1 \\
 0 & 1 
 \\
 \end{pmatrix}$,
 \item $A = \begin{pmatrix}
 1-\xi \eta^* & \xi \xi^* \\
 -\eta \eta^* & 1+\eta \xi^*
 \end{pmatrix}$ for some $(\xi,\eta) \in S\lip$.
 \end{enumerate}
\end{lemma}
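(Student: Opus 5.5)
The plan is to prove the cycle ii) $\Rightarrow$ iii) $\Rightarrow$ i) $\Rightarrow$ ii).

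\textbf{ii) $\Rightarrow$ iii).} Suppose $A = MP_0M^{-1}$ for some $M\in SL(2,\lip)$. Write $P_0 = 1 + N_0$ with $N_0=\begin{pmatrix}0&1\\0&0\end{pmatrix}$, so $A = 1 + MN_0M^{-1}$. Let $M=\begin{pmatrix}a&b\\c&d\end{pmatrix}$. The inverse of a Clifford matrix is $M^{-1}=\begin{pmatrix}d^*&-b^*\\-c^*&a^*\end{pmatrix}$; this uses Lemma \ref{lem:VariantSLConds} to check both products equal the identity. Then
\[MN_0M^{-1}=\begin{pmatrix}a\\c\end{pmatrix}\begin{pmatrix}-c^* & a^*\end{pmatrix}=\begin{pmatrix}-ac^*&aa^*\\-cc^*&ca^*\end{pmatrix}.\]
Setting $(\xi,\eta)=(a,c)$, which lies in $S\lip$ by Proposition \ref{prop:AlternateSLCharacterisation}, gives exactly the form in iii).

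\textbf{iii) $\Rightarrow$ i).} Write $A=1+\kappa\tau$ with $\kappa=(\xi,\eta)^T$ a column and $\tau=(-\eta^*,\xi^*)$ a row. Then $(A-1)^2=\kappa(\tau\kappa)\tau$, and $\tau\kappa=-\eta^*\xi+\xi^*\eta=0$ by Corollary \ref{cor:xistarEta}. Also $A\neq 1$, since $\xi\xi^*$ and $\eta\eta^*$ cannot both vanish when $(\xi,\eta)\neq 0$. I would also check that this $A$ actually lies in $SL(2,\lip)$, but the direction ii) $\Rightarrow$ iii) already produces such matrices. Alternatively, use transitivity (Lemma \ref{TransitiveLemma}): embed $\kappa$ as the first column of some $M\in SL(2,\lip)$, via (\ref{eqn:FirstSpinorSLEmbed}). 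The computation above then shows $A=MP_0M^{-1}$ directly, so iii) $\Rightarrow$ ii) as well, which handles membership at the same time.

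\textbf{i) $\Rightarrow$ ii).} This is the main obstacle. Set $N=A-1$, so $N\neq 0$ and $N^2=0$. The goal is to show $N$ is a rank-one product $\kappa\tau$ of the above type. First, $A$ has a fixed point on $\cpara$: if $N\kappa_0\neq 0$ for some spinor, then $\kappa_1:=N\kappa_0$ satisfies $A\kappa_1=\kappa_1$. One must verify $\kappa_1$ is a genuine Lipschitz spinor (or a right multiple of one), perhaps by choosing $\kappa_0=(1,0)$ or $(0,1)$ so that $\kappa_1$ is a column of $N$. Conjugate by a matrix sending $\kappa_1$ to $(1,0)$ (transitivity), so we may assume $c=0$, $a=1$. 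Then the pseudo-determinant forces $d^*\!=1$, hence $d=1$, and $N=\begin{pmatrix}0&b\\0&0\end{pmatrix}$ with $b\neq0$. The $SL$ conditions give $b\in\para$ (since $d^*b\in\para$). Finally, conjugating by $\mathrm{diag}(s,s'^{\,})$-type matrices with $s\in\lip$ and suitable norm — the matrices appearing in the proof of Theorem \ref{thm:Phi1SmoothDoubleCover} — rescales and rotates $b$ via the $\sigma$ action to $1$, yielding $P_0$.

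The delicate points are two. First, showing the fixed vector can be normalised using only the $SL(2,\lip)$ structure, since a column of $N$ need not be a Lipschitz spinor a priori; I would try to argue from $N^2=0$ that the columns are right multiples of a common spinor. Second, checking that the diagonal conjugation realises every positive scaling and rotation of $b$ — for instance taking $s=\sqrt{|b|}^{\,-1}$ times an appropriate spin element.
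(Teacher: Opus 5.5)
Your directions ii)$\Rightarrow$iii), iii)$\Rightarrow$ii) and iii)$\Rightarrow$i) are correct and essentially the paper's. The paper gets ii)$\Rightarrow$i) from conjugation-invariance of $(A-1)^2=0$ rather than from $\tau\kappa=0$, which is immaterial.

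The gap is in i)$\Rightarrow$ii), and it is exactly the step you flag and leave open. Your reduction needs a Lipschitz spinor $\kappa$ with $A\kappa=\kappa$ \emph{exactly}. What you actually have are vectors $N\kappa_0$ fixed by $A$, and these help only if $N\kappa_0=\kappa x$ with $\kappa\in S\lip$ and $x$ \emph{invertible}. Being a right multiple of a spinor is not enough.

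The case $\gamma=0$ shows why. The first column of $N$ is $(1,0)^T(\alpha-1)$, and $A(1,0)^T(\alpha-1)=(1,0)^T(\alpha-1)$ is equivalent to $(\alpha-1)^2=0$. So it holds whether or not $\alpha=1$, and to get $A(1,0)^T=(1,0)^T$ you must first prove $\alpha=1$. When $\gamma\neq0$, nothing you have said even places $\alpha-1$ in $\lip^{\triangleright}$. So the whole content of i)$\Rightarrow$ii) lies in the unproved step; the paper covers it by conjugating into Ahlfors' normal form.

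Here is what is missing, in two cases.
\begin{itemize}
\item \textbf{Case $\gamma\neq0$.} The conditions $\gamma^*\alpha,\gamma\delta^*\in\para$, pushed through $\sigma(\gamma^{*-1})$ and $\sigma(\gamma^{-1})$ together with $*$-invariance of paravectors, show that $\pU=\alpha\gamma^{-1}$ and $\pW=\gamma^{-1}\delta$ are paravectors. The $(2,1)$ entry of $(A-1)^2=0$ is $\gamma\alpha+\delta\gamma=2\gamma$, which now reads $\gamma(\pU+\pW)\gamma=2\gamma$. Hence $\gamma=2(\pU+\pW)^{-1}$ is a paravector, and $(\alpha-1)\gamma^{-1}=\tfrac12(\pU-\pW)\in\para$. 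So the first column of $N$ is $\kappa\gamma$ with $\kappa=(\tfrac12(\pU-\pW),1)\in S\lip$ and $\gamma$ invertible, whence $A\kappa=\kappa$. This is the paper's normal-form computation $\pV\gamma=1$, done without the normal form.
\item \textbf{Case $\gamma=0$.} You need that $(\alpha-1)^2=0$ forces $\alpha=1$ for $\alpha\in\lip$; the paper merely asserts this. By Lemma \ref{ABBALemma}, left multiplication by $\alpha$ is $|\alpha|$ times an isometry of $(\clif,\cdot)$, and a unipotent map of that form is the identity. Then $\delta=1$ and $A$ is already upper unitriangular.
\end{itemize}
With these two facts, your route is a valid and more self-contained alternative to the paper's.

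Your second worry disappears once you conjugate by $\mathrm{diag}(s,s^{*-1})$, not $\mathrm{diag}(s,s')$, whose pseudo-determinant is $|s|^2$. This conjugation sends $b\mapsto sbs^*$. For $b=|b|e^{\theta V}$, take $\rho=|b|^{1/2}e^{\theta V/2}$ and $s=\rho^{-1}$; this gives $b\mapsto1$, as in (\ref{eqn:SqrtParavector}). That requires $n\geq1$: for $n=0$, $\begin{pmatrix}1&-1\\0&1\end{pmatrix}\in SL(2,\R)$ satisfies i) but not ii). This caveat applies equally to the paper's proof.
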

\noindent\textit{Proof.} $ii) \to iii)$: Given a Clifford matrix
\[B = \begin{pmatrix}
    a & b \\
    c & d \\
\end{pmatrix} \in SL(2,\lip),\]
we compute
\begin{align*}
B\begin{pmatrix}
1 & 1 \\
0 & 1 
\end{pmatrix}B^{-1} %= 
%\begin{pmatrix}
%    a & b \\
%    c & d \\
%\end{pmatrix}
%\begin{pmatrix}
%1 & 1 \\
%0 & 1 
%\end{pmatrix}
%\begin{pmatrix}
%    d^* & -b^* \\
%    -c^* & a^*
%\end{pmatrix} &=
%\begin{pmatrix}
%    a & b \\
%    c & d \\
%\end{pmatrix}
%\begin{pmatrix}
%    d^*-c^* & -b^* + a^* \\
%    -c^* & a^*
%\end{pmatrix}
%\\
%= \begin{pmatrix}
%    a(d^*-c^*)-bc^* & a(-b^*+a^*)+ba^* \\
%    c(d^*-c^*)-dc^* & c(-b^*+a^*) + da^*
%\end{pmatrix} 
&= \begin{pmatrix}
    ad^*-bc^*-ac^* & aa^*+ba^*-ab^* \\
    -cc^*+cd^*-dc^* & da^*-cb^* + ca^*
\end{pmatrix}.
\end{align*}
From Lemma \ref{lem:VariantSLConds} we have that $ad^*-bc^*=da^*-cb^*=1$, and we also know $ab^*=ba^*$ and $cd^*=dc^*$ from Corollary \ref{cor:xistarEta}. Upon simplifying:
\[
B\begin{pmatrix}
1 & 1 \\
0 & 1 
\end{pmatrix}B^{-1} =
\begin{pmatrix}
    1 - ac^* & a^* \\
    -cc^* & 1+ca^*
\end{pmatrix}.\]
As $B,P_0 \in SL(2,\lip)$ the rows and columns are spinors by Proposition \ref{prop:AlternateSLCharacterisation}, so $(a,c) \in S\lip$ and ii) implies iii).
\\\\
$iii) \to ii)$: If $A$ satisfies condition iii) then, by Proposition \ref{prop:KappaSLMatrix}, there is a matrix $B \in SL(2,\lip)$ with first column $(\xi,\eta) \in S\lip$, and conjugation of $P_0$ by the matrix $B$ gives $A$ by the computation above. 
\\\\
$ii) \to i)$: The matrix $A \in SL(2,\lip)$ satisfies i) if and only if any conjugate of $A$ by $SL(2,\lip)$ satisfies i). To see this, let $B \in SL(2,\lip)$:
\begin{align*}
(BAB^{-1}-1)^2 = 0 &\Leftrightarrow BA^2B^{-1}-2BAB^{-1}+1 = 0 %&\Leftrightarrow B(A^2-2A)B^{-1}=-1\\
%&\Leftrightarrow A^2-2A+1=0\\ 
\Leftrightarrow (A-1)^2=0.
\end{align*}
The matrix $P_0$ satisfies i), so ii) implies i).
\\\\
$i) \to ii)$: Suppose the matrix $A = \begin{pmatrix}
    \alpha & \beta \\
    \gamma & \delta
\end{pmatrix}$ satisfies i). We show $A$ is conjugate to either an upper or lower triangular matrix with both diagonal entries $1$.

If $\gamma = 0$ then, as $A \in SL(2,\lip)$, the restriction on the pseudo-determinant implies $\delta = \alpha^{*-1}$. The $(1,1)$ entry of the equation $(A-1)^2=0$ is $(\alpha-1)^2=0$, so $\alpha=\delta=1$ and $A$ is an upper triangular matrix with $1$ along the diagonal as claimed. Note that, as the second column must form a Lipschitz spinor, we have $\beta \in \para$.

If $\gamma \neq 0$, then by \cite{Ahlfors_1985B} equation (15) we can conjugate $A$ into the `normal form'
\[N_A = \begin{pmatrix}
    \pV\gamma & \pV\gamma \pV -\gamma^{*-1} \\
    \gamma & \gamma \pV
\end{pmatrix},\]
where $\pV \in \para$. As above, $A$ satisfies i) if and only if any conjugate of $A$ does; we compute the lower left entry of
\[(N_A-1)^2 = \begin{pmatrix}
    \pV\gamma -1& \pV\gamma \pV -\gamma^{*-1} \\
    \gamma & \gamma \pV -1
\end{pmatrix}^2,\]
and use $(N_A-1)^2=0$ to find the equation $\gamma(\pV\gamma-1) + (\gamma\pV -1)\gamma = 2\gamma(\pV\gamma - 1) = 0$. We've chosen $\gamma \neq 0$ so this implies $\pV\gamma = 1$ and $\pV = \gamma^{-1}$, so $\gamma$ is a paravector and
\[N_A = \begin{pmatrix}
    1 & 0 \\
    \gamma & 1
\end{pmatrix}.\]
$A$ is conjugate to a lower triangular matrix with diagonal entries $1$, as claimed. 

This shows an $A$ satisfying i) is conjugate to one of the two following forms:
\[N_A = \begin{pmatrix}
    1 & \beta \\
    0 & 1
\end{pmatrix},\ \beta \in \para \text{  or  }
N_A = \begin{pmatrix}
    1 & 0 \\
    \gamma & 1
\end{pmatrix},\ \gamma \in \para.\]
We know $\lip$ acts on $\para$ by $\sigma$ as the group of rotations plus a homothety, so we can send $1$ to any other paravector in $\para$. Thus there is either an element $\rho_{\beta} \in \lip$ such that 
\begin{equation}
\label{eqn:SqrtParavector}
\sigma(\rho_{\beta})(1) = \rho_{\beta}\rho_{\beta}^* = \beta
\end{equation}
in the first case, or an element $\rho_{\gamma} \in \lip$ such that $\sigma(\rho_{\gamma})(1) = \rho_{\gamma}\rho_{\gamma}^* = -\gamma$ in the second. This lets us write $N_A$ as either
\[N_A = \begin{pmatrix}
    1 & \rho_{\beta}\rho_{\beta}^* \\
    0 & 1
\end{pmatrix} \text{  or  }
N_A = \begin{pmatrix}
    1 & 0 \\
    -\rho_{\gamma}\rho_{\gamma}^* & 1
\end{pmatrix},\]
which we recognise as the form of iii) for either the spinor $(\rho_{\beta},0) \in S\lip$ or $(0,\rho_{\gamma}) \in S\lip$. Then, by the previously proven equivalence of iii) with ii), this implies $N_A$ is conjugate to $P_0$, and as $A$ is conjugate to $N_A$ it is also conjugate to $P_0$ as claimed.
\qed
\\\\
Now we find these parabolic translations in an explicit form for a given spinor $\kappa$.
\begin{lemma}
 \label{ParabolicSubgroupDefinition}
The subgroup of parabolic translations fixing a Lipschitz spinor $\kappa = (\xi,\eta)$ is given by
\begin{equation}
\label{eqn:ParabolTrans}
P^{\kappa} = \left\{
\begin{pmatrix}
 1-\xi\pV\eta^* & \xi\pV\xi^* \\
 -\eta\pV\eta^* & 1+\eta\pV\xi^* 
\end{pmatrix}\ \middle|\ \pV \in \para \right\}.
\end{equation}
For a fixed $\pV \in \para$, the corresponding group element is denoted $P^{\kappa}_{\pV }$. This provides an explicit isomorphism between $P^{\kappa}$ and $\para$.
\end{lemma}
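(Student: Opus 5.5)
Write $P^{\kappa}_{\pV}$ for the matrix in (\ref{eqn:ParabolTrans}). The plan is to reduce everything to the base spinor $\kappa_0=(1,0)$ by transitivity (Lemma \ref{TransitiveLemma}), using the matrix $A_\kappa$ of Proposition \ref{prop:KappaSLMatrix}. Its first column is $\kappa$, and it can be rescaled so that its columns are $\kappa$ and $\check{\kappa}/|\kappa|^2$ up to sign.

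\emph{Step 1: the base spinor.} At $\kappa_0=(1,0)$ the parabolic translations fixing $\kappa_0$ should be exactly
\[\begin{pmatrix} 1 & \pV\\ 0 & 1\end{pmatrix},\qquad \pV\in\para.\]
For the inclusion, take a parabolic translation $A$ with $A\kappa_0=\kappa_0$. Then its first column is $(1,0)$. The pseudo-determinant condition $a^*d-c^*b=1$ forces $d=1$, and the condition that the second column $(b,1)$ lies in $S\lip$ forces $b\in\para$. Conversely, each such upper triangular matrix satisfies $(A-1)^2=0$, and is $\neq 1$ when $\pV\neq 0$. For $\pV\neq0$ it is also of form iii) of Definition \ref{def:ParabolicTranslation}, with spinor $(\rho,0)$ where $\rho\rho^*=\pV$, as in (\ref{eqn:SqrtParavector}). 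So these matrices are parabolic translations, together with the identity at $\pV=0$. Closure under products and $\pV\mapsto\pV$ being an isomorphism onto $(\para,+)$ are then immediate, since the matrices multiply by adding upper-right entries.

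\emph{Step 2: transport to a general $\kappa$.} For general $\kappa$, let $B=A_\kappa$, so $B\kappa_0=\kappa$. The stabiliser of $\kappa$ among parabolic translations is $BP^{\kappa_0}B^{-1}$, because conjugation preserves condition i) of Definition \ref{def:ParabolicTranslation} (shown in the preceding lemma) and $A\kappa=\kappa\iff B^{-1}AB\kappa_0=\kappa_0$. It remains to compute $B\begin{pmatrix}1&\pV\\0&1\end{pmatrix}B^{-1}$, generalising the computation in the proof of ii)$\to$iii) with $1$ replaced by $\pV$. Writing $B=\begin{pmatrix}\xi&b\\ \eta&d\end{pmatrix}$ and $B^{-1}=\begin{pmatrix}d^*&-b^*\\-\eta^*&\xi^*\end{pmatrix}$, the product simplifies via Lemma \ref{lem:VariantSLConds} and Corollary \ref{cor:xistarEta} to
\[\begin{pmatrix}1-\xi\pV\eta^* & \xi\pV\xi^*\\ -\eta\pV\eta^* & 1+\eta\pV\xi^*\end{pmatrix}.\]
The $b,d$ terms cancel because they appear only through $\xi d^*-b\eta^*=1$ and $\eta d^*-d\eta^*=0$.

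\emph{Step 3: the isomorphism.} The map $\pV\mapsto P^\kappa_{\pV}$ is then conjugation composed with the Step 1 isomorphism, so it is a group isomorphism $\para\to P^{\kappa}$. Injectivity can also be checked directly: $\xi\pV\xi^*$ and $\eta\pV\eta^*$ cannot both vanish for $\pV\neq0$, since $\xi,\eta$ are not both zero and nonzero elements of $\lip$ are invertible.

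\emph{Main obstacle.} I expect the hardest step to be the noncommutative simplification in Step 2. In particular, I need to make sure the $\pV$ sits in the middle of the products, and that the variant pseudo-determinant identities are applied in the right order, given that $\pV$ does not commute with $b,d$. The $b,d$ terms only appear outside $\pV$, which is what should make this work. I also need to interpret ``fixing $\kappa$'' as $A\kappa=\kappa$ and include the identity element, as the paper's $P^\kappa$ does at $\pV=0$.
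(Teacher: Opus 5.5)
Your proposal is correct and follows the paper's proof. You identify the stabiliser of $(1,0)$ as the matrices $\begin{pmatrix}1&\pV\\0&1\end{pmatrix}$, conjugate by an $SL(2,\lip)$ matrix with first column $\kappa$ to get the stated formula, and read off the isomorphism with $\para$. Your choice of $A_\kappa$ from Proposition~\ref{prop:KappaSLMatrix} as the conjugating matrix is slightly more robust than the paper's matrix (\ref{eqn:FirstSpinorSLEmbed}), because it stays defined when $\xi$ or $\eta$ is zero.
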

\noindent\textit{Proof.} 
The subgroup of parabolic translations fixing $(1,0)$ takes the form
\[P^{(1,0)} = \left\{\begin{pmatrix}
    1 & \pV \\
    0 & 1
\end{pmatrix}\ \middle| \ \pV \in \para\right\}.\]
It is simple to check these matrices fix $(1,0)$; for the converse, take an element $A = \begin{pmatrix}
    a & b \\
    c & d
\end{pmatrix} \in SL(2,\lip)$ fixing $(1,0)$:
\[\begin{pmatrix}
    a & b \\
    c & d
\end{pmatrix} \begin{pmatrix}
    1 \\
    0
\end{pmatrix} = \begin{pmatrix}
    1 \\
    0
\end{pmatrix},\]
and we read off the equations $a=1,\ c=0$. The pseudo-determinant condition for $SL(2,\lip)$ then imposes $a^*d-c^*b = d = 1$.
But $(a,b) = (1,b)\in S\lip$ by Proposition \ref{prop:AlternateSLCharacterisation}, which implies $b \in \para$ and $A \in P^{(1,0)}$. This implies $P^{(1,0)}$ is the maximal subgroup fixing $(1,0)$, and therefore the maximal subgroup of parabolic translations fixing $(1,0)$.

For an arbitrary spinor $\kappa = (\xi,\eta) \in S\lip$ we conjugate by the matrix of (\ref{eqn:FirstSpinorSLEmbed}) to send $\kappa$ to $(1,0)$, and act by $P^{(1,0)}$.
\[\begin{pmatrix}
 \xi & -\frac{1}{2}\eta^{*-1} \\
 \eta & \frac{1}{2}\xi^{*-1}
\end{pmatrix}
\begin{pmatrix}
 1 & \pV \\
 0 & 1
\end{pmatrix}
\begin{pmatrix}
 \frac{1}{2}\xi^{-1} & \frac{1}{2}\eta^{-1} \\
 -\eta^* & \xi^*
\end{pmatrix}
\begin{pmatrix}
 \xi \\
 \eta
\end{pmatrix} =
\begin{pmatrix}
\xi \\
\eta
\end{pmatrix}.\]
The associated matrix subgroup $P^{\kappa}$ is then
\begin{equation}
\label{eqn:ParabolicTranslationsGrp}
\begin{pmatrix}
 \xi & -\frac{1}{2}\eta^{*-1} \\
 \eta & \frac{1}{2}\xi^{*-1}
\end{pmatrix}
\begin{pmatrix}
 1 & \pV \\
 0 & 1
\end{pmatrix}
\begin{pmatrix}
 \frac{1}{2}\xi^{-1} & \frac{1}{2}\eta^{-1} \\
 -\eta^* & \xi^*
\end{pmatrix} = 
\begin{pmatrix}
 1-\xi\pV\eta^* & \xi\pV\xi^* \\
 -\eta\pV\eta^* & 1+\eta\pV\xi^* 
\end{pmatrix}.
\end{equation}
If there were an element $k \notin P^{\kappa}$ fixing $\kappa$, then by conjugation we would find an element $\tilde{k}$ fixing $(1,0)$ not in $P^{(1,0)}$, a contradiction. 

It is clear that the map $\pV \to P^{\kappa}_{\pV}$ is surjective, and a simple matrix multiplication shows it is a homomorphism, so to prove this is an isomorphism we only need consider injectivity. Let $P^{\kappa}_{\pV_1} = P^{\kappa}_{\pV_2}$, and assume $\xi \neq 0$. Then the $(1,2)$ entry of equating these matrices gives $\xi\pV_1\xi^* = \xi\pV_2\xi^*$, but as $\xi \neq 0$ both $\xi$ and $\xi^*$ are elements of $\lip$ and are thus invertible, so $\pV_1 = \pV_2$. If $\xi = 0$ then $\eta \neq 0$ and we can make a similar argument from equating the $(2,1)$ entries instead.\qed
\\\\
\begin{remark}
Parabolic translation matrices can be characterised by Definition \ref{def:ParabolicTranslation} iii):
    \[A = \begin{pmatrix}
        1-\xi\eta^* & \xi\xi^* \\
        -\eta\eta^* & 1+\eta\xi^*
    \end{pmatrix} \text{ for some } (\xi,\eta) \in S\lip.\]
    Compared to this, the matrices
    \[P^{\kappa} = \left\{
\begin{pmatrix}
 1-\xi\pV\eta^* & \xi\pV\xi^* \\
 -\eta\pV\eta^* & 1+\eta\pV\xi^* 
\end{pmatrix}\ \middle|\ \pV \in \para \right\}\]
of (\ref{eqn:ParabolTrans}) (a special family of parabolic translation matrices) seem to have an extra parameter. Of course, for $P^{\kappa}$ we have $\xi,\eta$ fixed, and by a similar argument to that preceeding (\ref{eqn:SqrtParavector}) we will have $\pV = \alpha\alpha^*$ for some $\alpha \in \lip$. Then the matrices of $P^{\kappa}$ will take the form
\[\begin{pmatrix}
 1-(\xi\alpha)(\eta\alpha)^* & (\xi\alpha)(\xi\alpha)^* \\
 -(\eta\alpha)(\eta\alpha)^* & 1+(\eta\alpha)(\xi\alpha)^* 
\end{pmatrix},\]
which satisfies Definition \ref{def:ParabolicTranslation} iii) for the spinor $(\xi \alpha, \eta \alpha) \in S\lip$.
\end{remark}
\noindent A horosphere $H$ can be parameterised by taking a point $q \in H$ and looking at its image under the action of the subgroup of parabolic translations fixing $H$. We can show this for the case of $\kappa=(1,0)$ and handle all other cases by conjugation as above.
\begin{proposition}
 \label{ParabolicLemma}
 The group $P^{(1,0)}$ of parabolic translations fixing $(1,0)$ acts simply transitively on the horosphere $H_0=\phi(1,0)$.
\end{proposition}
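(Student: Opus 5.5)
We will work directly in the hyperboloid model, using the explicit form of $P^{(1,0)}$ from Lemma \ref{ParabolicSubgroupDefinition}, namely the matrices $P_{\pV}=\begin{pmatrix}1&\pV\\0&1\end{pmatrix}$ with $\pV\in\para$. First we identify $H_0$. Since $\phi_1(1,0)=p_0=\begin{pmatrix}1&0\\0&0\end{pmatrix}=(1,1;0)_{\R}$, the defining hyperplane is $\{x \mid (x|p_0)_{1,n+2}=1\}$, i.e.\ $T-Z=1$. So
\[H_0=\{(T,Z;\pX)_{\hyp}\ :\ T-Z=1,\ T^2-Z^2-|\pX|^2=1\}.\]
In matrix form (Definition \ref{defn:ParavectorHermitianMatrices}) the bottom-right entry equals $\tfrac12$ and the matrix has pseudo-determinant $\tfrac14$. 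Solving these conditions, $T+Z=1+|\pX|^2$, so $H_0$ is parametrised by $\pX\in\para$ via
\[q_{\pX}=\frac12\begin{pmatrix}1+|\pX|^2 & \pX\\ \ol{\pX} & 1\end{pmatrix}.\]
This already shows that $H_0\cong\para$.

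Next we compute the action $P_{\pV}.q_{\pX}=P_{\pV}q_{\pX}P_{\pV}^{\dagger}$, where $P_{\pV}^{\dagger}=\begin{pmatrix}1&0\\ \ol{\pV}&1\end{pmatrix}$. Multiplying out the matrices, the bottom-right entry stays $\tfrac12$ and the off-diagonal entry becomes $\tfrac12(\pX+\pV)$. The top-left entry is
\[\tfrac12\left(1+|\pX|^2+\pV\ol{\pX}+\pX\ol{\pV}+|\pV|^2\right)=\tfrac12\left(1+|\pX+\pV|^2\right),\]
using the polarisation identity of Definition \ref{defn:ParavectorInnerProduct}. Hence
\[P_{\pV}.q_{\pX}=q_{\pX+\pV}.\]
This matches the equivariance argument of Lemma \ref{phi2equivariant}: $P_{\pV}$ fixes $(1,0)$ and therefore preserves $H_0$. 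It is also consistent with the homomorphism $\pV\mapsto P_{\pV}$ being additive, as stated in Lemma \ref{ParabolicSubgroupDefinition}.

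Transitivity and freeness then follow at once from the translation formula. Given $q_{\pX},q_{\pY}\in H_0$, the element $P_{\pY-\pX}$ sends the first to the second. If $P_{\pV}.q_{\pX}=q_{\pX}$, then $\pX+\pV=\pX$, so $\pV=0$ and $P_{\pV}=1$. Injectivity of $\pV\mapsto P_{\pV}$, from Lemma \ref{ParabolicSubgroupDefinition}, guarantees there is no hidden kernel, so the action is simply transitive.

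The only step that needs any care is checking that the parametrisation $q_{\pX}$ captures all of $H_0$ and lies on the positive sheet. This holds because $T=\tfrac12(2+|\pX|^2)>0$, and because $T-Z=1$ together with the hyperboloid equation forces $T+Z=1+|\pX|^2$ uniquely. The matrix product itself is routine, provided one keeps track that the products $\pV\ol{\pX}$ only need to combine into a real dot product.
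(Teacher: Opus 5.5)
Your proposal is correct and is essentially the paper's argument. Both compute $P_{\pV}hP_{\pV}^{\dagger}$ directly in the paravector Hermitian matrix model; the paper sends an arbitrary $h\in H_0$ to $q_0$ via $P_{-\pX}$ for transitivity and reads freeness off the $(1,2)$ entry $\pX+\pV$, while your translation formula $P_{\pV}.q_{\pX}=q_{\pX+\pV}$ packages both steps into a single computation.
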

\noindent \textit{Proof.} To show transitivity, it is enough to show every point $h \in H_0$ can be sent to the point $q_0=(1,0;0,...,0)_{\hyp} \in H_0$; we can then invert and multiply elements of $P^{(1,0)}$ to make a map sending any point to any other. 

Let
\[h = \frac{1}{2}\begin{pmatrix}
T+Z & \pX \\
\ol{\pX} & T-Z
\end{pmatrix} \in H_0.\]
By definition it satisfies $(h|\phi_1(1,0))_{1,n+2} = T-Z = 1$. We apply the parabolic transformation with $\pV =-\ol{X}$:
\begin{align*}
P^{(1,0)}_{-\pX}.h %&=\frac{1}{2}\begin{pmatrix}
% 1 & -\pX \\
% 0 & 1
%\end{pmatrix}
%\begin{pmatrix}
%T+Z & \pX \\
%\ol{\pX} & 1
%\end{pmatrix}
%\begin{pmatrix}
% 1 & 0 \\
% -\ol{\pX} & 1
%\end{pmatrix}\\
&= \frac{1}{2}
\begin{pmatrix}
(T+Z)-|\pX|^2 & 0 \\
0 & 1
\end{pmatrix}.
\end{align*}
As the point $h$ is in the hyperboloid model and $T-Z=1$ we know $|h|^2 = 4\text{pdet}(h) = (T+Z)-|X|^2=1$, which tells us $h$ was sent to $q_0$, as desired.
\[P^{(1,0)}_{-\pX}.h = \begin{pmatrix}
 1 & 0 \\
 0 & 1
\end{pmatrix}=q_0.\]
To prove the action is free, consider the action of an arbitrary element $P^{(1,0)}_{\pV } \in P^{(1,0)}$ on $h \in H_0$: 
\begin{align*}
P^{(1,0)}_{\pV }.h %&= \frac{1}{2}\begin{pmatrix}
% 1 & \pV \\
% 0 & 1
%\end{pmatrix}
%\begin{pmatrix}
%T+Z & \pX \\
%\ol{\pX} & 1
%\end{pmatrix}
%\begin{pmatrix}
% 1 & 0 \\
% \ol{\pV } & 1
%\end{pmatrix}\\
&= \frac{1}{2}
\begin{pmatrix}
(T+Z)+\pX\ol{\pV } + \ol{\pX\ol{\pV }} + |\pV |^2 & \pX+\pV \\
\ol{\pX} + \ol{\pV } & 1
\end{pmatrix}.
\end{align*}
For $h$ to be a fixed point of the transformation, we require
\[\begin{pmatrix}
T+Z & \pX \\
\ol{\pX} & T-Z
\end{pmatrix} = \begin{pmatrix}
(T+Z)+\pX\ol{\pV } + \ol{\pX\ol{\pV }} + |\pV |^2 & \pX+\pV \\
\ol{\pX} + \ol{\pV } & 1
\end{pmatrix},\]
and equating the $(1,2)$ entries implies $\pV =0$ and $P^{(1,0)}_{\pV }$ is the identity transformation. \qed

\begin{corollary}
    The group $P^{\kappa}$ acts simply transitively on the horosphere $H_{\kappa} = \phi(\xi,\eta)$.
\end{corollary}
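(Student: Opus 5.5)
The plan is to transport Proposition \ref{ParabolicLemma} from $(1,0)$ to an arbitrary spinor $\kappa=(\xi,\eta)$ by conjugation. By Lemma \ref{TransitiveLemma}, more concretely by the matrix of (\ref{eqn:FirstSpinorSLEmbed}) (or $A_\kappa$ of Proposition \ref{prop:KappaSLMatrix}), there is $B\in SL(2,\lip)$ with $B(1,0)^T=\kappa$. The computation (\ref{eqn:ParabolicTranslationsGrp}) in the proof of Lemma \ref{ParabolicSubgroupDefinition} then gives
\[P^{\kappa}_{\pV}=B\,P^{(1,0)}_{\pV}\,B^{-1},\]
so $P^{\kappa}=BP^{(1,0)}B^{-1}$ as groups.

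Next I identify the horosphere. Equivariance of $\phi_1$ (equation (\ref{eqn:EquivarOfphi1})) and of $\phi_2$ (Lemma \ref{phi2equivariant}) show that
\[H_\kappa=\phi(B(1,0))=B.\phi(1,0)=B.H_0,\]
where $B$ acts on $\hyp^{n+2}$ by the isometric action $q\mapsto BqB^{\dagger}$ of Definition \ref{def:ActionOnHyperbolic}.

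Transitivity then follows directly. Given $h_1,h_2\in H_\kappa$, the points $B^{-1}.h_1$ and $B^{-1}.h_2$ lie in $H_0$. Proposition \ref{ParabolicLemma} supplies $P^{(1,0)}_{\pV}$ with $P^{(1,0)}_{\pV}.(B^{-1}.h_1)=B^{-1}.h_2$. Because the conjugation action is a group action ($(AC).q=A.(C.q)$ since $(AC)^\dagger=C^\dagger A^\dagger$), applying $B$ gives $P^\kappa_{\pV}.h_1=h_2$.

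Freeness follows the same way. If $P^\kappa_{\pV}.h=h$ for some $h\in H_\kappa$, then $P^{(1,0)}_{\pV}$ fixes $B^{-1}.h\in H_0$. Proposition \ref{ParabolicLemma} then forces $\pV=0$, so $P^\kappa_{\pV}=I$ by the isomorphism $\pV\mapsto P^\kappa_{\pV}$ of Lemma \ref{ParabolicSubgroupDefinition}.

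The one step needing care is the choice of $B$. The matrix (\ref{eqn:FirstSpinorSLEmbed}) requires $\xi,\eta$ both invertible, so when $\xi=0$ or $\eta=0$ I would use $A_\kappa$ of Proposition \ref{prop:KappaSLMatrix} instead, since it is defined for every $\kappa$. I would then check that $A_\kappa P^{(1,0)}_{\pV}A_\kappa^{-1}$ still produces the matrix (\ref{eqn:ParabolTrans}). Its first column is $\kappa$ and its pseudo-determinant is $1$, so the ii)$\to$iii) computation of the preceding lemma carries over essentially verbatim. Everything else is formal once it is noted that the action is a genuine action by isometries preserving horospheres.
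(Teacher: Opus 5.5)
Your proof is correct and is essentially the paper's argument: conjugate $P^{(1,0)}$ to $P^{\kappa}$ by a matrix whose first column is $\kappa$, identify $H_\kappa = B.H_0$ using the equivariance of $\phi_1$ and $\phi_2$, and transport the simple transitivity of Proposition \ref{ParabolicLemma}. Your switch to $A_\kappa$ from Proposition \ref{prop:KappaSLMatrix} also closes a small gap in the paper: the conjugating matrix (\ref{eqn:FirstSpinorSLEmbed}) that the paper uses is undefined when $\xi=0$ or $\eta=0$, whereas $A_\kappa$ works for every $\kappa$, and the conjugation computation depends only on the first column once the matrix lies in $SL(2,\lip)$.
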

\noindent\textit{Proof.} The group $P^{\kappa}$ is just a conjugation of $P^{(1,0)}$. Factored as in the left-hand side of (\ref{eqn:ParabolicTranslationsGrp}), it maps $(\xi,\eta)$ to $(1,0)$, acts by $P^{(1,0)}$, then maps it back to $(\xi,\eta)$. By Lemma \ref{phi2equivariant} and (\ref{eqn:EquivarOfphi1}) the map $\phi = \phi_2 \circ \phi_1$ is equivariant with respect to the $SL(2,\lip)$ actions, so $A$ also maps $\phi(\xi,\eta)$ to $\phi(1,0)$ bijectively. By Proposition \ref{ParabolicLemma} the action of $P^{(1,0)}$ on $\phi(1,0)$ is simply transitive, which taken with the bijective maps between the horospheres implies $P^{\kappa}$ acts simply transitively on $\phi(\xi,\eta)$. \qed

\subsection{The Upper Half-Space Model}
\label{sec:UpperHalfspace}
For explicit computations, the upper half-space model
\[\U^{n+2} = \{(z;x_0,x_1,...,x_n)_{\U} \in \R^{n+2}\ |\ z>0\} \text{ with metric } ds^2 = \frac{dz^2+dx_0^2+dx_1^2+...+dx_n^2}{z^2}\]
gives clean expressions for the spinor-horosphere correspondence and the action of $SL(2,\lip)$. We identify\footnote{Thess identifications follows naturally from \cite{Ahlfors1985}, where he similarly identifies the upper half-space and its boundary with paravector spaces to allow the action of $SL(2,\lip)$ to extend to hyperbolic space.} $\R^{n+2}$ with $\$\R^{n+2}$, the hyperplane $z=0$ with $\para \subset \$\R^{n+2}$ (treating $z$ as the coefficient of $i_{n+1}$), and $\partial\U^{n+2}$ with $\cpara$. The coordinates on the boundary $(x_0,x_1,...,x_n)_{\partial\U}$ can then be given as a paravector $\textbf{x} = x_0+\sum_{j=1}^nx_ji_j \in \para$. 

Horospheres in this model either appear as horizontal hyperplanes if centred at $\infty$ (in which case we refer to the z-coordinate of the hyperplane as the \textit{Euclidean height}) or as $(n+1)$-spheres tangent at points on the boundary; that point of tangency is the centre of the horosphere. Call the maximum value of $z$ on such an $(n+1)$-sphere the \textit{Euclidean diameter} of the horosphere.

\subsubsection{Maps Between Models}
\label{sec:MapsBetweenModels}
We can map the hyperboloid model to the conformal ball model $\mathbb{B}^{n+2}$ via projection:
\[\pi_1:\ \hyp^{n+2} \rightarrow \mathbb{B}^{n+2},\ (T,Z;X_0,X_1,...,X_n)_{\hyp} \to \frac{1}{1+T}(Z;X_0,X_1,...,X_n)_{\mathbb{B}} = (w;y_0,y_1,...,y_n)_{\mathbb{B}},\]
where we introduce coordinates $w,y_0,...,y_n$ on $\B^{n+2}$, thought of as 
\begin{equation}
\label{eqn:BallModelParavCoords}
(w;y_0,y_1,...,y_n)_{\mathbb{B}} \to y_0 + \sum_{j=1}^n y_ji_j + wi_{n+1} \in \$\R^{n+2}.
\end{equation}
We can extend this map to the boundary; the sphere at infinity is given by projectivising the light-cone, which we take advantage of to define a map $\partial\hyp^{n+2} \to \partial\B^{n+2}$ as a map from the light-cone to the boundary sphere of the ball model.\footnote{This map is constant on rays of $L^+$, and so descends to $\mathscr{S}^+$.}
\[\pi_1^{\partial}:\ L^+ \to \partial \mathbb{B}^{n+2},\ (T,Z;X_0,X_1,...,X_n)_{\partial\hyp} \to \frac{1}{T}(Z;X_0,X_1,...,X_n)_{\partial\mathbb{B}} = (w;y_0,y_1,...,y_n)_{\partial\mathbb{B}}.\]
The image of $\pi_1^{\partial}$ satisfies a relation induced by the defining relation $|p|^2=0$ for the light-cone:
\[w^2+y_0^2 + y_1^2+...+y_n^2=1,\]
which confirms that the boundary really does map to the unit hypersphere in $\$\R^{n+2}$. \label{not:pi2partial}We then map to the upper half-space model via stereographic projection on the boundary:
\[\pi_2^{\partial}:\ \partial \mathbb{B}^{n+2} \rightarrow \partial \U^{n+2},\ (w;y_0,y_1,...,y_n)_{\partial\mathbb{B}} \to \frac{1}{1-w}\left(y_0+\sum_{j=1}^ny_ji_j\right) = x_0+\sum_{j=1}^nx_ji_j,\]
where $x_0+\sum_{j=1}^nx_ji_j = \textbf{x}$ are the paravector coordinates on $\partial\U^{n+2}$ introduced above. \label{not:pipartial}Let 
\[\pi^{\partial}: L^+ \to \partial \U^{n+2},\ \pi^{\partial} = \pi_2^{\partial} \circ \pi_1^{\partial}\]
be the boundary map from the hyperboloid model to upper half-space.

$\pi_2^{\partial}$ extends to an isometry $\pi_2$ on the interior, which we describe explicitly below. However, we can also compute what we need on the boundary by considering the action on the endpoints of intersecting geodesics; simply take two straight lines and see where their endpoints on $\partial \U^{n+2}$ get mapped to under the action, and you can track how their intersection moves. 

Geodesics in hyperbolic space can be specified by the two points where they meet the boundary at right angles. Thus, an oriented geodesic $\gamma \in \U^{n+2}$ can be specified by an ordered pair of paravectors $\pX,\pY$ (with the possibility one is infinite) in $\partial\U^{n+2} \cong \cpara$. 
\begin{definition}
\label{def:GeodesicsInHyp}
 The geodesic $\gamma(\pX \to \pY)$ is defined to be the unique geodesic in $\U^{n+2}$ that intersects the boundary at $\pX$ and $\pY$, oriented from $\pX$ to $\pY$. The geodesic $\gamma(w_1;\textbf{x} \to w_2;\textbf{y})$ is similarly the oriented geodesic from $(w_1;x_0,...,x_n)_{\partial \mathbb{B}}$ to $(w_2;y_0,...,y_n)_{\partial \mathbb{B}}$ in the ball model. \null\hfill\defn
\end{definition}
\noindent On the final page of \cite{Ahlfors1985}, Ahlfors defines a generalised Cayley mapping $\mathfrak{S}$ from the upper half-space model to the ball model, after identifying each with a subset of $\$\R^{n+2}$ appropriately. In the case of the ball model, this identification is given by (\ref{eqn:BallModelParavCoords}); the identification for $\U^{n+2}$ is given in the discussion of Section \ref{sec:UpperHalfspace}.
\begin{definition}
\label{defn:GeneralisedCayleyTransform}
For a model $M$ of hyperbolic space, let $\widehat{M} = M \cup \partial M$ be the closure of the model. The generalised Cayley transform $\mathfrak{S}$ is the isometry defined by:
\[\mathfrak{S}:\ \widehat{\U}^{n+2} \to \widehat{\B}^{n+2}, \ \mathfrak{S}(x) = (x-i_{n+1})(x+i_{n+1})^{-1}.\]
It is invertible, with inverse given by
\[\mathfrak{S}^{-1}:\ \widehat{\B}^{n+2} \to \widehat{\U}^{n+2},\ \mathfrak{S}^{-1}(x) = (1-x)^{-1}(1+x)i_{n+1}.\tag*{\defn}\]
\end{definition}
\noindent As written, this map doesn't agree on the boundary with the map $\pi_2^{\partial}$. But, by adjusting it, we can align the maps.
\begin{definition}
\label{def:pi2}
Our map from the ball model to the upper half-space model is given by
 \[\pi_2:\ \widehat{\B}^{n+2} \to \widehat{\U}^{n+2},\ \pi_2(x) =\frac{|1+i_{n+1}\ol{x}|^2}{|1-i_{n+1}\ol{x}|^2}\mathfrak{S}^{-1}(-i_{n+1}\ol{x}).\tag*{\raisebox{-0.8em}{\defn}}\]
\end{definition}
\begin{proposition}
$\pi_2^{\partial}(W) = \pi_2(W)$, for all $W \in \partial\B^{n+2}$.
\end{proposition}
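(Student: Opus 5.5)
The plan is a direct computation. Fix a boundary point $W \in \partial\B^{n+2}$ and write it, via (\ref{eqn:BallModelParavCoords}), as $W = \textbf{y} + w\,i_{n+1}$ with $\textbf{y} = y_0 + \sum_{j=1}^n y_ji_j \in \para$ and $w^2 + |\textbf{y}|^2 = 1$. I will evaluate $\pi_2(W)$ from Definition \ref{def:pi2} and check that it equals $\pi_2^{\partial}(W) = \textbf{y}/(1-w)$. The whole argument rests on three rules for $e := i_{n+1}$:
\begin{itemize}
\item $e^2 = -1$ and $\ol{e} = -e$;
\item $e\,\textbf{z} = \textbf{z}'\,e$ for every $\textbf{z}$ built only from $1, i_1, \dots, i_n$, since $e$ anticommutes with each of $i_1, \dots, i_n$ (the same commutation used in Lemma \ref{lem:LounestoCayleyDickson});
\item $\textbf{y}\textbf{y}' = \textbf{y}\ol{\textbf{y}} = |\textbf{y}|^2$ for $\textbf{y} \in \para$.
\end{itemize}
The point $w = 1$ is the north pole, where both maps should send $W$ to $\infty$. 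I will treat it separately and assume $w \neq 1$ in the main computation.

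First I compute the argument of $\mathfrak{S}^{-1}$. Since $\ol{W} = \ol{\textbf{y}} - w e$, we get $e\ol{W} = e\ol{\textbf{y}} + w$. Because $\ol{\textbf{y}} = \textbf{y}'$, this gives $e\ol{\textbf{y}} = \textbf{y}e$, so
\[u := -e\ol{W} = -w - \textbf{y}e.\]
Next I expand $\mathfrak{S}^{-1}(u) = (1-u)^{-1}(1+u)e$ using Definition \ref{defn:GeneralisedCayleyTransform}. Put $a := 1 - u = (1+w) + \textbf{y}e$. This is a paravector of $\$\R^{n+2}$, so its inverse is $\ol{a}/N(a)$, with $\ol{a} = (1+w) - \textbf{y}e$. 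I expect
\[N(a) = (1+w)^2 - \textbf{y}e\textbf{y}e = (1+w)^2 + |\textbf{y}|^2 = 2(1+w),\]
using the unit-sphere relation in the last step. The other factor is $(1+u)e = (1-w)e + \textbf{y}$. Multiplying out $\ol{a}\,\bigl((1-w)e + \textbf{y}\bigr)$, the $e$-components should collect into $(1 - w^2 - |\textbf{y}|^2)e = 0$. What survives is $2\textbf{y}$, so $\mathfrak{S}^{-1}(u) = \textbf{y}/(1+w)$.

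Finally I compute the scalar prefactor. Its numerator is $|1 + e\ol{W}|^2 = |1-u|^2 = N(a) = 2(1+w)$. Its denominator is $|1 - e\ol{W}|^2 = |1+u|^2 = |(1-w) - \textbf{y}e|^2$, which the same calculation gives as $(1-w)^2 + |\textbf{y}|^2 = 2(1-w)$. The prefactor is therefore $(1+w)/(1-w)$, and
\[\pi_2(W) = \frac{1+w}{1-w}\cdot\frac{\textbf{y}}{1+w} = \frac{\textbf{y}}{1-w} = \pi_2^{\partial}(W).\]
At $w = 1$ we have $\textbf{y} = 0$, the denominator of the prefactor vanishes, and both sides are $\infty$, consistent with the compactification $\cpara$.

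The main obstacle is the noncommutative bookkeeping in the middle step. One has to track the sign produced each time $e$ is moved past $\textbf{y}$ or $\ol{\textbf{y}}$, so that $e\ol{\textbf{y}}$ is correctly rewritten as $\textbf{y}e$ and the cancellation of the $e$-terms actually occurs. I will also have to check that $(1-u)^{-1}$ is to be placed on the left exactly as Definition \ref{defn:GeneralisedCayleyTransform} prescribes; here it happens to be harmless, because the relevant product comes out as a paravector.
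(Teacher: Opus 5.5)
Your computation is correct away from the poles and is essentially the paper's argument. Both proofs move $i_{n+1}$ past $\textbf{y}$, so that $i_{n+1}\ol{\textbf{y}}=\textbf{y}i_{n+1}$, and then use $w^2+|\textbf{y}|^2=1$.

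The one structural difference is the order of operations. The paper first cancels the numerator $|1+i_{n+1}\ol{W}|^2$ of the prefactor against $(1+i_{n+1}\ol{W})^{-1}=\ol{(1+i_{n+1}\ol{W})}/|1+i_{n+1}\ol{W}|^2$. In your notation this gives $\pi_2(W)=\ol{a}\,(1+u)i_{n+1}/|1+u|^2$. You instead evaluate $\mathfrak{S}^{-1}(u)$ and the prefactor separately.

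That choice opens a small gap. You divide by $N(a)=2(1+w)$, which vanishes at the south pole $W=-i_{n+1}$. There $\mathfrak{S}^{-1}(u)=\mathfrak{S}^{-1}(1)=\infty$ while the prefactor is $0$. Since you only set aside $w=1$, your argument as written says nothing at $w=-1$. Cancelling first removes the problem, because the resulting expression is regular there. Your own algebra then gives $\ol{a}(1+u)i_{n+1}=2\textbf{y}$ and $|1+u|^2=2(1-w)$. Hence $\pi_2(W)=\textbf{y}/(1-w)$ for every $w\neq 1$, including $\pi_2(-i_{n+1})=0=\pi_2^{\partial}(-i_{n+1})$.

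Two lesser points:
\begin{itemize}
\item The element $a=(1+w)+\textbf{y}i_{n+1}$ is not a paravector of $\$\R^{n+2}$, because the terms $y_j i_j i_{n+1}$ are bivectors. The step $a^{-1}=\ol{a}/(a\ol{a})$ still holds: your explicit computation shows $a\ol{a}=(1+w)^2+|\textbf{y}|^2$ is real. Alternatively, $a=i_{n+1}(\ol{W}-i_{n+1})$ is a product of paravectors.
\item At the north pole the prefactor is infinite while $\mathfrak{S}^{-1}(u)=0$, and $\pi_2^{\partial}(i_{n+1})$ is itself of the form $0/0$. So ``both sides are $\infty$'' holds only by continuity into $\cpara$. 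The paper also passes over this point.
\end{itemize}
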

\noindent\textit{Proof.} We apply both sides to the same generic point $W \in \partial \B^{n+2}$:
\begin{equation}
 \label{eqn:WinPartialB}
W = y_0 + \sum_{j=1}^ny_ji_j + wi_{n+1} \in \partial \B^{n+2}.
\end{equation}
Taking our modified Cayley transform $\pi_2$, we substitute the map $\mathfrak{S}^{-1}$ and expand some brackets.
\begin{align*}
\pi_2(W) %&= \frac{|1+i_{n+1}\ol{W}|^2}{|1-i_{n+1}\ol{W}|^2} (1+i_{n+1}\ol{W})^{-1}(1-i_{n+1}\ol{W})i_{n+1}\\
&= \frac{-1}{|1-i_{n+1}\ol{W}|^2} (Wi_{n+1}+i_{n+1}\ol{W})i_{n+1},
\end{align*}
where we use that $|W|=1$ for $W \in \partial\B^{n+2}$. If we expand $W$ using (\ref{eqn:WinPartialB}), we can ``commute'' the $i_{n+1}$ terms around (flipping the signs of $i_j$ with $1 \leq j \leq n$), and simplify further.
\begin{align*}
\pi_2(W) &= \frac{2}{|1-i_{n+1}\ol{W}|^2} \left(y_0 + \sum_{j=1}^ny_ji_j\right).
\end{align*}
We then expand the scaling factor out front and expand $W$ with (\ref{eqn:WinPartialB}), which gives us
%\begin{align*}
%\pi_2(W) &= \frac{2}{(2-i_{n+1}\ol{W}+Wi_{n+1})} \left(y_0 + %\sum_{j=1}^ny_ji_j\right).
%\end{align*}
%After again expanding $W$ with (\ref{eqn:WinPartialB}), this simplifies to
\begin{align*}
\pi_2(W) &= \frac{1}{1 - w} \left(y_0 + \sum_{j=1}^ny_ji_j\right) = \pi_2^{\partial}(W). \tag*{\raisebox{-1em}{\qed}}
\end{align*}
\noindent Because this map agrees with $\pi_2^{\partial}$ on the boundary, and isometries extend to the interior of hyperbolic space uniquely, this map is equivalent to the map given by looking at intersecting geodesics. However, while this is an explicit construction, in practice it often proves easier to apply the boundary map and look at intersecting geodesics to track points in the interior.
\begin{proposition}
\label{prop:PiTwoComponentForm}
    Given a point $W = (w;y_0,y_1,...,y_n) \in \B^{n+2}$, the map $\pi_2$ can be written explicitly in component form as
    \[\pi_2(W) = \frac{(1-|W|^2;2y_0,2y_1,...,2y_n)_{\U}}{1-2w+|W|^2},\]
    or purely in terms of the coordinates:
    \[\pi_2(W) = \frac{\left(1-\left(\sum_{j=0}^ny_j^2 + w^2\right);2y_0,2y_1,...,2y_n\right)_{\U}}{\sum_{j=0}^ny_j^2 + (w-1)^2}.\]
\end{proposition}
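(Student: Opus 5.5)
The plan is a direct computation from Definition \ref{def:pi2}, mirroring the boundary computation in the preceding proposition but without the simplification $|W|=1$. Write $W = \textbf{y} + w i_{n+1}$ with $\textbf{y} = y_0+\sum_{j=1}^n y_j i_j \in \para$. Every element in sight is a paravector of $\$\R^{n+2}$ (products of two such are invertible with the usual inverse $\ol{a}/|a|^2$). The first step is to compute $-i_{n+1}\ol{W}$ and the scalar prefactor $|1+i_{n+1}\ol{W}|^2/|1-i_{n+1}\ol{W}|^2$. Here I use that $i_{n+1}$ anticommutes with $i_1,\dots,i_n$, so $i_{n+1}\ol{\textbf{y}} = \textbf{y}' i_{n+1}$-type identities let me move $i_{n+1}$ through. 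Concretely, $i_{n+1}\ol{W} = i_{n+1}\ol{\textbf{y}} + w$, and its norm is easy: $|1\pm i_{n+1}\ol{W}|^2 = |W|^2 \pm 2w + 1$ after expanding (the cross term $\text{Re}(i_{n+1}\ol{W})=w$).

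Next I would expand $\mathfrak{S}^{-1}(x) = (1-x)^{-1}(1+x)i_{n+1}$ at $x=-i_{n+1}\ol{W}$. I would write $(1+i_{n+1}\ol{W})^{-1} = \ol{(1+i_{n+1}\ol{W})}/|1+i_{n+1}\ol{W}|^2$. After multiplying by the prefactor, this gives
\[
\pi_2(W) = \frac{\ol{(1+i_{n+1}\ol{W})}\,(1-i_{n+1}\ol{W})\,i_{n+1}}{|1-i_{n+1}\ol{W}|^2}.
\]
The denominator is $1-2w+|W|^2$, as required.

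The main step, and the one I expect to be the only real obstacle (sign bookkeeping), is expanding the numerator $N=(1+W i_{n+1}')\ldots$. More carefully, $\ol{i_{n+1}\ol{W}} = W\ol{i_{n+1}} = -W i_{n+1}$, so
\[
N = (1 - W i_{n+1})(1 - i_{n+1}\ol{W})\,i_{n+1} = \bigl(1 - W i_{n+1} - i_{n+1}\ol{W} + W i_{n+1} i_{n+1}\ol{W}\bigr)i_{n+1}.
\]
Using $i_{n+1}^2=-1$, the last term is $-|W|^2$. For the middle terms, I would substitute $W=\textbf{y}+wi_{n+1}$ and commute $i_{n+1}$ past $\textbf{y}$ (sending $\textbf{y}\mapsto\ol{\textbf{y}}$, since $\textbf{y}'=\ol{\textbf{y}}$ for paravectors). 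This yields $W i_{n+1} + i_{n+1}\ol{W} = -2w + 2 i_{n+1}\ol{\textbf{y}}$-type terms; I would check this combination carefully. The result should be
\[
N = (1-|W|^2)i_{n+1} + 2\textbf{y} + (\text{terms cancelling}),
\]
which in the coordinates $(z;\textbf{x})_{\U}$ reads $(1-|W|^2;2y_0,\dots,2y_n)_{\U}$. As a consistency check, when $|W|=1$ this must reduce to the boundary formula of the previous proposition, since $2/(2-2w)=1/(1-w)$.

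Finally, the second displayed form follows immediately from substituting $|W|^2=\sum_j y_j^2+w^2$. This gives denominator $1-2w+|W|^2 = \sum_j y_j^2 + (w-1)^2$.
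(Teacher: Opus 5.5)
Your route is the paper's: rewrite $\pi_2(W)$ as $\bigl(1-|W|^2-(Wi_{n+1}+i_{n+1}\ol{W})\bigr)i_{n+1}/|1-i_{n+1}\ol{W}|^2$, evaluate the denominator as $1-2w+|W|^2$, and substitute $W=\textbf{y}+wi_{n+1}$ in the numerator. The one step you must fix is the one you flagged, the evaluation of $Wi_{n+1}+i_{n+1}\ol{W}$.

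There is no $-2w$ there; the $w$-terms cancel. We have $Wi_{n+1}=\textbf{y}i_{n+1}-w$, and $i_{n+1}\ol{W}=i_{n+1}\ol{\textbf{y}}+w=\textbf{y}i_{n+1}+w$. Hence $Wi_{n+1}+i_{n+1}\ol{W}=2\textbf{y}i_{n+1}$, and the numerator is $(1-|W|^2)i_{n+1}-2\textbf{y}i_{n+1}^2=(1-|W|^2)i_{n+1}+2\textbf{y}$. The $-2w$ belongs to the difference $Wi_{n+1}-i_{n+1}\ol{W}$, which is what enters the denominator.

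With your tentative expression, the numerator would carry an extra $2w\,i_{n+1}$ that nothing cancels. The $z$-coordinate would then be $(1-|W|^2+2w)/(1-2w+|W|^2)$, which is generally nonzero on $|W|=1$. Your own boundary consistency check would have caught this, and the "(terms cancelling)" is only legitimate once the correction is made.

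A minor point: $1\pm i_{n+1}\ol{W}$ are not paravectors but products of two, namely $1\pm i_{n+1}\ol{W}=\pm i_{n+1}(\ol{W}\mp i_{n+1})$. This is what justifies the inverse formula $a^{-1}=\ol{a}/|a|^2$ that you use.
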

\noindent\textit{Proof.} We expand $W$ as 
\begin{equation}
\label{eqn:WBallGeneric}
W = y_0 + \sum_{j=1}^ny_ji_j + wi_{n+1} \in \B^{n+2}, 
\end{equation}
and compute:
\begin{align*}
\pi_2(W) %&= \frac{|1+i_{n+1}\ol{W}|^2}{|1-i_{n+1}\ol{W}|^2} (1+i_{n+1}\ol{W})^{-1}(1-i_{n+1}\ol{W})i_{n+1}\\
&= \frac{1}{|1-i_{n+1}\ol{W}|^2} (1-|W|^2-(Wi_{n+1}+i_{n+1}\ol{W}))i_{n+1}.
\end{align*}
We substitute (\ref{eqn:WBallGeneric}) to allow some manoeuvring, and we find: 
\begin{align*}
\pi_2(W) %&= \frac{1}{|1-i_{n+1}\ol{W}|^2} \left(1-|W|^2-\left(\left(y_0 + \sum_{j=1}^ny_ji_j + wi_{n+1}\right)i_{n+1}+i_{n+1}\ol{\left(y_0 + \sum_{j=1}^ny_ji_j + wi_{n+1}\right)}\right)\right)i_{n+1}\\
&= \frac{1}{|1-i_{n+1}\ol{W}|^2} \left((1-|W|^2)i_{n+1} + 2\left(y_0 + \sum_{j=1}^ny_ji_j\right)\right).
\end{align*}
We can expand and simplify the denominator:
\begin{align*}
|1-i_{n+1}\ol{W}|^2 %&= (1-i_{n+1}\ol{W})(1+Wi_{n+1}) \\ 
%&= (1-i_{n+1}\ol{W}+Wi_{n+1}-i_{n+1}|W|^2i_{n+1})\\ 
&= (1 + |W|^2 - i_{n+1}\ol{W} + Wi_{n+1}).
\end{align*}
Expand $W$ in terms of components once more to find $|1-i_{n+1}\ol{W}|^2 = 1 - 2w +|W|^2$. 
%\begin{align*}
%|1-i_{n+1}\ol{W}|^2 %&= \left(1+|W|^2-i_{n+1}\left(y_0 - \sum_{j=1}^ny_ji_j - wi_{n+1}\right)+\left(y_0 + \sum_{j=1}^ny_ji_j + wi_{n+1}\right)i_{n+1}\right) \\
%&= 1 - 2w +|W|^2.
%\end{align*}
In total, we have
\begin{align*}
\pi_2(W) &= \frac{2\left(y_0 + \sum_{j=1}^ny_ji_j\right) + (1-|W|^2)i_{n+1}}{1 - 2w +|W|^2},
%&= \frac{(1-|W|^2;2y_0,2y_1,...,2y_n)_{\U}}{1-2w+|W|^2}\\
%&= \frac{\left(1-\left(\sum_{j=0}^ny_j^2 + w^2\right);2y_0,2y_1,...,2y_n\right)_{\U}}{\sum_{j=0}^ny_j^2 + (w-1)^2}.\tag*{\qed}
\end{align*}
which can be rearranged into the forms given in the proposition statement. \hfill\qed
\\\\
\label{not:pi}We combine our maps between models to give a map from the hyperboloid directly to the upper half-space model. 
\[\pi: \hyp^{n+2} \to \U^{n+2},\ \pi = \pi_2 \circ \pi_1.\]

\subsubsection{The Action of $SL(2,\lip)$ on Upper Half-Space}
The action of $SL(2,\lip)$ on $\U^{n+2}$ is, by Definition \ref{def:ActionOnHyperbolic}, inherited from the action on the hyperboloid model in Minkowski space. That is, for $p \in \partial\U^{n+2}$ we define $A.p = \pi^{\partial}\left(A.(\pi^{\partial})^{-1}(p)\right)$, and we then extend the map on the boundary to the interior by considering the action on the endpoints of geodesics. But we have another, rather more natural action on $\U^{n+2}$, defined by letting $SL(2,\lip)$ act on the boundary as M\"{o}bius transformations.
\begin{lemma}
\label{UpperHalfspaceMobiusTransformationsLemma}
 The action of $SL(2,\lip)$ on $\partial\U^{n+2}$ inherited from the action on $\partial\hyp^{n+2}$ is equivalent to the standard action by M\"{o}bius transformations.
\end{lemma}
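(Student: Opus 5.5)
The plan is to show that the composite $\pi^{\partial}\circ\phi_1: S\lip \to \partial\U^{n+2}$ is just the ratio map $\mathscr{R}(\xi,\eta)=\xi\eta^{-1}$. Once that identity is in hand, equivariance follows formally from results already proved. By Lemma \ref{Phi1SurjectLightConeLemma}, every point of $L^+$ has the form $\phi_1(\kappa)$. Since $\pi^{\partial}$ is constant on rays, every point of $\partial\U^{n+2}$ is $\pi^{\partial}(\phi_1(\kappa))$ for some $\kappa$. The inherited action is then
\[A.\pi^{\partial}(\phi_1(\kappa)) = \pi^{\partial}(A.\phi_1(\kappa)) = \pi^{\partial}(\phi_1(A\kappa))\]
by (\ref{eqn:EquivarOfphi1}). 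If $\pi^{\partial}\circ\phi_1=\mathscr{R}$, this equals $\mathscr{R}(A\kappa)$. Lemma \ref{lem:SL2DefinesSpinorAction} identifies $\mathscr{R}(A\kappa)$ with the M\"obius image $(a\pV+b)(c\pV+d)^{-1}$ of $\pV=\xi\eta^{-1}$. One should also note that the inherited action is well defined independent of the choice of preimage, because $\phi_1(\kappa)$ for different $\kappa$ over the same ray differ by positive scaling, and the action on $\R^{1,n+2}$ is linear.

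The key computation comes first. Write $\phi_1(\kappa)$ with $T=|\xi|^2+|\eta|^2$, $Z=|\xi|^2-|\eta|^2$, and $\pX=2\xi\ol{\eta}$, read off from Definition \ref{defn:BasepointMap}. Then $\pi_1^{\partial}$ gives $w=Z/T$ and paravector part $\pX/T$. Next, $1-w = 2|\eta|^2/T$, so $\pi_2^{\partial}$ yields
\[\frac{\pX/T}{2|\eta|^2/T} = \frac{2\xi\ol{\eta}}{2|\eta|^2} = \xi\ol{\eta}\,|\eta|^{-2} = \xi\eta^{-1},\]
using $\eta^{-1}=\ol{\eta}|\eta|^{-2}$ for $\eta\in\lip$. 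I must take care with the convention that the $(1,2)$ entry of the matrix is $\tfrac12\pX$, so that the factors of $2$ cancel correctly.

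The degenerate case $\eta=0$ needs separate handling. Here $w=1$ and $\pi_2^{\partial}$ sends the point to $\infty$, consistent with $\mathscr{R}(\xi,0)=\infty$. I also need the M\"obius action at $\infty$, and at points sent to $\infty$, to be interpreted via the spinor picture. Lemma \ref{lem:SL2DefinesSpinorAction} handles this, since it works with $(\xi,\eta)$ directly and the column $(a\xi,c\xi)$ gives $a c^{-1}$ or $\infty$.

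I expect the main obstacle to be bookkeeping rather than conceptual: matching the ordering of the factors $\xi\ol{\eta}$ against $\ol{\eta}\xi$, and checking that $\pi^{\partial}$ is genuinely a bijection $\mathscr{S}^+\to\cpara$. The bijection is needed so that the inherited action makes sense. It follows from $\pi_1^{\partial}$ being a diffeomorphism onto the unit sphere together with stereographic projection.
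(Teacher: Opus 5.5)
Your proof is correct, but it is organised differently from the paper's.

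The paper works directly on the light-cone. It writes out the coordinates $(T',Z';\pY)$ of $A.p$ for a general $p=(T,Z;\pX)\in L^+$ and applies $\pi^{\partial}(T,Z;\pX)=\pX/(T-Z)$. It then checks by hand that the result agrees with $(a\pV+b)(c\pV+d)^{-1}$ for $\pV=\pX/(T-Z)$. That check uses the light-cone relation $|\pX|^2=(T-Z)(T+Z)$ and the fact that $c\pX\ol{d}+\ol{c\pX\ol{d}}$ is real, which in turn needs $(c,d)\in S\lip$.

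You instead factor everything through $S\lip$. Your single computation $\pi^{\partial}\circ\phi_1=\mathscr{R}$ is the same one the paper performs at the start of the proof of Theorem \ref{HorosphereCoordinateTheorem}. Combined with surjectivity of $\phi_1$ (Lemma \ref{Phi1SurjectLightConeLemma}), the equivariance (\ref{eqn:EquivarOfphi1}) and Lemma \ref{lem:SL2DefinesSpinorAction}, it makes the statement a formal consequence of results already proved.

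What your route buys:
\begin{itemize}
\item It is shorter, and avoids the expressions for $T'$, $Z'$, $\pY$ and the degree argument in the paper's final step.
\item It also yields the ``centre at $\xi\eta^{-1}$'' part of Theorem \ref{HorosphereCoordinateTheorem}.
\item It makes explicit two points the paper passes over quickly. One is the behaviour at $\infty$: the paper divides by $T-Z$ and by $c\pX+d(T-Z)$, and Lemma \ref{lem:SL2DefinesSpinorAction} as proved only treats $\eta\neq0$. The other is well-definedness of the inherited action on rays of $L^+$.
\end{itemize}

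In exchange, the paper's computation is self-contained on $L^+$ without passing to spinor preimages. It also produces explicit formulas for how $A$ acts on the coordinates $(T,Z;\pX)$.
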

\noindent\textit{Proof.} 
The map $\pi^{\partial}$ is given in coordinates by
\[\pi^{\partial}: L^+ \to \partial \U^{n+2},\ (T,Z;X_0,X_1,...,X_n)_{\partial\hyp} \to \frac{1}{T-Z}\left(X_0+\sum_{j=1}^nX_ji_j\right) = \frac{1}{T-Z}\pX.\]
Consider acting on a point\footnote{This is really a representative for a ray along $L^+$; we can think of quotienting by $\R^+$.} $p = (T,Z;\pX)_{\partial\hyp}$ in the boundary $\mathscr{S}^+$ of the hyperboloid model. Action by the element
\[A = \begin{pmatrix}
 a & b \\
 c & d
\end{pmatrix} \in\ SL(2,\lip)\]
results in the point $A.p$ with coordinates\footnote{We use $\pY$ since $\pX$' would clash with our involution $\cdot'$.} $(T',Z';\pY)$:
\begin{align*}
2T' &= (T+Z)(|a|^2+|c|^2)+(T-Z)(|b|^2+|d|^2)+2\text{Re}(a\pX\ol{b}+c\pX\ol{d}),\\
2Z' &= (T+Z)(|a|^2-|c|^2)+(T-Z)(|b|^2-|d|^2)+2\text{Re}(a\pX\ol{b}-c\pX\ol{d}),\\
\pY &= 2\left((T+Z)a\ol{c} + (T-Z)b\ol{d} + a\pX\ol{d} + b\ol{\pX}\ol{c}\right).
\end{align*}
Feeding this to $\pi^{\partial}$ gives us the image 
\begin{equation}
\label{eqn:UpperHalfMobiusEqui1}
\pi^{\partial}(A.p) = \frac{(T+Z)a\ol{c} + (T-Z)b\ol{d} + a\pX\ol{d} + b\ol{\pX}\ol{c}}{((T+Z)|c|^2+(T-Z)|d|^2+2\text{Re}(c\pX\ol{d}))} \in \partial\U^{n+2}.
\end{equation}
Now we map to $\U^{n+2}$ first,
and apply the M\"{o}bius transformation.
\begin{align*}
A.\pi^{\partial}(p) &= A.\left(\frac{\pX}{T-Z}\right) = \left(a\frac{\pX}{T-Z}+b\right)\left(c\frac{\pX}{T-Z}+d\right)^{-1} \\
&= \left(a|\pX|^2\ol{c}+(T-Z)(a\pX\ol{d}+b\ol{\pX}\ol{c})+(T-Z)^2b\ol{d}\right)\left|c\pX+d(T-Z)\right|^{-2}.
\end{align*}
As $p$ is represented by a point on the light-cone its coordinates satisfy $T^2-Z^2-|\pX|^2=0$, implying $|\pX|^2=(T-Z)(T+Z)$. Making this substitution, our expression then reduces to (\ref{eqn:UpperHalfMobiusEqui1}):
\begin{align*}
A.\pi^{\partial}(p) %&= (T-Z)\left(a(T+Z)\ol{c} + (T-Z)b\ol{d} + a\pX\ol{d} + b\ol{\pX}\ol{c}\right)\left|c\pX + d(T-Z)\right|^{-2} \\
&= (T-Z)\left((T+Z)a\ol{c} + (T-Z)b\ol{d} + a\pX\ol{d} + b\ol{\pX}\ol{c}\right)\left(\left(c\pX + (T-Z)d\right)\left(\ol{\pX}\ol{c} + (T-Z)\ol{d}\right)\right)^{-1} \\
&= \frac{(T+Z)a\ol{c}+(T-Z)b\ol{d}+a\pX\ol{d}+b\ol{\pX}\ol{c}}{(T+Z)|c|^2+(T-Z)|d|^2+2\text{Re}(c\pX\ol{d})}.
\end{align*}
For the final equality we need $c\pX \ol{d} + \ol{c\pX \ol{d}} \in \R$; if $c$ or $d$ is $0$ this is trivial, so assume $c,d \neq 0$. This equality follows from the fact that $(c,d) \in S\lip$ by Proposition \ref{prop:AlternateSLCharacterisation}; this implies $c = \pV d$ for some $\pV \in \para$, and then $c\pX\ol{d} = |d|^2 \pV Ad_d(\pX)$ is of degree $\leq 2$, implying $c\pX\ol{d} + \ol{c\pX\ol{d}} \in \R$.\qed
\\\\
From Section \ref{sec:EquivarianceOfPhi1}, we know $\Phi_1$ is equivariant. We can now extend this equivariance to the entire map $\Phi$.
\begin{theorem}
\label{thm:PhiEquivariant}
The map $\Phi: S\lip \to L\text{Hor}^{n+2}$ is equivariant with respect to the $SL(2,\lip)$-action.
\end{theorem}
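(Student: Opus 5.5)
Since $\Phi = \Phi_2\circ\Phi_1$ and Theorem \ref{Phi1Equivariance} already gives $\Phi_1(A\kappa) = A.\Phi_1(\kappa)$, it suffices to show that $\Phi_2$ is equivariant: for every multiflag $F=[[p;v_1,\dots,v_n]]$ and every $A\in SL(2,\lip)$,
\[\Phi_2(A.F)=A.\Phi_2(F).\]
The action on $L\text{Hor}^{n+2}$ is defined through the action on the hyperboloid model. Definition \ref{def:ActionOnHyperbolic} transports it to other models via the isometry $\pi$, and Lemma \ref{UpperHalfspaceMobiusTransformationsLemma} identifies it with the M\"obius action on $\partial\U^{n+2}$. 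So I will work entirely in $\R^{1,n+2}$, where $A$ acts by the linear map $S\mapsto ASA^{\dagger}$, and the upper half-space statement follows formally by conjugating with $\pi$.

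First I check the underlying horosphere. By Lemma \ref{phi2equivariant}, $\phi_2(A.p)=A.\phi_2(p)$. Next I check the tangent spaces. The tangent space of $\phi_2(p)$ at $q$ is $q^{\perp}\cap p^{\perp}$. The action is a linear isometry of $\R^{1,n+2}$ (Lemma \ref{ActionIsometryMink}), so its derivative at every point is $A$ itself. Hence $A$ carries $q^{\perp}\cap p^{\perp}$ onto $(Aq)^{\perp}\cap(Ap)^{\perp}$, which is the tangent space of $\phi_2(Ap)$ at $Aq$.

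Now consider the line fields. The $i_j$ line at $q$ is $T_q\phi_2(p)\cap\mathrm{span}(p,v_j)$, viewed as a subspace of the tangent space (equivalently, the intersection of the parallel translate of the flag plane through $q$ with the horosphere). Applying the linear map $A$ gives
\[A\bigl(T_q\phi_2(p)\cap \mathrm{span}(p,v_j)\bigr)=T_{Aq}\phi_2(Ap)\cap\mathrm{span}(Ap,Av_j).\]
The right-hand side is by definition the $i_j$ line of $\Phi_2(A.F)$ at $Aq$. Linearity also commutes with taking parallel translates, since $A(q+W)=Aq+AW$, so the foliation description is compatible as well.

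The step I expect to need the most care is orientation. The line at $q$ is oriented through the flag orientation on $\mathrm{span}(p,v_j)/p\R$, which is given by the class of $v_j$. To pin down the oriented direction, I will use the following characterisation: the line is spanned by the unique vector $w=v_j+tp$ lying in $q^{\perp}$, with $t=-(v_j|q)/(p|q)$ and $(p|q)_{1,n+2}=1\neq 0$. This vector is positively oriented because its class modulo $p\R$ is that of $v_j$. Applying $A$ gives $Aw=Av_j+t\,Ap$. This vector lies in $(Aq)^{\perp}$, and $t$ is unchanged since $A$ is an isometry. Its class modulo $Ap\,\R$ is that of $Av_j$, which is exactly the orienting vector of the $i_j$ subflag of $A.F$. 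So orientations match. The labelling of the line fields by $i_j$ is preserved because the action preserves the ordering of the subflags.

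Combining the horosphere, the tangent spaces, the lines and their orientations gives $\Phi_2(A.F)=A.\Phi_2(F)$. Taking $F=\Phi_1(\kappa)$ and using Theorem \ref{Phi1Equivariance} then yields $\Phi(A\kappa)=\Phi_2(A.\Phi_1(\kappa))=A.\Phi(\kappa)$.
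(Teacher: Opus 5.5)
Your proof is correct and follows the same route as the paper. You reduce to $\Phi_2$ via Theorem \ref{Phi1Equivariance} and Lemma \ref{phi2equivariant}, then observe that the actions on multiflags and on line fields both come from the single linear isometry $S\mapsto ASA^{\dagger}$ of $\R^{1,n+2}$, which commutes with taking tangent spaces and intersections. Your explicit check that orientations are carried over correctly, using the vector $v_j+tp\in q^{\perp}$, fills in a detail the paper leaves implicit.
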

\noindent\textit{Proof.} We've already defined an action on both domain and codomain, so all that's left is to show that $\Phi$ commutes with the group action. We have that $\Phi_1$ is equivariant (Theorem \ref{Phi1Equivariance}), as is $\phi_2$ (Lemma \ref{phi2equivariant}), so we focus on $\Phi_2$. 

But the action of $SL(2,\lip)$ on both $\mathcal{MF}^n$ and $L\text{Hor}^{n+2}$ is inherited from the same linear action on $\R^{1,n+2}$, one by restriction and one by taking a derivative. The derivative of a linear map is that very same map, so the action on the multiflags and on line fields is inherited from the same action on $\R^{1,n+2}$. This implies $\Phi_2$, and therefore $\Phi$, are $SL(2,\lip)$ equivariant. \qed

\subsection{Explicit Computation of $\Phi$}
We can calculate the map $\Phi:\ \lip \to L\text{Hor}^{n+2}$ in a simple case, and describe the general case via $SL(2,\lip)$ equivariance.
\subsubsection{The Base Case}
\begin{lemma}
\label{TransitiveExampleLemma}
 $\Phi(1,0)$ is the horosphere $H_0$ in $\hyp^{n+2}$ with point at infinity along the direction $p_0 = (1,1;0,...,0)_{\partial\hyp}$ on $L^+$ and which passes through $q_0 = (1,0;0,...,0)_{\hyp}$. The $n$ oriented line fields are parallel and mutually orthogonal, and at $q_0$ point in the directions $\partial_j = (0,0;...,1,...,0)_{T\hyp}$ where the only non-zero entry is for $X_j$, $1 \leq j \leq n$.
\end{lemma}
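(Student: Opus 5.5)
Direct computation at $\kappa_0=(1,0)$, following the order in which $\Phi=\Phi_2\circ\Phi_1$ is built.

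First, $\phi_1(1,0)=\kappa_0\kappa_0^{\dagger}=\begin{pmatrix}1&0\\0&0\end{pmatrix}$. With the convention $\tfrac12\begin{pmatrix}T+Z&\pX\\ \ol{\pX}&T-Z\end{pmatrix}$ this is $p_0=(1,1;0,\dots,0)_{\R}$, so the centre of $\phi(1,0)$ is the ray through $p_0$. The horosphere is $\{x\in\hyp^{n+2}:(x|p_0)_{1,n+2}=1\}$, and $(x|p_0)_{1,n+2}=T-Z$. The point $q_0=(1,0;0,\dots,0)_{\hyp}$ satisfies $T-Z=1$ and lies on the hyperboloid, so $q_0\in H_0$.

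Second, the flag plane vectors. Here $\check\kappa_0=(0,-1)$, and by (\ref{DerivEquation}) $(D_{\kappa_0}\phi_1)(\check\kappa_0 i_j)=\kappa_0(\check\kappa_0i_j)^{\dagger}+(\check\kappa_0i_j)\kappa_0^{\dagger}$. This has zero diagonal, $(1,2)$ entry $i_j$ (up to the sign fixed by $\ol{-i_j}=i_j$), and $(2,1)$ entry $-i_j$. In coordinates it is $2\partial_{X_j}$, as already recorded in (\ref{eqn:Derivs}). So $\Phi_1(1,0)=[[p_0;\partial_{X_1},\dots,\partial_{X_n}]]$ after positive rescaling, which does not change the flags.

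Third, the line fields at $q_0$. The tangent space is $T_{q_0}H_0=q_0^{\perp}\cap p_0^{\perp}$, and $q_0^{\perp}$ is $\{T=0\}$. The $i_j$ flag plane is $\mathrm{span}(p_0,\partial_{X_j})$. A vector $ap_0+b\partial_{X_j}$ has $T$-component $a$, so intersecting with $\{T=0\}$ forces $a=0$. Hence the intersection is $\R\partial_{X_j}$, and it lies in $p_0^{\perp}$. By the orientation convention of Section \ref{sec:OrientationConventions}, $f_j/p_0\R$ is oriented by the class of $\partial_{X_j}$, so the line is oriented along $+\partial_{X_j}$. The $\partial_{X_j}$ are mutually orthogonal under $(\cdot|\cdot)_{1,n+2}$, so the fields are mutually orthogonal at $q_0$.

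Fourth, parallelism and orthogonality at every point. The main obstacle is that the parallel-translate definition of Lemma \ref{lem:FlagsToLineFields} must match invariance under $P^{(1,0)}$. Take $q\in H_0$. By Proposition \ref{ParabolicLemma} there is a unique $P_{\pV}=P^{(1,0)}_{\pV}$ with $P_{\pV}.q_0=q$. Each $P_{\pV}$ fixes $\kappa_0$, so by equivariance (Theorem \ref{thm:PhiEquivariant}) it preserves $\Phi(1,0)$. Concretely, I would compute $P_{\pV}\,\partial_{X_j}\,P_{\pV}^{\dagger}=\partial_{X_j}+c\,p_0$ for some real $c$ depending on $\pV$ and $j$ (a short matrix calculation). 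This shows the action fixes each flag. The field at $q$ is $T_qH_0\cap\mathrm{span}(p_0,\partial_{X_j})$, which is the $P_{\pV}$-image of the field at $q_0$, because $P_{\pV}$ is a linear isometry preserving $H_0$ and each flag plane.

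This gives both conclusions. The fields are invariant under all translations fixing $H_0$, so they are parallel in the sense of Definition \ref{def:ParabolicTranslation}. They are mutually orthogonal everywhere, since $P_{\pV}$ acts by isometries (Lemma \ref{ActionIsometryMink}).
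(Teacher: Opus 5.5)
Your proposal is correct and follows essentially the paper's route. Both compute $\phi_1(1,0)=p_0$, read off the flag vectors $\partial_{X_j}$ (as in (\ref{eqn:Derivs})), intersect with $q_0^{\perp}=\{T=0\}$, and obtain parallelism from $P^{(1,0)}$. Both rest on the same matrix calculation $P^{(1,0)}_{\pV}.\partial_{X_j}=\partial_{X_j}+V_j p_0$, so your constant is $c=V_j$.

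The only difference is organisational. The paper solves the intersection explicitly at every $q_{\pV}$, getting the direction $V_jp_0+\partial_j$, and then checks that the translations carry these directions to one another. You solve only at $q_0$ and transport the line, using that $P_{\pV}$ fixes $p_0$, $H_0$ and each flag plane. Since $P_{\pV}$ acts trivially on $f_j/p_0\R$, it also preserves the orientation, a point your write-up leaves implicit.
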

\noindent \textit{Proof.} Considering the example in Section \ref{ExampleComputation}, setting $\theta=0$ we see that $\phi_1(1,0) = p_0$ as expected. The associated horosphere is the manifold $H_0 = \{x \in \hyp^{n+2}\ |\ (x|p_0)_{1,n+2} = 1\}$, and $q_0$ satisfies this relation, so it lies on $H_0$. Since the family of horospheres with a given centre has one parameter for radius, this is enough to specify the horosphere. 

We can also compute the flag vectors from the example in Section \ref{ExampleComputation}:
\[(D_{\kappa}\phi_1)(\check{\kappa}i_j) = (0,0;0,...,1,...,0)_{T\hyp},\]
where the only non-zero entry is the $X_j$ component. Therefore the $i_j$ subflag is given by $[[p_0,\partial_j]]$ with induced orientation.

To find the line fields we intersect this flag with $H_0$. As proven in Proposition \ref{ParabolicLemma} the action of the parabolic subgroup on $H_0$ is simple transitive, so the entire horosphere is parameterised by the image of $q_0$ under the group action. Write $q_{\pV }=P^{(1,0)}_{\pV }.q_0$, and then $H_0 = \{q_{\pV }\ |\ \pV \in \para\}$. The action of $P^{(1,0)}_{\pV}$ on a point $p$ in Minkowski (written as a matrix $S_p \in \mathcal{P}^{n+2}$) is:
\begin{align*}
P^{(1,0)}_{\pV}.S_p %&= \frac{1}{2}\begin{pmatrix}
% 1 & \pV \\
% 0 & 1
%\end{pmatrix}
%\begin{pmatrix}
% T+Z & \pX \\
% \ol{\pX} & T-Z 
%\end{pmatrix}
%\begin{pmatrix}
% 1 & 0 \\
% \ol{\pV } & 1
%\end{pmatrix}\\
&=\frac{1}{2} 
\begin{pmatrix}
T+Z+|\pV |^2(T-Z) +\pX\ol{\pV }+\pV \ol{\pX} & \pX + (T-Z)\pV \\
\ol{\pX} + (T-Z)\ol{\pV } & T-Z
\end{pmatrix}.
\end{align*}
This gives us the following transformed coordinates for $P_{\pV}^{(1,0)}.S_p$:
\begin{align*}
T' &= T + \frac{\pX\ol{\pV }+\pV \ol{\pX}}{2}+\frac{|\pV |^2(T-Z)}{2},\\
Z' &= Z + \frac{\pX\ol{\pV }+\pV \ol{\pX}}{2}+\frac{|\pV |^2(T-Z)}{2},\\
\pY &= \pX+(T-Z)\pV,
\end{align*}
which when applied to $S_p=S_{q_0}$ gives $q_{\pV} = \left(1+\frac{|\pV |^2}{2},\frac{|\pV |^2}{2};\pV \right)_{\hyp}$. These $q_{\pV}$ are the points of the horosphere $\phi(1,0)$ in the hyperboloid model. At the point $q_{\pV}$ the $i_j$ oriented line field is given by the intersection of the tangent space of $H_0$ at $q_{\pV}$ (which is $q_{\pV}^{\perp} \cap p_0^{\perp}$) with the $i_j$ subflag: 
\[q_{\pV}^{\perp}\cap [[p_0,\partial_j]] \cap p_0^{\perp} = q_{\pV}^{\perp}\cap [[p_0,\partial_j]],\] 
as the null flags live in the tangent space to $L^+$ at $p_0$ already. The points $x=(T_x,Z_x;\pX_x)_{T\hyp} = (T_x,Z_x;[X_x]_0,[X_x]_1,...,[X_x]_n)_{T\hyp} \in q_{\pV}^{\perp} \cap [[p_0,\partial_j]]$ lie in the hyperplane $q_{\pV}^{\perp}$ and so satisfy the equation
\begin{equation}
 \label{hyperplaneCondition}
(x | q_{\pV})_{1,n+2} = \left(1+\frac{|\pV |^2}{2}\right)T_x- \frac{|\pV |^2}{2}Z_x - \pV \cdot \pX_x = 0.
\end{equation}
But they also lie in the $i_j$ subflag, and so satisfy the pair of conditions
\begin{equation}
\label{flagFirstcondition}
[X_x]_0,[X_x]_1,...,\widehat{[X_x]}_j,...,[X_x]_n=0,
\end{equation}
\begin{equation}
 \label{flagCondition}
 T_x=Z_x,
\end{equation} 
where the hat denotes exclusion. Simply speaking, this says all the coordinates of the paravector $\pX_x$ are zero except the $i_j$ coordinate, and the $T$ and $Z$ coordinates match. 

Combining (\ref{hyperplaneCondition}) and (\ref{flagCondition}), the points $x=(T_x,Z_x;\pX_x)_{T\hyp} \in q^{\perp}_{\pV}\cap [[p_0,\partial_j]]$ of the intersection satisfy
\begin{align*}
0 &= \left(1+\frac{|\pV |^2}{2}\right)T_x- \frac{|\pV |^2}{2}T_x - \pV \cdot \pX_x
\end{align*}
which implies $T_x = \pV \cdot \pX_x$. By (\ref{flagFirstcondition}), $[X_x]_j$ is the only non-zero term of $\pX_x$ and this reduces to $T_x = V_j[X_x]_j$. This gives an intersection parameterised by
\[(V_j[X_x]_j,V_j[X_x]_j;0,...,[X_x]_j,...,0)_{T\hyp} = [X_x]_jV_jp_0 + [X_x]_j\partial_j,\]
and spanned by
\begin{equation}
\label{eqn:ExampleLineFieldSpan}
(V_j,V_j;0,...,1,...,0)_{T\hyp} = V_jp_0 + \partial_j.
\end{equation}
In particular, as the point $q_0$ has $\pV=0$ the oriented line field is directed there by $\partial_j$.

Finally, we show that the line fields are \textit{parallel}; that is, taking the intersection of the flag at other points of the horosphere leads to the same oriented line as applying the parabolic translation taking $q_0$ to that point. Let $\pW = W_0 + \sum_{j=1}^nW_ji_j \in \para$ and apply the parabolic transformation $P^{(1,0)}_{\pW}$ to the vector $V_jp_0 + \partial_j$ directing the line field:
\begin{align*}
P^{(1,0)}_{\pW}.(V_jp_0 + \partial_j) %&= \frac{1}{2}\begin{pmatrix}
% 1 & \pW \\
% 0 & 1
%\end{pmatrix}
%\begin{pmatrix}
% 2V_j & i_j \\
% \ol{i_j} & 0 
%\end{pmatrix}
%\begin{pmatrix}
% 1 & 0 \\
% \ol{\pW} & 1
%\end{pmatrix} \\
&= \frac{1}{2}\begin{pmatrix}
2V_j-2\text{Re}(i_j\pW) & i_j \\
-i_j & 0
\end{pmatrix},
\end{align*}
where $-$Re$(i_j\pW) = W_j$. This gives the defining vector $((V_j+W_j),(V_j+W_j);0,...,1,...,0)_{T\hyp}$ for our oriented line field at the point $q_{\pV + \pW}$. This is precisely the defining vector (\ref{eqn:ExampleLineFieldSpan}) we get from intersecting directly at $q_{\pV+\pW}$, so the oriented line field is parallel as claimed. \qed
\\\\
Here we see the first demonstration of these line fields being parallel! Though this is only an example, it will be proven be true in general in Corollary \ref{cor:LineFieldsParallel}. The line fields of this example are also orthogonal, which will be proven in the general case in Corollary \ref{OrthogonalLineFields}.
\begin{proposition}
\label{BasicHorosphereLemma}
 $\pi \circ \Phi (1,0)$ is the horosphere in $\U^{n+2}$ centred at infinity, and of Euclidean height 1. It bears $n$ mutually orthogonal parallel oriented line fields, with the $i_j$ field at $q_0 = (1;0,...,0)_{\U}$ pointing along the $x_j$ coordinate.
\end{proposition}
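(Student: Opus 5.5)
The plan is to transport everything from Lemma \ref{TransitiveExampleLemma} to the upper half-space model through $\pi = \pi_2\circ\pi_1$. We first locate the centre, then the height, and then push forward the line fields using the derivative of $\pi$.

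\textbf{Centre.} In the hyperboloid model the horosphere $H_0$ has centre the ray through $p_0=(1,1;0,\dots,0)_{\partial\hyp}$. The boundary map $\pi^{\partial}$ sends $(T,Z;\pX)$ to $\pX/(T-Z)$. Since $T-Z=0$ at $p_0$, the centre goes to $\infty$. Equivalently, $\pi_1^{\partial}(p_0)=(1;0,\dots,0)_{\partial\B}$, which is the pole sent to $\infty$ by stereographic projection. Hence $\pi(H_0)$ is a horizontal hyperplane $z=\text{const}$.

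\textbf{Height.} By Lemma \ref{TransitiveExampleLemma}, $q_0=(1,0;0,\dots,0)_{\hyp}$ lies on $H_0$. Applying $\pi_1$ gives the origin of $\B^{n+2}$, that is $W=0$. Proposition \ref{prop:PiTwoComponentForm} then gives $\pi_2(0)=(1;0,\dots,0)_{\U}$. So the image horosphere is $z=1$, and $q_0$ maps to $(1;0,\dots,0)_{\U}$ as claimed.

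\textbf{Line fields.} Use the explicit parametrisation $q_{\pV}=(1+|\pV|^2/2,\,|\pV|^2/2;\pV)_{\hyp}$ of $H_0$ from the proof of Lemma \ref{TransitiveExampleLemma}. First, $\pi_1(q_{\pV}) = (|\pV|^2/2;\pV)/(2+|\pV|^2/2)$. Applying Proposition \ref{prop:PiTwoComponentForm} should simplify to $\pi(q_{\pV})=(1;\pV)_{\U}$. Checking this is the main computational step: we need $1-2w+|W|^2$ and $1-|W|^2$ to agree, and both to equal twice the scaling factor, which is plausible given that the point lies on $z=1$.

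Once $\pi(q_{\pV})=(1;\pV)_{\U}$ is established, the composite restricted to $H_0$ is just $\pV\mapsto(1;\pV)$. Its derivative sends the tangent vector $\partial_{V_j}q_{\pV}=V_jp_0+\partial_j$, differentiated in the $V_j$ direction, to $\partial_{x_j}$. This tangent vector is exactly the direction (\ref{eqn:ExampleLineFieldSpan}) of the $i_j$ line field at $q_{\pV}$. So at every point the $i_j$ field points along $+x_j$, and the orientation matches because the coefficient of $\partial_j$ is $+1$. At $q_0$ in particular the field points along $x_j$.

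\textbf{Parallelism and orthogonality.} In the flat Euclidean geometry of the plane $z=1$, constant coordinate fields are mutually orthogonal. They are also invariant under the parabolic translations $P^{(1,0)}_{\pW}$, which act on $\U^{n+2}$ by the Möbius map $\pX\mapsto\pX+\pW$ (Lemma \ref{UpperHalfspaceMobiusTransformationsLemma}) and hence as Euclidean translations. This also matches the parallelism already shown in the hyperboloid model.
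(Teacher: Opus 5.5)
Your proof is correct. It differs from the paper's mainly in how the line fields are pushed forward to $\U^{n+2}$.

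\textbf{How the paper argues.} It locates $\pi(q_0)$ as the intersection of the images of two geodesics through the origin of $\B^{n+2}$, tracking their endpoints under $\pi_2^{\partial}$. It then computes the Jacobians only at that one point, getting $D\pi_1(\partial_j)=\tfrac12\partial^{\B}_j$ and $D\pi_2(\partial^{\B}_j)=2\partial_{x_j}$. The direction at other points is inferred from the parallelism already proved in Lemma \ref{TransitiveExampleLemma}, carried over by the isometry $\pi$.

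\textbf{How you argue.} You compute $\pi$ on the whole horosphere using the parabolic parametrisation $q_{\pV}$. Your key observation is that the line-field generator $V_jp_0+\partial_j$ of (\ref{eqn:ExampleLineFieldSpan}) is exactly the coordinate vector $\partial q_{\pV}/\partial V_j$.

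\textbf{What each buys.} Your observation makes the push-forward the constant field $\partial_{x_j}$ on $z=1$ at every point. Direction, orientation, orthogonality and parallelism then follow at once, with no derivative of $\pi_2$ needed. The cost is that you need the global formula for $\pi\circ q$, whereas the paper only needs a one-point computation.

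\textbf{The step you called plausible.} It does go through. Put $s=|\pV|^2$. Then $w=s/(4+s)$ and
\[
|W|^2=\frac{s^2+4s}{(4+s)^2}=\frac{s}{4+s}=w .
\]
This is just the equation of the image horosphere in $\B^{n+2}$: the Euclidean sphere through the origin tangent to the boundary at $w=1$. Hence $1-|W|^2=1-2w+|W|^2=1-w=4/(4+s)$, which is twice the scale factor $2/(4+s)$. This gives $\pi(q_{\pV})=(1;\pV)_{\U}$.

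\textbf{A small correction on the translation action.} Lemma \ref{UpperHalfspaceMobiusTransformationsLemma} only describes the action on $\partial\U^{n+2}$. You do not need it here. Your own formula, combined with $P^{(1,0)}_{\pW}.q_{\pV}=q_{\pV+\pW}$, already shows that $P^{(1,0)}_{\pW}$ acts on the plane $z=1$ as the Euclidean translation by $\pW$.
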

\noindent \textit{Proof.} Recall that $\pi_1$, $\pi_2$ are our maps between models of hyperbolic space from Section \ref{sec:MapsBetweenModels}. From Lemma \ref{TransitiveExampleLemma} we have a description of $\Phi(1,0)$ in $\hyp^{n+2}$. Under the map $\pi^{\partial}$ the centre $p_0$ goes to $\infty \in \partial\U^{n+2}$; the point $q_0$ maps to $(0;0,...,0)_{\mathbb{B}}$ in the ball model under $\pi_1$, but as this isn't on the boundary we cannot simply feed it into $\pi_2^{\partial}$. Bypassing $\pi_2$, we instead describe this point as the intersection of the pair of geodesics $A=A(0;-1 \to 0;1)$ and $B=B(-1;0 \to 1;0)$ in $\mathbb{B}^{n+2}$.

Following the endpoints under $\pi_2^{\partial}$, the geodesic $A$ maps to the geodesic $\pi_2^{\partial}A(-1\to 1)$, while $B$ maps to the geodesic $\pi_2^{\partial}B(0 \to \infty)$. Both of these geodesics are contained in the ($x_0$, $z$)-plane, and the intersection (which is on the horosphere) is at the point $(1;0)_{\U}$; as the horosphere is a plane parallel to the boundary $\para$, the Euclidean height is 1.

The line fields at $q_0$ are given in Lemma \ref{TransitiveExampleLemma}, directed by the vectors $\partial_j = (0,0;0,...,1,...,0)_{T\hyp}$, where the only non-zero entry is the $X_j$ component for $1 \leq j \leq n$. The maps $\pi_1$, $\pi_2$ are isometries, which implies that the parallel property of the line fields passes from $\hyp^{n+2}$ to the other models. To find the direction, then, we only need to check the direction of the $i_j$ fields at a single point.

We use the Jacobian matrix to map the tangent vector $\partial_j$ to the ball model.
\[D\pi_1: T\hyp^{n+2} \to T\B^{n+2},\ D\pi_1 = \begin{pmatrix}
 -\frac{Z}{(1+T)^2} & \frac{1}{1+T} & 0 & ... \\
 -\frac{X_0}{(1+T)^2} & 0 & \frac{1}{1+T} & ... \\
 ... & ... & ... & ...\\
 -\frac{X_n}{(1+T)^2} & 0 & ... & \frac{1}{1+T} \\
\end{pmatrix}.\]
The only non-zero entries in row $i$ are the first column and the $(i+1)$th column. Applying this to our $\partial_j$ vectors we find
\[D\pi_1(\partial_j) = \left(0;0,...,\frac{1}{1+T},...,0\right)_{T\mathbb{B}},\]
where the only non-zero entry is in the $y_j$ column. As $T>0$ this defines the same oriented line field as $\partial^{\B}_j = (0;0,...,1,...,0)_{T\mathbb{B}}$, so we rescale and take that to be our representative instead. 

Next, we compute the Jacobian of $\pi_2$ using the component form of Proposition \ref{prop:PiTwoComponentForm}. Not all of the entries of $D\pi_2$ are needed to find the action on the $\partial_j^{\B}$; we can ignore derivatives with respect to $w$ and $y_0$. We compute the other partial derivatives:
\[\frac{\partial(\pi_2(W))_z}{\partial y_j} = \frac{\partial}{\partial y_j} \left(\frac{1-\left(\sum_{\ell=0}^ny_\ell^2 + w^2\right)}{\sum_{\ell=0}^ny_\ell^2 + (w-1)^2}\right) = \frac{4y_j\left(w-1\right)}{\left(\sum_{\ell=0}^n y_\ell^2 + (w-1)^2\right)^2}.\]
For $j \neq k$,
\[\frac{\partial(\pi_2(W))_{x_k}}{\partial y_j} = \frac{\partial}{\partial y_j}\left(\frac{2y_k}{\sum_{\ell=0}^n y_{\ell}^2 +(w-1)^2}\right) = -\frac{4y_ky_j}{\left(\sum_{\ell=0}^n y_{\ell}^2 + (w-1)^2\right)^2}.\]
If instead we have $k=j$, we find:
\[\frac{\partial(\pi_2(W))_{x_j}}{\partial y_j} = \frac{\partial}{\partial y_j}\left(\frac{2y_j}{\sum_{\ell=0}^n y_{\ell}^2 + (w-1)^2}\right) = \frac{2(\sum_{\ell=0}^ny_{\ell}^2 -2y_j^2 +(w-1)^2)}{\left(\sum_{\ell=0}^n y_{\ell}^2 + (w-1)^2\right)^2}.\]
As mentioned above we only need to evaluate these at one point, and the parallel property will define their direction elsewhere. We evaluate the vector $D\pi_2(\partial_j)$ at the point $(0;0,0,...,0)_{\B}$.
\[D\pi_2(\partial_j)|_{(0;0)_{\B}} = (0;0,...,0,2,0,...,0)_{T\U},\]
where the only non-zero entry is the $x_j$ component. The $i_j$ oriented line field thus points in the $x_j$ direction as claimed. 
\qed

\subsubsection{The General Case}
The horosphere of Proposition \ref{BasicHorosphereLemma} is an $(n+1)$-plane; more broadly, the horospheres centred at infinity are simply $(n+1)$-planes parallel to the boundary copy of $\para$ in the upper half-space. But horospheres centred elsewhere appear as spheres tangent to their centres on $\para$, which makes it a little more difficult to describe the line fields of the correspondence. We make a standard choice, the \textit{North Pole specification}, which considers the line fields only in the tangent space to the horosphere at the point of maximal $z$ value in $\U^{n+2}$ (this point is the \textit{north pole}, the point furthest from the boundary $\para$ in the Euclidean metric on $\$\R^{n+2}$).

So we can describe isometries on line fields by M\"{o}bius transformations on the boundary at infinity, we associate to a given line field a tangent geodesic which is similarly oriented. Then, as the endpoints of the geodesic change, we can reconstruct how the geodesic (and thus the tangent oriented line) changes as well, as conformality ensures the tangencies are preserved.
\begin{lemma}
\label{DecorationsTransformLemma}
Consider a horosphere $H$ in $\U^{n+2}$ centred at $\infty$ or $0$, with a parallel oriented line field specified (respectively, North Pole specified) by the unit paravector $\textbf{D}$. The line field transforms as follows under the generators of $SL(2,\lip)$:
 \begin{itemize}
 \item $A_1 = \begin{pmatrix}
 1 & \pV \\
 0 & 1
 \end{pmatrix},\ \pV \in \para$ leaves $\textbf{D}$ unaffected.
 \item $A_2 = \begin{pmatrix}
 0 & -1 \\
 1 & 0 
 \end{pmatrix}$ sends $\textbf{D}$ to $-\ol{\textbf{D}}$, up to scaling.\footnote{Recall line fields are invariant under rescaling the specification vector.}
 \item $A_3 = \begin{pmatrix}
 a & 0 \\
 0 & a^{*-1} 
 \end{pmatrix},\ a \in \lip$ sends $\textbf{D}$ to $a\textbf{D}a^*$.
 \end{itemize}
\end{lemma}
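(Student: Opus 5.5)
We prove the three bullet points by computing how each generator acts on the boundary $\cpara$ through the Möbius action (\ref{ParavectorMobiusAction}). This is legitimate by Lemma \ref{UpperHalfspaceMobiusTransformationsLemma}: isometries of $\U^{n+2}$ are determined by their boundary values and are conformal. So a parallel oriented line field on a horosphere can be tracked through oriented geodesics tangent to it.

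Concretely, take $H$ centred at $\infty$ at height $h$, with field direction $\textbf{D}$. The geodesic tangent to the field line through the point above $\textbf{x}_0\in\para$ is the semicircle in the vertical plane containing $\textbf{D}$. Its endpoints are $\textbf{x}_0-h\textbf{D}$ and $\textbf{x}_0+h\textbf{D}$, oriented from the first to the second. For $H$ centred at $0$ we instead use the north pole specification: at the north pole, the tangent geodesic in direction $\textbf{D}$ has endpoints $\pm r\textbf{D}$ for suitable $r$ depending on the diameter. By the parallel property it suffices to check each generator at a single point; the horosphere itself maps correctly by equivariance.

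\emph{$A_1$:} the boundary map is $\textbf{x}\mapsto\textbf{x}+\pV$, a Euclidean translation. It fixes $\infty$ and maps a horosphere at $\infty$ to itself, translating the endpoints $\textbf{x}_0\pm h\textbf{D}$ to $\textbf{x}_0+\pV\pm h\textbf{D}$, so $\textbf{D}$ is unchanged. If $H$ is centred at $0$ it moves to a horosphere centred at $\pV$. A Euclidean translation sends north pole to north pole and preserves directions, so the specification is again $\textbf{D}$.

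\emph{$A_3$:} the boundary map is $\textbf{x}\mapsto a\textbf{x}a^{*}$, which is $\sigma(a)$, a rotation together with the homothety by $|a|^2$ (Lemma \ref{ParavectorActionFixedSpaceLemma} and the discussion of $\sigma$). It fixes $0$ and $\infty$ and extends to the corresponding linear similarity of $\$\R^{n+2}$ fixing the $i_{n+1}$ axis. It therefore maps north poles to north poles and horizontal planes to horizontal planes. The endpoints $\pm r\textbf{D}$ go to $\pm r\,a\textbf{D}a^*$, so the new direction is $a\textbf{D}a^*$. This is the step I would double-check: that $a\textbf{x}(a^{*-1})^{-1}=a\textbf{x}a^*$ is correct, and that the extension to the interior really fixes the vertical axis, which follows since it preserves $z$ up to the factor $|a|^2$.

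\emph{$A_2$:} the boundary map is $\textbf{x}\mapsto -\textbf{x}^{-1}=-\ol{\textbf{x}}/|\textbf{x}|^2$, which swaps $0$ and $\infty$. Take $H$ centred at $\infty$ at height $h$, and its point above $0$, which is $(h;0)_{\U}$. The vertical axis $\gamma(0\to\infty)$ maps to $\gamma(\infty\to 0)$, so this point goes to the north pole of the image horosphere, which has diameter $1/h$. The tangent geodesic $\gamma(-h\textbf{D}\to h\textbf{D})$ maps to the geodesic from $\ol{\textbf{D}}/h$ to $-\ol{\textbf{D}}/h$, using $\textbf{D}^{-1}=\ol{\textbf{D}}$ for a unit paravector. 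At the north pole this geodesic is oriented in the direction $-\ol{\textbf{D}}$. The case where $H$ is centred at $0$ follows by the same computation read backwards, since $A_2^2=-I$ acts trivially.

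The main obstacle is justifying the tangent-geodesic bookkeeping. We need that the geodesic with endpoints $\pm r\textbf{D}$ is tangent to the horosphere at the north pole, and that the orientation convention carries through. Both follow from conformality and the fact that the vertical plane spanned by $\textbf{D}$ and $i_{n+1}$ meets everything in a $2$-dimensional picture. In that plane the claims reduce to the familiar $SL(2,\C)$ computation.
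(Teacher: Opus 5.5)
Your proposal is correct and follows essentially the same route as the paper: both track the oriented geodesic with endpoints $\pm h\textbf{D}$, which is tangent to the field at the north pole (or at the point above the origin), through the boundary M\"obius maps of $A_1,A_2,A_3$, and read off the new direction from the difference of the image endpoints. Your version is more careful than the paper's in two places: you check that the tangency point lands on the north pole of the image horosphere (for $A_2$, via the vertical axis), and you handle the centred-at-$0$ case of $A_2$ using $A_2^2=-I$.
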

\noindent\textit{Proof.} Let the Euclidean diameter (or equivalently the height) of $H$ be $h$. If $H$ is centred at $0$, the circular geodesic segment $\gamma(-h\textbf{D} \to h\textbf{D})$ is tangent to the north pole specification vector; if centred at $\infty$, this geodesic will also be tangent to the oriented line field of $H$ at $(1;0)_{\U}$. The orientation of the line field also agrees with the orientation of the geodesic. 

The direction of this geodesic can be specified by the `vector' (really just an ordered pair of points on the boundary) $\pX$ pointing from one endpoint of $\gamma$ to another. Alternately, it can be thought of as the projection onto the boundary $\para$ of the geodesic. We write it as a vector in $\para$:
\[\pX = h\textbf{D} - (-h\textbf{D}) = 2h\textbf{D}.\]
Note that this is a vector in the copy of $\para \subset \partial\U^{n+2}$, and not a tangent vector! 

The parabolic transformation $A_1$ sends the endpoints $\pm h\textbf{D}$ to $\pm h\textbf{D} + \pV$. The action on $\pX$ is then trivial, as it affects the endpoints equally:
\[A_1.\pX = h\textbf{D} + \pV - (- h\textbf{D} + \pV)=2h\textbf{D} = \pX.\]
Inversion ($A_2$) sends the endpoints $\pm h\textbf{D}$ to $-\left(\pm h\textbf{D}\right)^{-1} = -(\pm h\ol{\textbf{D}})\left| \pm h\right|^{-2}$. Taking the difference and rescaling, the line segment points in the direction $-\ol{\textbf{D}}$. Finally, $A_3$ sends the endpoints $\pm h\textbf{D}$ to $\pm a(h\textbf{D})a^*$, and the directed line segment between them points along the paravector $a\textbf{D}a^*$, as required. \qed

\begin{corollary}
\label{cor:LineFieldsParallel}
    The $n$ oriented line fields of $\Phi (\kappa)$ are parallel.
\end{corollary}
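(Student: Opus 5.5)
We reduce to the base case of Lemma \ref{TransitiveExampleLemma} using transitivity of $SL(2,\lip)$ on $S\lip$ (Lemma \ref{TransitiveLemma}) and equivariance of $\Phi$ (Theorem \ref{thm:PhiEquivariant}). Fix $\kappa \in S\lip$. By Lemma \ref{TransitiveLemma} we choose $A \in SL(2,\lip)$ with $A(1,0) = \kappa$; for instance, the matrix (\ref{eqn:FirstSpinorSLEmbed}) or $A_\kappa$ of Proposition \ref{prop:KappaSLMatrix} works. Theorem \ref{thm:PhiEquivariant} then gives $\Phi(\kappa) = A.\Phi(1,0)$. So the horosphere is $H_\kappa = A.H_0$, and each $i_j$ line field on $H_\kappa$ is the image under $DA$ of the $i_j$ line field on $H_0$.

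Next we identify the parabolic translations fixing $H_\kappa$. Parabolic translations are defined as conjugates of $P_0$, so they are closed under conjugation. The parabolic translations in $SL(2,\lip)$ fixing $H_\kappa$ are therefore $A P' A^{-1}$, where $P'$ ranges over those fixing $H_0$. We must check that every translation fixing $H_0 = \phi(1,0)$ lies in $P^{(1,0)}$. A parabolic translation fixing the horosphere fixes its centre, so it fixes the ray of $(1,0)$ projectively. A matrix of the form $\begin{pmatrix} a & b\\ 0 & d\end{pmatrix}$ that also satisfies $(A-1)^2 = 0$ must have $a = d = 1$, as in the proof of the equivalence lemma. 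Such a matrix fixes $(1,0)$ exactly, so it lies in $P^{(1,0)}$. This is consistent with Lemma \ref{ParabolicSubgroupDefinition}, which gives $P^{\kappa} = A P^{(1,0)} A^{-1}$ by equation (\ref{eqn:ParabolicTranslationsGrp}).

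Now take any parabolic translation $Q = A P^{(1,0)}_{\pV} A^{-1}$ fixing $H_\kappa$, and let $L_j$ be the $i_j$ field on $H_0$. Then
\[Q.(A.L_j) = A.\big(P^{(1,0)}_{\pV}.L_j\big) = A.L_j,\]
where the last equality is the parallelism of $L_j$ under $P^{(1,0)}$ established at the end of Lemma \ref{TransitiveExampleLemma}. The actions compose correctly because they are all induced from the single linear action on $\R^{1,n+2}$ and its derivative. This proves the corollary.

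The main obstacle is the identification step: the claim that every parabolic translation fixing $H_\kappa$ is conjugate into $P^{(1,0)}$, rather than merely fixing the centre. The upper-triangular argument handles this by showing such a translation fixes $(1,0)$ itself.
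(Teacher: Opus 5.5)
Your proof is correct and follows essentially the same route as the paper. The paper also obtains the general case from the parallelism of $\Phi(1,0)$ in Lemma \ref{TransitiveExampleLemma} via equivariance (Theorem \ref{thm:PhiEquivariant}), compressing the transport step into ``isometries preserve parallel lines''. Your conjugation argument $Q = A P^{(1,0)}_{\pV} A^{-1}$, together with the check that every parabolic translation fixing $H_0$ already lies in $P^{(1,0)}$, makes that step precise relative to Definition \ref{def:ParabolicTranslation}. One small correction: the matrix (\ref{eqn:FirstSpinorSLEmbed}) involves $\xi^{*-1}$ and $\eta^{*-1}$, so it is undefined when $\xi=0$ or $\eta=0$; use $A_\kappa$ from Proposition \ref{prop:KappaSLMatrix}, which works for every $\kappa$.
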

\noindent\textit{Proof.} From Lemma \ref{TransitiveExampleLemma}, the line fields are parallel in the case of $\kappa = (1,0)$. The action of $SL(2,\lip)$ is equivariant through $\Phi_2$ by Theorem \ref{thm:PhiEquivariant}, and Lemma \ref{DecorationsTransformLemma} tells us the action of $SL(2,\lip)$ on the line fields of $L$Hor$^{n+2}$ is via isometries, which preserve parallel lines.\qed
\\\\
With these lemmas in place, we can use the action of $SL(2,\lip)$ to find the relationship between arbitrary Lipschitz spinors and horospheres bearing line fields. 
\begin{theorem}
\label{HorosphereCoordinateTheorem}
The spinor $\kappa=(\xi,\eta) \in S\lip$ maps under $\Phi$ to a horosphere decorated with $n$ parallel oriented line fields, centred at $\xi\eta^{-1}$ in the upper half-space model (taken to be $\infty$ if $\eta=0$).
\begin{itemize}
 \item If $\eta \neq 0$ the horosphere has Euclidean diameter $|\eta|^{-2}$, and the north pole specification for the $i_j$ direction field is $\eta^{'}i_j\ol{\eta}$.
 \item If $\eta = 0$ then the horosphere has Euclidean height $|\xi|^2$ and the $i_j$-direction field is specified by $\xi i_j\xi^*$.
\end{itemize}
\end{theorem}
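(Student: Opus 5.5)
The plan is to reduce everything to the base case of Proposition \ref{BasicHorosphereLemma} using $SL(2,\lip)$ equivariance (Theorem \ref{thm:PhiEquivariant}), factoring a suitable matrix into the generators of Lemma \ref{DecorationsTransformLemma} and tracking the decoration through each factor. Parallelism of the line fields is already Corollary \ref{cor:LineFieldsParallel}. The centre is easy: by Lemma \ref{lem:SL2DefinesSpinorAction} and Lemma \ref{UpperHalfspaceMobiusTransformationsLemma}, any $A$ with $A(1,0)^T=\kappa$ sends the centre $\infty$ of $\Phi(1,0)$ to $A.\infty=\xi\eta^{-1}$ (or $\infty$ if $\eta=0$).

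For the case $\eta=0$, write $\kappa=A_3(1,0)^T$ with $A_3=\mathrm{diag}(\xi,\xi^{*-1})$, which lies in $SL(2,\lip)$ since the pseudo-determinant is $\xi^*\xi^{*-1}=1$. Lemma \ref{DecorationsTransformLemma} sends the direction $i_j$ to $\xi i_j\xi^*$. For the height I would apply the M\"obius map $x\mapsto \xi x\xi^{*-1}\cdot$ to the horosphere at height $1$. Concretely, restricted to $\$\R^{n+2}$ this acts as $x\mapsto \xi x\xi^*|\xi|^{-2}\cdot|\xi|^2$; it is easiest to note that it is $\sigma(\xi)$ (a rotation fixing $i_{n+1}$ directions, scaled by $|\xi|^2$). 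The height therefore becomes $|\xi|^2$. The factor $|\xi|^{-2}$ from $\xi^{*-1}=\ol{\xi}'|\xi|^{-2}$ needs to be checked carefully, and I would do this via the action on two boundary points of a vertical geodesic.

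For $\eta\neq0$, I would factor
\[
\begin{pmatrix}\xi & *\\ \eta & *\end{pmatrix}=A_1(\pV)\,A_2\,A_3(a),\qquad \pV=\xi\eta^{-1},
\]
and solve for $a$. We have $A_2A_3=\begin{pmatrix}0&-a^{*-1}\\ a&0\end{pmatrix}$, so applying $A_1$ gives first column $(\pV a, a)$. Taking $a=\eta$ yields $\kappa$. Tracking the horosphere through the factors: $A_3(\eta)$ gives height $|\eta|^2$ and direction $\eta i_j\eta^*$; $A_2$ (inversion $x\mapsto -x^{-1}$) sends a horosphere at height $h$ about $\infty$ to one at $0$ of diameter $1/h$, i.e.\ $|\eta|^{-2}$; and $A_1$ translates the centre to $\pV$ without changing the diameter or the direction.

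For the direction, Lemma \ref{DecorationsTransformLemma} gives $-\ol{\eta i_j\eta^*}=-\eta'\,\ol{i_j}\,\ol{\eta}=\eta' i_j\ol{\eta}$, as claimed, and $A_1$ leaves it unchanged. One subtlety is that Lemma \ref{DecorationsTransformLemma} is stated for horospheres centred at $\infty$ or $0$ with north-pole specification, and $A_2$ maps centre $\infty$ to $0$. I would check that the tangent geodesic through $(h;0)$ used in its proof maps to the geodesic through the north pole $(1/h;0)$, which follows since inversion preserves the vertical axis.

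I expect the main obstacle to be bookkeeping rather than any conceptual difficulty: verifying that the scaling of $A_3$ on upper half-space is exactly by $|a|^2$ (not $|a|^{-2}$), and that the signs and conjugations in the $A_2$ step produce $\eta'i_j\ol{\eta}$ rather than a variant such as $\eta i_j\eta^*$. I would settle both by a direct computation on the endpoints $\pm h i_j$ of the specifying geodesic, as in the proof of Lemma \ref{DecorationsTransformLemma}.
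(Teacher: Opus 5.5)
Your proposal is correct and is essentially the paper's proof: you reduce to $\Phi(1,0)$ by equivariance, factor the relevant matrix into diagonal, inversion and translation generators, and read off the height or diameter and the $i_j$ directions by tracking geodesic endpoints together with Lemma \ref{DecorationsTransformLemma}. The differences are cosmetic. Your $A_2A_3(\eta)=\begin{pmatrix}0&-\eta^{*-1}\\ \eta&0\end{pmatrix}$ is exactly the paper's $G_3G_2$, with the diagonal factor tracked before rather than after the inversion, and you obtain the centre as $A.\infty$ instead of computing $\pi^{\partial}\circ\phi_1(\kappa)$ directly. Note also that the boundary map of $\mathrm{diag}(\xi,\xi^{*-1})$ is simply $x\mapsto\xi x\xi^*$, not $x\mapsto\xi x\xi^{*-1}$. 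The height then comes from the semicircle $\gamma(-1\to 1)$ going to $\gamma(-\xi\xi^*\to\xi\xi^*)$, which has radius $|\xi|^2$; a vertical geodesic's endpoints alone do not determine it.
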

\noindent \textit{Proof.} The map $\phi_1$ sends $\kappa$ to a point $(|\xi|^2+|\eta|^2,|\xi|^2-|\eta|^2;2\xi\ol{\eta})_{\R} = (T,Z;\pX)_{\R}$ on the light-cone.
Expand the paravector component as $\xi\ol{\eta} =[\xi\ol{\eta}]_0+\sum_{j=1}^n[\xi\ol{\eta}]_ji_j$. We map the ideal boundary of the hyperboloid first into the ball model:
\[\pi_1^{\partial}(|\xi|^2+|\eta|^2,|\xi|^2-|\eta|^2;2[\xi\ol{\eta}]_0,2[\xi\ol{\eta}]_1,...,2[\xi\ol{\eta}]_n)_{\partial \hyp} = \frac{1}{|\xi|^2+|\eta|^2}(|\xi|^2-|\eta|^2;2[\xi\ol{\eta}]_0,2[\xi\ol{\eta}]_1,...,2[\xi\ol{\eta}]_n)_{\partial\mathbb{B}},\]
and then to the upper half-space model:
\begin{align*}
\pi_2^{\partial}\left[\frac{1}{|\xi|^2+|\eta|^2}(|\xi|^2-|\eta|^2;2[\xi\ol{\eta}]_0,2[\xi\ol{\eta}]_1,...,2[\xi\ol{\eta}]_n)_{\partial\mathbb{B}} \right] &= \frac{1}{1-\frac{|\xi|^2-|\eta|^2}{|\xi|^2+|\eta|^2}}\frac{2\xi\ol{\eta}}{\left(|\xi|^2+|\eta|^2\right)}%\\
%=\frac{|\xi|^2+|\eta|^2}{2|\eta|^2}\frac{2\xi\ol{\eta}}{|\xi|^2+|\eta|^2} = \xi\ol{\eta}|\eta|^{-2} 
= \xi\eta^{-1}.
\end{align*}
To compute the Euclidean diameter (or height), first consider the case where $\eta$=0. Starting at $(1,0)$ we apply the transformation
\[G_1 = \begin{pmatrix}
\xi & 0 \\
0 & \xi^{*-1}
\end{pmatrix},\]
to get all Lipschitz spinors of the form $G_1\begin{pmatrix}
 1 & 0
\end{pmatrix}^T = 
\begin{pmatrix}
 \xi & 0
\end{pmatrix}^T$. This transforms the geodesic $\pi_2^{\partial}A(-1 \to 1)$ of Proposition \ref{BasicHorosphereLemma} into $G_1\pi_2^{\partial}A(-\xi\xi^* \to \xi\xi^*)$, and the geodesic $\pi_2^{\partial}B(\infty \to 0)$ goes to $G_1\pi_2^{\partial}B(\infty \to 0)$, so the intersection of these geodesics will now be at the midpoint of the Euclidean semicircle $G_1\pi_2^{\partial}A$, and thus a full Euclidean radius away from the boundary. The radius is $|\xi\xi^*|=|\xi||\xi^*|=|\xi|^2$, which is the Euclidean height of the horosphere as all points of the plane are at the same height. Lemma \ref{DecorationsTransformLemma} then implies the action of $G_1$ sends the specifying vectors $i_j$ to $\xi i_j \xi^*$.

If $\eta \neq 0$, start by applying the inversion
\[G_2 = \begin{pmatrix}
 0 & -1 \\
 1 & 0 
\end{pmatrix},\]
which sends $(1,0) \to (0,1)$ and sends our geodesics to $G_2\pi_2^{\partial}A(1 \to -1),\ G_2\pi_2^{\partial}B(0 \to \infty)$. That is, the orientations are reversed but the geodesics are otherwise invariant. We then apply the transformation
\[G_3 = \begin{pmatrix}
 \eta^{*-1} & 0 \\
 0 & \eta 
\end{pmatrix},\]
which sends $(0,1) \to (0,\eta)$. This transforms the geodesics as follows:
\begin{align*}
G_2\pi_2^{\partial}A &\to G_3G_2\pi_2^{\partial}A(\eta^{*-1}\eta^{-1} \to -\eta^{*-1}\eta^{-1}),\\
G_2\pi_2^{\partial}B &\to G_3G_2\pi_2^{\partial}B(0 \to \infty),
\end{align*}
and by the same logic as above, the intersection is at Euclidean height $|\eta^{*-1}\eta^{-1}|=|\eta|^{-2}$. As this geodesic goes from the centre of the horosphere to infinity it passes through the north pole, and so this gives the Euclidean diameter. 

Finally, we apply the transformation
\[G_4 = \begin{pmatrix}
 1 & \pV \\
 0 & 1
\end{pmatrix},\]
where $\xi \eta^{-1}=\pV$. This sends $(0,\eta) \to (\xi,\eta)$ and just translates the geodesics equally, which (by Lemma \ref{DecorationsTransformLemma}) doesn't change the point of intersection:
\begin{align*}
G_3G_2\pi_2^{\partial}A &\to G_4G_3G_2\pi_2^{\partial}A(\eta^{*-1}\eta^{-1}+\xi \to -\eta^{*-1}\eta^{-1}+\xi),\\
G_3G_2\pi_2^{\partial}B &\to G_4G_3G_2\pi_2^{\partial}B(\xi \to \infty).
\end{align*}
The Euclidean diameter of $\phi(\xi,\eta)$ is thus $|\eta|^{-2}$. 

Consider acting with these transformations on the oriented line fields directed by the $i_j$; the inversion $G_2$ sends $i_j \to -\ol{i_j}=i_j$, the homothety and rotation $G_3$ sends $i_j \to \eta^{*-1}i_j\eta^{-1}$, and $G_4$ leaves the directing vectors invariant. The final line field is north pole specified by $\eta^{*-1}i_j\eta^{-1}$, which we can rescale to the slightly simpler form $\eta'i_j\ol{\eta}$.\qed
\\\\
This theorem implies the line fields come from taking the orthonormal frame for $\R^{0,n}$ given by the $i_j$, and applying an isometry (plus rescaling) $\sigma(\xi)$ or $\sigma(\eta')$ to the whole frame.
\begin{corollary}
\label{OrthogonalLineFields}
The $n$ line fields of $\Phi(\xi,\eta)$ are mutually orthogonal.
\end{corollary}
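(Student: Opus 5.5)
Since $\Phi(\xi,\eta)$ is described explicitly in Theorem \ref{HorosphereCoordinateTheorem}, the natural plan is to verify orthogonality at the single specification point and then spread it over the whole horosphere. At the north pole (or at $(1;0)_{\U}$ when $\eta=0$) the tangent space of the horosphere is parallel to the boundary copy of $\para$. Both the hyperbolic metric and the Euclidean metric of $\$\R^{n+2}$ are conformal to the dot product there. Orthogonality of the $i_j$ line fields at that point is therefore equivalent to orthogonality, under the dot product of Definition \ref{defn:ParavectorInnerProduct}, of the specifying paravectors. These are $\xi i_j\xi^*$ when $\eta=0$ and $\eta' i_j\ol{\eta}$ when $\eta\neq 0$.

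The key algebraic step is that $\sigma(x)$ for $x\in\lip$ is a conformal linear map of $\para$. Concretely, I would show $\sigma(x)(\pV)\cdot\sigma(x)(\pW)=|x|^4\,\pV\cdot\pW$. The computation is
\[\sigma(x)(\pV)\,\ol{\sigma(x)(\pW)} = x\pV x^*\,\ol{x}'\,\ol{\pW}\,\ol{x} = |x|^2\, x\pV\ol{\pW}\,\ol{x},\]
using $x^*\ol{x}' = (\ol{x}x)^* = |x|^2$, which holds because $x^*\ol{x}'=x^*x'^*$ is the reversal of $x'x$ and $|x'|^2=|x|^2$. I then take real parts. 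Since $\pV\ol{\pW}$ has degree at most $2$, the argument of Lemma \ref{InnerProductPropertiesLemma} iv) applies: symmetrising with the conjugate replaces $\pV\ol{\pW}$ by $2\,\mathrm{Re}(\pV\ol{\pW})$, and then $\mathrm{Re}(x\,c\,\ol{x})=c|x|^2$ for real $c$. With $x=\xi$ or $x=\eta'$, the paravectors $i_j$ are mutually orthogonal with $i_j\cdot i_k=\delta_{jk}$, so the images $\sigma(x)(i_j)$ are mutually orthogonal (and nonzero, of equal length $|x|^2$).

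Finally I pass from one point to the whole horosphere. The line fields are parallel by Corollary \ref{cor:LineFieldsParallel}, so they are invariant under the parabolic translations fixing the horosphere. These act transitively on it (Proposition \ref{ParabolicLemma} and its corollary) and act by isometries, which preserve angles. Hence orthogonality at the north pole propagates to every point.

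Alternatively, one could argue entirely by equivariance. In the base case $\kappa=(1,0)$ the fields are orthogonal by Proposition \ref{BasicHorosphereLemma}. Every $\kappa$ is reached from $(1,0)$ by some $A\in SL(2,\lip)$ (Lemma \ref{TransitiveLemma}), $\Phi$ is equivariant (Theorem \ref{thm:PhiEquivariant}), and $A$ acts on $\hyp^{n+2}$ by isometries, so orthogonality transfers. I expect the only delicate point to be the identification of the north-pole specification vector with the actual tangent direction up to a conformal factor. In the explicit route this is the identity $x^*\ol{x}'=|x|^2$ together with the reality of $\mathrm{Re}(x c\ol{x})$. The equivariant route avoids this issue entirely, so I would present that as the main argument and keep the explicit computation as a check.
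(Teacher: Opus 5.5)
Your proposal is correct, but the argument you designate as the main one is not the paper's. The paper's proof is your ``explicit route'' in compressed form. By Theorem \ref{HorosphereCoordinateTheorem} the $i_j$ fields are specified by $\sigma(\xi)(i_j)$ or by $\sigma(\eta')(i_j)$. Since $\sigma(x)$ is a conformal map of $\para$ (asserted there as ``an isometry plus a homothety'', not computed), orthogonality of the $i_j$ is preserved.

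The paper does not spell out the two points you add. First, the north-pole specification vector is the actual tangent direction, in a horizontal tangent space where the hyperbolic metric is a multiple of the dot product. Second, orthogonality at one point spreads over the whole horosphere via parallelism and the simply transitive isometric action of $P^{\kappa}$. Your version of that route is therefore the more complete one.

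Your equivariance route is instead the argument the paper uses for Corollary \ref{cor:LineFieldsParallel}, transplanted to orthogonality. You start from orthogonality for $\Phi(1,0)$, given by Proposition \ref{BasicHorosphereLemma}. Alternatively it follows directly from the directing vectors $V_jp_0+\partial_j$ of Lemma \ref{TransitiveExampleLemma}, which are mutually Minkowski-orthogonal since $p_0$ is null and orthogonal to every $\partial_k$. You then apply Lemma \ref{TransitiveLemma}, Theorem \ref{thm:PhiEquivariant}, and the fact that $SL(2,\lip)$ acts on line fields by derivatives of isometries.

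Compared with the paper's route:
\begin{itemize}
\item Your route needs neither the explicit coordinates of Theorem \ref{HorosphereCoordinateTheorem} nor any north-pole bookkeeping, and it gives orthogonality at every point at once.
\item The paper's route instead exhibits the decoration concretely as the image of the standard frame under the conformal map $\sigma(x)$, with all specifying vectors of common length $|x|^2$.
\end{itemize}

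One correction to your check computation. Since Clifford conjugation is an antiautomorphism, $\ol{x\pW x^*}=\ol{x^*}\,\ol{\pW}\,\ol{x}=x'\,\ol{\pW}\,\ol{x}$. So the middle factor is $x^*x'$, not $x^*\ol{x}'$; note that $\ol{x}'=x^*$. The identity you want is $x^*x'=(\ol{x}x)^*=|x|^2$. Your justification through $x'x$ does not work: for $x=i_1i_2$ one gets $x'x=-1$ and $x^*x^*=-1$, while $|x|^2=1$. With that fixed, your derivation of $\sigma(x)(\pV)\cdot\sigma(x)(\pW)=|x|^4\,\pV\cdot\pW$ is correct.
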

\noindent\textit{Proof.} From Theorem \ref{HorosphereCoordinateTheorem} we know the line fields are specified either by $\sigma(\xi)(i_j)$ for every $i_j$, or by $\sigma(\eta')(i_j)$ for every $i_j$. But $\sigma(x)$ is a conformal transformation of $\para$ for every $x \in \lip$ (namely, an isometry plus a homothety). Since the $i_j$ are mutually orthogonal in $\para$, they remain orthogonal after applying an isometry and a homothety.\qed
\\\\
We can now properly restrict the map $\Phi_2$ to the appropriate codomain.
\begin{definition}
 An ordered set of $n$ oriented mutually orthogonal parallel line fields $(L_1,...,L_n)$ on a horosphere is termed a {\normalfont decoration}; the line field $L_j$ may also be referred to as the $i_j$ {\normalfont decoration}. A horosphere in $\hyp^{n+2}$ bearing a decoration is termed a {\normalfont decorated horosphere}. \label{not:DHor}Denote the set of decorated horospheres by $D$Hor$^{n+2}$.\hfill\defn
\end{definition}
\noindent The set of decorated horospheres can be given a manifold structure; in fact, a fibre bundle structure. The space of horospheres Hor$^{n+2}$ can be identified with $\R^+ \times S^{n+1}$, as we can smoothly vary the centre across the boundary of $\hyp^{n+2}$ and then pick a `radius' to capture a unique horosphere. A decoration can be equated with the orthonormal frame in the tangent space to the horosphere whose basis vectors point along the parallel line fields (and from any orthonormal frame we can recover a unique decoration), and so we can identify $D$Hor$^{n+2}$ with the orthonormal frame bundle over Hor$^{n+2}$.

Because the case of $n=0$ is somewhat unique, we define a special decoration to allow a continuous rotation between real spin decorations (defined in the next section).\footnote{See \cite{DescartesTheorem} Section 2.1 for more details.}
\begin{definition}
\label{def:PlanarDecoration}
    The points $(z;x_0,0)_{\U}$ of $\U^3$ can be identified with $\U^2$; the boundary at infinity of $\U^2$ is then given by $\partial \U^2 = \R \cup \{\infty\} \subset \C \cup \{\infty\} = \partial \U^3$. By intersecting $\U^3$ with $\U^2$, we may regard horocycles $H$ of $\U^2$ as arising from horospheres $\widetilde{H}$ of $\U^3$ centred at $\R \cup \{\infty\}$. 
    
    Given a horocycle $H$ in $\U^2 \subset \U^3$, a \emph{planar decoration} on $H$ is a decoration on $\widetilde{H}$ (north pole) specified by the vector $i_1=i$. \null\hfill\defn
\end{definition}
\noindent A spinor $(\xi, \eta)$ describes a planar decoration on a horocycle if and only if $\xi$ and $\eta$ are both real; see \cite{DescartesTheorem}, towards the end of Section 2.1.

\begin{theorem}
\label{thm:Phi2Diffeomorph}
    $\Phi_2$ is a diffeomorphism from $\mathcal{MF}^n$ to $D$Hor$^{n+2}$.
\end{theorem}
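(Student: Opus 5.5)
We will show that $\Phi_2$ maps into $D$Hor$^{n+2}$, is a bijection onto it, and is smooth with smooth inverse.

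\textbf{Image lies in $D$Hor$^{n+2}$.} By Theorem \ref{thm:Phi1SmoothDoubleCover}, every multiflag $F\in\mathcal{MF}^n$ is of the form $\Phi_1(\kappa)$, so $\Phi_2(F)=\Phi(\kappa)$. Corollaries \ref{cor:LineFieldsParallel} and \ref{OrthogonalLineFields} then show that the $n$ oriented line fields are parallel and mutually orthogonal.

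\textbf{Equivariance and transitivity.} The map $\Phi_2$ is $SL(2,\lip)$-equivariant; this is the argument of Theorem \ref{thm:PhiEquivariant}, since both actions come from the same linear action on $\R^{1,n+2}$. The group $SL(2,\lip)$ acts transitively on $\mathcal{MF}^n$, as shown in the proof of Theorem \ref{thm:Phi1SmoothDoubleCover}. It also acts transitively on $D$Hor$^{n+2}$: it acts transitively on horospheres (Möbius transformations move centres and change diameters), and the stabiliser of the horosphere at height $1$ centred at $\infty$ contains the matrices $\mathrm{diag}(\xi,\xi^{*-1})$ with $|\xi|=1$. By Lemma \ref{DecorationsTransformLemma}, these rotate the frame $(i_1,\dots,i_n)$ by $\sigma(\xi)$, which realises every orientation-preserving orthonormal frame of the correct orientation class.

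\textbf{Bijectivity.} Surjectivity follows from transitivity together with equivariance. For injectivity it is enough, by equivariance, to compare stabilisers at a single base point: take $F_0=\Phi_1(1,0)$ and $\Phi_2(F_0)=\pi\circ\Phi(1,0)$, the horosphere described in Proposition \ref{BasicHorosphereLemma}. If $\Phi_2(F)=\Phi_2(F')$, write $F'=AF$. Then $A$ fixes the image decorated horosphere, and we must show that $A$ fixes $F$.

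\textbf{Stabiliser comparison (main obstacle).} We compute the stabiliser of the decorated horosphere directly.
\begin{itemize}
\item Fixing the centre $\infty$ forces $A$ to be upper triangular, of the form $\begin{pmatrix}a&b\\0&a^{*-1}\end{pmatrix}$.
\item Fixing the height forces $|a|=1$.
\item By Lemma \ref{DecorationsTransformLemma}, fixing each $i_j$ line field forces $\sigma(a)(i_j)$ to be a positive multiple of $i_j$ for every $j$. Since $\sigma(a)$ is orientation preserving on $\para$, it then fixes $1$ as well, so $\sigma(a)$ is the identity and $a=\pm1$.
\end{itemize}
The stabiliser is therefore $\{\pm P^{(1,0)}_{\pV}\}$.

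This group also stabilises $F_0$. The matrix $-I$ acts trivially on $\R^{1,n+2}$, since $A.S=ASA^\dagger$. The translation $P^{(1,0)}_{\pV}$ fixes $p_0$, and by the computation in the proof of Lemma \ref{TransitiveExampleLemma} it sends $\partial_j$ to $\partial_j+W_jp_0$, which spans the same subflag. Hence the stabiliser of $\Phi_2(F_0)$ is contained in the stabiliser of $F_0$, and $\Phi_2$ is injective.

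The delicate point in this step is the orientation argument that excludes $\sigma(a)$ acting as a reflection on the $1$-direction. It is handled by connectedness, using the fact that $\sigma(\lip)$ acts on $\para$ by $SO(n+1)$ together with a homothety.

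\textbf{Smoothness.} $\Phi_2$ is smooth because it is built from intersections of linear subspaces that depend smoothly on $(p,v_j)$, and these intersections are transverse by Lemma \ref{lem:FlagsToLineFields}.

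For the inverse, note that $\mathcal{MF}^n$ and $D$Hor$^{n+2}$ are both homogeneous spaces of $SL(2,\lip)$, namely frame bundles over $S^{n+1}\times\R$, and $\Phi_2$ corresponds to the identity on the quotient $SL(2,\lip)/\mathrm{Stab}$ under the two identifications. An equivariant smooth bijection between homogeneous spaces is a diffeomorphism (by constant rank), so $\Phi_2^{-1}$ is smooth.
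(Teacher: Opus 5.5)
Your argument is correct, but it takes a genuinely different route from the paper.

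\textbf{The paper's route.} The paper argues fibrewise. First, $\phi_2\colon L^+\to\mathrm{Hor}^{n+2}$ is a bijection on base spaces. Then, over each $p$, the multiflags based at $p$ and the decorations on $\phi_2(p)$ are both $SO(n+1)$-torsors, and they correspond ``as both are expressions of an orthonormal basis in terms of the same family of vectors.'' The real content, left implicit there, is this: $T_q\phi_2(p)=q^\perp\cap p^\perp$ is a complement to $p\R$ inside $p^\perp$, so it projects isometrically onto $p^\perp/p\R$, and this identifies orthonormal frames on the horosphere with multiflags at $p$. Smoothness of the inverse is simply asserted.

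\textbf{Your route.} You use $SL(2,\lip)$-equivariance and transitivity on both spaces to reduce bijectivity to a single stabiliser comparison at $\Phi_2(F_0)$. You carry this out explicitly, using upper-triangular form, $|a|=1$, Lemma~\ref{DecorationsTransformLemma}, and the translation computation from Lemma~\ref{TransitiveExampleLemma}. You then get smoothness of $\Phi_2^{-1}$ from the equivariant rank theorem.

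\textbf{Comparison.} Your approach makes the inverse's smoothness genuinely justified and reuses computations already in the paper. It costs some overhead: dependence on the upper half-space machinery, a separate proof of transitivity on $D$Hor$^{n+2}$, and a Lie-theoretic input. The paper's fibrewise route is intrinsic to the hyperboloid model and exhibits the correspondence directly, point by point.

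\textbf{Two minor phrasing points.} First, a decoration is an orthonormal $n$-frame in an $(n+1)$-dimensional tangent space, on which $SO(n+1)$ acts simply transitively. So there is no ``orientation class'' to match, and any such frame is $\sigma(\xi)(i_1),\dots,\sigma(\xi)(i_n)$ for some unit $\xi$. Second, the fact that $\sigma(a)$ preserves orientation follows most directly from $\lip$ being generated by paravectors, each acting by a rotation plus a homothety (Lemma~\ref{ParavectorActionFixedSpaceLemma}), rather than from an unspecified connectedness argument.
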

\noindent\textit{Proof.} First, we note that $\phi_2$ is a bijection (in fact, a diffeomorphism). Every point $p \in L^+$ will generate a different hyperplane (implying the map is injective), and both $L^+$ and Hor$^{n+2}$ are topologically $S^{n+1} \times \R$. Smoothly varying $p$ leads to smoothly varying hyperplanes, and thus smoothly varying horospheres.

From Corollary \ref{OrthogonalLineFields}, the image $\Phi_2(\kappa)$ for a Lipschitz spinor $\kappa$ with $\phi_1(\kappa) = p$ has orthogonal line fields, and by Corollary \ref{cor:LineFieldsParallel} they are also parallel. So the image of $\Phi_2$ does lie within $D$Hor$^{n+2}$. At a point $p \in L^+$, the set of multiflags is diffeomorphic to $SO(n+1)$. The set of decorations on the horosphere $\phi_2(p)$ is also diffeomorphic to $SO(n+1)$, and the set of multiflags based at $p$ corresponds bijectively to the set of decorations on $\phi_2(p)$ (as both are expressions of an orthonormal basis in terms of the same family of vectors), so $\Phi_2$ is a bijection. As the flags vary smoothly, their intersections with the horosphere will also vary smoothly, so the map is a diffeomorphism.
\qed
\section{Spin decorations, Lambda Lengths, and Ptolemy Relations}
\label{sec:Fourth}
We turn now to the issue of spin, and lift our various $SO(n)$ structures to $Spin(n)$ structures. This will turn our various maps from double covers into genuine homeomorphisms and equivalences, as well as setting us up nicely to apply spinor algebra to our hyperbolic objects. The fruit of this will be a generalisation of Penner's lambda lengths to arbitrary dimensions, and a non-commutative generalisation of the hyperbolic Ptolemy relation in arbitrary dimension.

\label{not:Hypn}In this section, $\hyp^n$ refers to $n$-dimensional hyperbolic space regardless of model. Assume $\hyp^n$ is oriented.
\subsection{Frame Fields and Double Covers}
A horosphere $H \in \hyp^n$ has two choices of normal direction; following \cite{Mathews_Spinors_horospheres}, call the normal pointing towards the centre of the horosphere the \textit{outward} normal, and the one pointing away the \textit{inward} normal. \label{Not:NinNout}These form unit normal vector fields $N^{out}$ and $N^{in}$ on the horosphere, with vectors based at different points related by parallel transport. 

A \textit{frame} is shorthand for a positively oriented orthonormal frame at a point $p$ in $\hyp^n$; that is, an ordered tuple of $n$ orthonormal vectors in $T_p\hyp^n$ which is positively oriented. In \cite{Mathews_Spinors_horospheres} this orientation was defined in terms of the standard cross product on $\R^3$, but in arbitrary dimension defining an orientation takes more work. \label{not:HodgeStar}A common way to define orientations is via the Hodge star operator.
\begin{definition}[Hodge Star for $k$-Vectors]
    Let $(X,g)$ be an oriented, $m$-dimensional smooth manifold $X$ with a (pseudo)-Riemannian metric $g$, and let $p \in X$. For $0 \leq k \leq m$ write $\bigwedge^kT_pX$ for the space of $k$-vectors in the tangent space $T_pX$. Let $(\cdot,\cdot)_g$ be the bilinear form on $\bigwedge^kT_pX$ induced by $g$, defined on simple $k$-vectors $\alpha = \alpha_1 \wedge ... \wedge \alpha_k$, $\beta = \beta_1 \wedge ... \wedge \beta_k$ by the {\normalfont Gram determinant} $(\alpha,\beta)_g = \text{det} \left( g(\alpha_{\ell},\beta_j)\right)$, and extended to the rest of $\bigwedge^kT_pX$ by linearity. The {\normalfont Hodge star operator} is the unique linear function 
    \[\star: \bigwedge^kT_pX \to \bigwedge^{m-k}T_pX\]
    defined by the identity $\alpha \wedge \star \beta = (\alpha, \beta)_g\omega$ for all $\alpha,\beta \in \bigwedge^kT_pX$, where $\omega$ is the unit $m$-vector defined locally by $\omega = e_1 \wedge e_2 \wedge ... \wedge e_m$, for an oriented orthonormal basis $\{e_1,e_2,...,e_m\}$ of $T_pX$. \null\hfill\defn
\end{definition}
\noindent A \textit{positively oriented orthonormal frame} (or just \textit{frame}) at a point $p \in \hyp^n$ then consists of $n$ orthonormal (under the metric induced by $(\cdot|\cdot)_{1,n}$) vectors $(v_1,...,v_n) \in T_p\hyp^n$ such that $\star (v_1 \wedge v_2...\wedge v_{n-1})=v_n$. Because the vectors are orthogonal, the wedge and Clifford products are equivalent and this condition can equivalently be written as $\star (v_1v_2...v_{n-1})=v_n$.

We've established that the decorations of the spinor-horosphere correspondence are orthogonal, so they can be equated with a frame. In the Euclidean geometry of the horosphere, vector fields define oriented line fields and oriented line fields define unit vector fields, so we will use the two somewhat interchangeably. 

\label{not:Fr}The collection of frames forms a principal $SO(n+2)$ bundle over $\hyp^{n+2}$, denoted $\text{Fr}^{n+2} \longrightarrow \hyp^{n+2}$. However, from Proposition \ref{phi1FibreLemma} we know we should really be looking at spin groups, rather than special orthogonal groups. \label{not:SFr}We denote the associated double cover of Fr$^{n+2}$ (which will be a principal $Spin(n+2)$ bundle) by
$S\text{Fr}^{n+2} \longrightarrow \hyp^{n+2}$. Points of this double cover are called \textit{spin frames}, and each point in Fr$^{n+2}$ has two lifts in $S\text{Fr}^{n+2}$. 

How do these frames relate to decorated horospheres?
\begin{definition}[Horosphere Frame Fields]
\label{defn:FrameFields}
 The outwards frame field of a decorated horosphere $H \subset D\text{Hor}^{n+2}$ with unit normals $N^{out}$ and $N^{in}$ and decorations specified by unit vector fields $\widehat{D}_1,\widehat{D}_2,...,\widehat{D}_n$ is\footnote{This is chosen to agree with the convention in \cite{Mathews_Spinors_horospheres}, Definition 4.1. It is slightly different to the convention in \cite{mathews2024quaternions} Definition 6.1.1, as there the normal and the `real paravector 1' generators are swapped in the tuple.}
 \[F^{out} = \left(N^{out},\widehat{D}_1,\widehat{D}_2,...,\widehat{D}_n,\star (N^{out} \wedge \widehat{D}_1 \wedge \widehat{D}_2 \wedge ...\wedge \widehat{D}_n)\right).\]
 Similarly, the inward frame field is given by
 \[F^{in} = \left(N^{in},\widehat{D}_1,\widehat{D}_2,...,\widehat{D}_n,\star (N^{in} \wedge \widehat{D}_1 \wedge \widehat{D}_2 \wedge ...\wedge \widehat{D}_n)\right).\]
\label{not:HodgeOne}For expediency's sake, we label this final Hodge star component by $1^{out}$ and $1^{in}$, respectively. The {\normalfont frame field} of the decorated horosphere is the pair $(F^{out},F^{in})$.\null\hfill\defn
\end{definition}
\noindent Any frame $F$ based at a point $p_F$ in $\hyp^{n+2}$ (that is, any point of Fr$^{n+2}$) will uniquely generate a frame field in a horosphere as follows: call the first component of $F$, $N^{out}$. Choose the horosphere $H_F$ which passes through $p_F$ and is centred at the point of $\partial\hyp^{n+2}$ where the oriented line determined by $N^{out}$ intersects the boundary, in the direction $N^{out}$ points along. Then parallel transport of $F$ across the horosphere will generate a frame field, which will be precisely the $F^{out}$ of the horosphere $H_F$. This gives a surjective map Fr$^{n+2} \to D\text{Hor}^{n+2}$, though the map is not injective as all parallel transports across the horosphere $H_F$ are different points of Fr$^{n+2}$ that generate the same decorated horosphere.  
\subsubsection{Bijections and Covering Spaces}
We know (see Ahlfors \cite{Ahlfors1985}) that the group of orientation-preserving isometries of $\hyp^{n+2}$ is isomorphic to $PSL(2,\lip)$, the quotient $SL(2,\lip)/\{I,-I\}$ identifying elements of $SL(2,\lip)$ with their negative. The action of $PSL(2,\lip)$ can be extended to frames through derivatives, and because hyperbolic space is homogeneous and isotropic this action is simply transitive on the frame bundle Fr$^{n+2}$. This implies Fr$^{n+2}$ is a $PSL(2,\lip)$-torsor.\footnote{Roughly, a $G$-torsor is a set acted on by $G$ that can be identified with $G$ as a group after making an arbitrary choice of identity.}
\begin{proposition}
     $\text{Fr}^{n+2}$ is a $PSL(2,\lip)$-torsor.
\end{proposition}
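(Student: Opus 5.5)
The claim is that $PSL(2,\lip)$ acts freely and transitively on $\text{Fr}^{n+2}$. The plan is to reduce it to two facts: $PSL(2,\lip)$ is the full group of orientation-preserving isometries of $\hyp^{n+2}$, and that isometry group acts simply transitively on positively oriented orthonormal frames.

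First I fix the action. An element $A \in SL(2,\lip)$ acts on $\hyp^{n+2}$ as in Definition \ref{def:ActionOnHyperbolic}. By Lemma \ref{ActionIsometryMink} and the lemma following Definition \ref{def:ActionOnHyperbolic}, this is an action by orientation-preserving isometries. Its derivative therefore sends positively oriented orthonormal frames at $p$ to positively oriented orthonormal frames at $A.p$. Hence $A.F = (DA(v_1),\dots,DA(v_{n+2}))$ defines an action on $\text{Fr}^{n+2}$. The action on $\R^{1,n+2}$ is $S \mapsto ASA^{\dagger}$, which is unchanged under $A \mapsto -A$, so it descends to $PSL(2,\lip)$.

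Second, I identify $PSL(2,\lip)$ with $\text{Isom}^+(\hyp^{n+2}) \cong SO^+(1,n+2)$. Ahlfors' Theorem B, as cited in the proof of Lemma \ref{ActionIsometryMink}, says $SL(2,\lip)$ double covers the orientation-preserving Möbius group of $\cpara$. Lemma \ref{UpperHalfspaceMobiusTransformationsLemma} says our boundary action is exactly that Möbius action. Since isometries of $\hyp^{n+2}$ are determined by their boundary values, the induced map $PSL(2,\lip) \to \text{Isom}^+(\hyp^{n+2})$ is surjective. For injectivity, I need the kernel of the $SL(2,\lip)$ action to be exactly $\{\pm I\}$. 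If $A$ acts trivially on $\R^{1,n+2}$, then it fixes $\phi_1(1,0)$, $\phi_1(0,1)$ and $\phi_1(i_j,1)$. Using $\phi_1(A\kappa) = A\phi_1(\kappa)A^{\dagger}$ and the fibre description in Proposition \ref{phi1FibreLemma}, this forces $A$ to be diagonal with $A\cdot(1,0) = (s,0)$ for a unit $s$, and $\sigma(s)$ acting trivially on $\para$. That gives $s = \pm 1$, and the pseudo-determinant then gives $A = \pm I$.

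Third, I show the isometry group acts simply transitively on frames, which is the standard "homogeneous and isotropic" fact. For transitivity, I use transitivity on points, e.g. via Lemma \ref{TransitiveLemma} together with the parabolic translations of Proposition \ref{ParabolicLemma} and the dilations $\mathrm{diag}(a,a^{*-1})$. It then remains to show transitivity on oriented frames at the single point $q_0$. The stabiliser of $q_0$ in $SO^+(1,n+2)$ is $SO(n+2)$ acting on $T_{q_0}\hyp^{n+2} = q_0^{\perp} \cong \R^{n+2}$, and $SO(n+2)$ is simply transitive on oriented orthonormal frames of $\R^{n+2}$. For freeness, an isometry fixing a point and a full frame there has identity derivative at that point, so it is the identity, since isometries are determined by their 1-jet. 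Combining the three steps gives the torsor statement.

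The main obstacle is the second step: showing cleanly that the kernel is exactly $\{\pm I\}$ rather than just citing Ahlfors. If the direct computation becomes messy, I would fall back on Ahlfors' result that $SL(2,\lip)$ is a double cover of the orientation-preserving Möbius group, and use that the Möbius action of $-I$ is trivial to identify the kernel.
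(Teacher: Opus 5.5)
Your proposal is correct and follows the same route as the paper. The paper cites Ahlfors for $PSL(2,\lip)\cong\mathrm{Isom}^+(\hyp^{n+2})$ and uses homogeneity and isotropy of $\hyp^{n+2}$ to get a simply transitive action on frames; you supply more detail at each step, notably a direct check via Proposition \ref{phi1FibreLemma} that the kernel of the action on $\R^{1,n+2}$ is exactly $\{\pm I\}$.
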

\noindent\textit{Proof.} We can obtain an explicit identification by choosing a base-point $F_0 \in \text{Fr}^{n+2}$ and making the association $M \to M.F_0$ for $M \in PSL(2,\lip)$. This turns Fr$^{n+2}$ into a group; because the action is simply transitive, the identification is bijective, and because it is obviously a homomorphism we have an isomorphism of groups. \qed
\\\\
We can lift this map to an isomorphism $SL(2,\lip) \cong S\text{Fr}^{n+2}$, choosing a lifted base-point $\Tilde{F}_0 \in S\text{Fr}^{n+2}$ and assigning to $M \in SL(2,\lip)$ the image $M.\Tilde{F}_0$ of the basepoint $\Tilde{F}_0$ under the group action.
\begin{theorem}
\label{DoubleCoverTheorem}
The map $\Phi: S\lip \to D\text{Hor}^{n+2}$ is a smooth double cover.
\end{theorem}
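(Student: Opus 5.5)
The plan is to factor $\Phi = \Phi_2 \circ \Phi_1$ and combine two results already established. Theorem \ref{thm:Phi1SmoothDoubleCover} shows that $\Phi_1: S\lip \to \mathcal{MF}^n$ is a smooth double cover, with fibres $\{\kappa,-\kappa\}$. Theorem \ref{thm:Phi2Diffeomorph} shows that $\Phi_2: \mathcal{MF}^n \to D\text{Hor}^{n+2}$ is a diffeomorphism. A double covering map followed by a diffeomorphism is again a double covering map.

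Concretely, I take evenly covered neighbourhoods $U \subset \mathcal{MF}^n$ for $\Phi_1$; their images $\Phi_2(U)$ are open and evenly covered for $\Phi$. Smoothness of $\Phi$ follows because it is a composition of smooth maps.

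Before this, I need to confirm that the codomain is right. Theorem \ref{thm:PhiEquivariant}, together with Corollaries \ref{cor:LineFieldsParallel} and \ref{OrthogonalLineFields}, shows that $\Phi(\kappa)$ really lies in $D\text{Hor}^{n+2}$ and not merely in $L\text{Hor}^{n+2}$. Theorem \ref{thm:Phi2Diffeomorph} already incorporates this restriction.

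I will also check directly that the fibres have exactly two points. Since $\Phi_2$ is injective, $\Phi(\kappa_1) = \Phi(\kappa_2)$ if and only if $\Phi_1(\kappa_1) = \Phi_1(\kappa_2)$. By the argument in Theorem \ref{thm:Phi1SmoothDoubleCover}, this happens if and only if $\kappa_2 = \pm\kappa_1$. The underlying reason is that, after moving to a base-point with $SL(2,\lip)$, the stabiliser of a multiflag in the fibre $\kappa s$ ($s \in \text{Spin}(n+1)$, Proposition \ref{phi1FibreLemma}) acts through $\sigma(s)$, and $\sigma$ has kernel $\{\pm 1\}$ on unit elements. The points $\kappa$ and $-\kappa$ are distinct because $\kappa \neq 0$.

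The main obstacle is making sure the local-triviality claim in Theorem \ref{thm:Phi1SmoothDoubleCover} is genuinely a covering, not just a two-to-one surjection. If that is in doubt, I would argue it as follows. The $SL(2,\lip)$-action is transitive on $S\lip$ (Lemma \ref{TransitiveLemma}), and $\Phi$ is equivariant. So $\Phi$ is a surjective equivariant map between homogeneous spaces of the Lie group $SL(2,\lip)$. Its derivative has full rank at $(1,0)$: it is injective modulo the kernel $\kappa(\R^{0,n}\oplus\bigwedge^2\R^{0,n})$ of $D\phi_1$, because those directions rotate the frame by Lemma \ref{NuZjLemma}. Hence $\Phi$ is a local diffeomorphism everywhere by equivariance. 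A local diffeomorphism with finite fibres of constant cardinality $2$, given by the free action of $\{\pm 1\}$, is then the quotient map $S\lip \to S\lip/\{\pm1\}$, which is a double cover.
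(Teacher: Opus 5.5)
Your proposal is correct and takes the same route as the paper: factor $\Phi=\Phi_2\circ\Phi_1$ and combine Theorem~\ref{thm:Phi1SmoothDoubleCover} ($\Phi_1$ is a smooth double cover) with Theorem~\ref{thm:Phi2Diffeomorph} ($\Phi_2$ is a diffeomorphism). Your backup local-diffeomorphism argument goes further than the paper, whose proof of Theorem~\ref{thm:Phi1SmoothDoubleCover} never separately checks local triviality; one correction there is that full rank along $\kappa(\R^{0,n}\oplus\bigwedge^2\R^{0,n})$ does not follow from Lemma~\ref{NuZjLemma}, which only concerns flag-plane vectors at a fixed $\kappa$, but from $\sigma$ restricting to a covering homomorphism $\mathrm{Spin}(n+1)\to SO(n+1)$, whose differential is an isomorphism.
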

\noindent\textit{Proof.} In Theorem \ref{thm:Phi1SmoothDoubleCover} it was shown that $\Phi_1$ is a smooth double cover, and in Theorem \ref{thm:Phi2Diffeomorph} it was shown $\Phi_2$ is a diffeomorphism. Taken together, we see $\Phi_2 \circ \Phi_1 = \Phi$ is also a smooth double cover. \qed
\begin{remark} We have used the equivariance of $\Phi$ to extend calculations from $\Phi(1,0)$ to other decorated horospheres; but does this allow us to describe all of $D\text{Hor}^{n+2}$? In fact, it does; we can extend the transitive action on $S\lip$ (see Lemma \ref{TransitiveLemma}) through to $D\text{Hor}^{n+2}$, because transitivity of a group action $G$ passes through $G$ equivariant surjective maps; and, if the action is transitive on $D$Hor$^{n+2}$, then we can reach every decorated horosphere from $\Phi(1,0)$. This also lifts to transitivity on $S$Hor$^{n+2}$ under the equivariance of $\Tilde{\Phi}$, terms which which will be defined shortly.
\end{remark}

\subsection{Lifting to Double Covers}
Now we lift everything to the universal cover. Instead of discussing frames and decorations, we now consider spin frames and spin decorations.
\begin{definition}
\label{SpinFrameDefinition}
Consider a horosphere $H \subset \hyp^{n+2}$. An outward (inward) spin frame on $H$ is a continuous lift of an outward (inward) frame on $H$ from Fr$^{n+2}$ to $S\text{Fr}^{n+2}$.
 \begin{itemize}
 \item If $W^{out}$ is an outward spin frame on $H$ lifting an outward frame field, the associated inward spin frame is obtained by rotating $W^{out}$ by an angle of $\pi$ about the oriented geodesic $n$-plane (with orientation chosen by $N^{out}$) spanned by the decorations. More concretely, it is obtained by rotating in the plane spanned by $N^{out}$ and $1^{out}$ by $\pi$, from $N^{out}$ towards $1^{out}$.
 \item If $W^{in}$ is an inward spin frame on $H$ lifting an inward frame field, the associated outward spin frame is obtained by rotating $W^{in}$ by an angle $-\pi$ about the oriented geodesic $n$-plane (with orientation chosen by $N^{out}$) spanned by the decorations. Precisely, it is obtained by rotating in the plane spanned by $N^{in}$ and $1^{in}$ by $\pi$, from $1^{in}$ towards $N^{in}$.\hfill\defn
 \end{itemize}
\end{definition}
\noindent The choice of `associated' spin frame is somewhat arbitrary, but some consistent choice needs to be made. An outward spin frame $W^{out}$ now has an associated inward spin frame $W^{in}$, and the inward frame is associated to $W^{out}$ as well; they form a pair. 
\begin{definition}[Spin Decorations]
 \label{not:SpinDecHoro}A {\normalfont spin decorated horosphere} is a decorated horosphere $H$ equipped with a pair $W=(W^{in},W^{out})$ of an inward and outward spin frame lifting decorations on $H$, each associated to the other. The $i_j$ {\normalfont spin decoration} is then the spin lift of the $i_j$ decoration. \label{not:SHor}The space of spin decorated horospheres $(H,W)$ in $\hyp^{n+2}$ is denoted $S$Hor$^{n+2}$.\hfill\defn
\end{definition}
\subsubsection{Null Flag Bundles}
The flag bundle $\mathcal{MF}^n$ is also insensitive to the spinorial nature of $S\lip$. We can resolve this by considering a map into the double cover $\widetilde{\mathcal{MF}}^n$; choosing an arbitrary lift of $\Phi_1(1,0)$ as a base-point in $\widetilde{\mathcal{MF}}^n$, we obtain a lifted map
\[\Tilde{\Phi}_1: S\lip \to \widetilde{\mathcal{MF}}^n.\]
\noindent We denote points of this new space \textit{spin multiflags}. To differentiate our flags $[[p,v_j]]$ from their lifted counterparts we write $\widetilde{[[p,v_j]]}$ for the lifted flags. We also define a lifted map $\Tilde{\Phi}_2: \widetilde{\mathcal{MF}}^n \to S\text{Hor}^{n+2}$, again choosing an arbitrary lift of $\Phi (1,0)$ as a base-point such that Figure \ref{fig:LiftingComDiagram} commutes.
\begin{figure}[H]
    \centering
\begin{tikzcd}
S\lip \arrow[rr, "\tilde{\Phi}_1"] \arrow[rrdd, "\Phi_1"] &  & \widetilde{\mathcal{MF}} \arrow[rr, "\tilde{\Phi}_2"] \arrow[dd] &  & S\text{Hor}^{n+2} \arrow[dd] \\
                                                          &  &                                                                  &  &                              \\
                                                          &  & \mathcal{MF} \arrow[rr, "\Phi_2"]                                &  & D\text{Hor}^{n+2}           
\end{tikzcd}
    \caption{The various lifts of spaces and maps.}
    \label{fig:LiftingComDiagram}
\end{figure}
\noindent With these lifted functions, we properly have a diffeomorphism $\Tilde{\Phi} = \Tilde{\Phi}_2 \circ \Tilde{\Phi}_1: S\lip \to S\text{Hor}^{n+2}$. Putting together these various correspondences gives us the full Lipschitz spinor-horosphere correspondence.
\begin{theorem}
 There is an explicit, smooth, bijective, $SL(2,\lip)$ equivariant correspondence between the following:
 \begin{enumerate}[label=\roman*)]
 \item Lipschitz spinors $S\lip$,
 \item spin multiflags $\widetilde{\mathcal{MF}}^n$,
 \item The space $S$Hor$^{n+2}$ of spin decorated horospheres in $\hyp^{n+2}$.
\end{enumerate}
\end{theorem}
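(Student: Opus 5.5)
The plan is to assemble the correspondence from the lifted maps $\Tilde{\Phi}_1$ and $\Tilde{\Phi}_2$ of Figure \ref{fig:LiftingComDiagram}. We then check smoothness, bijectivity and equivariance for each lift separately, by pulling these properties back from the unlifted maps already studied.

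\textbf{Step 1: $\Tilde{\Phi}_1$ is a diffeomorphism.} By Theorem \ref{thm:Phi1SmoothDoubleCover}, $\Phi_1: S\lip \to \mathcal{MF}^n$ is a smooth double cover, and its fibres are $\{\kappa,-\kappa\}$. By Proposition \ref{phi1FibreLemma}, these two points are joined inside the fibre $\kappa\,\text{Spin}(n+1)$ by the path $\kappa e^{i_1i_2 t}$, $t\in[0,\pi]$, which lies in Spin$(n+1)$ and ends at $-1$. The image of this loop in $\mathcal{MF}^n_p \cong SO(n+1)$ is a full $2\pi$ rotation in a plane, which is the nontrivial loop in $\pi_1(SO(n+1))$. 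Hence $\Phi_1$ is the connected double cover that unwinds the $SO(n+1)$ fibre, so it coincides with the spin double cover $\widetilde{\mathcal{MF}}^n \to \mathcal{MF}^n$. Lifting theory then makes $\Tilde{\Phi}_1$ (fixed by a choice of basepoint over $\Phi_1(1,0)$) a bijective local diffeomorphism, and therefore a diffeomorphism.

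\textbf{Step 2: $\Tilde{\Phi}_2$ is a diffeomorphism.} $\Phi_2$ is a diffeomorphism by Theorem \ref{thm:Phi2Diffeomorph}. It identifies the $SO(n+1)$ fibres of $\mathcal{MF}^n$ with the decoration fibres of $D$Hor$^{n+2}$, the latter being the frames tangent to the horosphere, i.e. the frame fields of Definition \ref{defn:FrameFields}. I must check that the nontrivial loop in one fibre maps to the nontrivial loop in the other; this is automatic, since it is a fibrewise diffeomorphism of $SO(n+1)$-torsors. Both $\widetilde{\mathcal{MF}}^n$ and $S$Hor$^{n+2}$ are the associated connected double covers (for $S$Hor$^{n+2}$, via continuous lifts to $S\text{Fr}^{n+2}$). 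So $\Phi_2$ lifts to a diffeomorphism once basepoints are matched over $\Phi(1,0)$.

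\textbf{Step 3: Equivariance.} Equivariance of $\Phi_1$ and $\Phi$ is given by Theorems \ref{Phi1Equivariance} and \ref{thm:PhiEquivariant}. Since $SL(2,\lip)$ is connected (Ahlfors, as used in Lemma \ref{ActionIsometryMink}), its action on $\mathcal{MF}^n$ and $D$Hor$^{n+2}$ lifts uniquely to actions on the double covers. There are two candidate lifts of $A$ compatible with the identity path, and I would define the action on $\widetilde{\mathcal{MF}}^n$ by transporting the action on $S\lip$ through $\Tilde{\Phi}_1$. I then need to check this agrees with the natural lift of the action on $\mathcal{MF}^n$ to spin frames. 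The check: for $A$ near $I$ the two agree by continuity, and the set where they agree is open and closed in the connected group.

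\textbf{Main obstacle.} The delicate part is the topological claim in Step 1 that $\kappa$ and $-\kappa$ lie in the same path component of a fibre while the covering is nontrivial. Without this, $\Tilde{\Phi}_1$ could be a trivial double-sheet map rather than a bijection. The explicit computation of Section \ref{ExampleComputation} handles it: rotating the spinor by $\theta$ rotates the subflags by $2\theta$. For $n=0$, where Spin$(1)$ is disconnected, I would instead argue via the planar decoration convention of Definition \ref{def:PlanarDecoration}.
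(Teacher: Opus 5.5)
Your proposal is correct and follows the paper's route: lift the double covers to diffeomorphisms and lift the equivariance of Theorems \ref{Phi1Equivariance} and \ref{thm:PhiEquivariant}. The only difference in decomposition is that the paper lifts $\Phi$ directly rather than composing $\Tilde{\Phi}_2\circ\Tilde{\Phi}_1$. You also make explicit what the paper leaves implicit, namely that $\kappa$ and $-\kappa$ are joined within a fibre over the nontrivial loop of $SO(n+1)$, so that $\Phi_1$ genuinely is the spin cover.

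Two small repairs are needed.
\begin{enumerate}[label=\roman*)]
\item The path $e^{i_1i_2t}$ requires $n\ge 2$. The unit-paravector path $\kappa e^{i_1t}$, $t\in[0,\pi]$, as in Section \ref{ExampleComputation}, works for all $n\ge 1$.
\item Unique lifting of the action uses more than connectedness of $SL(2,\lip)$. It holds because the group is in fact simply connected, or directly by your transport definition via $\Tilde{\Phi}_1$.
\end{enumerate}
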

\noindent\textit{Proof.} We prove ii), iii) are equivalent to i).\\\noindent
i) $\Leftrightarrow$ ii): $\Phi_1$ is a smooth double cover (as shown in Theorem \ref{thm:Phi1SmoothDoubleCover}), which implies it lifts to a diffeomorphism. It is $SL(2,\lip)$ equivariant by Theorem \ref{Phi1Equivariance}, which also lifts to the equivariant map $\Tilde{\Phi}_1$.
\\\\
i) $\Leftrightarrow$ iii): Theorem \ref{DoubleCoverTheorem} tells us $\Phi$ is a smooth double cover of $D\text{Hor}^{n+2}$, which lifts to a diffeomorphism $S\lip \to S\text{Hor}^{n+2}$. It is made explicit (up to spin) by Theorem \ref{HorosphereCoordinateTheorem}, and $SL(2,\lip)$ equivariant by lifting the equivariance of Theorem \ref{thm:PhiEquivariant}.
\qed
\subsubsection{The Case of 2 Dimensions}
Spin decorations in $\hyp^2$ are an outlier, because the space of positively oriented orthonormal frames based at a point on a horocycle consists of a single point. The spin `double cover' is then two points and is thus disconnected, which means there is no path from one spin frame to another. We can remedy this by embedding $\U^2 \subset \U^3$, as was set up in Definition \ref{def:PlanarDecoration}.
\begin{definition}[Planar Spin Decorations]
\label{defn:PlanarSpinDecs}
Embed $\U^2 \subset \U^3$ as in Definition \ref{def:PlanarDecoration}. Given a horocycle $H$ in $\U^2 \subset \U^3$, a \emph{planar spin decoration} on $H$ is a lift of a planar decoration on $H$ to the spin double cover. Each horocycle $H$ of $\U^2$ has two planar spin decorations, related by a $2\pi$ rotation. \null\hfill\defn
\end{definition}
\noindent We can then define frames and spin frames as usual by considering these planar spin decorations as a subset of the spin decorations on $\U^3$.
\subsection{Lambda Lengths}
\label{LambdaLengthsSection}
In \cite{Mathews_Spinors_horospheres}, complex lambda lengths are defined geometrically and shown to have a nice algebraic interpretation as a symplectic form on the space of complex spinors. This relationship can be generalised; but while we already have a natural generalisation of the symplectic form (the bracket defined in Section \ref{sec:TheBracket}), we need to define the appropriate notion of `higher dimensional lambda length' between decorated horospheres. 

Informally, the appropriate geometric picture of a lambda length (or $\lambda$) is given by pushing the inward (resp. outward) spin frame on one horosphere to the outward (resp. inward) spin frame on the other, parallel transporting along the geodesic between their centres. Then, when the frames are based at the same point, there will be some rotation in the (now identified) tangent spaces of the horospheres that aligns the spin decorations. The lambda length is an element of $\lip$ representing this combined translation and rotation.

\begin{definition}
 We define two (closely related) actions of $\lip$ on spin decorations, with the difference being essentially spinorial. Because $\lip$ is defined to act on $\para$, using it to act on the tangent spaces of horospheres requires an identification of the tangent space with $\para$. In one instance we make this identification using the spin frame $W^{in}$ from the spin decoration $W$, choosing $i_j \in \para$ to point along the $i_j$ component of $W$, and $1 \in \para$ to point along $1^{in}$. In the other, we identify $\para$ along the components of $W^{out}$.
 
 For $a \in \lip$, the {\normalfont inwards action} on the spin decorated horosphere $(H,W)$ sends it to $a^{in}.(H,W) = (H,a^{in}.W)$. The spin decoration $a^{in}.W = (a^{in}.W^{in},a^{in}.W^{out})$ is the result of acting on $W^{in}$ and $W^{out}$ with $\Tilde{\sigma}\left(a/|a|\right)$ under the identification of the tangent space along $W^{in}$, where $\Tilde{\sigma}(\cdot)$ is the lift of $\sigma(\cdot)$ from a map $\lip \to SO(n+1)$ to a map $ \lip \to Spin(n+1)$. The {\normalfont outwards action} is almost the same, but instead identifies the tangent space with $\para$ along $W^{out}$.
 \null\hfill\defn
\end{definition}

\begin{definition}[Lambda Length]
\label{defn:LambdaLength}
 Consider a pair of spin decorated horospheres $(H_1,W_1)$ and $(H_2,W_2)$. If the centres of $H_1$, $H_2$ are distinct, the lambda length $\lambda_{12}$ from $H_1$ to $H_2$ is defined to be the element of $\lip$ with magnitude $e^{d/2}$, where $d$ is the oriented hyperbolic distance from $H_1$ to $H_2$ along their common perpendicular geodesic, and whose inwards action on spin decorations sends $W_1^{in}$ to $\Tilde{\sigma}(\lambda_{12})(W_1^{in}) = W_2^{out}$ after parallel transporting $W_1$ along the common perpendicular to be based at the same point as $W_2$, carrying the inwards identification along with it. If $H_1$, $H_2$ share a centre, then $\lambda_{12}=0$. \null\hfill\defn
\end{definition}
\noindent Although this is a somewhat intuitive geometric description, it isn't very explicit. For computations, the following characterisation proves more useful.
\begin{proposition}
 The lambda length $\lambda_{12}$ between two spin decorated horospheres $(H_1,W_1)$ and $(H_2,W_2)$ can be written explicitly in the form
 \[\lambda_{12} = e^{d/2}e^{\theta_k V_k/2}... e^{\theta_2 V_2/2}e^{\theta_1 V_1/2},\]
 where the simple rotations $e^{\theta_jV_j/2}$ are applied right to left. Here, $d$ is the signed hyperbolic distance from $H_1$ to $H_2$ along the geodesic between their centres (as in the definition above), the $\theta_j$ are angles in $[0,4\pi)$, and the $V_j \in \R^{0,n}$ are unit vectors; the $e^{\theta_j V_j/2}$ correspond to an ordered series of rotations by $\theta_j$ within the $2$-planes spanned by $\{\R,V_1\},\{\R,V_2\},...,\{\R,V_k\}$, from $1$ towards $V_j$, under the identification of $\para$ with the tangent space using the spin frame $W^{in}$. These rotations compose to the action $\Tilde{\sigma}(\lambda_{12})$ sending $W_1^{in}$ to $W_2^{out}$.
\end{proposition}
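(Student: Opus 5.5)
The plan is to split $\lambda_{12}$ into its magnitude and its unit part, and then to decompose the unit part into simple rotations using the exponential parametrisation of Section \ref{sec:ParaExponential}. By Definition \ref{defn:LambdaLength}, $\lambda_{12}\in\lip$ has $|\lambda_{12}| = e^{d/2}$. Since $N(\lambda_{12})>0$, set $u = \lambda_{12}/|\lambda_{12}|$. Then $N(u)=1$, so $u$ lies in $\$\text{pin}(n+1)\cong\text{Spin}(n+1)$ by Corollary \ref{cor:ParavSpinGrp}. The real scalar $e^{d/2}$ is central, so it can be pulled to the front, giving $\lambda_{12}=e^{d/2}u$. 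It then remains to write $u$ as an ordered product of exponentials $e^{\theta_jV_j/2}$.

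To decompose $u$, I use that $\lip$ is generated by invertible paravectors (Definition \ref{def:ParaLipschitzGrp}). Normalising each factor, $u=\pU_k\cdots\pU_1$ with unit paravectors $\pU_j$, up to an overall sign that can be absorbed into one factor. For any unit paravector $\pU_j\notin\R$, write its vector part as $\sin\alpha\,V_j$ with $V_j\in\R^{0,n}$ a unit vector. Then Proposition \ref{EulerGeneralisationLemma} gives $\pU_j=\cos\alpha+V_j\sin\alpha=e^{\alpha V_j}$; set $\theta_j=2\alpha$, reduced mod $4\pi$. If $\pU_j=\pm1$, pick any unit $V_j$ and take $\theta_j\in\{0,2\pi\}$. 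This is exactly why $\theta_j$ ranges over $[0,4\pi)$: the spin lift distinguishes $e^{\theta V/2}$ from $-e^{\theta V/2}=e^{(\theta+2\pi)V/2}$.

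The geometric meaning comes from Lemma \ref{ParavectorActionFixedSpaceLemma}. It shows $\sigma(e^{\theta_jV_j/2})$ is the rotation by $\theta_j$ in the plane spanned by $\{\R,V_j\}$, from $1$ towards $V_j$, fixing the orthogonal complement. Since $\sigma$ is multiplicative,
\[\sigma(u)=\sigma(e^{\theta_kV_k/2})\circ\cdots\circ\sigma(e^{\theta_1V_1/2}),\]
so the rotations are applied right to left. Under the identification of $\para$ with the tangent space along $W^{in}$, and using that $\tilde\sigma$ lifts $\sigma$, this composite is exactly $\tilde\sigma(\lambda_{12}/|\lambda_{12}|)$. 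By the definition of the lambda length, that map sends the parallel-transported $W_1^{in}$ to $W_2^{out}$.

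The main obstacle is making the spin lift consistent: showing that $\tilde\sigma$, applied factor by factor with the chosen $\theta_j\in[0,4\pi)$, reproduces the lift determined by $\lambda_{12}$ rather than its negative. I would handle this by taking $\tilde\sigma(u)$ to mean the class of $u$ itself in $\text{Spin}(n+1)$, so that multiplicativity in the group settles the sign. The path $t\mapsto e^{t\theta_jV_j/2}$, $t\in[0,1]$, then gives a continuous lift of each rotation. Two further points need only brief remarks: the decomposition is not unique (only existence is claimed), and the degenerate case of shared centres, where $\lambda_{12}=0$, is excluded.
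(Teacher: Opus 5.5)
Your argument is correct, but it takes a different route from the paper's.

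The paper works in $SO(n+1)$. It invokes Corollary \ref{IsometryGeneratedByRotationsCorollary} to write $\sigma(\lambda_{12}/|\lambda_{12}|)$ as a product of simple rotations in planes $\{\R,V_j\}$. It then simply asserts that ``this statement lifts'' to $\text{Spin}(n+1)$.

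You work in the Lipschitz group instead. You factor the element $u=\lambda_{12}/|\lambda_{12}|$ into unit paravectors, using only that $\lip$ is generated by paravectors and that $N$ is multiplicative on such products. You write each factor as $e^{\theta_jV_j/2}$ by Proposition \ref{EulerGeneralisationLemma}. You then read off the geometry from Lemma \ref{ParavectorActionFixedSpaceLemma} and the multiplicativity of $\sigma$.

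Your route has two advantages:
\begin{itemize}
\item The identity $u=e^{\theta_kV_k/2}\cdots e^{\theta_1V_1/2}$ holds exactly in the Clifford algebra. So the sign ambiguity that the paper's lifting step passes over never arises. In the paper's version one must still observe that if the product of lifted rotations comes out as $-u$, one $\theta_j$ can be shifted by $2\pi$.
\item You do not need the corollary, which the paper states without proof.
\end{itemize}
Your factorisation is the same one the paper itself uses later, in Lemma \ref{HalfwayLambdaLengthComputationLemma}. Your reading of $\tilde{\sigma}$ as $u$ itself in $\text{Spin}(n+1)$ matches the paper's inward-action convention $\tilde{\sigma}(a/|a|)$. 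The paper's argument is shorter and more geometric, but it leaves the spin bookkeeping implicit.

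One small correction: no overall sign appears when you normalise the paravector factors. We have $|\lambda_{12}|=\prod_j|\pV_j|$ with all magnitudes positive, so that caveat can be dropped.
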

\noindent\textit{Proof.} $e^{d/2}$ is the magnitude of $\lambda_{12}$ by definition. The action on $W_1^{in}$ is by $\Tilde{\sigma}(\lambda_{12})$, but by Corollary \ref{IsometryGeneratedByRotationsCorollary}, the action of any $\sigma(\alpha)$ for $\alpha \in \lip$ can be generated by the product of simple rotations in the 2-planes spanned by $\{\R,V_j\}$ for some $V_j \in \R^{0,n}$. This statement lifts, so any $\Tilde{\sigma}(\alpha)$ can be generated by the product of the lifts of simple rotations.\qed
\\\\
Before moving on, a quick lemma.
\begin{lemma}
 \label{ParavectorsSquareLemma}
 The only paravectors $\pV \in \para$ satisfying $\pV^2=-1$ are the subset of \textit{unit vectors}; that is, the $V \in \R^{0,n} \subset \lip$ satisfying $|V|=1$.
\end{lemma}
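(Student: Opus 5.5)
The plan is a direct coordinate computation, using only the multiplication rules of $\clif$. Write a general paravector as $\pV = V_0 + V$ with $V_0 \in \R$ and $V = \sum_{j=1}^n V_j i_j \in \R^{0,n}$. Squaring gives
\[\pV^2 = V_0^2 + 2V_0 V + V^2.\]
By the polarisation identity (\ref{eqn:PolarisationIdentity}), or directly from $i_j^2=-1$ and $i_ji_k=-i_ki_j$, we have $V^2 = -\sum_j V_j^2 = -|V|^2$, which is real. So $\pV^2 = (V_0^2 - |V|^2) + 2V_0 V$. The first term has degree $0$ and the second degree $1$ in the standard basis.

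Next, impose $\pV^2 = -1$ and compare components. The degree-one part gives $2V_0 V = 0$, so either $V_0 = 0$ or $V = 0$.

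If $V = 0$, the real part gives $V_0^2 = -1$, which has no real solution, so this case is excluded. Hence $V_0 = 0$, and the real part then reads $-|V|^2 = -1$, i.e.\ $|V| = 1$. So $\pV = V$ is a unit vector in $\R^{0,n}$.

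For the converse, any unit $V \in \R^{0,n}$ satisfies $V^2 = -V\ol{V} \cdot (-1)$; more simply, $\ol{V} = -V$ gives $V^2 = -V\ol{V} = -|V|^2 = -1$, exactly as in the proof of Proposition \ref{EulerGeneralisationLemma}. Such $V$ is nonzero, so it lies in $\lip$.

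No step here is a real obstacle. The only care needed is to justify that $V^2$ is purely real, which the polarisation identity supplies, so that the degree-$0$ and degree-$1$ parts of $\pV^2$ can be separated cleanly.
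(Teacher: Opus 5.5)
Your proof is correct and follows the paper's argument essentially verbatim: expand $\pV^2 = (V_0^2-|V|^2) + 2V_0V$, use the vanishing of the degree-one part to force $V_0=0$ (since $V=0$ would give $V_0^2=-1$), and read off $|V|=1$. The converse you add is a fine completion that the paper leaves implicit, but delete the stray expression $V^2 = -V\ol{V}\cdot(-1)$, which actually evaluates to $+1$; the line after it, $V^2 = -V\ol{V} = -|V|^2 = -1$, is the correct one.
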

\noindent\textit{Proof.} Consider expanding $\pV^2$ for a generic paravector $\pV=V_0+V$ (where $V$ is the vector component):
\[\left(V_0+V\right)\left(V_0+V\right) = V^2+2V_0V+V_0^2=-1.\]
If $V_0V$ is non-zero the left-hand side has complex components, so this cross-term must be zero. This implies $V_0=0$, or $V=0$; but if $V=0$ then $V_0^2=-1$, which is not satisfied for any real $V_0$. We conclude $V_0=0$ and therefore $\pV^2=V^2=-1$, and as $|V|^2=-V^2$ we see $|V|=1$.\qed
\subsubsection{Computing the Lambda Length}
The spin nature of the lambda length can make it difficult to handle signs; to resolve this we compute a simple example, and extend that case to the general one.
\begin{lemma}
 \label{FirstLambdaLengthLemma}
 The lambda length $\lambda_{12}$ from $\Tilde{\Phi} (1,0)$ to $\Tilde{\Phi} (0,1)$ is $1$.
\end{lemma}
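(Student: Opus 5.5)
The plan is to compute both ingredients of Definition \ref{defn:LambdaLength} directly in the upper half-space model, using Theorem \ref{HorosphereCoordinateTheorem}: first the magnitude $e^{d/2}$, then the rotation part.

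For the magnitude, Theorem \ref{HorosphereCoordinateTheorem} gives the two horospheres explicitly.
\begin{itemize}
\item $\Phi(1,0)$ is the horizontal plane at Euclidean height $|1|^2=1$, centred at $\infty$.
\item $\Phi(0,1)$ is centred at $0$ with Euclidean diameter $|1|^{-2}=1$.
\end{itemize}
The centres are distinct, and their common perpendicular is the vertical geodesic $\gamma(0\to\infty)$. Both horospheres meet this geodesic at the point $(1;0)_{\U}$: it is the north pole of the second horosphere and lies on the first. Hence they are tangent there, $d=0$, and $|\lambda_{12}|=e^{0}=1$. Since no parallel transport is needed, both spin frames are already based at $q=(1;0)_{\U}$, and what remains is to compare $W_1^{in}$ with $W_2^{out}$ there.

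Next I would identify the frames at $q$. The inward normal of $\Phi(1,0)$ points away from its centre $\infty$, that is, downward ($-\partial_z$). The outward normal of $\Phi(0,1)$ points toward its centre $0$, also downward. So $N_1^{in}=N_2^{out}$.

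For the decorations:
\begin{itemize}
\item Proposition \ref{BasicHorosphereLemma} gives the $i_j$ field of $\Phi(1,0)$ at $q$ along $\partial_{x_j}$.
\item Theorem \ref{HorosphereCoordinateTheorem} gives the north-pole specification for $(0,1)$ as $\eta' i_j\ol{\eta}=i_j$, also along $\partial_{x_j}$.
\end{itemize}
The remaining Hodge-star components $1^{in}$ and $1^{out}$ are then determined by the same $(n+1)$ vectors and agree. Therefore $W_1^{in}$ and $W_2^{out}$ project to the same frame in $\mathrm{Fr}^{n+2}$, so $\lambda_{12}=\pm1$.

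The main obstacle is the sign, which is a genuine spin question. My approach is to use the $SL(2,\lip)$ equivariance of $\tilde\Phi$ together with the chosen base lift. The matrix $G_2=\begin{pmatrix}0&-1\\1&0\end{pmatrix}$ sends $(1,0)$ to $(0,1)$ and $(0,1)$ to $(-1,0)$. It acts as the half-turn rotation about the geodesic through $q$ orthogonal to the vertical line, by angle $\pi$ in the plane of $N$ and $1$. So $W_2=G_2.W_1$ as spin decorations.

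I would then check that this lift of the $\pi$-rotation coincides with the rotation in Definition \ref{SpinFrameDefinition} that associates $W_1^{out}$ with $W_1^{in}$. To do this I would follow the continuous path $G_t=\begin{pmatrix}\cos t&-\sin t\\ \sin t&\cos t\end{pmatrix}$ for $t\in[0,\pi/2]$, which lies in $SL(2,\lip)$, and track the frame $W_1^{out}$ continuously. At $t=\pi/2$ this would show $G_2.W_1^{out}=W_1^{in}$ as spin frames, with the $N\to1$ rotation direction matching the convention, giving $W_2^{out}=W_1^{in}$ and hence $\lambda_{12}=+1$ rather than $-1$.

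If the orientation conventions produce the opposite sign, it must be absorbed by the choice of base lift of $\tilde\Phi(1,0)$. As a consistency check I would use the $n=1$ case, where this reduces to Mathews' complex result $\lambda=1$ for $(1,0),(0,1)$ in \cite{Mathews_Spinors_horospheres}.
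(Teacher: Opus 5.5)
Your skeleton matches the paper's. Tangency at $(1;0)_{\U}$ gives $d=0$, the two spin frames lie over the same frame so $\lambda_{12}=\pm1$, and the rotation path $M_\theta$ (your $G_t$) fixes the lift. You push $W_1^{out}$ forward and compare with the out-to-in association at $H_1$, whereas the paper pushes $W_1^{in}$ forward and applies the in-to-out association at $H_2$. This is an equivalent reordering, since the action carries associated pairs to associated pairs.

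The gap is that the sign, which is the only content of the lemma beyond $\pm1$, is announced rather than derived (``this would show \dots\ with the $N\to1$ rotation direction matching the convention''). The missing idea is identifying the axis of $M_\theta$ with the decoration plane:
\begin{itemize}
\item The boundary fixed points of $M_\theta$ satisfy $\pV^2=-1$, so by Lemma \ref{ParavectorsSquareLemma} they form the unit sphere of $\R^{0,n}$.
\item Hence $M_\theta$ is a rotation by $2\theta$ about the totally geodesic $n$-plane $\Pi_\gamma$ that this sphere bounds. It is not a rotation about ``the geodesic through $q$ orthogonal to the vertical line'', which only makes sense for $n=1$.
\item $T_q\Pi_\gamma=\mathrm{span}(\partial_{x_1},\dots,\partial_{x_n})$ is exactly the span of the decorations, with the same orientation. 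So the association rotation of Definition \ref{SpinFrameDefinition} is a rotation by $\pi$ about the same oriented $n$-plane as the path.
\item Therefore the path's $+\pi$ and the association's $-\pi$ cancel. Equivalently, in your ordering, $G_2$ acts on $W_1^{out}$ exactly as the out-to-in association does. The paper checks the direction by restricting to $\mathrm{span}\{1,i_j,i_{n+1}\}$, where it becomes Mathews' $n=1$ computation.
\end{itemize}
Until this orientation match is established, $\lambda_{12}=-1$ has not been excluded.

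Your fallback also fails: a wrong sign cannot be absorbed into the choice of base lift of $\Tilde{\Phi}(1,0)$. Since $S\lip$ is connected, choosing the other base lift composes $\Tilde{\Phi}$ with the deck transformation (rotation by $2\pi$). That flips \emph{every} spin decoration, in particular both $W_1^{in}$ and $W_2^{out}$. The deck transformation is central and commutes with parallel transport and with $\Tilde{\sigma}$, so $\lambda_{12}$ is unchanged. The sign is therefore forced by the association convention together with the direction in which $M_\theta$ rotates. If these disagreed, the lemma would be false under these conventions, not rescued by a different lift. For the same reason, agreement with Mathews' value at $n=1$ is only a consistency check, not a substitute for the orientation argument in general $n$.
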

\noindent\textit{Proof.} We give the horospheres and spin decorations names:
\begin{align*}
\Tilde{\Phi}(1,0) &= (H_1,W_1),\ \Tilde{\Phi}(0,1) = (H_2,W_2).
\end{align*}
Both horospheres go through the point $(1;0)_{\U}$ (which looks, in the upper half-space, like a point at Euclidean distance 1 above the origin of the boundary), and the two spin decorations project to the same orthonormal frame $(i_1,i_2,...,i_n)$ on $\partial \U^{n+2}$. This implies the spin decorations either align, or differ by a $2\pi$ rotation; if they are aligned then $\lambda_{12}=1$, and if they differ by a $2\pi$ rotation $\lambda_{12}=-1$.

Consider a smooth path of horospheres from $(1,0)$ to $(0,1)$ given by $(\cos\theta,\sin\theta)$ for $\theta \in [0,\pi/2]$. This is equivalently given by acting on $(1,0)$ with
\[M_{\theta}=\begin{pmatrix}
 \cos\theta & -\sin\theta\\
 \sin\theta & \cos\theta
\end{pmatrix} \in SL(2,\lip).\]
This action is an elliptic isometry, a rotation about a geodesic hyperplane. We can see this by examining the fixed points; consider the set\footnote{It is easy to check $\infty$ is not fixed.} of $\pV \in \partial \U^{n+2}$ such that $M_{\theta}.\pV = \pV$.
\begin{align*}
\pV &= \left(\cos\theta\pV - \sin\theta\right)\left(\sin\theta\pV + \cos\theta\right)^{-1} \implies \pV^2 = -1.
\end{align*}
From Lemma \ref{ParavectorsSquareLemma}, this implies $\pV$ is a unit vector in $\R^{0,n} \subset \para$, and the set of these unit vectors form the boundary of a geodesic hyperplane. The path $M_{\theta}$ fixes the points $i_1,-i_1,i_2,-i_2$ on the boundary and therefore also fixes the geodesics $\gamma_1(-i_1\to i_1)$ and $\gamma_2(-i_2 \to i_2)$. The point $(1;0)_{\U}$ is on both geodesics, and must stay on both under the action. As the only point where the geodesics meet is $(1;0)_{\U}$, it must be fixed. Because there is a fixed point in the interior, the action is elliptic.

Geometrically, this action looks like a rotation by $2\theta$ about the oriented geodesic hyperplane $\Pi_{\gamma}$ bounded by the unit $(n-1)$-sphere of pure vectors $ \R^{0,n} \subset \partial\U^{n+2}$. Note that if we restrict to the subspace spanned by the vectors $1$, $i_j$, and $i_{n+1}$ in $\U^{n+2} \subset \$\R^{n+2}$ then this looks like rotation about the oriented geodesic from $(-i_j\to i_j)$, which reduces us to the $n=1$ case given in \cite{Mathews_Spinors_horospheres} p.18. The frame $W_1^{in}$ based at $(1;0)_{\U}$ is thus rotated by $\pi$ about $\Pi_{\gamma}$ to become $W_2^{in}$. 

To arrive at the associated outward spin frame $W_2^{out}$, we then rotate $W_2^{in}$ by $-\pi$ about the oriented hyperplane defined by the decorations (as in Definition \ref{SpinFrameDefinition}). But the decorations for $(H_2,W_2)$ are given by $i_1,i_2,...,i_n$, so this is a rotation about precisely the same hyperplane $\Pi_{\gamma}$! In particular, as the decorations all point along the $i_j$, we get the same orientation as $\Pi_{\gamma}$ also. So rotating to the outwards spin frame means rotating by $-\pi$, back to the start, and $W_1^{in}=W_2^{out}$ at $(0;1)_{\U}$. As the spin frames align, the lambda length $(H_1,W_1) \to (H_2,W_2)$ is $1$.\qed 
\\\\
We can compute the general case via the equivariant action of $SL(2,\lip)$, starting with a slightly more general example. Unless stated otherwise, in the following lemma when we write elements in the tangent spaces to horospheres as elements of $\para$ the identification is always taken to be the generic basis inherited from the boundary $\partial\U^{n+2}$, or equivalently the restriction $z=$ \textit{constant} of $\U^{n+2}$ to the horosphere. 
\begin{lemma}
 \label{LambdaLengthParavectorCaseLemma}
 The lambda length $\lambda_{13}$ from the spin decorated horosphere $(H_1,W_1) = \Tilde{\Phi} (1,0)$ to $(H_3,W_3) = \Tilde{\Phi} (0,\pU)$, $\pU \in \para$ is $\pU$.
\end{lemma}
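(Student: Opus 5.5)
We reduce to Lemma \ref{FirstLambdaLengthLemma} by an element of $SL(2,\lip)$ that fixes $(1,0)$ and sends $(0,1)$ to $(0,\pU)$. Take
\[G = \begin{pmatrix} \pU^{*-1} & 0 \\ 0 & \pU \end{pmatrix} = \begin{pmatrix} \pU^{-1} & 0 \\ 0 & \pU \end{pmatrix},\]
since $\pU^*=\pU$. This is the transformation $A_3$ of Lemma \ref{DecorationsTransformLemma} with $a=\pU^{-1}$. It lies in $SL(2,\lip)$ because the pseudo-determinant is $(\pU^{-1})^*\pU=1$. It sends $(0,1)\mapsto(0,\pU)$ but $(1,0)\mapsto(\pU^{-1},0)$, not $(1,0)$. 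We therefore use a different factorisation.

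Consider instead acting only on the second horosphere. The inward action of $\pU$ on $(H_2,W_2)$ and the effect on the distance can be compared directly with $\Tilde\Phi(0,\pU)$. By Theorem \ref{HorosphereCoordinateTheorem}, $\Phi(0,\pU)$ is centred at $0$ and has Euclidean diameter $|\pU|^{-2}$. Its decorations are north-pole specified by $\pU' i_j\ol{\pU}=\sigma(\pU')(i_j)$. Compared with $\Phi(0,1)$ (diameter $1$, decorations $i_j$), the horosphere has shrunk by Euclidean factor $|\pU|^{-2}$. The oriented distance from $H_1$ (height $1$, centred at $\infty$) along the vertical geodesic over $0$ is therefore $d=\log|\pU|^2$, so $e^{d/2}=|\pU|$, matching the magnitude of $\pU$.

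For the rotational part, the spin decoration of $\Tilde\Phi(0,\pU)$ is obtained from that of $\Tilde\Phi(0,1)$ by the path $s\mapsto \Tilde\Phi(0,\pU_s)$, where $\pU_s$ runs from $1$ to $\pU/|\pU|$ along the great circle $e^{sV\theta}$ (Proposition \ref{EulerGeneralisationLemma}), followed by scaling. Along this path the decorations rotate by $\sigma(\pU_s')$, a rotation by angle $2s\theta$ in the $\{1,V\}$ plane (Lemma \ref{ParavectorActionFixedSpaceLemma}), up to the sign $'$. The lambda length varies continuously in $s$, since $\Tilde\Phi$ is a diffeomorphism. Starting from $\lambda=1$ at $s=0$ (Lemma \ref{FirstLambdaLengthLemma}), continuity pins down the spin lift, giving $\lambda_{13}=e^{d/2}\,e^{\theta V}$ with the correct sign.

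The main obstacle is reconciling the identifications. The frame on $H_2$ is read in the boundary basis, whereas $\Tilde\sigma(\lambda)$ acts via the $W^{in}$ identification on $H_1$ after parallel transport. The transport from $H_1$ down the geodesic to $H_3$ combined with the inward-to-outward $\pi$-rotation (Definition \ref{SpinFrameDefinition}) conjugates $\pU'$ to $\pU$: the inversion $G_2$ of Theorem \ref{HorosphereCoordinateTheorem} exchanges $\cdot'$ with the identity, via $-\ol{\textbf D}$. I would check this on the two-dimensional slice spanned by $1$, $V$, $i_{n+1}$, where the situation reduces to the complex case of \cite{Mathews_Spinors_horospheres}, and there confirm that the rotation reads $e^{\theta V}$ rather than $e^{-\theta V}$. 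Combining the magnitude and the rotation gives $\lambda_{13}=|\pU|e^{\theta V}=\pU$.
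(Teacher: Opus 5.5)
Your route is the paper's. You take $\lambda=1$ from $\Tilde\Phi(1,0)$ to $\Tilde\Phi(0,1)$ (Lemma \ref{FirstLambdaLengthLemma}), get $e^{d/2}=|\pU|$ from the vertical geodesic, and use the path $s\mapsto\Tilde\Phi(0,e^{s\theta V})$, which is the paper's $M_t=\mathrm{diag}(e^{-t\theta V},e^{t\theta V})$ applied to $\Tilde\Phi(0,1)$. The matrix $G$ you discard in your first paragraph is exactly the paper's $M_1$. It never needs to fix $(1,0)$, because the first horosphere is handled by Lemma \ref{FirstLambdaLengthLemma} and only the second is moved.

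The gap is the \emph{direction} of the rotation. Continuity in $s$ only selects the spin lift; it cannot tell $e^{\theta V}$ from $e^{-\theta V}$, since both start at $1$. What Theorem \ref{HorosphereCoordinateTheorem} actually gives is that, in the boundary basis, the decorations move by $\sigma(\pU')$, a rotation by $-2\theta$ from $1$ towards $V$. If the inwards identification coincided with the boundary basis, your argument would output $\lambda_{13}=\ol{\pU}$. So the step you defer (``I would check this\ldots'') is the entire rotational content of the lemma, and your phrase ``with the correct sign'' presupposes it.

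The algebra you gesture at is right. For $R(\pV)=-\ol{\pV}$ one has $R\,\sigma(a)\,R=\sigma(a')$, so $\sigma(\pU')$ becomes $\sigma(\pU)$ once you know that the inwards identification ($1\mapsto 1^{in}$, $i_j\mapsto D_j$) is the boundary basis composed with $R$. That is, you need $1^{in}=\star(N^{in}\wedge D_1\wedge\cdots\wedge D_n)$ to point along $-\partial_{x_0}$ while the $D_j$ point along $\partial_{x_j}$. You do not prove this, and $G_2$ cannot supply it. The rule $\textbf{D}\mapsto-\ol{\textbf{D}}$ of Lemma \ref{DecorationsTransformLemma} describes how north-pole specifications move under an isometry. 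It was already used (fixing the $i_j$) in deriving $\pU' i_j\ol{\pU}$, and it says nothing about where $1^{in}$ sits.

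This orientation fact is exactly what the paper's proof supplies. It observes that $1^{in}$ of $W^{in}$ is opposite to $1^{out}$ of $W^{out}$, which converts the rotation by $-2\theta$ from $\R^+$ towards $\pU$ into a rotation by $2\theta$ under the inwards action. Your proposed check on the slice spanned by $1,V,i_{n+1}$ is not a substitute, for two reasons:
\begin{enumerate}
\item the $i_j$ generally do not lie in that slice, so the Hodge-star component does not restrict to the three-dimensional frame convention of \cite{Mathews_Spinors_horospheres};
\item there the angle is signed by the orientation of the common perpendicular, not through an identification of the tangent space with $\C$ via $W^{in}$.
\end{enumerate}
Reconciling those conventions is precisely the bookkeeping that is still missing.
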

\noindent\textit{Proof.} We can write $\pU$ as $|\pU|e^{\theta V}$ for a unit vector $V \in \R^{0,n}$, $|V|^2=1$ (Proposition \ref{EulerGeneralisationLemma}). We then consider the magnitude and rotational components of the lambda length independently, a rotation by $\pU/|\pU|$ and magnitude $|\pU|$. 

First we send $(1,0)$ to $(0,1)$, which (as the lambda length between them is 1 by Lemma \ref{FirstLambdaLengthLemma}) matches the inward spin frame of $\Tilde{\Phi}(1,0)$ to the outward spin frame of $\Tilde{\Phi}(0,1)$. If $\pU=|\pU|$ is real and positive then there is no rotation, and the magnitude is a function of the distance from $1$ to $\pU^{-2}$. The signed hyperbolic distance along the geodesic from the centre of $H_1$ (which is $\infty$) to the centre of $H_3$ (which is 0) is then:
\[d = \int^{1}_{|\pU|^{-2}}\frac{dz}{z}=2\ln|\pU|,\]
and $\lambda=e^{d/2}=|\pU|=\pU$. 

We therefore assume $\pU \notin \R^+$, and we'll also assume $|\pU|=1$ as we can simply rescale afterwards. Acting by
\[M_1=\begin{pmatrix}
 \pU^{-1} & 0\\
 0 & \pU
\end{pmatrix} \in SL(2,\lip)\]
sends $(0,1)$ to $(0,\pU)$ and therefore sends $\Tilde{\Phi}(0,1)=(H_2,W_2)$ from Lemma \ref{FirstLambdaLengthLemma} to $(H_3,W_3)$.
We define a path in $SL(2,\lip)$ from the identity to the action $M_1$:
\[M_t = \begin{pmatrix}
 e^{-t\theta V} & 0 \\
 0 & e^{t\theta V}
\end{pmatrix},\]
for $t \in [0,1]$. As shorthand, write $\pU_t=e^{t\theta V}$; note also that, as a unit paravector, $\pU'_t = \ol{\pU}_t = \pU^{-1}_t$. As per Lemma \ref{ParavectorActionFixedSpaceLemma}, the action of $M_t$ as a M\"{o}bius transformation on $\pV \in \para$ will be a rotation $\sigma(\pU^{-1}_t)$ in the plane spanned by $\R$ and $\pU^{-1}_t$; denote the span by $\{\R,\pU^{-1}_t\}$.
\[M_t.\pV = \sigma(\pU^{-1}_t)(\pV) = e^{-t\theta V}\pV e^{-t\theta V}.\]
But this will always be the plane spanned by $\{\R,\pU\}$, since
\[\pU^{-1}_t = \cos(t\theta) - V\sin(t\theta) \in \{\R,\pU\},\]
and therefore $\{\R,\pU^{-1}_t\}$ spans the same plane as $\{\R,\pU\}$ unless $\pU^{-1}_t$ is real. If $\pU^{-1}_t$ is real (implying $\pU^{-1}_t = \cos(t\theta)$), it is simple to check that the derivative lies within the correct plane and is orthogonal to $\pU^{-1}_t$, so the path of isometries $M_t$ really is just a rotation in the $\{\R,\pU\}$-plane. 

If we act on a vector in the plane $\{\R,\pU\}$:
\[\sigma(\pU^{-1}_t)(A+B\pU) = Ae^{-t\theta V}e^{-t\theta V}+Be^{-t\theta V}e^{\theta V}e^{-t\theta V},\]
then, applying the Baker-Campbell-Hausdorff formula, we see all the commutators vanish (as all the exponents are multiples of $V$):
\[\sigma(\pU^{-1}_t)(A+B\pU) = Ae^{-2t\theta V}+Be^{\theta V-2t\theta V}= (A+B\pU)e^{-2t\theta V}.\]
This is precisely a rotation by $-2t\theta$ in the plane spanned by $\R$ and $\pU^{-1}$ (Lemma \ref{ParavectorActionFixedSpaceLemma}), from $\R^+$ towards $\pU$. Thus, the action of $M_1$ sending the spin frame of $\Tilde{\Phi}(0,1)$ to $\Tilde{\Phi}(0,\pU)$ is a rotation in the $\{\R,\pU\}$-plane by $-2\theta \mod 4\pi$ from $\R^+$ towards $\pU$.

But we aren't trying to map the spin frame of $\Tilde{\Phi}(0,1)$ to the spin frame of $\Tilde{\Phi}(0,\pU)$; we want to go from $\Tilde{\Phi}(1,0)$ to $\Tilde{\Phi}(0,\pU)$, and the lambda length is computed in the $W_1^{in}$ identification. As shown in Lemma \ref{FirstLambdaLengthLemma}, the inwards spin frame of $\Tilde{\Phi}(1,0)$ is identified with the outwards spin frame of $\Tilde{\Phi}(0,1)$. But, in our orientation conventions, $1^{in}$ of $W^{in}$ will point in the opposite direction to $1^{out}$ of $W^{out}$. This makes the rotation by $-2\theta$ from $\R^+$ towards $\pU$ a rotation by $-2\theta$ from $\R^-$ towards $\pU$, or a rotation by $2\theta$ from $\R^{+}$ towards $\pU$ by our inwards action, which is the rotation taking the spin frame of $\Tilde{\Phi}(1,0)$ to $\Tilde{\Phi}(0,\pU)$.

To compute the lambda length $\lambda_{13}$ we relax the assumption $|\pU|=1$ and integrate (as above) to find the hyperbolic distance as $2\ln|\pU|$, giving
\[\lambda_{13} = e^{2\ln|\pU|/2}e^{2\theta V/2}= |\pU|e^{\theta V}=\pU. \tag*{\qed}\]
It isn't a huge leap to the next generalisation; because $\lip$ is generated by paravectors, we can factor a general $\eta \in \lip$ into paravectors and apply the rotations one after the other. 
\begin{lemma}
 \label{HalfwayLambdaLengthComputationLemma}
 The lambda length between the spin decorated horospheres $(H_1,W_1) = \Tilde{\Phi} (1,0)$ and $(H_4,W_4) = \Tilde{\Phi} (0,\eta)$ is $\eta$.
\end{lemma}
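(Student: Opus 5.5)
Since $\eta \in \lip$ is a product of invertible paravectors, I would write $\eta = \pU_1\pU_2\cdots\pU_k$ with each $\pU_j \in \para\backslash\{0\}$ and induct on $k$. The case $k=1$ is Lemma \ref{LambdaLengthParavectorCaseLemma}. For the inductive step I would let $\eta_{k-1} = \pU_1\cdots\pU_{k-1}$ and assume that the lambda length from $\Tilde{\Phi}(1,0)$ to $\Tilde{\Phi}(0,\eta_{k-1})$ is $\eta_{k-1}$. The aim is then to show that passing from $(0,\eta_{k-1})$ to $(0,\eta_{k-1}\pU_k)$ multiplies the lambda length on the right by $\pU_k$.

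The tool is a diagonal matrix acting on the second component. I would take
\[
M = \begin{pmatrix} \eta_{k-1}^{*-1} & 0 \\ 0 & \eta_{k-1} \end{pmatrix} \in SL(2,\lip).
\]
This $M$ sends $(0,\pU_k)$ to $(0,\eta_{k-1}\pU_k)$ and $(0,1)$ to $(0,\eta_{k-1})$. It fixes the centre $\infty$ but moves $(1,0)$ to $(\eta_{k-1}^{*-1},0)$. So instead of transporting the whole configuration, I would compare spin frames directly. By equivariance of $\Tilde{\Phi}$, $M$ carries the pair $\Tilde{\Phi}(0,1)\to\Tilde{\Phi}(0,\pU_k)$ to the pair $\Tilde{\Phi}(0,\eta_{k-1})\to\Tilde{\Phi}(0,\eta)$, and its action on spin frames at $0$ is the lift of $\sigma(\eta_{k-1})$ together with a dilation.

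With that, the lambda length from $\Tilde{\Phi}(1,0)$ to $\Tilde{\Phi}(0,\eta)$ splits as two steps, both measured in the inward identification carried from $W_1^{in}$. The first step reaches $W_{\eta_{k-1}}^{out}$ and is $\eta_{k-1}$ by induction. The second step is an extra rotation $\Tilde\sigma(\pU_k/|\pU_k|)$, read in the frame already rotated by $\eta_{k-1}$, together with a translation of hyperbolic length $2\ln|\pU_k|$ along the same vertical geodesic from $\infty$ to $0$. The distances add, and $|\eta| = |\eta_{k-1}||\pU_k|$, so the magnitude is right. A rotation expressed in a frame that has already been rotated by $g$ composes on the right, which gives $\Tilde\sigma(\eta_{k-1})\Tilde\sigma(\pU_k)=\Tilde\sigma(\eta_{k-1}\pU_k)$ and hence the lambda length $\eta_{k-1}\pU_k=\eta$.

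The main obstacle is this frame-bookkeeping step: showing that the rotation relating $\Tilde{\Phi}(0,\eta_{k-1})$ to $\Tilde{\Phi}(0,\eta)$, read in the transported $W_1^{in}$ identification, is exactly $\Tilde\sigma(\pU_k)$ composed on the correct side, with no sign ambiguity at the spin level. I would handle the signs by continuity, in the same way as Lemmas \ref{FirstLambdaLengthLemma} and \ref{LambdaLengthParavectorCaseLemma}. Choose a path $\pU_k(t)$ from $1$ to $\pU_k$ through paravectors, for instance $|\pU_k|^t e^{t\theta V}$. Along this path, both the lambda length and $\eta_{k-1}\pU_k(t)$ vary continuously in $\lip$, and they agree at $t=0$ by the inductive hypothesis. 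Their images in $SO(n+1)\times\R^+$ agree for every $t$ by the argument above, so the two lifts coincide for all $t$.
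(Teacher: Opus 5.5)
Your argument is correct. It differs from the paper's mainly in how the paravector factors are combined.

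The paper writes $\eta/|\eta|=e^{\theta_kV_k}\cdots e^{\theta_1V_1}$ and builds one explicit path $M_t$ of diagonal matrices from $I$ to $\mathrm{diag}(\eta^{*-1},\eta)$, switching the exponential factors on one at a time. It reads the M\"obius action along this path as a succession of simple rotations of the boundary. It then appeals to ``the same argument as'' Lemma \ref{LambdaLengthParavectorCaseLemma}, both to convert to the $W_1^{in}$ identification and to fix the spin sign by continuity in $SL(2,\lip)$.

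You never write down a composite path. You induct on the number of factors and use the single isometry $\mathrm{diag}(\eta_{k-1}^{*-1},\eta_{k-1})$, which fixes $0$ and $\infty$, to transport the paravector case to the step from $\Tilde{\Phi}(0,\eta_{k-1})$ to $\Tilde{\Phi}(0,\eta)$. You should say explicitly why the relative rotation from $W^{out}$ of $\Tilde{\Phi}(0,1)$ to $W^{out}$ of $\Tilde{\Phi}(0,\pU_k)$ is $\Tilde{\sigma}(\pU_k/|\pU_k|)$: it is because $W_1^{in}=W^{out}$ of $\Tilde{\Phi}(0,1)$ by Lemma \ref{FirstLambdaLengthLemma}. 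These two horospheres are concentric, so this relative rotation is not itself a lambda length.

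What your route gains is that the order of multiplication becomes transparent and independent of orientation conventions. You only use that an isometry preserving the vertical geodesic commutes with parallel transport along it and with the right $Spin(n+1)$-action on spin frames. So the relative rotations compose on the right and give $\eta_{k-1}\pU_k$ rather than $\pU_k\eta_{k-1}$, a point the paper's ``followed by \ldots\ and so on'' leaves implicit. The cost is that your sign step needs the geometric lambda length to vary continuously with the spin decorated horosphere, or equivalently spin-level equivariance of $\Tilde{\Phi}$. Granted that, unique path lifting through the double cover $\lip\to SO(n+1)\times\R^+$ settles the sign cleanly.

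One small imprecision, which your argument does not rely on: $M$ acts on frames along the vertical geodesic by $\Tilde{\sigma}(\eta_{k-1})$ only in the $W_1^{in}$ identification. In boundary coordinates, where $M$ is $\pV\mapsto\eta_{k-1}^{*-1}\pV\eta_{k-1}^{-1}$, the rotation is $\sigma(\eta_{k-1}^{*-1})$. This is proportional to $\sigma(\eta_{k-1}')$, matching the decoration $\eta'i_j\ol{\eta}$ of Theorem \ref{HorosphereCoordinateTheorem}.
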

\noindent\textit{Proof.} Similar to the previous case, we can find the hyperbolic distance between these by integrating along the $z$ axis, giving $d = \int_{|\eta|^{-2}}^{1}\frac{dz}{z} = 2\ln|\eta|$ and $e^{d/2} = |\eta|$. 

Parallel transport the inward frame $W_1^{in}$ along $\gamma$ to be based at the intersection of $\gamma$ with $H_2$. To be precise about spin frames, we need to act by a path of elements in $SL(2,\lip)$. We already know the lambda length from $(1,0)$ to $(0,1)$ involves no distance or rotation, so we consider going from $(0,1)$ to $(0,\eta)$. Action by
\[M_1=\begin{pmatrix}
 \eta^{-1*} & 0\\
 0 & \eta
\end{pmatrix} \in SL(2,\lip)\]
sends $(0,1)$ to $(0,\eta)$ and therefore sends $\Tilde{\Phi}(0,1)=(H_2,W_2)$ from Lemma \ref{FirstLambdaLengthLemma} to $(H_4,W_4)$. We factor $\eta/|\eta|$ as a product of paravectors and write them in exponential form, giving $\eta/|\eta| = e^{\theta_kV_k}...e^{\theta_1V_1}$
where the $V_j$ are unit vectors in $\R^{0,n}$. We can construct a path through $SL(2,\lip)$ from the identity to $M_1$ by defining an appropriately staggered set of parameters $t_j$:
\[t_{j+1} = t_{j+1}(t) = 
\begin{cases}
0 & t < \frac{j}{k}, \\
kt-j & \frac{j}{k} \leq t \leq \frac{j+1}{k}, \\
1 & t > \frac{j+1}{k},
\end{cases}\]
for $j \in \{0,...,k-1\}$, which gives us the path of transformations
\[M_{t} = \begin{pmatrix}
 e^{-t_k\theta_kV_k}...e^{-t_1\theta_1V_1} & 0 \\
 0 & e^{t_k\theta_kV_k}...e^{t_1\theta_1V_1}
\end{pmatrix}\]
from the identity matrix $I$ to $M_1$, for $t \in [0,1]$. The action on $\para$ will be by $\sigma(\cdot)$; for $\pV \in \para$,
\begin{align*}
M_t.\pV &= e^{-t_k\theta_kV_k}...e^{-t_1\theta_1V_1}\pV e^{-t_1\theta_1V_1}...e^{-t_k\theta_kV_k} = \sigma\left(e^{-t_k\theta_kV_k}\right)...\sigma\left(e^{-t_1\theta_1V_1}\right)(\pV).
\end{align*}
From Lemma \ref{ParavectorActionFixedSpaceLemma} and Lemma \ref{LambdaLengthParavectorCaseLemma} we see this is a rotation by $2\theta_1 \mod 4\pi$ in the $\{\R,V_1\}$ plane, followed by a $2\theta_{2} \mod 4\pi$ rotation in the $\{\R,V_2\}$ plane, and so on. By the same argument as Lemma \ref{LambdaLengthParavectorCaseLemma}, this implies a total lambda length of
\[\lambda = e^{2\ln|\eta|/2}e^{2\theta_kV_k/2}...e^{2\theta_1V_1/2} = |\eta|e^{\theta_kV_k}...e^{\theta_1V_1}=\eta. \tag*{\qed}\]
Given a completely general pair of Lipschitz spinors, we can transform them via hyperbolic isometries (which leave the lambda length invariant) to the form of Lemma \ref{HalfwayLambdaLengthComputationLemma}.
\begin{theorem}
\label{thm:LambdaLengthBracket}
 The lambda length between two spin decorated horospheres with spinor coordinates $\kappa_1 = (\xi_1,\eta_1)$ and $\kappa_2 = (\xi_2,\eta_2)$ is given by taking their bracket. Properly, $\lambda_{12} = \{\kappa_1,\kappa_2\}$.
\end{theorem}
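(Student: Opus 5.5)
The plan is to reduce to the normalised case of Lemma \ref{HalfwayLambdaLengthComputationLemma} using the transitivity and equivariance of the $SL(2,\lip)$ action. Two invariances do the work: the lambda length is preserved by the action of $SL(2,\lip)$ on $S$Hor$^{n+2}$ (the action is by orientation-preserving isometries, lifted to spin frames, so it carries common perpendiculars, distances, parallel transport and the inwards identification of tangent spaces to the corresponding objects), and the bracket is preserved by Lemma \ref{SesquimorphismLemma}. So it suffices to find one $A \in SL(2,\lip)$ with $A\kappa_1 = (1,0)$ and $A\kappa_2 = (0,\eta)$, and to show $\{(1,0),(0,\eta)\} = \eta$.

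First, the degenerate case. Suppose $\kappa_1,\kappa_2$ have the same centre, i.e.\ $\xi_1\eta_1^{-1} = \xi_2\eta_2^{-1}$ (both $\infty$ being allowed). The computation in Lemma \ref{lem:BracketRestrictsLip} gives $\{\kappa_1,\kappa_2\} = \eta_1^*(\pV_1-\pV_2)\eta_2^{-1} = 0$. When $\eta_1 = \eta_2 = 0$ the bracket is trivially zero. Theorem \ref{HorosphereCoordinateTheorem} shows the centres coincide, so $\lambda_{12} = 0$ by definition.

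Now the generic case. Take $A_{\kappa_1}$ from Proposition \ref{prop:KappaSLMatrix}; it has first column $\kappa_1$, so $B = A_{\kappa_1}^{-1}$ sends $\kappa_1$ to $(1,0)$. Write $B\kappa_2 = (x,y)$. By Lemma \ref{SesquimorphismLemma},
\[\{\kappa_1,\kappa_2\} = \{(1,0),(x,y)\} = y.\]
Here $y \neq 0$, since distinct centres are preserved by the Möbius action. Next apply the parabolic translation
\[C = \begin{pmatrix} 1 & -xy^{-1} \\ 0 & 1 \end{pmatrix},\]
which lies in $SL(2,\lip)$ because $xy^{-1} \in \para$. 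It fixes $(1,0)$ and sends $(x,y)$ to $(0,y)$. Lemma \ref{HalfwayLambdaLengthComputationLemma} then gives the lambda length from $\Tilde{\Phi}(1,0)$ to $\Tilde{\Phi}(0,y)$ as $y$. Finally, $\{(1,0),(0,y)\} = 1^*y = y$, and invariance of both quantities under $CB$ yields $\lambda_{12} = \{\kappa_1,\kappa_2\}$.

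The main obstacle is making the invariance of the lambda length under $SL(2,\lip)$ rigorous at the level of spin. The action on spin decorations is defined through the lift $\Tilde{\Phi}$, which is equivariant. Since $A$ is an isometry, its derivative commutes with parallel transport along the common perpendicular, and it intertwines the $W^{in}$ and $W^{out}$ identifications with $\para$ for the image decorations. Hence the spin rotation $\Tilde{\sigma}(\lambda_{12})$ taking transported $W_1^{in}$ to $W_2^{out}$ is conjugated to the corresponding one for the images, and it is the same element of $\lip$, since both are expressed in the frames themselves.

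The remaining issue is the sign ambiguity $\pm\lambda$ coming from the lift. I would settle it by continuity: $SL(2,\lip)$ is connected, so take a path from $I$ to $A$. Along the path both $\lambda(A_t\cdot)$ and the bracket vary continuously and their images in $SO(n+1)\times\R^+$ agree. Since they agree at $t=0$, they agree for all $t$, and in particular at $t=1$.
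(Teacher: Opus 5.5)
Your proposal is correct and follows essentially the paper's proof: move $\kappa_1$ to $(1,0)$ by an element of $SL(2,\lip)$, translate the image of $\kappa_2$ to $(0,\{\kappa_1,\kappa_2\})$, and invoke Lemma \ref{HalfwayLambdaLengthComputationLemma} together with the invariance of lambda lengths under isometries. Your version is slightly more complete. Using $A_{\kappa_1}^{-1}$ from Proposition \ref{prop:KappaSLMatrix}, rather than the paper's matrix involving $\xi_1^{-1}$ and $\eta_1^{-1}$, also covers spinors with a zero component, and reading off $y$ via Lemma \ref{SesquimorphismLemma} avoids the explicit computation. You also treat the coincident-centre case and justify the isometry-invariance of $\lambda$, both of which the paper only asserts.
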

\noindent\textit{Proof.} We can apply an isometry $A \in SL(2,\lip)$ to send $\kappa_1$ to $(1,0)$:
\[A.\kappa_1 = \begin{pmatrix}
 \frac{1}{2}\xi_1^{-1} & \frac{1}{2}\eta_1^{-1} \\
 -\eta_1^* & \xi_1^*
\end{pmatrix}
\begin{pmatrix} 
\xi_1 \\
\eta_1
\end{pmatrix}= \begin{pmatrix}
 1 \\
 0
\end{pmatrix}.\]
This isometry affects $\kappa_2$ as follows:
\[A.\kappa_2 = \begin{pmatrix}
 \frac{1}{2}\xi_1^{-1} & \frac{1}{2}\eta_1^{-1} \\
 -\eta_1^* & \xi_1^*
\end{pmatrix}
\begin{pmatrix} 
\xi_2 \\
\eta_2
\end{pmatrix} = 
\begin{pmatrix}
 \frac{1}{2}\xi_1^{-1}\xi_2+\frac{1}{2}\eta_1^{-1}\eta_2 \\
 -\eta_1^*\xi_2+\xi_1^*\eta_2
\end{pmatrix}.\]
We can then apply the appropriate translation $B \in SL(2,\lip)$ to send the centre of $A.\kappa_2$ to 0:
\[B.A.\kappa_2 = 
\begin{pmatrix}
 1 & -(\frac{1}{2}\xi_1^{-1}\xi_2+\frac{1}{2}\eta_1^{-1}\eta_2)(-\eta_1^*\xi_2+\xi_1^*\eta_2)^{-1} \\
 0 & 1 
\end{pmatrix}
\begin{pmatrix}
 \frac{1}{2}\xi_1^{-1}\xi_2+\frac{1}{2}\eta_1^{-1}\eta_2 \\
 -\eta_1^*\xi_2+\xi_1^*\eta_2
\end{pmatrix}
=
\begin{pmatrix}
 0 \\
 \xi_1^*\eta_2-\eta_1^*\xi_2
\end{pmatrix}.\]
This translation leaves $A.\kappa_1$ invariant. The lambda length can now be computed from Lemma \ref{HalfwayLambdaLengthComputationLemma} as
\[\lambda_{12} = \xi_1^*\eta_2 - \eta_1^*\xi_2 = \{\kappa_1,\kappa_2\}.\tag*{\qed}\]
In its definition, the lambda length picks out a canonical basis for the tangent space (given by the decorations) to allow the action of $\lip$ to be defined. But, as it turns out, the lambda length is independent of the coordinates on the tangent space, up to orientation-preserving and orthogonal changes of basis (meaning it maps positively oriented orthonormal bases to positively oriented orthonormal bases).
\begin{proposition}
The lambda length is invariant under orientation-preserving orthonormal changes of basis of the shared tangent space of the horospheres.
\end{proposition}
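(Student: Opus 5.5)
The plan is to turn the change of basis into the action of an element of $SL(2,\lip)$, and then use two earlier results: Theorem \ref{thm:LambdaLengthBracket} ($\lambda_{12}=\{\kappa_1,\kappa_2\}$) and Lemma \ref{SesquimorphismLemma} (the bracket is preserved by the diagonal action of $SL(2,\lip)$). The lambda length would then be unchanged on the nose, not merely conjugated.

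First I reduce to the normal form. As in the proof of Theorem \ref{thm:LambdaLengthBracket}, I apply $BA\in SL(2,\lip)$ so that $\kappa_1=(1,0)$ and $\kappa_2=(0,\eta)$. Both horospheres are then orthogonal to the vertical geodesic $\gamma(\infty\to 0)$. After parallel transport along $\gamma$, the shared tangent space sits at a point of $\gamma$, and its horospherical directions are identified with $\para$ as in Lemma \ref{LambdaLengthParavectorCaseLemma}. Since isometries preserve both the geometric definition of $\lambda$ and (via Lemma \ref{SesquimorphismLemma}) the bracket, it suffices to treat this configuration.

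Next I model the basis change. An orientation-preserving orthonormal change of basis of $\para\cong T$ is some $R\in SO(n+1)$. By the discussion in Theorem \ref{thm:Phi1SmoothDoubleCover}, $R=\sigma(s)$ for some $s\in\lip$ with $|s|=1$, and $s$ is determined up to sign. I claim that re-expressing both spin decorations in the new basis has the same effect as acting on the whole configuration by
\[A_s=\begin{pmatrix} s & 0\\ 0 & s^{*-1}\end{pmatrix}\in SL(2,\lip).\]
This matrix is an elliptic isometry fixing $\gamma$ pointwise. By Lemma \ref{DecorationsTransformLemma} it rotates every horospherical tangent frame along $\gamma$ by $\sigma(s)$, and it does so simultaneously for $W_1$ and for $W_2$. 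To fix the spin sign, I join $s$ to $1$ by a path in $\text{Spin}(n+1)$, which is connected for $n\geq 1$; the planar case is handled via Definition \ref{defn:PlanarSpinDecs}. This gives a path $A_{s_t}$ from $I$ to $A_s$, exactly as the paths $M_t$ were used in Lemmas \ref{LambdaLengthParavectorCaseLemma} and \ref{HalfwayLambdaLengthComputationLemma}, so the lift to spin frames is pinned down continuously.

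Finally I compute the bracket directly:
\[\{A_s(1,0),A_s(0,\eta)\}=\{(s,0),(0,s^{*-1}\eta)\}=s^*s^{*-1}\eta=\eta.\]
So the lambda length measured after the basis change is still $\lambda_{12}$. The distance factor $e^{d/2}$ is untouched anyway, because $A_s$ fixes $\gamma$.

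I expect the main obstacle to be justifying that ``changing the identification of the tangent space'' really coincides with acting on both decorations by the same isometry $A_s$. The worry is the inward/outward asymmetry: $W_1^{in}$ and $W_2^{out}$ carry opposite $1$-directions, as noted in Lemma \ref{LambdaLengthParavectorCaseLemma}. I would handle this by checking on the generating rotations $e^{\theta V}$ from Corollary \ref{IsometryGeneratedByRotationsCorollary}. For these, Lemma \ref{ParavectorActionFixedSpaceLemma} gives the action explicitly, and one can verify that the two identifications transform compatibly, so that the inward action of $\lambda_{12}$ sends the rotated $W_1^{in}$ to the rotated $W_2^{out}$.
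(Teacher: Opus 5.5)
Your proposal is correct and is essentially the paper's argument: the paper also writes the orientation-preserving orthonormal change of basis as $\sigma(b)$ for a unit $b\in\lip$. It realises this by acting on both spinors with $\mathrm{diag}(b,b^{*-1})\in SL(2,\lip)$, and concludes from Lemma \ref{SesquimorphismLemma} together with Theorem \ref{thm:LambdaLengthBracket}. Your reduction to the normal form $(1,0),(0,\eta)$ is a refinement the paper leaves implicit, since it makes precise that this matrix fixes the common perpendicular pointwise and so genuinely acts on the shared tangent space; your path argument fixing the spin sign is harmless but not needed, because replacing $s$ by $-s$ negates both spinors and leaves the bracket unchanged.
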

\noindent\textit{Proof.} Take a pair of spin decorated horospheres $(H_1,W_1)$ and $(H_2,W_2)$ described by Lipschitz spinors $\kappa_1$ and $\kappa_2$ respectively. Consider the lambda length $\lambda_{12}$ from $H_1$ to $H_2$, so $W_1^{in}$ is identified with $\para$. We change the identification of the tangent space with $\para$ through the action $\sigma(b)$ of some unit element $b \in \lip$; as $\sigma$ gives all orientation-preserving isometries, any orientation-preserving orthogonal change of basis can be written in this form.

Via equivariance, we can describe this rotation on the tangent space by acting on $\kappa_1$ and $\kappa_2$ with the element
\[B=\begin{pmatrix}
 b & 0 \\
 0 & b^{*-1}
\end{pmatrix} \in SL(2,\lip).\]
This action on $\kappa_1$ and $\kappa_2$ leaves bracket, and thus the lambda length, invariant, as shown in Lemma \ref{SesquimorphismLemma}.

\subsection{The Generalised Ptolemy Relations}
\label{sec:PtolemyRelns}
In the complex $n=1$ case described in \cite{Mathews_Spinors_horospheres}, a Ptolemy equation relating the lambda lengths of four horospheres is derived from the Pl\"{u}cker relations of a matrix built from the spinor coordinates of the horospheres. This relation can be generalised, but since we're now working over noncommutative algebras we cannot use the commutative Pl\"{u}cker relations. We appeal instead to the theory of quasideterminants (see for example \cite{gelfand2004quasideterminants}), due to Gelfand and Retakh. 
\begin{definition}[Quasideterminant of a Matrix]
\label{def:quasideterminant}
 Given an $m \times m$ matrix $A$ with components $A_{jk}$ over a (possibly noncommutative) ring, if $A$ has a well defined inverse with components $(A^{-1})_{jk}$, $1 \leq j,k \leq m$, the $(j,k)$-quasideterminant of $A$ is defined as $|A|_{jk}=((A^{-1})_{kj})^{-1}$ whenever the element $(A^{-1})_{kj}$ is invertible. \null\hfill\defn
\end{definition}
\noindent Given an invertible $2\times 2$ matrix with possibly noncommutative entries:
\[A = \begin{pmatrix}
a & b \\
c & d
\end{pmatrix}\]
The inverse $A^{-1}$ is the following:
\[A^{-1} = \begin{pmatrix}
(a-bd^{-1}c)^{-1} & (c-db^{-1}a)^{-1} \\
(b-ac^{-1}d)^{-1} & (d-ca^{-1}b)^{-1}
\end{pmatrix},\]
and then the various quasideterminants are just the inverses of these entries. Specifically, the quasideterminants can be arranged into the following matrix:
\[|A| = \begin{pmatrix}
 (a - bd^{-1}c) & (b - ac^{-1}d) \\
 (c - db^{-1}a) & (d - ca^{-1}b)
\end{pmatrix}.\]
\begin{definition}
\label{not:DeleteColj}Given a matrix $A$, the matrix $A^{(j)}$ is $A$ with the $j$th column deleted. \label{not:LeftQuasiPlucker}The left quasi-Pl\"{u}cker coordinates are defined on the $m\times (m+1)$ matrix $A$ as $q_{jk}^{(s)}(A) = |A^{(k)}|_{sj}^{-1}|A^{(j)}_{sk}|$, with $1 \leq j,s \leq m$, $0 \leq k \leq m$. The quasi-Pl\"{u}cker coordinates $q_{jk}^{(s)}(A)$ are actually independent of the choice of $s$ (Proposition 4.2.1 of \cite{gelfand2004quasideterminants}). 

For a more general $m \times \ell$ matrix $B$, $m\leq \ell$, choose a set of indices $1 \leq j,k,j_1,...,j_{m-1} \leq \ell$ with $j \not\in I = \{j_1,j_2,...,j_{m-1}\}$. Let $C$ be the $m \times (m+1)$ submatrix of the columns $j,k,j_1,...,j_{m-1}$ drawn from $B$. The left quasi-Pl\"{u}cker coordinates are $q_{jk}^I(B) = q_{jk}^{(s)}(C)$.\null\hfill\defn
\end{definition}
\noindent To relate this machinery to the bracket and lambda lengths, we recall that a matrix 
\[A_{\ell j} = \begin{pmatrix}
 \xi_{\ell} & \xi_j \\
 \eta_{\ell} & \eta_j
\end{pmatrix}\]
has pseudo-determinant $\Delta_{\ell j} = \xi_{\ell}^*\eta_j - \eta_{\ell}^*\xi_j$.
\begin{lemma}
\label{lem:QuasiPluckerPdet}
Consider a matrix whose columns are four Lipschitz spinors:
\[A = \begin{pmatrix}
 \xi_1 & \xi_2 & \xi_3 & \xi_4 \\
 \eta_1 & \eta_2 & \eta_3 & \eta_4
\end{pmatrix}.\]
The left quasi-Pl\"{u}cker coordinates can be constructed from the pseudo-determinant as $q^{\{k\}}_{\ell j}=\Delta_{k\ell}^{-1}\Delta_{kj}$.
\end{lemma}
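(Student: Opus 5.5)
We need to compute $q^{\{k\}}_{\ell j}(A)$ directly from the definition. With $m=2$, the index set is $I=\{k\}$, and $C$ is the $2\times 3$ submatrix with columns $\ell, j, k$ in that order. By Definition \ref{not:LeftQuasiPlucker},
\[q^{(s)}_{\ell j}(C)=|C^{(j)}|_{s\ell}^{-1}\,|C^{(\ell)}|_{sj}.\]
Here $C^{(j)}$ is the $2\times 2$ matrix with columns $\ell,k$ and $C^{(\ell)}$ is the one with columns $j,k$. Since Gelfand--Retakh's proposition gives independence of $s$, we are free to pick whichever row makes the algebra cleanest. The plan is to take $s=1$ when the relevant $\xi$'s are invertible and $s=2$ otherwise, and to check at the end that the formula survives in both cases.

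\textbf{The generic computation.} Work in the case $\xi_k\neq 0$ with $s=1$. From the quasideterminant matrix $|A|$ displayed above, the $(1,1)$ quasideterminant of $\begin{pmatrix}\xi_\ell & \xi_k\\ \eta_\ell & \eta_k\end{pmatrix}$ is
\[\xi_\ell-\xi_k\eta_k^{-1}\eta_\ell \qquad\text{or, via the other convention,}\qquad \xi_\ell - \xi_k\eta_k^{-1}\eta_\ell .\]
I will rewrite this so that a bracket appears. Corollary \ref{cor:xistarEta} gives $\eta_k^{*}\xi_k=\xi_k^*\eta_k$, equivalently $\xi_k\eta_k^{-1}=\eta_k^{*-1}\xi_k^*$, since $\xi_k\eta_k^{-1}$ is a paravector and hence $*$-invariant. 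Substituting,
\[\xi_\ell-\xi_k\eta_k^{-1}\eta_\ell=\eta_k^{*-1}\bigl(\eta_k^*\xi_\ell-\xi_k^*\eta_\ell\bigr)=-\eta_k^{*-1}\Delta_{k\ell}.\]
The same computation for the columns $j,k$ gives $-\eta_k^{*-1}\Delta_{kj}$. Multiplying the first inverse on the left of the second, the common factor $\eta_k^{*-1}$ cancels and we obtain $\Delta_{k\ell}^{-1}\Delta_{kj}$.

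\textbf{Degenerate cases.} If $\eta_k=0$, then $\xi_k\neq0$, and we use $s=2$ instead. The analogous identity is
\[\eta_\ell-\eta_k\xi_k^{-1}\xi_\ell=\xi_k^{*-1}\Delta_{k\ell},\]
which again follows from Corollary \ref{cor:xistarEta} (now via $\eta_k\xi_k^{-1}=\xi_k^{*-1}\eta_k^*$), and the same cancellation goes through. Throughout we assume $\Delta_{k\ell}\neq0$, i.e.\ distinct centres, so that $\Delta_{k\ell}^{-1}$ exists. Lemma \ref{lem:BracketRestrictsLip} guarantees that the bracket then lies in $\lip$, so it is invertible.

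\textbf{Main obstacle.} The real difficulty is bookkeeping. We must pin down which index of $|\cdot|_{sj}$ refers to the row and which to the column, and which column of the $2\times2$ submatrix plays the role of the "$a$" entry, so that the quasideterminant genuinely lands on the $k$-column elimination. If the orientation comes out reversed, the expression becomes $\xi_k-\xi_\ell\eta_\ell^{-1}\eta_k$. In that case I would rewrite it using $\Delta_{\ell k}=-\Delta_{k\ell}^*$ from (\ref{eq:BracketAntisymmetry}) together with $\xi_\ell\eta_\ell^{-1}\in\para$, and then check that the resulting conjugations still cancel to give $\Delta_{k\ell}^{-1}\Delta_{kj}$. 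Invertibility of the intermediate factors follows from $\lip$ being a group.
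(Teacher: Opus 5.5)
Your argument is correct and is essentially the paper's proof. With $s=1$, both compute $q^{\{k\}}_{\ell j}=(\xi_\ell-\xi_k\eta_k^{-1}\eta_\ell)^{-1}(\xi_j-\xi_k\eta_k^{-1}\eta_j)$ and use the $*$-invariance of the paravector $\xi_k\eta_k^{-1}$ to cancel a common factor of $\eta_k^{*}$; the only difference is that the paper inserts $\eta_k^*\eta_k^{*-1}$ into $\Delta_{k\ell}^{-1}\Delta_{kj}$, whereas you factor each quasideterminant as $-\eta_k^{*-1}\Delta$.

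Your ``main obstacle'' is moot. The definition's $|C^{(j)}|_{s\ell}$ and $|C^{(\ell)}|_{sj}$ are exactly the column-$\ell$ and column-$j$ quasideterminants you computed. The reversed, column-$k$ version would give $\Delta_{\ell k}^{-1}\eta_\ell^{*}\eta_j^{*-1}\Delta_{jk}$, which is not $\Delta_{k\ell}^{-1}\Delta_{kj}$ in general, so your fallback would not have worked anyway.

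Your $s=2$ treatment of the case $\eta_k=0$ fills a small gap, since the paper's ``arbitrary'' choice $s=1$ silently requires $\eta_k\neq 0$. Note, though, that your generic case must be stated for $\eta_k\neq0$ (it is $\eta_k^{-1}$ that appears), not $\xi_k\neq0$ as you wrote.
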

\noindent\textit{Proof.} 
Define the submatrices
\[A_{\ell j} = \begin{pmatrix}
 \xi_{\ell} & \xi_j \\
 \eta_{\ell} & \eta_j
\end{pmatrix}.\] 
Computing the left quasi-Pl\"{u}cker coordinates (arbitrarily picking $s=1$) gives us
\[q^{\{k\}}_{\ell j} = |A_{\ell k}|_{1\ell}^{-1}|A_{jk}|_{1j} = (\xi_{\ell} - \xi_k\eta_k^{-1}\eta_{\ell})^{-1}(\xi_j - \xi_k\eta_k^{-1}\eta_j).\]
Now we compute the right-hand side and compare.
\[\Delta_{k\ell}^{-1}\Delta_{kj}=(\xi_k^*\eta_{\ell} - \eta_k^*\xi_{\ell})^{-1}(\xi_k^*\eta_j - \eta_k^*\xi_j).\]
We can insert a factor of $1=\eta_k^*\eta_k^{*-1}$ between the two factors and rearrange terms to find
\[\Delta_{k\ell}^{-1}\Delta_{kj}=(\xi_{\ell}-(\xi_k\eta_k^{-1})^*\eta_{\ell})^{-1}(\xi_j-(\xi_k\eta_k^{-1})^*\eta_j)\]
and because the $(\xi_k\eta_k^{-1})$ terms are paravectors they're invariant under the $\cdot^*$ operation.
\[\Delta_{k\ell}^{-1}\Delta_{kj}=(\xi_{\ell}-\xi_k\eta_k^{-1}\eta_{\ell})^{-1}(\xi_j-\xi_k\eta_k^{-1}\eta_j) = q_{\ell j}^{\{k\}}.\tag*{\qed}\]
With this, we can apply the properties of these coordinates to lambda lengths (as these descend from the pseudo-determinant); one of those properties is a noncommutative Pl\"{u}cker relation.
\begin{theorem}
\label{PtolemyTheorem}
 Given four spin decorated horospheres $(H_j,W_j)$, $j \in \{1,2,3,4\}$ in $\hyp^{n+2}$ with lambda lengths $\lambda_{jk}$ between $(H_j,W_j)$, $(H_k,W_k)$, their lambda lengths satisfy
 \begin{equation}
 \label{PtolemyEquation}
\lambda_{31}^{-1}\lambda_{23}^*\lambda_{42}^{-1}\lambda_{14}^*+\lambda_{31}^{-1}\lambda_{43}^*\lambda_{24}^{-1}\lambda_{12}^*=1.
 \end{equation}
This is the noncommutative counterpart to the Ptolemy equation.
 \end{theorem}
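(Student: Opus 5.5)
Take spinor coordinates $\kappa_j=(\xi_j,\eta_j)$ with $\tilde\Phi(\kappa_j)=(H_j,W_j)$ for $j=1,\dots,4$. By Theorem \ref{thm:LambdaLengthBracket}, $\lambda_{jk}=\{\kappa_j,\kappa_k\}=\Delta_{jk}$. The antisymmetry relation (\ref{eq:BracketAntisymmetry}) then gives $\lambda_{kj}^*=-\Delta_{jk}$. The plan is to rewrite every factor in (\ref{PtolemyEquation}) as a pseudo-determinant and read the result as a noncommutative Pl\"ucker relation for the $2\times 4$ matrix $A$ of Lemma \ref{lem:QuasiPluckerPdet}. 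I assume the four centres are pairwise distinct, so that every $\lambda_{jk}\neq 0$ and every inverse exists; Lemma \ref{lem:BracketRestrictsLip} shows these brackets then lie in $\lip$.

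Substituting gives $\lambda_{31}^{-1}=\Delta_{31}^{-1}$, $\lambda_{23}^*=-\Delta_{32}$, $\lambda_{42}^{-1}=\Delta_{42}^{-1}$, $\lambda_{14}^*=-\Delta_{41}$, $\lambda_{43}^*=-\Delta_{34}$, $\lambda_{24}^{-1}=\Delta_{24}^{-1}$ and $\lambda_{12}^*=-\Delta_{21}$. The signs cancel in pairs, so (\ref{PtolemyEquation}) becomes
\[\Delta_{31}^{-1}\Delta_{32}\Delta_{42}^{-1}\Delta_{41}+\Delta_{31}^{-1}\Delta_{34}\Delta_{24}^{-1}\Delta_{21}=1.\]
By Lemma \ref{lem:QuasiPluckerPdet}, $\Delta_{31}^{-1}\Delta_{32}=q^{\{3\}}_{12}$ and $\Delta_{42}^{-1}\Delta_{41}=q^{\{4\}}_{21}$. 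The first term is therefore $q^{\{3\}}_{12}q^{\{4\}}_{21}$. For the second term, rewrite $\Delta_{24}^{-1}\Delta_{21}=q^{\{2\}}_{41}$. Then $\Delta_{31}^{-1}\Delta_{34}=q^{\{3\}}_{14}$, so the second term is $q^{\{3\}}_{14}q^{\{2\}}_{41}$. The target identity is thus
\[q^{\{3\}}_{12}q^{\{4\}}_{21}+q^{\{3\}}_{14}q^{\{2\}}_{41}=1.\]
This is exactly the form of the Gelfand--Retakh quasi-Pl\"ucker relation for $2\times n$ matrices. In \cite{gelfand2004quasideterminants} it reads $q^{\{k\}}_{ij}q^{\{i\}}_{ji}+q^{\{j\}}_{ik}q^{\{i\}}_{ki}=1$, up to their index conventions, and here we take $i=1$, $j=2$, $k=3$ together with the fourth column.

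The main obstacle is matching the indices precisely to the version stated in \cite{gelfand2004quasideterminants}. If the match fails, I would prove the identity directly instead. First apply the $SL(2,\lip)$ action, which preserves all the $\Delta_{jk}$ by Lemma \ref{SesquimorphismLemma}, to normalise $\kappa_3=(1,0)$. Then $\Delta_{3j}=\eta_j$, and with paravectors $\pV_j=\xi_j\eta_j^{-1}$ the computation in the proof of Lemma \ref{lem:BracketRestrictsLip} gives $\Delta_{jk}=\eta_j^*(\pV_j-\pV_k)\eta_k$. Substituting, the $\eta$'s telescope and the identity reduces to
\[(\pV_2-\pV_4)^{-1}(\pV_1-\pV_4)+(\pV_4-\pV_2)^{-1}(\pV_1-\pV_2)=1,\]
which holds because the left side equals $(\pV_2-\pV_4)^{-1}(\pV_2-\pV_4)$. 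The remaining care is in keeping track of which factors pick up the reversal $\cdot^*$ as the $\eta$'s cancel.
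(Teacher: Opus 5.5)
Your proposal is correct and is essentially the paper's proof. The paper likewise substitutes $\lambda_{jk}=\Delta_{jk}$ and $\lambda_{kj}^*=-\Delta_{jk}$, converts to quasi-Pl\"ucker coordinates via Lemma \ref{lem:QuasiPluckerPdet}, and applies the Gelfand--Retakh relation $q^{\{k\}}_{\ell j}q^{\{l\}}_{j\ell}+q^{\{k\}}_{\ell l}q^{\{j\}}_{l\ell}=1$ at $(\ell,j,k,l)=(1,2,3,4)$. That specialisation is exactly your target $q^{\{3\}}_{12}q^{\{4\}}_{21}+q^{\{3\}}_{14}q^{\{2\}}_{41}=1$. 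The general form you quote, with $\{i\}$ as the upper index on the second factors, is not right as written, but the specialisation you actually use is the correct one, so your index worry is resolved.

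Your fallback normalisation $\kappa_3=(1,0)$ is also a valid, self-contained proof. The sum comes out as $\eta_1^{-1}[\,\cdot\,]\eta_1$, which is harmless. It also correctly uses $\Delta_{jk}=\eta_j^*(\pV_j-\pV_k)\eta_k$, whereas the proof of Lemma \ref{lem:BracketRestrictsLip} prints $\eta_2^{-1}$ where $\eta_2$ is meant.
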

\noindent\textit{Proof.} The noncommutative (2,4) Pl\"{u}cker relations (\cite{gelfand2004quasideterminants} Theorem 4.4.2) are $q^{\{k\}}_{\ell j}q_{j\ell}^{\{l\}}+q_{\ell l}^{\{k\}}q_{l\ell}^{\{j\}}=1$, which we can substitute with pseudo-determinants by Lemma \ref{lem:QuasiPluckerPdet}.
\[\Delta_{k\ell}^{-1}\Delta_{kj}\Delta_{lj}^{-1}\Delta_{l\ell} + \Delta_{k\ell}^{-1}\Delta_{kl}\Delta_{jl}^{-1}\Delta_{j\ell}=1.\]
However, the pseudo-determinant applied to a matrix with two Lipschitz spinors as columns is just the lambda length between them.
\[\lambda_{k\ell}^{-1}\lambda_{kj}\lambda_{lj}^{-1}\lambda_{l\ell}+\lambda_{k\ell}^{-1}\lambda_{kl}\lambda_{jl}^{-1}\lambda_{j\ell}=1.\]
By substituting $\ell,j,k,l=1,2,3,4$ and applying the identity $\lambda_{\ell j}=-\lambda_{j\ell}^*$, we get the claimed result. \qed
\\\\
Rearranging the Ptolemy equation as
\begin{equation}
 \label{LambdaHolonomyEquation}
 \lambda_{kj}\lambda_{lj}^{-1} \lambda_{l\ell}+\lambda_{kl}\lambda_{jl}^{-1}\lambda_{j\ell}=\lambda_{k\ell},
\end{equation}
we get a kind of holonomy relation, where the lambda length from $k$ to $\ell$ is the sum of two paths going $k \to j \to l \to \ell$ and $k \to l \to j \to \ell$.

Another useful property of these quasi-Pl\"{u}cker coordinates is given in Theorem 4.4.1 of \cite{gelfand2004quasideterminants}, the skew-symmetry relation $q^{\{k\}}_{\ell j}q_{jk}^{\{\ell\}}q_{k\ell}^{\{j\}}=-1$. This translates into a relation on lambda lengths.
\begin{proposition}
 Given the lambda lengths $\lambda_{12}$, $\lambda_{13}$, $\lambda_{23}$ between three spin decorated horospheres $(H_j,W_j)$ for $j \in \{1,2,3\}$, the following identity holds:
 \[\lambda_{12}\lambda_{32}^{-1}\lambda_{31}=(\lambda_{12}\lambda_{32}^{-1}\lambda_{31})^*.\]
 That is, $\lambda_{12}\lambda_{32}^{-1}\lambda_{31}$ is invariant under the reversal map.
\end{proposition}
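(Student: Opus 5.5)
The plan is to follow the derivation of the Ptolemy relation in Theorem \ref{PtolemyTheorem}. First translate the skew-symmetry relation $q^{\{k\}}_{\ell j}q^{\{\ell\}}_{jk}q^{\{j\}}_{k\ell}=-1$ (Theorem 4.4.1 of \cite{gelfand2004quasideterminants}) into pseudo-determinants. Then identify those with lambda lengths using Theorem \ref{thm:LambdaLengthBracket}. Finally, simplify with the twisted antisymmetry (\ref{eq:BracketAntisymmetry}), i.e.\ $\lambda_{j\ell}=-\lambda_{\ell j}^*$. The statement is only meaningful when the lambda lengths involved are invertible, so I will assume the three horospheres have distinct centres. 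Then every $\lambda_{jk}\in\lip$ by Lemma \ref{lem:BracketRestrictsLip}, and all quasi-Plücker coordinates are defined.

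\textbf{Step 1.} Take spinors $\kappa_1,\kappa_2,\kappa_3$ for the horospheres and set $(\ell,j,k)=(1,2,3)$. Lemma \ref{lem:QuasiPluckerPdet} was stated for a $2\times 4$ matrix, but its proof uses only the three columns involved, so it applies here. Writing $q^{\{k\}}_{\ell j}=\Delta_{k\ell}^{-1}\Delta_{kj}$, the skew-symmetry relation becomes
\[\Delta_{31}^{-1}\Delta_{32}\,\Delta_{12}^{-1}\Delta_{13}\,\Delta_{21}^{-1}\Delta_{23}=-1.\]
Replacing each $\Delta$ by the corresponding lambda length gives
\[\lambda_{31}^{-1}\lambda_{32}\lambda_{12}^{-1}\lambda_{13}\lambda_{21}^{-1}\lambda_{23}=-1.\]

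\textbf{Step 2.} Rearrange into the claimed symmetric form. Multiply on the left by $\lambda_{31}$ and on the right by $\lambda_{23}^{-1}$:
\[\lambda_{32}\lambda_{12}^{-1}\lambda_{13}\lambda_{21}^{-1}=-\lambda_{31}\lambda_{23}^{-1}.\]
Substitute $\lambda_{21}=-\lambda_{12}^*$, $\lambda_{13}=-\lambda_{31}^*$ and $\lambda_{23}=-\lambda_{32}^*$, tracking the signs. Then isolate $X=\lambda_{12}\lambda_{32}^{-1}\lambda_{31}$ on one side; inverses and reversals of elements of $\lip$ are themselves in $\lip$, so these moves are legitimate. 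The other side should become $X^*=\lambda_{31}^*\lambda_{32}^{*-1}\lambda_{12}^*$, using that reversal is an antiautomorphism commuting with inversion. Equivalently, the target identity is $\lambda_{32}^{*-1}\lambda_{12}^*\lambda_{31}^{*-1}=\lambda_{31}^{-1}\lambda_{32}\lambda_{12}^{-1}$, which inverts to $X^*=X$.

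\textbf{Main obstacle.} The hard part is the bookkeeping of index conventions and signs. The lower index order in $q^{\{k\}}_{\ell j}$ relative to $\Delta$ must be exactly as in Lemma \ref{lem:QuasiPluckerPdet}. The three sign flips from antisymmetry must combine with the $-1$ on the right so that the result is $X=X^*$ rather than $X=-X^*$.

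If the direct substitution gets tangled, I would instead verify the identity by invariance. Lemma \ref{SesquimorphismLemma} shows the bracket is $SL(2,\lip)$-invariant, and Lemma \ref{TransitiveLemma} gives transitivity, so I can normalise $\kappa_1=(1,0)$ and translate $\kappa_3$ to $(0,\eta_3)$. Then $\lambda_{31}=-\eta_3^*$ and $\lambda_{12}=\eta_2$, and $X$ reduces to an explicit expression of the form $\eta_2(\xi_3^*\eta_2-\eta_3^*\xi_2)^{-1}\eta_3^*$ up to sign. Its reversal invariance should follow because $\xi_2\eta_2^{-1}$ and $\xi_2^*\eta_2$ are paravectors (Lemma \ref{lem:xiStarEtaParavector}).
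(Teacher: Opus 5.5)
\textbf{Direct route.} This is the paper's approach, but Step~1 is mis-indexed, so the displayed relation is false. The bookkeeping you flagged as the main obstacle does go wrong.

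\begin{itemize}
\item With $(\ell,j,k)=(1,2,3)$, the third factor $q^{\{j\}}_{k\ell}$ is $q^{\{2\}}_{31}=\Delta_{23}^{-1}\Delta_{21}$. You wrote $\Delta_{21}^{-1}\Delta_{23}$, which is $q^{\{2\}}_{13}$.
\item Your relation already fails for $n=1$. Take $\kappa_1=(1,0)$, $\kappa_2=(0,1)$, $\kappa_3=(2,1)$. Then $\lambda_{12}=\lambda_{13}=1$ and $\lambda_{23}=-2$, and your left-hand side equals $-\lambda_{23}^2\lambda_{12}^{-2}=-4$, not $-1$.
\item Step~2 contains a compensating slip. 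Since $(X^*)^{-1}=\lambda_{12}^{*-1}\lambda_{32}^{*}\lambda_{31}^{*-1}$, your ``equivalent target'' $\lambda_{32}^{*-1}\lambda_{12}^*\lambda_{31}^{*-1}=\lambda_{31}^{-1}\lambda_{32}\lambda_{12}^{-1}$ is not equivalent to $X=X^*$. Inverting it gives $X=\lambda_{31}^*\lambda_{12}^{*-1}\lambda_{32}^*$. It is exactly what your faulty relation produces.
\end{itemize}

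So the direct route as written does not prove the statement. The correct translation is
\[\lambda_{31}^{-1}\lambda_{32}\lambda_{12}^{-1}\,\lambda_{13}\lambda_{23}^{-1}\lambda_{21}=-1.\]
The first three factors are $X^{-1}$. Substituting $\lambda_{13}=-\lambda_{31}^*$, $\lambda_{23}=-\lambda_{32}^*$ and $\lambda_{21}=-\lambda_{12}^*$ turns the last three into $-\lambda_{31}^*\lambda_{32}^{*-1}\lambda_{12}^*=-X^*$. Hence $X^{-1}X^*=1$. This is the paper's proof.

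\textbf{Fallback route.} This one is correct and is a genuinely different argument. Normalise using Lemma~\ref{SesquimorphismLemma} and transitivity; the translation step needs $\{\kappa_1,\kappa_3\}=\lambda_{13}\neq0$. You then get
\[\lambda_{12}=\eta_2,\quad \lambda_{32}=-\eta_3^*\xi_2,\quad \lambda_{31}=-\eta_3^*,\]
and therefore exactly $X=\eta_2\xi_2^{-1}$. Because $\xi_3=0$, there is no residual $\xi_3^*\eta_2$ term and no sign ambiguity. Also $\xi_2\neq0$, since $\lambda_{32}\neq0$. As $(\xi_2,\eta_2)\in S\lip$, this is the paravector $\eta_2\ol{\xi_2}/|\xi_2|^2$, hence reversal-invariant.

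This route avoids quasideterminants entirely and proves more than the statement, namely $X\in\para$. Reversal-invariance alone would still allow degree-$4$ terms, such as $i_1i_2i_3i_4\in\lip$. The paper's route, by contrast, is a one-line specialisation of the Gelfand--Retakh skew-symmetry relation, once the indices are right.
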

\noindent\textit{Proof.} Using the above skew-symmetry relation, we substitute lambda lengths to find
\[\lambda_{31}^{-1}\lambda_{32}\lambda_{12}^{-1} \lambda_{13}\lambda_{23}^{-1}\lambda_{21}=-1.\]
After some rearranging, we find the claimed identity.\qed
\\\\
We can also rearrange this skew-symmetry relation and find the following:
\[\lambda_{13}\lambda_{23}^{-1}\lambda_{21} + \lambda_{12}\lambda_{32}^{-1}\lambda_{31}=0=\lambda_{11}.\]
This is a special case of the holonomy relation (\ref{LambdaHolonomyEquation}); just set $k=\ell$.

\appendix
\section*{Notation}
\label{sec:Notation}
\small
\begin{tabularx}{\textwidth} { 
   >{\raggedright\arraybackslash}l
   >{\raggedright\arraybackslash}X}
 \hline
 {\small\textbf{General}} & \\
 $\N$ & The natural numbers $\{0,1,2,...\}$ \\
 $\Z$, $\Z_{>0}$ & The integers (resp. the positive integers $\{1,2,3,...\}$) \\
 $\Z_n$ & The ring of integers mod $n$ \\
 $\Q$ & The rationals \\
 $\R$ & The reals \\
 $\R^+$, $\R^-$ & The open interval $(0,\infty)$ (resp. $(-\infty,0)$) \\
 $\C$ & The complex numbers \\
 $\hyp$ & The quaternions (sec. \ref{sec:ClifAlgLadder} only) \\
 $\mathbb{O}$ & The octonions \\
 $I$ & A multi-index $\{j_1,...,j_k\}$  (p. \pageref{not:multiindex})\\
 $e^X$ & The exponential of $X$ (p. \pageref{not:exponentialmap})\\
 $\text{Im}(f)$ & The image of the function $f$ \\
 $\theta,\varphi,\psi,$... & Generic angle variables\\
 $\lfloor \cdot \rfloor$, $\lceil \cdot \rceil$ & The floor and ceiling functions, respectively\\
 $|\cdot|^2$ & Norm or magnitude squared (context dependent, see p. \pageref{defn:CliffordNorm})\\
 $[X_a]_j$ & The $j$ component of an object $X_a$\\
 $x_{\bullet}$ & A generic placeholder for an indexed object $x$\\
\hline

\textbf{Clifford Algebra} & \\
$\R^{p,q}$ & Real vector space with $p$ positive, $q$ negative unit generators (p. \pageref{not:Rpq})\\
 $Q_{p,q}$, $N_{p,q}$ & The quadratic form (resp. norm) on $\R^{p,q}$ (p. \pageref{not:Rpq})\\
$\clifn{p,q}$ & Clifford algebra on $\R^{p,q}$ (p. \pageref{not:Clifpq})\\
$\cdot'$ & Grade involution on a Clifford algebra (ch. 1-4) (p. \pageref{not:GradeInv})\\
$\cdot^*$ & Reversal involution on a Clifford algebra (ch. 1-4) (p. \pageref{not:RevInv})\\
$\ol{\cdot}$ & Clifford conjugation on a Clifford algebra (ch. 1-4) (p. \pageref{not:CliffConj})\\
$\pX \cdot \pY$, $x \cdot y$ & The dot product on $\$\R^{q+1,p}$ and $\clifn{p,q}$ (p. \pageref{defn:ParavectorInnerProduct})\\
$\clifn{p,q}^{\pm}$ & The positive and negative eigenspaces of $\clifn{p,q}$ under $\cdot'$ (p. \pageref{not:Clifpqpm})\\
$X^{\pm}$ & $X \cap \clifn{p,q}^{\pm}$ for a space $X$\\ 
$\clifn{p,q}^{\times}$  & The invertible subgroup of $\clifn{p,q}$ (p. \pageref{not:ClifInvSubgrp})\\
$\$\R^{q+1,p}$, $\para$ & The paravector subspace of $\clifn{p,q}$ (resp. $\clif$) (p. \pageref{not:Paravectors})\\
$\Gamma_{p,q}$, $\Gamma_{n+1}$ & The Lipschitz group of $\clifn{p,q}$ (resp. $\clif$) (p. \pageref{def:LipschitzGrp})\\
$\$\Gamma_{q+1,p}$, $\lip$ & The paravector Lipschitz group of $\clifn{p,q}$ (resp. $\clif$) (p. \pageref{def:ParaLipschitzGrp})\\
$\$\Gamma_{q+1,p}^{\triangleright}$, $\lip^{\triangleright}$ & The paravector Lipschitz monoid of $\clifn{p,q}$ (resp. $\clif$) (p. \pageref{not:LipschitzMonoid})\\
$Spin(p,q)$, $Spin(n+1)$ & The Spin cover of $SO(p,q)$ (resp. $SO(n+1)$) (p. \pageref{def:SpinGrp})\\
$\$pin(p,q)$, $\$pin(n+1)$ & The copies of the spin groups in $\$\Gamma_{q+1,p}$ (resp. $\lip$) (p. \pageref{cor:ParavSpinGrp})\\
$\$\mathfrak{g}_{n+1}$ & The Lie algebra of $\lip$ (p. \pageref{LipschitzLieAlgebraTheorem})\\
$S\lip$ & The set of Lipschitz spinors (p. \pageref{not:SLip})\\
$s_{\pV}$, $s(\pV,\kappa)$ & Sections of the tangent bundle $TS\lip$ (p. \pageref{not:TSLip})\\
$GL(2,\lip)^{\triangleright}$ & $GL(2,\lip)$ with the pseudo-determinant condition relaxed to allow any value in $\R$ (p. \pageref{not:GL2LipMonoid})\\
$N(\cdot)$, $|\cdot|^2$ & The paravector norm $N(\pV)=\pV\ol{\pV}$ (p. \pageref{defn:CliffordNorm})\\
$x,y,z,...$ & Generic elements of $\clifn{p,q}$ \\
$\pU,\pV,\pW,...$ & Generic paravectors \\
$U,V,W,...$ & Generic vectors \\
$i_j$ & Basis elements of $\clifn{p,q}$, $1 \leq j \leq p+q$ (p. \pageref{defn:CliffordAlgebra})\\
$\sigma(x)(y)$ & The action $xyx^*$ (p. \pageref{defn:TwistedAdjointSigma})\\
$\widetilde{Ad}_xy$ & The twisted adjoint action $x'yx^{-1}$ (p. \pageref{not:TwistAdj})\\
$Ad_xy$ & The adjoint action $xyx^{-1}$ (p. \pageref{not:Adj})\\
$\mathcal{B}^n$ & The bi-paravectors in $\clif$ (p. \pageref{BiParavectorDefinition})\\
$\kappa$ & A generic spinor (p. \pageref{def:LipschitzSpinor})\\
$(\xi,\eta)$ & Components of a generic Lipschitz spinor (p. \pageref{def:LipschitzSpinor})\\
$\check{D}$ & The complement of an element $D$ of $\clif^2$ (p. \pageref{def:Complement})\\
$\cdot^{\dagger}$ & The conjugate transpose (p. \pageref{not:ConjTrans})\\
$\cpara$ & One-point compactified $\para$; generalised Riemann sphere (p. \pageref{not:1ptCompact})\\
$\langle \cdot,\cdot \rangle$ & The generalised Hermitian form (p. \pageref{sec:HermitianForm})\\
$\{\cdot,\cdot\}$ & The bracket operation (p. \pageref{sec:TheBracket})\\
pdet & Pseudo-determinant (p. \pageref{not:pdet})\\
$P$, $P^{\kappa}$, $P^{\kappa}_{\pV}$ & The group of parabolic translation matrices (respectively; the subgroup of $P$ fixing $\kappa$, and the parabolic translation matrix fixing $\kappa$ and parameterised by $\pV$) (p. \pageref{def:ParabolicTranslation}, p. \pageref{ParabolicSubgroupDefinition})\\
\hline
 
 \textbf{Topology} & \\
 $\R p$, $\R^+ p$ & Real (resp. positive) scalar multiples of a point $p$ in a vector space\\
 $\Tilde{f}$ & The lift of a function $f$\\
 $\partial M$ & The boundary (possibly ideal) of a manifold or complex $M$\\
 $\widehat{M}$ & $M \cup \partial M$ for a manifold or complex $M$\\ 
 $A \times B$ & The topological product of spaces $A$ and $B$ \\
 $A/B$ & The quotient space of topological spaces $A$ and $B$ \\
$S^n$ & The $n$-sphere\\
 Dim$(X)$ & The topological dimension of $X$\\
 $\Pi$, $\Pi_{\gamma}$ & A hyperplane (resp. geodesic hyperplane)\\
\hline
 
 \textbf{Differential Geometry} & \\
 $TM,\ T_pM$ & The tangent bundle of $M$ (resp. tangent space to $M$ at $p$)  \\
 $(D_pf)(v)$ & The derivative of the function $f$ at $p$ in the direction $v$ \\
 $Df$ & The Jacobian matrix of $f$\\
 $\partial_j$ & A vector field in `the $j$ direction'\\
\hline
 
 \textbf{Algebra} & \\
 $\text{Re}(x)$ & The `real component' of $x \in A$, for $A$ a real algebra with canonical embedding $\R \subset A$ \\
 $R^{\times}$ & The invertible subgroup of the ring $R$ \\
 $A.X$ & The group, ring, algebra, etc. action of $A$ on $X$ \\
 $End(X)$ & Endomorphisms of the space $X$ \\
 $\oplus$ & The direct sum\\
 $\text{mod }n$ & The modulus function\\
 $x^{-1}$ & Multiplicative inverse of $x$\\
 $A/B$ & The quotient group, ring, etc. of $A$ over $B$\\
\hline
 
 \textbf{Linear Algebra} & \\
 $\star$ & The Hodge star operator (p. \pageref{not:HodgeStar})\\
 $a \wedge b$ & The exterior product of $a$ and $b$\\
 $\bigwedge \R^{p,q}$, $\bigwedge^n \R^{p,q}$ & The exterior algebra on $\R^{p,q}$ (resp. the subspace of $n$-vectors)\\
 $O(n)$, $SO(n)$ & The orthogonal (resp. special orthogonal) $n \times n$ matrix groups\\
 $GL(2,X)$ & The $2\times 2$ general linear group over a group or ring $X$ \\
 $SL(2,X)$, $SU(2,X)$ & The $2\times 2$ special linear (resp. special unitary) group over $X$ \\
 $PV$ & The projectivisation of a vector space $V$ (p. \pageref{not:projectivisation})\\
 $\mathcal{M}(n,X)$ & The algebra of $n \times n$ matrices over a group or ring $X$ \\
 tr, det & The trace and determinant of a matrix, respectively\\
$v^{\perp}$ & The orthogonal subspace to a vector $v$ in some vector space $V$ (p. \pageref{SphereTangentSpaceLemma})\\
$\otimes$ & The tensor product\\
\hline

\textbf{Minkowski Geometry} & \\
$\R^{1,n+2}$ & Generalised Minkowski space (p. \pageref{not:GenMinkSpace})\\
$L$,$L^+$ & The light-cone (resp. positive light-cone) of $\R^{1,n+2}$ (p. \pageref{not:LightCone})\\
$\ell$ & A light-ray in $L^+$ (p. \pageref{not:ell})\\
$(\ell,\psi)$ & A decorated ideal point (p. \pageref{not:decIdealPt})\\
$\mathscr{S}^+$, $\mathscr{S}^+_T$ & The celestial sphere (resp. celestial sphere at constant $T$) (p. \pageref{not:CelestialSphere})\\
$\mathscr{S}^{+D}$ & The space of decorated ideal points (p. \pageref{not:SpaceDecIdeal})\\
$(T,Z; X_0,...,X_n)_{\R}$, $(T,Z;\pX)_{\R}$ & Coordinates for $\R^{1,n+2}$ (p. \pageref{OrderOfVariables})\\
$\mathcal{P}^{1,n+2}$ & Paravector Hermitian matrices (p. \pageref{defn:ParavectorHermitianMatrices})\\
$(\cdot|\cdot)_{1,n+2}$ & The Minkowski metric (p. \pageref{not:MinkMetric}) \\
$(\cdot|\cdot)_{n+3}$ & The Euclidean metric on $\R^{n+3}$ \\
$SO(1,n+2)^+$ & The time-orientation-preserving component of $SO(1,n+2)$\\
$S_p$ & The paravector Hermitian matrix representation of $p \in \R^{1,n+2}$ (p. \pageref{not:ParaMatrixPt})\\
$\phi_1$ & The basepoint map from $S\lip$ to $L^+ \subset \R^{1,n+2}$ (p. \pageref{defn:BasepointMap})\\
$\Phi_1$ & The double covering map from $S\lip$ to $\mathcal{MF}^n$ (p. \pageref{defn:Phi1})\\
$\phi_2$ & The map from $L^+$ to horospheres in $\hyp^{n+2}$ (p. \pageref{not:phi2})\\
$\phi_2^{\partial}$ & The map from $p \in L^+$ to the centre of $\phi_2(p)$ (p. \pageref{not:phi2partial})\\
$\Phi_2$ & The diffeomorphism $\mathcal{MF}^n  \to D\text{Hor}^{n+2}$ (p. \pageref{Phi2})\\
$\phi$, $\Phi$ & $\phi_2 \circ \phi_1$ (resp. $\Phi_2 \circ \Phi_1$) (p. \pageref{not:phi}, p. \pageref{not:Phi})\\
$\phi^{\partial}$ & $\phi_2^{\partial} \circ \phi_1$ (p. \pageref{phipartial})\\
$[[p,v]]$ & The null flag with flagpole $\R p$ and flagplane spanned by $\{p,v\}$ (p. \pageref{not:FlagSingle})\\
$[[p;v_1,v_2,...v_n]]$ & The multiflag with subflags $[[p,v_1]],[[p,v_2]]...,[[p,v_n]]$ (p. \pageref{def:Multiflag})\\
$\mathcal{MF}^n_p$, $\mathcal{MF}^n$ & The space of multiflags based at $p$ (resp. bundle of multiflags) (p. \pageref{def:MultiflagSpace})\\
$\Psi$ & The correspondence for multiflags and decorated ideal points (p. \pageref{prop:MFlagsAreDecIdealPoints})\\
\hline

 \textbf{Hyperbolic Geometry} & \\
 $\hyp^n$ & Hyperboloid model (ch. 3), hyperbolic $n$-space (ch. 4-5) (p. \pageref{defn:HypCoords}, p. \pageref{not:Hypn})\\
 $\U^n$ & The upper half-space model of hyperbolic $n$-space (p. \pageref{not:UpperHalfSpace}, p. \pageref{not:UpperHalf2})\\
 $\B^n$ & The ball model of hyperbolic $n$-space (p. \pageref{not:BallModel})\\
 $(T,Z; X_0,...,X_n)_{\hyp}$, $(T,Z;\pX)_{\hyp}$ & Standard coordinates on the hyperboloid (p. \pageref{defn:HypCoords})\\
 $(w;y_0,...,y_n)_{\B}$, $(w;\textbf{y})_{\B}$ & Standard coordinates on the ball model (p. \pageref{not:BallCoords})\\
 $(z;x_0,...,x_n)_{\U}$, $(z;\textbf{x})_{\U}$ & Standard coordinates in the upper half-space (p. \pageref{not:UpperHalfCoords})\\
 $(...)_{\partial M}$ & Coordinates for the boundary of $M$ (p. \pageref{not:BdryCoords})\\
 $(...)_{TM}$ & Coordinates for the tangent space of $M$ (p. \pageref{not:TanCoords})\\
 $\lambda_{ij}$ & The lambda length between horospheres indexed $i$ and $j$ (p. \pageref{defn:LambdaLength})\\
$\gamma(\pX \to \pY)$ & The oriented geodesic between $\pX, \pY \in \partial \U^{n+2}$ (p. \pageref{def:GeodesicsInHyp})\\
$\gamma(w_1;\textbf{x} \to w_2;\textbf{y})$ & The oriented geodesic between $(w_1;\textbf{x})_{\partial\B}, (w_2;\textbf{y})_{\partial\B}$ (p. \pageref{def:GeodesicsInHyp})\\
Hor$^{n}$ & The space of horospheres in $\hyp^n$ (p. \pageref{not:HorN})\\
$L$Hor$^{n}$ & The space of horospheres in $\hyp^n$ bearing $n$ linearly independent oriented line fields (p. \pageref{def:LHor})\\
$D$Hor$^{n}$ & The space of decorated horospheres in $\hyp^n$ (p. \pageref{not:DHor})\\
$S$Hor$^{n}$ & The space of spin decorated horospheres in $\hyp^n$ (p. \pageref{not:SHor})\\
$\pi_1$, $\pi_1^{\partial}$ & The map (resp. boundary map) from $\hyp^{n+2} \to \B^{n+2}$ (p. \pageref{sec:MapsBetweenModels})\\
$\pi_2$, $\pi_2^{\partial}$ & The map (resp. boundary map) from $\B^{n+2} \to \U^{n+2}$ (p. \pageref{def:pi2}, p. \pageref{not:pi2partial})\\
$\pi$, $\pi^{\partial}$ & $\pi_2 \circ \pi_1$ (resp. $\pi_2^{\partial} \circ \pi_1^{\partial}$) (p. \pageref{not:pi}, p. \pageref{not:pipartial})\\
$\mathfrak{S}$ & The generalised Cayley transform $\widehat{\U}^{n+2} \to  \widehat{\B}^{n+2}$ (p. \pageref{defn:GeneralisedCayleyTransform})\\
$N^{in}$, $N^{out}$ & The in/outwards normal vector field on a horosphere (p. \pageref{Not:NinNout})\\
%$Fr(M)$ & The orthonormal frame bundle over a manifold $M$ (p. \pageref{not:FrM})\\
Fr$^n$, $S$Fr$^n$ & The frame (resp. spin frame) bundle over $\hyp^n$ (p. \pageref{not:Fr}, p. \pageref{not:SFr})\\
$(W^{in},W^{out})$ & A spin decoration or frame (p. \pageref{SpinFrameDefinition})\\
$(H,W)$ & A spin decorated horosphere (p. \pageref{not:SpinDecHoro})\\
$1^{in}$, $1^{out}$ & The Hodge star component of a horosphere frame field (p. \pageref{not:HodgeOne})\\
\hline
 
 \textbf{Quasideterminants} & \\
 $|A|_{jk}$ & The $(j,k)$-quasideterminant of the matrix $A$ (p. \pageref{def:quasideterminant})\\
 $A^{(j)}$ & The matrix formed by deleting the $j$th column of $A$ (p. \pageref{not:DeleteColj})\\
 $q_{jk}^{(s)}(A)$, $q_{jk}^I(A)$ & The left quasi-Pl\"{u}cker coordinates for a matrix $A$ (p. \pageref{not:LeftQuasiPlucker})\\
\hline
\end{tabularx}
\normalsize

\printbibliography
\end{document}